\definecolor{green}{rgb}{0,1,0}
\theoremstyle{plain}
\newtheorem{Theorem}{Theorem}[chapter]		
\newtheorem{Lemma}[Theorem]{Lemma}
\newtheorem{Remark}[Theorem]{Remark}
\newtheorem{Proposition}[Theorem]{Proposition}
\newtheorem{corrollary}[Theorem]{Corollary}
\theoremstyle{definition}
\newtheorem{defn}[Theorem]{Definition}
\newcommand{\properideal}{%
  \mathrel{\ooalign{$\lneq$\cr\raise.22ex\hbox{$\lhd$}\cr}}}
\title{\sffamily\bfseries Nilpotent Symplectic Alternating Algebras} 
\author{\sffamily Layla Hamad Elnil Mugbil Sorkatti}
\begin{document}
\maketitle
\pagenumbering{arabic}

\thispagestyle{empty}
\vspace{-2.1cm}
\hspace{500cm}

\hspace{500cm}
\hspace{500cm}
\begin{center}
.. To my dad 
 \& 
my mum, who surprised me by becoming an expert in using the communication technology to keep in touch with me .. 
\end{center}
\begin{center}
.. To Paul Vaderlind ..
\end{center}
\begin{center}
..  To Mohsin Hashim  ..
\end{center}
\begin{center}
.. To my `$2^{nd}$ mum' Zalima Md. Drus ..
\end{center}
\begin{center}
.. To anyone who reads it and/or uses it as a reference ..
 \end{center}
\begin{center}
* A special dedication to my nephew Ahmed Mohammed Omer *
\end{center}

\clearpage

\thispagestyle{empty}
\chapter*{SUMMARY}
\vspace{-2.1cm}
\hspace{500cm}
\noindent

We develop a structure theory for nilpotent symplectic alternating algebras.
We then give a classification of all nilpotent symplectic alternating algebras of dimension up to $10$ over any field $\mathbb{F}$.
The study reveals a new subclasses of powerful groups that we call \emph{powerfully nilpotent groups}, \emph{perfect nilpotent groups} and \emph{powerfully soluble groups}. 

\clearpage

\thispagestyle{empty}
\chapter*{ACKNOWLEDGEMENTS}
\vspace{-2.1cm}
`O mankind! We created you from a single (pair) of a male and a female, and made you into nations and tribes, that ye may know each other (not that ye may despise (each other). Verily the most honoured of you in the sight of Allah is (he who is) the most righteous of you. And Allah has full knowledge and is well acquainted (with all things)'.  The Quran 49:13 (Surah Al-Hujurat) \\\\
First and foremost, I am ever grateful to Allah of having the opportunity to continue my studies in the beautiful City of Bath.  My short stay here has been extremely memorable. I met interesting people of different nationalities and backgrounds. They have enriched my life beyond my expectation.\\\\
My acknowledgment will begin with my Supervisor, Gunnar Traustason for all his help and encouragement during the last three years. I believe that his supervision and positive attitude have made my PhD a very enriching experience. I wish him all the best for the future.\\\\
I extend my acknowledgment to the entire Geometry Group at Bath University, especially Geoff Smith, Gregory Sankaran, Alster King, David Calderbank, Francis Burstall and Xinping Su.  Also, to the Postgraduate Geometry \& Algebra Seminar Group, especially Acyri, Matthew and Nathan. My special thanks to Atika Ahmed for all her help, generosity and sincere friendship. Not forgetting, the Department of Mathematics and all the people on Level 2, especially Ann Linfield and Mary Banies. 
I would also like to thank my office mates in $4W\,2.21$; Horacio, Alex, Sarah, Istvan, Joshua, Tom and Fancies.
It is not least because of Prof. Paull Vaderlind, Dr. Mohsin Hashim and Eilaf Sorkatti. Their help, constant support and advice have been invaluable.\\\\
My acknowledgment also goes to my bother Mugbil who was always there for me and made time to travel with me.
 Many thanks go to my friends Abdulraheem, Ayoub Bashier, Amy, Ammarah, Asiah, Fatima, Khadra, Maren, 
Meruyert \& Aizhan, Alnada, Naseerah, Natalya, Oscar Xi Wang, Passawan, Rougya, Sian, Soumya \& Sanaa, Sozy and Zaynub for their friendship, empathy and great sense of humour during my sojourn in Bath. 
Special thanks to my friend Maren for the fruitful discussions. 
Warm thanks to Amy, Naaserah and Zalima for the numerous dinner invitations.
My thank you, too to my relatives in Sudan, Cardiff and Birmingham. I also would like to mention especially the kindest couple in the world, Osman and Nafisa for their worry and care for me.
I am so grateful to most of the people mentioned for making their homes very welcoming to me all the time. My apology to anyone I have left out.\\\\
 I must not forget the staff at the English Language Center, namely Tim Ratcliffe. I also have been fortunate to cross paths with a lovely, helpful lady Mrs Del Davies from the University Accommodation Center. My thanks also go to Anne Ellis, Santander Bank, University of Bath branch for her patience and honesty in advising me on banking matters. \\\\
A big thank you to Khartoum University and University of Bath for their financial support that made my research for this thesis possible and for funding several visits and conferences, including Groups St Andrews $2009$ and $2013$.\\\\
Last but not least, to my parents for their long distance unwavering support, endurance and great patience.  My brother Mohammed and Sister Eilaf for taking care of them in my absence. 

\thispagestyle{empty}

\tableofcontents
\addtocontents{toc}{\def\protect\@chapapp{}}

\chapter{Introduction}
In this thesis we study certain algebraic structures called symplectic alternating algebras. Symplectic alternating algebras originate in a study of powerful $2$-Engel groups [\cite{3},\cite{4}] although here we will study them purely 
as structures that are interesting in their own right with many beautiful properties. This thesis continues the development of the theory of symplectic alternating algebras that was started in \cite{1}. Some general theory was also developed in \cite{2}. The aim is to explore these algebraic structures and in particular
to develop a theory for nilpotent symplectic alternating algebras.\\ \\
Let $\mathbb{F}$ be a field. A \emph{Symplectic Alternating Algebra} over $\mathbb{F}$ is a triple $(L,\,(\ ,\ ),\ \cdot\,)$ where
$L$ is a symplectic vector space over $\mathbb{F}$ with respect to a non-degenerate alternating form $(\ ,\ )$ and $\cdot$ is a
bilinear and alternating binary operation on $L$ such that 
$(u\cdot v,w)=(v\cdot w,u) $ for all $u,v,w\in L$. We often denote Symplectic Alternating Algebra more shortly by SAA.
\section{Connection with Engel groups and the Burnside Problem}
As we said above SAA's originate in some work on powerful $2$-Engel groups. We will not be exploring this connection in this thesis
but will be primarily looking at SAA's as structures interesting in their own right. In this section we however briefly describe the origin as a background to our work.
As a starting point we first mention the famous Burnside problems from which Engel groups originate.
These were posed in $1902$ by William Burnside \cite{Burnside02}.\\ \\
{\bf The General Burnside Problem} Is a finitely generated periodic group necessarily finite?\\\\
{\bf The Burnside Problem} 
If $B(r,n)$ is the largest $r$-generator group of exponent $n$. For what values of $r$ and $n$ is 
$B(r, n)$ finite?\\\\
{\bf The Restricted Burnside Problem} For what values of $r$ and $n$ is there an upper bound on the orders of finite $r$-generator groups of exponent $n$?\\\\
In $1964$ Golod \cite{golod1964nil} answered the general Burnside 
problem by constructing a counter example that is a finitely generated infinite $p$-group.
For the restricted Burnside problem the answer turns out to be that such an upper bound exists for all $r$ and $n$. 
P. Hall and Higman \cite{hall1956p} reduced this problem
to the case when $n$ is a prime power exponent. 
The solution was then completed in 1989 by Zel'manov [\cite{zel1991-odd},\cite{zel1991}] .\\\\
We next turn to the Burnside problem, For $n=2, 3, 4, 6$ it is known that $B(r,n)$ is finite. 
In general the answer is however negative and $B(r,n)$ is known to be infinite when $r \geq 2$
and $n$ is large enough [\cite{adeiian1979burnside}, \cite{ivanov1992burnside}, \cite{lysenok1996infinite}]. Surprisingly until now it is unknown whether $B(2,5)$ or $B(2,8)$ is finite or not. \\ \\
The Burnside problems lead naturally to the Engel-identities. Engel groups have their origin in William Burnside's paper \cite{Burnside02}. Recall that the commutator of two elements $x$ and $y$ in a group is defined as 
$[x, y]= x^{-1}y^{-1}xy$. We adopt the left normed convention for commutators of more than two elements. Thus 
$[x_1, x_2, \ldots, x_n] = [\ldots[[x_1, x_2], \ldots], x_n]$. We also define $[y,_{n} x]$ recursively by $[y,_{0} x] = y$
and $[y,_{n+1} x] = [[y,_{n} x], x]$ .
\begin{defn}
Let $G$ be a group. $G$ is said to be an \emph{Engel} group if for each pair $(x, y) \in G \times G$ there exists an integer 
			$m = m(x, y)$ such that \[ [y,_{m} x] = 1 .\] 
A group is said to be an $n$-Engel group if 
			$m(x, y) \leq n$
for all $x, y \in G$.
\end{defn}
\begin{Remark} Obviously $G$ is an $0$-Engel group if and only if $G = \{e\}$ and $1$-Engel groups are the abelian groups.
\end{Remark}
\noindent
It is well know that groups of exponent $2$ are abelian that implies that any $r$-generator group of exponent $2$ is of order at most $2^r$. Burnside showed in \cite{Burnside02} that it is also true that groups of exponent $3$ are locally finite. It was Burnside who observed that in groups of exponent $3$, any two conjugates $a, a^b$ commute but this property is equivalent to the $2$-Engel identity $[[b,a],a]=1$ and thus these groups are $2$-Engel groups. It has been shown later in \cite{Burnside01} by Burnside that the $2$-Engel groups also satisfy the laws
\begin{eqnarray*}
[x, y, z]  &=& [ y, z, x] \\
{[x, y, z]}^3 &=& 1 
\end{eqnarray*}
and thus any $2$-Engel group where there is no element of order $3$ would be nilpotent of class at most $2$. One can see furthermore that the following identity holds
\begin{eqnarray*}
[x, y, z, t] &=& 1
\end{eqnarray*}
which apparently was noticed first by Hopkins \cite{1929}.
The fact that these laws characterize $2$-Engel groups is however usually attributed to Levi \cite{1942}. The last identity shows that 
$2$-Engel groups are nilpotent of class at most $3$. Further details can be found in \cite{2011}. That we have a complete understanding of $2$-Engel groups is however no more true then saying that
the law $xy=yx$ tells us all about abelian groups. There are still a number of open question regarding $2$-Engel 
groups. For example, the following well known problems were raised by Caranti \cite{2006unsolved1}.\\ \\
{\bf Problem 1}.
(a) Let $G$ be a group of which every element commutes with all its endomorphic images. Is $G$ nilpotent of class at most $2$?\\\\
(b) Does there exist a finite $2$-Engel $3$-group of class three such that 
		$\mbox{Aut\ } G = \mbox{Aut}_c G \cdot \mbox{Inn\ }G$ 
where $\mbox{Aut}_c G$ is the group of central automorphisms of $G$?\\ \\
We have a positive answer for question (b). In 2010 Abdollahi, A., et al constructed such an automorphism group \cite{abdollahi2010finite} using GAP and Magma computations.\\\\
As we mentioned before SAA's originate from a study of powerful $2$-Engel groups.
\begin{defn} A finite $p$-group, $p$ odd, is said to be \emph{powerful} if $[G, G] \leq G^p$. Or equivalently if $G/G^p$ is abelian. If $p=2$ then $G$ is powerful if $[G, G] \leq G^4$. \end{defn}
\noindent
Questions about $p$-groups can often be reduced to powerful $p$-groups.
In $2008$ some work was done by Moravec and Traustason in \cite{3} on powerful $2$-Engel groups. 
			Now any powerful $3$-group of exponent $3$ is abelian 
								and
one might therefore expect that the class of powerful $2$-Engel $3$-groups would be smaller than $3$.\\\\
\noindent
In fact Moravec and Traustason showed that this is the case for $3$-generator groups.
\begin{Proposition} [\cite{3}]
Every $3$-generator powerful $2$-Engel group is nilpotent of class at most $2$. \end{Proposition}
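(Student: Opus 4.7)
The plan is to proceed in two stages: reduce to the case of a $3$-group, and then exploit the powerful hypothesis to force the single generator of $\gamma_3(G)$ to vanish.

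For the reduction, note that any $2$-Engel group is nilpotent of class at most $3$ with all weight-$3$ commutators killed by $3$. If $G$ is a powerful $p$-group with $p \neq 3$ then $G$ has no $3$-torsion, so every weight-$3$ commutator is trivial and the conclusion already holds. Assume therefore that $G$ is a powerful $2$-Engel $3$-group on generators $a_1, a_2, a_3$. The $2$-Engel identities $[x,y,z] = [y,z,x]$ and $[x,y,z] = [x,z,y]^{-1}$ (the latter valid modulo $\gamma_4(G)$) show that $\gamma_3(G)$ is cyclic, generated by $c := [a_1, a_2, a_3]$ with $c^3 = 1$. Moreover $[G,G]$ is abelian, since $G$ has class at most $3$. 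It therefore suffices to prove $c = 1$.

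For the main computation, I would use the fact that in a powerful odd-$p$ group the set equality $G^p = \{g^p : g \in G\}$ holds, so from $[a_2, a_3] \in [G,G] \leq G^3$ one writes $[a_2, a_3] = g^3$ for some $g \in G$. The $2$-Engel identity $[x, y^n] = [x, y]^n$ then gives
\[
c \;=\; \bigl[a_1,\, [a_2, a_3]\bigr] \;=\; [a_1, g^3] \;=\; [a_1, g]^3 .
\]
Writing $g \equiv a_1^{\alpha_1} a_2^{\alpha_2} a_3^{\alpha_3} \pmod{[G, G]}$, expanding $[a_1, g]$ via standard commutator identities, and cubing inside the abelian group $[G, G]$ (the $\gamma_3$-part of $[a_1, g]$ has order dividing $3$ and cubes to the identity), one represents $c$ as a specific monomial in the cubes $[a_i, a_j]^3$. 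Performing the analogous computation for the cyclic variants $c = [[a_2, a_3], a_1] = [[a_3, a_1], a_2]$ yields two further expressions for $c$ of the same shape, with exponents drawn from the cube-roots of $[a_1, a_3]$ and $[a_1, a_2]$.

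The main obstacle lies in combining these three cyclic relations with the constraint that, on projecting $g^3 = [a_2, a_3]$ to the abelianisation $G / [G, G]$, the image of $g$ (and similarly the other cube-roots) is forced to be a $3$-torsion element, pinning down the exponents $\alpha_i$ modulo the orders of the generators. The expected key technical point is that $a_i^3$ is central modulo $\gamma_3$, equivalently $[a_i, a_j]^3 \in \gamma_3(G)$, which should follow from the standard powerful-group identity $[G^3, G] = [G, G]^3$ applied inside the class-$2$ quotient $G / \gamma_3(G)$. Once that is in place, the three cyclic relations become a linear system over $\mathbb{F}_3$ inside the cyclic group $\gamma_3(G)$ whose only solution is $c = 1$, giving nilpotency of class at most $2$.
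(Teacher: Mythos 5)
The paper does not actually prove this proposition; it is quoted from Moravec and Traustason \cite{3} without proof, so your argument has to stand entirely on its own. Your setup is largely sound: the reduction to $3$-groups, the fact that $\gamma_3(G)=\langle c\rangle$ with $c^3=1$ for a $3$-generator $2$-Engel group, the surjectivity of the cubing map onto $G^3$ in a powerful $3$-group, and the identity $[x,y^3]=[x,y]^3$ are all correct (modulo the harmless sign $[a_1,[a_2,a_3]]=c^{-1}$ rather than $c$). The problem is precisely the step you yourself flag as the ``key technical point''.

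The claim that $a_i^3$ is central modulo $\gamma_3(G)$, equivalently that $[a_i,a_j]^3\in\gamma_3(G)$, is false for powerful $2$-Engel $3$-groups. Take $G=\langle a,b\mid a^{81}=b^9=1,\ a^b=a^{10}\rangle$, of order $3^6$. Here $[a,b]=a^9$, so $G'=\langle a^9\rangle\le G^3$ and $G$ is powerful; moreover $[a^9,b]=a^{81}=1$, so $G$ has class $2$ and is in particular $2$-Engel; yet $[a,b]^3=a^{27}\neq 1=\gamma_3(G)$. The justification you offer also does not work: in the class-$2$ quotient $\bar G=G/\gamma_3(G)$, the identity $[\bar G^3,\bar G]=[\bar G,\bar G]^3$ combined with powerfulness yields only $1=\gamma_3(\bar G)\le (\bar G')^3$, which is the trivial containment in the wrong direction; to conclude $(\bar G')^3=1$ you would need $\bar G^3\le Z(\bar G)$, which is not implied. (What powerfulness genuinely gives is the reverse inclusion $\gamma_3(G)\le (G')^3$.) Without this lemma your relation $c^{\pm 1}=[a_1,a_2]^{3\alpha_2}[a_1,a_3]^{3\alpha_3}$ does not live in an $\mathbb{F}_3$-vector space. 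Using the genuine constraint $\alpha_j\equiv 0\pmod{3^{e_j-1}}$ (where $3^{e_j}$ is the order of $\bar a_j$ in $G/G'$), the right-hand side can be rewritten as $[a_2^{3^{e_2}},a_1]^{m_2}[a_3^{3^{e_3}},a_1]^{m_3}$, which does land in $\gamma_3(G)$ because $a_j^{3^{e_j}}\in G'$; but this only produces an identity of the form $c=c^{s}$ for an exponent $s$ you do not control, and $c=c^s$ does not force $c=1$. The concluding ``linear system over $\mathbb{F}_3$ whose only solution is $c=1$'' is therefore never actually exhibited, and the argument does not close.
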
  \noindent
We know that the class of $2$-Engel groups is at most $3$. Notice that as the class of powerful $2$-Engel groups is not closed under taking subgroups, it does not follows from the last proposition that 
when the class of $2$-Engel groups is furthermore powerful then the class will be reduced to $2$.\\\\
In fact it turns out that there is a rich family of powerful $2$-Engel groups of class $3$.
\begin{defn} A powerful $2$-Engel $3$-group of class $3$ is \emph{minimal} if all the proper powerful sections have class at most $2$. \end{defn} \noindent
Moravec and Traustason classified powerful $2$-Engel groups of class three that are minimal. The study reveals that there are infinitely many minimal groups of rank $5$ and also of any even rank $\geq 4$.\\ \\
Symplectic vector spaces play a role in the classification. One of the families considered has a richer structure which lead to related algebraic structures that we call "symplectic alternating algebras". \\ \\
\noindent
In fact the study in \cite{4} reveals there is a one-to-one correspondence between symplectic alternating algebras over the field $\mbox{GF}(3)$ and a certain class of powerful $2$-Engel $3$-group of exponent $27$. 
\noindent
The groups form a class $\mathcal{C}$ that consist of powerful $2$-Engel $3$-groups $G$ with the following extra properties :\\\\
(a)  $G = \langle x , H \rangle$ where $H =\{ g \in G :\ g^9 = 1\}$ and $Z(G) = \langle x \rangle$ with $O(x) = 27$.\\ \\
(b) $G$ is of rank $2r+1$ and has order $3^{3+4r}$.\\\\
The associated symplectic alternating algebra $L(G)$ is constructed as follows. First we consider 
				$L(G) = H / G^3$ as a vector space over $\mbox{GF}(3)$. 
To this we associate a bilinear alternating form (,) and a alternating binary multiplication as follows
 		\[    [a, b]^3 = x ^{9 (  \bar{a}, \bar{b}  )  }   \]
		$$\bar{a} \cdot \bar{b} = \bar{c} \mbox{\ where\ } [a,b ] Z(G)= c^3 Z(G) \mbox{\ and\ } \bar{y}=yG^3.$$
One can show that these are well defined and turn $L(G)$ into a SAA.
Suppose that $\bar{a} \cdot \bar{b} = \bar{d}$ and $\bar{b} \cdot \bar{c} = \bar{e}$. Then
\begin{align*}
x^{9( \bar{a} \cdot \bar{b} , \bar{c} )}  &= x^{9(\bar{d} ,\bar{c} )} = [d, c]^3 = [d^3 , c] = [a,b,c] = [b, c, a] = [e^3, a] =  [e, a]^3 = x^{9 (\bar{e}, \bar{a})}\\
& = x^{9 ( \bar{b} \cdot \bar{c} , \bar{a} )}.
\end{align*}
Notice that $L(G)$ is abelian if and only if $G$ is nilpotent of class at most $2$. Traustason showed that $L(G) \cong L(K)$ if and only if $G \cong K$.\\\\
Suppose $G =\langle x, h_1, \ldots, h_{2r} \rangle$ and that $L(G) = \langle u_1, \ldots, u_{2r} \rangle$ is the corresponding SAA where $u_i = h_i G^3$. Form the structure coefficients for $L(G)$
\begin{eqnarray*}
		u_i \cdot u_j &=& \alpha_{ij}(1) u_1 + \cdots + \alpha_{ij}(2r) u_{2r} \\
			(u_i, u_j) &=& \beta_{ij} 
	\end{eqnarray*}			
where $1 \leq i < j \leq 2r$. We get the following powerful commutator relations for $G$
\begin{eqnarray*}  
		[h_i, h_j] &=&  h_1^{3 \alpha_{ij}(1)} \cdots h_{2r}^{3\alpha_{ij}(2r)} x^{3\beta_{ij}} .
 \end{eqnarray*}	
\noindent
\begin{Remark} Recall that a subgroup $K$ of a $3$-group $G$ is said to be powerfully embedded if $[K,G] \leq K^3$. We then
notice that powerful subgroups of $G$ correspond to subalgebras and powerful embedded subgroups correspond to ideals.
\end{Remark}
\noindent
{\bf Example.} Consider the symplectic alternating algebra $L = \mathbb{F} x_1 + \mathbb{F} y_1 + \mathbb{F} x_2 + \mathbb{F} y_2$ of dimension $4$, over $\mathbb{F} = \mbox{GF}(3)$ where $(x_1, y_1) = (x_2, y_2)=1$ and $(x_i, x_j) = (y_i, y_j) = (x_i, y_j)=0$ otherwise for $i,j \in \{1, 2\}$.
\begin{eqnarray*}
x_1x_2 &=& 0\\
y_1y_2 &=& -y_1 \\
x_1y_1 &=& x_2\\
x_1y_2 &=& -x_1\\
x_2y_1 &=& 0\\
x_2y_2 &=& 0
\end{eqnarray*}
The corresponding group is $G(L) = \langle x_1, y_1, x_2, y_2, x \rangle$ with the relations
\begin{eqnarray*}
[x_1, x_2] &=& 1\\
\mbox{} [y_1,y_2 ] &=& y_1^{-3} \\
\mbox{} [x_1,y_1] &=& x_2^3 x^3\\
\mbox{} [x_1,y_2] &=& x_1^{-3}\\
\mbox{} [x_2, y_{1}] & = & 1 \\
\mbox{} [x_2,y_2] & = & x^3
\end{eqnarray*}
\section{An overview of this thesis}
We now give a detailed summary of the thesis. The thesis is divided into two parts. 
In the first part we develop a structure theory for nilpotent symplectic alternating algebras. We will first discuss some background material in Chapter $2$, which we will need throughout the thesis. 
We then begin Chapter $3$ by describing some results that in particular lead to specific type of presentations that we call nilpotent presentations. All algebras with a nilpotent presentation are nilpotent and conversely any nilpotent algebra will have a nilpotent presentation. In particular we will focus on the algebras that are of maximal class and we will see that their structure is very rigid. 
In Chapter $4$ we also illustrate the theory by classifying all nilpotent SAA's of dimension up to $8$ over an arbitrary field $\mathbb{F}$. 
The classification of the nilpotent symplectic alternating algebras of dimension up to $8$ is implicit in \cite{3} although this is not done explicitly and the context there is a more general setting. 
There are three algebras and one family of nilpotent Symplectic alternating algebras of dimension $8$ over any field.\\ \\
The second part of the thesis is to deal with the challenging classification of nilpotent symplectic alternating algebras of dimension $10$ over any field. 
It turns out that the classification of algebras with a center that is not isotropic can be easily reduced to the classification 
of algebras of dimension $8$. The main bulk of work is thus about algebras with isotropic center. The dimension of the center lies between $2$ and $5$ and we deal with these cases in turn.
At some points there are interesting geometrical situations that arise, like when dealing with algebras that have an isotropic center of dimension $4$.
There are $22$ such algebras when the field is algebraically closed. Over the field $\mbox{GF\,}(3)$, where there is a $1$-$1$ correspondence with a class of powerful $2$-Engel $3$-groups, there are $25$ algebras. In general the classifications depends strongly on the field.
\section{Publication details}
Part I of this thesis is joint work with Gunnar Traustason. This has been published in the Journal of Algebra, and forms reference \cite{6}.\\\\
Part II is is also joint work with Gunnar Traustason and the majority of this is currently being prepared for publication \cite{7,8}.
\part{General Theory}
\chapter{Background Material }
\section{Symplectic Alternating Algebras}
\begin{defn}
Let $\mathbb{F}$ be a field. A \emph{Symplectic Alternating Algebra} over $\mathbb{F}$ is a triple $(L,\,(\ ,\ ),\ \cdot\,)$ where
$L$ is a symplectic vector space over $\mathbb{F}$ with respect to a non-degenerate alternating form $(\ ,\ )$ and $\cdot$ is a
bilinear and alternating binary operation on $L$ such that 
$$(u\cdot v,w)=(v\cdot w,u)  $$ for all $u,v,w\in L.$ 
\end{defn}
\noindent
Notice that $(u\cdot x, v)= ( x \cdot v, u) = - (v \cdot x, u) = (u, v\cdot  x)$ and thus the multiplication from the right by $x$ is self-adjoint with respect to the alternating form. As the alternating form is non-degenerate, $L$ is of even dimension and we can pick a basis $x_1, y_1, \ldots, x_n, y_n$ with the property that $(x_i, x_j) = (y_i, y_j) =0$ and $(x_i, y_j) = \delta_{ij}$ for $1 \leq i \leq j \leq n$. We refer to a basis of this type as a  \emph{standard basis}.\\ \\
Suppose we have any basis $u_1, \ldots, u_{2n}$ for $L$. The structure of $L$ is then determined from
\[ (u_i u_j, u_k)= \gamma_{ijk}, \quad 1 \leq i <  j  <  k \leq 2n.\]
We refer to such data as a presentation for $L$. The convention is to only list those triple values that are non-zero.
Alternatively we can describe $L$ as follows, if we take the two isotropic subspaces $\mathbb{F} x_1+ \cdots + \mathbb{F} x_n$ and 
 $\mathbb{F} y_1+ \cdots + \mathbb{F} y_n$ with respect to a given standard basis, then it is suffices to write down only the
 products $x_ix_j, y_iy_j,$ $1 \leq i < j \leq n$. The reason for this is that having
 determined these products we have determined all the triples $ (u_i u_j, u_k)$ where
 $ 1 \leq i < j < k \leq 2n$, since two of those are either some $x_i, x_j$ or
 some $y_i, y_j$ in which case the triple is determined from $x_ix_j$ or $y_iy_j$. Since
 $(x_i x_j, x_k) = (x_j x_k, x_i) = (x_k x_i, x_j)$ and $(y_i y_j, y_k) = (y_j y_k, y_i) = 
(y_k y_i, y_j)$, this put some more conditions on the products $x_ix_j$ and $y_iy_j$. \\ \\
We adopt the left-normed convention for multiple products. 
Thus $x_1x_2 \cdots x_n$ = $(\cdots(x_1$
$x_2)$
$\cdots)x_n $.
Many of the terms that will be used are analogous to the corresponding terms for related structures. Thus a subspace $I$ of a SAA $L$ is an ideal if $IL \leq I$. Also $U \leq V$ stands for `$U$ is a subspace of $V$'. \\ \\
Now let $L$ be a SAA of dimension $2n$. We next look at some general properties that hold for $L$.
\begin{Lemma}[\cite{1}] \label{lma115} 
If $I$ is an ideal of $L$ then $I^\perp$ is also an ideal. Furthermore any isotropic ideal $I$ of a SAA $L$ is abelian.
\end{Lemma}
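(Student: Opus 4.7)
The plan is to use only two ingredients: the defining symmetry $(u\cdot v,w)=(v\cdot w,u)$ of a SAA, and the ideal property $IL\subseteq I$. Both parts reduce to a single line of manipulation with the alternating form.

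For the first assertion, take $x\in I^{\perp}$ and $y\in L$; I want to show $x\cdot y\in I^{\perp}$, i.e.\ $(x\cdot y,z)=0$ for every $z\in I$. Applying the identity gives $(x\cdot y,z)=(y\cdot z,x)$. Since $I$ is an ideal, $y\cdot z\in I$, and since $x\in I^{\perp}$, this pairing vanishes. Hence $I^{\perp}L\subseteq I^{\perp}$, so $I^{\perp}$ is an ideal.

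For the second assertion, suppose $I$ is an isotropic ideal and take $u,v\in I$. I want $u\cdot v=0$. Because the form is non-degenerate, it suffices to verify $(u\cdot v,w)=0$ for every $w\in L$. Again by the symmetry, $(u\cdot v,w)=(v\cdot w,u)$. Since $I$ is an ideal, $v\cdot w\in I$, and since $I$ is isotropic and $u\in I$, the right-hand side is $0$. Thus $u\cdot v\in L^{\perp}=0$, so $I$ is abelian.

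There is no real obstacle here; the only thing to be a little careful about is choosing which factor of $u\cdot v$ to move via the symmetry $(u\cdot v,w)=(v\cdot w,u)$ so that the resulting product ends up inside the ideal $I$ and is paired against an element that lies in $I$ (for the isotropic case) or in $I^{\perp}$ (for the first case). Once this is set up correctly, both statements follow in one line.
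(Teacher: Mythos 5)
Your proof is correct and follows essentially the same argument as the paper: both parts are obtained by applying the defining symmetry $(u\cdot v,w)=(v\cdot w,u)$ once, using the ideal property to land the product inside $I$, and then invoking orthogonality (respectively isotropy and non-degeneracy). The paper merely writes the same manipulation more compactly as $(I^{\perp}\cdot L,I)=-(I\cdot L,I^{\perp})=0$ and $(I\cdot I,L)=(I\cdot L,I)=0$.
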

\begin{proof}
As $(I^\perp \cdot L, I) = - (I \cdot L, I^\perp) = 0$, $I^\perp$ is an ideal in $L$. For the latter suppose that $I$ is an isotropic ideal of $L$. Thus
$I \leq I^\perp$ and so $(I \cdot I, L) = (I \cdot L, I) = 0$
implies that $I$ is abelian as required.
\end{proof}
\noindent
We define the lower central series in an analogous way to related structures like associative algebras and Lie algebras. Thus we define the lower central series recursively by 
\[ L^1=L  \text{ and } L^{n+1}=L^n L, \]
\noindent
and the upper central series by 
\[ Z_0(L)=\{0\} \text{ and } Z_{n+1}(L)=\{ x \in L : xL \in Z_n(L) \}.\]
It is readily seen that the terms of the lower and the upper central series are all ideals of $L$. The following beautiful property that shows relation between the upper and the lower central series will be used frequently.
\begin{Lemma}[\cite{1}]  \label{beautiful_relation} 
$Z_n(L)=(L^{n+1})^\perp.$
\end{Lemma}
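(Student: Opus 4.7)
The natural strategy is induction on $n$, with the base case following at once from non-degeneracy and the inductive step reducing to a single application of the defining identity $(u\cdot v, w) = (v\cdot w, u)$.

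For the base case $n=0$, we have $Z_0(L) = \{0\}$ by definition and $(L^1)^\perp = L^\perp = \{0\}$ because the symplectic form is non-degenerate, so the two sides agree.

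For the inductive step I assume $Z_n(L) = (L^{n+1})^\perp$ and aim to show $Z_{n+1}(L) = (L^{n+2})^\perp$. The key observation is that $L^{n+2} = L^{n+1}\cdot L$ is spanned by products $z\cdot y$ with $z\in L^{n+1}$ and $y\in L$, so $x\in (L^{n+2})^\perp$ iff $(x, z\cdot y) = 0$ for all such $y,z$. Using the alternating property of the form together with the SAA identity, I rewrite
\[
(x, z\cdot y) = -(z\cdot y, x) = -(y\cdot x, z) = (x\cdot y, z),
\]
so the condition becomes $(x\cdot y, z) = 0$ for every $y\in L$ and $z\in L^{n+1}$. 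This is precisely the statement that $xL \subseteq (L^{n+1})^\perp$, which by the inductive hypothesis equals $Z_n(L)$, and this in turn is the definition of $x\in Z_{n+1}(L)$. Chaining the equivalences yields $Z_{n+1}(L) = (L^{n+2})^\perp$.

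There is no real obstacle here; the whole proof is a short bookkeeping exercise once one spots that the SAA identity converts an inner product against a product $z\cdot y$ into an inner product of a product $x\cdot y$ against $z$. The only point that needs care is the base case, where one must invoke non-degeneracy to conclude $L^\perp = 0$; everything else is a transparent induction.
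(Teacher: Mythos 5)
Your proof is correct and rests on exactly the same mechanism as the paper's: using the defining identity $(u\cdot v,w)=(v\cdot w,u)$ (equivalently, the self-adjointness of right multiplication) to move factors of $L$ across the form, turning $a\perp L^{n+1}$ into $aL\cdots L=0$. The paper compresses this into a single chain of equivalences, $a\in Z_n(L)\Leftrightarrow a\underbrace{L\cdots L}_{n}=0\Leftrightarrow (a,L^{n+1})=0$, whereas you make the iteration explicit as an induction on $n$; the content is the same.
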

\begin{proof}
We have
$ a \in Z_n(L) \Leftrightarrow a\underbrace{L \cdots L}_{n} = 0 \Leftrightarrow  0 = (a \underbrace{L \cdots L}_{n}, L) = ( a, L^{n+1}) \Leftrightarrow a \in (L^{n+1})^\perp$.
\end{proof}
\noindent
Notice also that $\mbox{dim\,} Z_n(L) + \mbox{dim\,} L^{n+1} = \mbox{dim\,} L$.
\begin{Lemma}[\cite{1}]
 Any one-dimensional ideal of $L$ is contained in $Z(L)$ and any two-dimensional ideal is abelian and contains a non-trivial element from $Z(L)$. 
\end{Lemma}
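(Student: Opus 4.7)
The plan is to treat the two claims in turn, with the one-dimensional case supplying a key input for the two-dimensional case.

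For the one-dimensional claim, let $I=\mathbb{F}a$. Since $I$ is an ideal, right multiplication of $a$ by $L$ defines a linear functional $\lambda:L\to\mathbb{F}$ via $ax=\lambda(x)a$. Applying the defining identity to $(ax,x)=(xx,a)=0$ forces $\lambda(x)(a,x)=0$ for every $x\in L$. Suppose for contradiction that $\lambda(x_0)\neq 0$ for some $x_0$; then $(a,x_0)=0$. Computing $(ax_0,y)$ in two ways --- directly it equals $\lambda(x_0)(a,y)$, while iterating the cyclic axiom together with the alternating property of the product gives $(ax_0,y)=(x_0y,a)=(ya,x_0)=-(ay,x_0)=-\lambda(y)(a,x_0)=0$ --- we obtain $(a,y)=0$ for every $y$, contradicting the non-degeneracy of the form. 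Hence $\lambda\equiv 0$ and $a\in Z(L)$.

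For the two-dimensional claim, take a basis $a,b$ of $I$ and write $ab=\alpha a+\beta b$. The cyclic axiom together with the alternating property yields $(ab,b)=(bb,a)=0$ and $(ab,a)=(ba,a)=(aa,b)=0$, forcing $\alpha(a,b)=\beta(a,b)=0$. If $I$ is not isotropic we may rescale so that $(a,b)=1$; then $ab=0$, and analogous calculations applied to $al$ and $bl$ force right multiplication by $l$ to act on both $a$ and $b$ as the same scalar $p(l)$. Hence $\mathbb{F}a$ and $\mathbb{F}b$ are one-dimensional ideals, central by the first part, so $I\subseteq Z(L)$. The remaining case is $I$ isotropic, where $I$ is abelian by Lemma~\ref{lma115}. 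To produce a central element I pass to the trilinear form $\omega(x,y,z):=(xy,z)$, which is totally antisymmetric: cyclic symmetry from the defining axiom combined with antisymmetry in the first two slots from the alternating product forces antisymmetry under every transposition. The ideal condition translates into $\omega(v,u,w)=0$ whenever $v\in I$ and $u\in I^\perp$. Thus for each fixed $v\in I$ the alternating $2$-form $\omega(v,\cdot,\cdot)$ on $L$ vanishes whenever either argument lies in $I^\perp$, so it descends to an alternating $2$-form on the $2$-dimensional quotient $L/I^\perp$. Since the space of alternating $2$-forms on a $2$-dimensional space is $1$-dimensional, the assignment $v\mapsto\omega(v,\cdot,\cdot)$ is a linear map from a $2$-dimensional space into a $1$-dimensional one, so its kernel has dimension at least $1$; a vector $v$ lies in this kernel precisely when $vL=0$, i.e.\ when $v\in Z(L)$, so $I\cap Z(L)\neq 0$.

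The main obstacle is this isotropic subcase: in the non-isotropic case the central element falls out of part one, but when $I$ is isotropic right multiplication on $I$ need not be scalar and no one-dimensional sub-ideal of $I$ is directly visible. The key resolution is the dimension count, which exploits that $\dim I$ equals $\dim(L/I^\perp)=2$, giving precisely the slack needed to force a nonzero kernel of the map $v\mapsto\omega(v,\cdot,\cdot)$.
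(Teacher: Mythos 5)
Your proof is correct, and it in fact establishes slightly more than the paper's own argument does: the paper only produces the central element in the two-dimensional case and never addresses the claim that such an ideal is abelian, whereas your isotropic/non-isotropic dichotomy settles that point (isotropic ideals are abelian by Lemma~\ref{lma115}, and in the non-isotropic case you obtain the stronger conclusion $I\leq Z(L)$). The route is genuinely different in presentation, though the engine is the same. The paper argues entirely through the quotient alternating algebra $L/I^\perp$ together with the duality $Z(L)=(L^2)^\perp$ of Lemma~\ref{beautiful_relation}: a one-dimensional alternating algebra is abelian, so $L^2\leq I^\perp$ and hence $I\leq (L^2)^\perp=Z(L)$; and $\dim(L/I^\perp)^2\leq 1$ for a two-dimensional one, so $L^2+I^\perp=(Z(L)\cap I)^\perp$ is proper and $Z(L)\cap I\neq\{0\}$. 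Your eigenvalue computation in dimension one and your kernel count for $v\mapsto\omega(v,\cdot,\cdot)$ in dimension two are exactly the dualized, element-level versions of these two statements --- that the alternating $2$-forms on a $2$-dimensional space form a $1$-dimensional space is the same fact as $\dim(L/I^\perp)^2\leq 1$. What the paper's phrasing buys is brevity and a reusable duality principle; what yours buys is explicitness: the abelian claim is actually proved, and the non-isotropic case is seen to force the entire ideal into the centre.
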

\begin{proof}
Let $I$ be an ideal of dimension one. Then $L/I^\perp$ is an alternating algebra of dimension one and hence abelian. It follows that $L^2 \leq I^\perp$ and hence $I \leq (L^2)^\perp = Z(L)$.\\ \\
Now let $I$ be an ideal of dimension two. Then $L/I^\perp$ is an alternating algebra of dimension two. Thus $dim\ (L/I^\perp)^2 \leq 1$ and so $(L/I^\perp)^2 < L/I^\perp$, that is $L > L^2 + I^\perp = ((L^2)^\perp \cap I )^\perp = (Z(L) \cap I)^\perp.$ Hence $Z(L) \cap I > \{0\}$.
\end{proof}
\noindent
We define simplicity for SAA in the natural way. 
$L \neq \{0\}$ is simple if it has no proper nontrivial ideals. 
\begin{defn} Suppose that $L$ is a SAA with ideals $I_1 , \ldots, I_n$ which all are SAA's and where
\[ L = I_1 \operp I_2 \operp \cdots \operp I_n. \] 
\end{defn}
\noindent
We then say that $L$ is the direct sum of the SAA's $I_1, \ldots, I_n$. 
\begin{Remark} It follows that $I_i I_j \leq I_i \cap I_j = \{ 0 \}$ when $i \neq j$. \end{Remark}
\begin{defn}
We say that a SAA is semi-simple if it is a direct sum of simple SAA's.
\end{defn}
\noindent
\noindent
As we said before, some general theory was developed in \cite{1} and \cite{2}. In particular a well-known dichotomy property for Lie algebras also holds for SAA's. Thus a SAA is either semi-simple or has a non-trivial abelian ideal. 
\begin{Theorem}[\cite{1}]
 Either $L$ has a non-trivial abelian ideal or $L$ is semi-simple. In the latter case the direct summands are uniquely determined as the minimal ideals of $L$ .
\end{Theorem}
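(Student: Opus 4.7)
The plan is to prove the dichotomy by induction on $\dim L$ and then extract the uniqueness statement from minimality.

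First I would assume $L$ has no non-trivial abelian ideal and pick a minimal non-trivial ideal $I$ of $L$. By Lemma~\ref{lma115}, $I^\perp$ is also an ideal, so $I \cap I^\perp$ is an ideal of $L$ contained in $I$; minimality forces it to be $\{0\}$ or $I$. The second alternative would make $I$ isotropic and hence abelian by Lemma~\ref{lma115}, contradicting the standing hypothesis. Thus $I \cap I^\perp = \{0\}$, and counting dimensions gives $L = I \operp I^\perp$. Next I would exploit the cyclic symmetry $(u \cdot v, w) = (v \cdot w, u) = (w \cdot u, v)$ to compute $(I \cdot I^\perp, L) = (I^\perp \cdot L, I) \subseteq (I^\perp, I) = 0$, using that $I^\perp$ is an ideal; non-degeneracy of the form then yields $I \cdot I^\perp = \{0\}$. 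This vanishing has two consequences that drive the induction: any ideal of $I$ is automatically an ideal of $L$, so the minimality of $I$ upgrades to simplicity; and any abelian ideal of $I^\perp$ would be an abelian ideal of $L$, so the inductive hypothesis applies to $I^\perp$, which is therefore either zero or semi-simple. In either case $L = I \operp I^\perp$ is semi-simple.

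For uniqueness, given any decomposition $L = I_1 \operp \cdots \operp I_n$ into simple ideals, each $I_j$ is a minimal ideal of $L$ because an ideal of $L$ sitting in $I_j$ is also an ideal of $I_j$ (using $I_i \cdot I_j = \{0\}$ for $i \neq j$) and $I_j$ has no proper non-trivial ideals. Conversely, for any minimal ideal $J$ of $L$, each $J \cap I_j$ is an ideal of $L$ contained in $J$, so by minimality it equals $\{0\}$ or $J$. If $J \cap I_j = J$ for some $j$, then $J \leq I_j$ and simplicity of $I_j$ gives $J = I_j$. Otherwise $J \cdot I_j \subseteq J \cap I_j = \{0\}$ for every $j$, so $J \cdot L = 0$; then $J$ would be a non-trivial central, hence abelian, ideal, which one rules out by showing directly that a semi-simple SAA cannot admit such an ideal (project to each summand and use that a non-abelian simple SAA has trivial centre).

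The step I expect to be trickiest is the orthogonal splitting, specifically obtaining $I \cdot I^\perp = \{0\}$, because this identity is what transports both hypotheses across the decomposition: the absence of abelian ideals in $L$ descends to $I^\perp$, and the minimality of $I$ in $L$ upgrades to simplicity of $I$ as a SAA in its own right. The cyclic identity of SAAs is precisely what makes this vanishing clean, and everything else in the argument is bookkeeping around the induction and around the intersection $J \cap I_j$.
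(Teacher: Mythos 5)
Your proof is correct. The paper itself does not reproduce a proof of this theorem (it is quoted from \cite{1}), but your argument is the standard one and is fully consistent with the machinery the paper does establish: Lemma \ref{lma115} gives that $I^\perp$ is an ideal and that isotropic ideals are abelian, which is exactly what you use to force $I\cap I^\perp=\{0\}$; the cyclic identity then yields $I\cdot I^\perp=\{0\}$, which correctly transports both the minimality of $I$ (to simplicity) and the absence of abelian ideals (to $I^\perp$) across the orthogonal splitting, and the uniqueness argument via $J\cap I_j$ and the vanishing of $Z(L)$ for a semi-simple $L$ is sound.
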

\noindent
It is however still unknown whether there exist a non-trivial SAA $L$ where $\mbox{Aut\,} L =\{\mbox{id\,}\}$.
\noindent
We are interested in the classification of SAA. Suppose that $V$ is a symplectic vector space with a non-degenerate alternating form $(,)$. Let $x_1, \ldots, x_{2n}$ be a standard basis of $V$. Thus $(x_{2i}, x_{2i-1}) = 1$ but $(x_j, x_i) = 0$ otherwise for any $1 \leq i < j \leq 2n$. The next proposition show that in fact we can turn this into a SAA. 
\begin{Proposition} [\cite {1}] Let $n \geq 2$ and for each $(i, j, k),$ $1 \leq i < j < k \leq 2n$, choose a number $\alpha(i, j, k)$ in the field $\mathbb{F}$. There is a unique SAA of dimension $2n$ over $\mathbb{F}$ satisfying 
\[ (x_i x_j, x_k) = \alpha(i, j, k)\]
for $1 \leq i < j < k \leq 2n$.
\end{Proposition}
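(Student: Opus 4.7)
The plan is to use non-degeneracy of $(\,,\,)$ to translate the SAA structure into an equivalent trilinear object that can be built directly from the prescribed values. Set $T(u,v,w) := (u\cdot v, w)$. Bilinearity of $\cdot$ makes $T$ trilinear; the alternating law $u\cdot u = 0$ becomes $T(u,u,w) = 0$; and the SAA identity $(uv,w) = (vw,u)$ becomes cyclic invariance $T(u,v,w) = T(v,w,u)$. Since antisymmetry in the first two slots together with cyclic invariance is equivalent to total antisymmetry, the datum of an SAA structure on $V$ compatible with the given form is equivalent to the datum of a totally alternating trilinear form on $V$.

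Having made this reduction, I construct $T$ from the prescribed data. I declare $T(x_i, x_j, x_k) := \alpha(i,j,k)$ on strictly increasing basis triples, extend to arbitrary ordered triples of basis vectors by setting $T$ to vanish on triples with a repeated index and to change sign under transpositions, and then extend trilinearly to $V\times V\times V$. The standard multilinear-algebra fact that the space of totally alternating trilinear forms on $V$ has a basis indexed by the increasing triples of basis vectors guarantees that this recipe is well-defined and yields the unique totally alternating form whose values on increasing basis triples are the prescribed $\alpha(i,j,k)$. Now I recover the product: for each pair $(u,v)$, the linear functional $w \mapsto T(u,v,w)$ corresponds, under the isomorphism $V \to V^*$ provided by non-degeneracy, to a unique element of $V$ which I take to be $u\cdot v$. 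Thus $u\cdot v$ is characterised by $(u\cdot v, w) = T(u,v,w)$ for all $w$. Bilinearity of the product follows from trilinearity of $T$; the alternating identity $u\cdot u = 0$ follows from $T(u,u,w) = 0$ plus non-degeneracy; and the SAA identity is just the cyclic invariance of $T$.

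Uniqueness is then immediate. Any SAA on $V$ satisfying $(x_i x_j, x_k) = \alpha(i,j,k)$ for $1 \leq i < j < k \leq 2n$ produces, via $T(u,v,w) := (uv,w)$, a totally alternating trilinear form whose values on increasing basis triples are exactly the $\alpha(i,j,k)$; such a form is unique, and non-degeneracy then forces the product to equal the one constructed above. No real obstacle is present in this argument; the only mildly non-trivial ingredient is the standard fact that a totally alternating trilinear form is freely determined by its values on increasing basis triples, which simply reflects the dimension count for $\Lambda^3 V^*$.
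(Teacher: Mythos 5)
Your proof is correct and follows essentially the same route as the paper: both arguments reduce the SAA structure to the alternating ternary form $T(u,v,w)=(uv,w)$, use the fact that such a form is freely determined by its values on increasing basis triples, and recover the product from $T$ via non-degeneracy of the symplectic form. The only cosmetic difference is that the paper writes $x_jx_i$ out explicitly in the standard basis (making the inverse of the duality $V\to V^*$ concrete), whereas you invoke that isomorphism abstractly.
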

\begin{proof}
We only need to define a bilinear alternating product on $V$ that interacts with the alternating form in such a way that $(uv,w) = (vw, u)$ for all $u, v, w \in V$. 
As the product and the alternating form are both bilinear, everything reduces to working with the basis vectors. Now extend the function 
$\alpha$ to all triples of pairwise disjoint numbers $1 \leq i,j,k \leq 2n$ such that $\alpha(i, j,k ) = \alpha(j, k , i) = \alpha(k,i,j)$, $\alpha(j,i,k) = \alpha(k,j,i)=\alpha(i,k,j) = - \alpha(i, j, k)$ and we let
every product $(x_i x_j, x_k ) =0$ if there is a repeated occurrence of a basis vector.
A bilinear alternating product is determined completely from $x_jx_i$, $i< j$.  Now let
$$x_j x_i = -\alpha(j, i, 2) x_1 + \alpha(j, i, 1) x_2 - \cdots - \alpha(j, i, 2n) x_{2n-1} + \alpha(j, i , 2n-1) x_{2n}$$
where $i < j$. We thus have that $V$ is the unique SAA satisfying the stated conditions.
\end{proof}
\noindent
We next get from this some information about the growth of SAA's.
The map $L^3 \to \mathbb{F}$, $(u, v, w) \mapsto (u \cdot v, w)$ is an alternating ternary form and each alternating ternary form on a given symplectic vector space, with a non-degenerate alternating form, defines a unique SAA. Classifying SAA's of dimension $2n$ over a field $\mathbb{F}$ is then equivalent to finding all the ${\mathcal \mbox{Sp}}(V)$-orbits of $(\wedge^3 V)^*$ under the natural action, where $V$ is a symplectic vector space of dimension $2n$ with a non-degenerate alternating form. Suppose that $\mathbb{F}$ is a finite field and suppose that the disjoint ${\mathcal \mbox{Sp}}(V)$-orbits of $(\wedge^3 V)^*$ are ${u}^{{\mathcal \mbox{Sp}}(V)}_1, 
\ldots, {u}^{{\mathcal\mbox{Sp}}(V)}_m$. Then
\[ m \leq |\mathbb{F}|^{{2n \choose 3}} = |(\wedge^{3}V)^*| \leq m|{\mathcal \mbox{Sp}}(V)| \leq m|\mathbb{F}|^{{2n+1 \choose 2}}. \]
It follows that $m = |\mathbb{F}|^{\frac{4n^3}{3} + O(n^2)}$. Because of the sheer
 growth, a general classification of SAA's seems impossible.\\\\
We end this section by looking at all the SAA's of dimension up to $4$.
It is clear that the only SAA of dimension $2$ is the abelian one. Furthermore, it is easily
 seen that apart from the abelian one there is only one SAA of dimension $4$ that can be
 described by the following multiplication table. (see \cite{1}).
\begin{eqnarray*}
x_1x_2 &=& 0\\
y_1y_2 &=& -y_1\\
L: \quad x_1y_1 &=& x_2\\
x_1y_2 &=& - x_1\\
x_2 y_1 &=& 0\\
x_2y_2 &=& 0
\end{eqnarray*} 
The presentation is thus $(x_1y_1, y_2) = 1$. As we said before there is a close connection between SAA's over the field $\mbox{GF}(3)$ of three elements and a certain class of $2$-Engel groups, and in \cite{1} the SAA's over $\mbox{GF}(3)$ of dimension $6$ were classified. There are $31$ such algebras of dimension $6$ of which $15$ are simple. None of the $31$ has a trivial automorphism group. 
We would like to mention here also the work of Atkinson \cite{5} who in his thesis looked at alternating ternary forms over $\mbox{GF}(3)$ in order to study a certain class of groups of exponent $3$.
\section{Nilpotent Symplectic Alternating Algebras}
\begin{defn} 
A symplectic alternating algebra $L$ is \emph{nilpotent} if there exists an ascending chain of ideals $I_0, \ldots, I_n$ such that
\[ \{0\} = I_0 \leq I_1 \leq \cdots \leq I_n = L \]
and $I_sL \leq I_{s-1}$ for $s=1, \ldots, n$. The smallest possible $n$ is then called the \emph{nilpotence class} of $L$.
\end{defn}
\begin{defn} 
More generally, if $I_0 \leq I_1 \leq \cdots \leq I_n$ is any chain of ideals of $L$ then we say that this chain is central in $L$ if $I_s L \leq I_{s-1}$ for $s=1, \ldots, n$.
\end{defn}
\begin{Remark} Equivalently we have that $L$ is nilpotent of class $n \geq 0$ if $n$ is the smallest non-negative integer such that $L^{n+1}=\{0\}$ or equivalently $Z_n(L)=L$. \end{Remark}\noindent
Another interesting property is that any SAA that is abelian-by-nilpotent must be nilpotent.
\noindent
\begin{Proposition}[\cite{2}]
Let $L$ be a SAA. If $L$ is abelian-by-(nilpotent of class $\leq n$) then it is nilpotent of class at most $2n + 1$.
\end{Proposition}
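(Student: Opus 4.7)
The plan is to exploit the cyclic symmetry $(uv,w)=(vw,u)$ together with Lemma \ref{beautiful_relation} to bootstrap the hypothesis into the inclusion $L^{n+1}\leq Z_{n+1}(L)$, after which the standard upper-central-series bookkeeping finishes the job.

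First I would unpack the hypothesis: let $A$ be an abelian ideal of $L$ with $L/A$ nilpotent of class at most $n$. Then $(L/A)^{n+1}=\{0\}$, i.e.\ $L^{n+1}\leq A$. Since $A$ is abelian and $L^{n+1}$ is an ideal contained in $A$, we immediately obtain $L^{n+1}\cdot L^{n+1}\leq A\cdot A=\{0\}$.

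The crucial step is to convert this vanishing product into an upper-central-series statement. For $u,v\in L^{n+1}$ and $w\in L$ the SAA identity gives $0=(uv,w)=(vw,u)$. As $u$ ranges over $L^{n+1}$ and $vw$ ranges over $L^{n+1}\cdot L=L^{n+2}$, this says $(L^{n+2},L^{n+1})=0$, and by Lemma \ref{beautiful_relation} (which identifies $Z_{n+1}(L)$ with $(L^{n+2})^{\perp}$) we conclude
\[
L^{n+1}\leq (L^{n+2})^{\perp}=Z_{n+1}(L).
\]
This is the point where the symplectic hypothesis really pays off: in a general non-associative algebra the abelian-by-nilpotent hypothesis only gives control of a product of length $2n+2$, but here the cyclic identity lets one absorb an entire $L^{n+1}$ into the centre-chain for free.

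Finally I would iterate using $Z_k(L)\cdot L\leq Z_{k-1}(L)$, which is immediate from the definition of the upper central series. An easy induction on $k$ then yields $L^{n+1+k}\leq Z_{n+1-k}(L)$ for $0\leq k\leq n+1$, and taking $k=n+1$ gives $L^{2n+2}\leq Z_0(L)=\{0\}$. Hence $L$ is nilpotent of class at most $2n+1$, as required. I do not foresee a genuine obstacle; the only delicate point is spotting that the cyclic identity, combined with Lemma \ref{beautiful_relation}, is exactly what is needed to turn the ``abelian'' half of the hypothesis into an honest gain of $n$ steps in the centre chain.
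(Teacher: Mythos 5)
Your proof is correct and follows essentially the same route as the paper: both hinge on deducing $L^{n+1}\cdot L^{n+1}=\{0\}$ from the abelian ideal and then transferring this across the form. The only cosmetic difference is that the paper shifts the product all the way down to obtain $(L,L^{2n+2})=0$ directly, whereas you stop after one shift at $L^{n+1}\leq Z_{n+1}(L)$ and finish with the upper-central-series induction — but since Lemma \ref{beautiful_relation} is itself just the iterated self-adjointness, the two arguments are the same computation packaged differently.
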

\begin{proof}
Let $I$ be an abelian ideal of $L$ such that $L/I$ is nilpotent of class at most $n$. Then $L^{n+1} \leq I$. But this is equivalent to saying that
$ 0 = (L^{n+1} \cdot L^{n+1}, L) = (L^{n+1}, L \cdot  \underbrace{L \cdots L}_{n+1}) = (L , L^{2n+2})$ and $L$ is nilpotent of class at most $2n+1$.
\end{proof}\noindent
Notice that this result is however not true if we assume that our algebra is nilpotent-by-abelian. The the non-abelian SAA $L$ of dimension $4$ still provides a counter example\\\\
\textbf{Example.} \cite{2}
Consider
$$\begin{array}{ll}
\begin{tabular}{c} 
  \mbox{}  $L:$  \\
\end{tabular} &  
$$\begin{array}{ll}
x_1x_2 = 0 \\
y_1y_2 = - y_1,
\end{array}$$
\end{array}$$
the only nonabelian SAA of dimension $4$ over a field $\mathbb{F}$. Notice that 
\[ Z(L) = \mathbb{F} x_2 ,\  L^2 = Z(L)^\perp = \mathbb{F} x_1 + \mathbb{F} x_2 + \mathbb{F} y_1.\]
Notice that $(L^2)^3=\{0\}$ and thus $L$ is nilpotent-by-abelian. However 
$L$ is not nilpotent as $y_1 y_2^n = (-1)^n y_1 $ for any integer $n \geq 1$.
\chapter{General Theory}
\section{Introduction}
Here we develop a structure theory for nilpotent SAA's. As we said before some general theory was developed in \cite{1} and \cite{2}. We will describe some general results that in particular lead to specific type of presentations that we call later nilpotent presentations. All algebras with a nilpotent presentation are nilpotent and conversely any nilpotent algebra will have a nilpotent presentation. We will also focus on the algebras that are of maximal class and we will see that their structure is very rigid.\\\\
The lack of the Jacobi identity means that many properties that hold for Lie algebras do not hold for SAA's. As the following example shows, it is not true in general that the product of two ideals is an ideal. That example also shows that the formula $L^iL^j \leq L^{i+j}$ does not hold in general.\\\\
\textbf{Example. } 
Consider the $12$-dimensional SAA which has a standard basis $x_1, y_1, x_2, y_2,$
$ x_3, y_3,$
$ x_4, y_4, x_5, y_5, x_6, y_6$ where
\[ (x_3y_5, y_6) = (x_2y_4, y_6) = (x_1y_4, y_5) = (y_1y_2, y_3) = 1 \]
and $(uv, w) = 0$ if $u, v, w$ are basis elements where 
$\{u, v, w\} \not \in \{ \{x_3, y_5, y_6\},$
$ \{x_2, y_4, y_6 \},$
$ \{x_1, y_4, y_5 \},$
$ \{y_1, y_2, y_3 \} \}$.
Notice that this implies that
\begin{alignat*}{3}
    x_3y_5  &= x_6,  &\quad   x_1y_4 &= x_5,    &\quad  y_2y_3 &= x_1,    \\ 
   x_3y_6  &= - x_5, &\quad  x_1y_5 &= -x_4,   &\quad y_4y_5 &= - y_1, \\
   x_2y_4  &= x_6,   &\quad  y_1y_2 &= x_3,    &\quad  y_4y_6 &=-y_2,    \\
   x_2y_6  &= - x_4, &\quad  y_1y_3 &= -x_2,  &\quad   y_5y_6 &= - y_3. 
\end{alignat*}
From this one sees that
\begin{eqnarray*}
L^2 &=&\mathbb{F} x_6+ \mathbb{F} x_5 +\cdots +\mathbb{F} x_1+\mathbb{F} y_1+\mathbb{F} y_2+\mathbb{F} y_3, \\
L^3 &=& \mathbb{F} x_6 + \mathbb{F} x_5 +\cdots  +\mathbb{F} x_1, \\
L^4 &=& \mathbb{F} x_6 + \mathbb{F} x_5 +\mathbb{F} x_4,\\
L^5 &=& 0,\\
L^2L^2 & =& \mathbb{F} x_3 +\mathbb{F} x_2 +\mathbb{F} x_1.
\end{eqnarray*}
In particular $L$ is nilpotent of class $4$, $L^2L^2$ is not an ideal and $L^2L^2 \not \leq L^4$.\\\\
This example indicates that SAA's do differ from Lie algebras. We are going to see in the following sections that there are some shared properties but the next lemma underlines the difference by showing that the two classes of algebras do not have many algebras in common when the characteristic is not $2$. In fact only the SAA's that are obviously Lie algebras are there, namely those of class at most $2$.
\begin{Lemma}\label{lma10}
Let $L$ be a SAA where $\mbox{char\,}{L} \neq 2$ and $L$ is either associative or a Lie algebra. Then $L^3 =\{ 0 \}$.
\end{Lemma}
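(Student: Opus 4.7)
The plan is to handle the two cases separately, using in each case the interplay between the alternating multiplication, the ambient symplectic form, and the defining algebraic law (associativity or Jacobi identity). In both cases the target is to show that every triple product $(xy)z$ vanishes, since this is equivalent to $L^3 = \{0\}$.

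For the associative case, I would exploit the antisymmetry $ab = -ba$ of the product and chain it together with associativity. Concretely,
\[ (xy)z = x(yz) = -x(zy) = -(xz)y = (zx)y = z(xy) = -(xy)z, \]
where the middle equalities alternate between repositioning parentheses (associativity) and negating a factor that has been swapped (alternation). Since $\mbox{char\,} L \neq 2$, this forces $(xy)z = 0$.

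For the Lie case, I would combine the Jacobi identity with the SAA structure in two essentially different ways. First, taking the inner product of $(xy)z + (yz)x + (zx)y = 0$ with an arbitrary $w$ and applying $(u \cdot v, w) = (v \cdot w, u)$ to each term yields
\[ (xy, zw) + (yz, xw) + (zx, yw) = 0. \qquad (*) \]
Second, using the self-adjointness of right-multiplication noted in the excerpt, I can rewrite $(xy, zw) = (x, (zw)y)$; applying Jacobi to expand $(zw)y = -(wy)z - (yz)w$ and using self-adjointness once more gives
\[ (xy, zw) = (xz, yw) - (xw, yz). \qquad (**) \]
Swapping $x$ with $z$ in $(**)$ and simplifying using $ab = -ba$ produces a formula for $(yz, xw)$; substituting this into $(*)$ collapses two terms against each other and leaves $2(xy, zw) = 0$. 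As the characteristic is not $2$, this gives $(L^2, L^2) = 0$, so $L^2 \leq (L^2)^\perp = Z(L)$ by Lemma \ref{beautiful_relation}, and therefore $L^3 = L^2 \cdot L = \{0\}$.

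The main obstacle I anticipate is the sign bookkeeping in the Lie case: since both $(*)$ and $(**)$ ultimately come from the same Jacobi identity, one must verify that the two manipulations really produce independent relations rather than collapsing to a triviality when combined. The associative case is much cleaner, since a single chain of alternation-then-associativity moves already forces $(xy)z = -(xy)z$.
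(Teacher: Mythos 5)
Your proof is correct, and in both cases it diverges from the paper's in an instructive way. In the associative case you bypass the symplectic form entirely: the chain $(xy)z = x(yz) = \cdots = -(xy)z$ uses only associativity and anticommutativity, whereas the paper first invokes the self-adjointness of right multiplication to establish $tzy = t(yz) = tyz$ and only then runs the same permutation trick to reach $2xyz = 0$; your version is the more elementary one and shows the statement is really a fact about anticommutative associative rings. In the Lie case the routes are genuinely different: the paper pairs the Jacobi identity with a test vector $t$, rewrites each term in the form $(x,\cdot)$, and deduces the symmetry $tzy = tyz$ of the left-normed triple product, from which $2xyz=0$ follows as before; you instead produce two independent expansions of $(xy,zw)$ and conclude $(L^2,L^2)=0$, hence $L^2 \leq (L^2)^\perp = Z(L)$ and $L^3 = \{0\}$. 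I have checked your sign bookkeeping: substituting the swapped form of $(**)$ into $(*)$ does collapse to $2(xy,zw)=0$ rather than a tautology, so the two relations are indeed independent when $\mbox{char\,}\mathbb{F} \neq 2$. One small remark: since $((xy)z,w) = (xy,wz) = -(xy,zw)$, the identity $(L^2,L^2)=0$ is already equivalent to $L^3=\{0\}$ by non-degeneracy, so the final appeal to Lemma \ref{beautiful_relation} can be replaced by this one-line observation; what your argument buys is that it makes the total isotropy of $L^2$ explicit, while the paper's isolates a symmetry of the triple product.
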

\begin{proof}
Let us first assume that $L$ is associative. We then have  
\[ 0 = (xyz - x(yz), t) = (x, tzy - t(yz)) = (x, tzy - tyz) \]
for all $x, y, z, t \in L$. It follows that $tzy = tyz = -ytz $ for all $t, z, y \in L$. Using this last property repeatedly we get that
\[ xyz = -zxy = yzx = -xyz\]
and thus $2xyz =0$ for all $x,y,z \in L$. As $char\,L \neq 2$, it follows that $L^3 =0$.\\\\
Now suppose $L$ is a Lie algebra. We then have
\[ 0 = (xyz + yzx + zxy, t) = (x, tzy - t (yz) - tyz) = 2(x, tzy - tyz). \]
As $\mbox{char\,}{ L} \neq 2$, it follows again that $tzy = tyz$ for all $t, z, y \in L$ and this implies again that $L^3 = \{0 \}$.
\end{proof}
\noindent
One handicap that the SAA's have is that when $I$ is an ideal then $L/I$ is in general only an alternating algebra as there is no natural way of inducing an alternating form on this quotient.
For example simply for the reason that the quotient can have odd dimension.
There is however a weaker form of a quotient structure that we can associate to any ideal $I$ of $L$ that works. Thus for any ideal $I$ we have that $(I^\perp + I)/I$ is a well defined SAA with the natural induced multiplication and where the induced alternating form is given by 
$(u + I, v + I) = (u, v) $ for  $u, v \in I^\perp$.
The reader can easily convince himself that this is well defined and that $((I^\perp + I)/I)^\perp = 0$. 
This algebra is also isomorphic to $I^\perp/(I \cap I^\perp)$ that has a similar naturally induced structure as a SAA.
\begin{Remark}
There are some familiar facts for Lie algebras that do not reply on the Jacobi identity and remain true for SAA's. Such properties are particularly useful as we can use them when dealing with quotients $L/I$ where we only know that the resulting algebra is alternating. For example $L^2$ has co-dimension at least $2$ in any nilpotent alternating algebra $L$ of dimension greater than or equal to $2$. From this and the duality given in Lemma \ref{beautiful_relation}, it follows immediately that the dimension of $Z(L)$ is at least $2$ for any non-trivial nilpotent SAA which is something that we will also see later as a corollary of Lemma \ref{lma2.1gth}.
\end{Remark}
\section{Symplectic Alternating Algebras}
\begin{Remark} \label{rem_stressed} 
Let $U, V$ be subspaces of $L$. Notice that 
\[ UV = 0 \Leftrightarrow (UV, L) = 0 \Leftrightarrow (UL, V) = 0 \Leftrightarrow UL \leq V^\perp .\]
In other words we have that $U$ annihilates $V$ if and only if it annihilates $L/V^\perp$. This is useful property that we will be making use of later.
\end{Remark}
\begin{Lemma}\label{lma2.4gth}
Let $I$ and $J$ be ideals in a SAA $L$. We have that $I \cdot L\leq J$ if and only if $I \cdot  J^\perp=\{0\}$. In particular $L^m \cdot Z_m(L)=\{0\}$.
\end{Lemma}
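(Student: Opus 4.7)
The plan is to deduce this directly from Remark \ref{rem_stressed} together with the duality of perpendiculars under the non-degenerate form, so there is essentially no real obstacle. Let me lay out the three steps.

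First, I would set $U = I$ and $V = J^\perp$ in Remark \ref{rem_stressed}, which yields the equivalence
\[
I \cdot J^\perp = \{0\} \iff I \cdot L \leq (J^\perp)^\perp .
\]
Since the alternating form is non-degenerate, we have $(J^\perp)^\perp = J$, so the right-hand side is exactly $I \cdot L \leq J$. This gives the stated equivalence without further work; one should mention that $J^\perp$ is itself an ideal by Lemma \ref{lma115}, although this is not strictly needed for the statement.

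Second, for the ``in particular'' claim, I would apply the equivalence with $I = L^m$ and $J = L^{m+1}$. Both are ideals of $L$, and by the definition of the lower central series $I \cdot L = L^m \cdot L = L^{m+1} = J$, so the hypothesis $I \cdot L \leq J$ holds trivially. The equivalence then gives $L^m \cdot (L^{m+1})^\perp = \{0\}$, and invoking Lemma \ref{beautiful_relation} to identify $(L^{m+1})^\perp = Z_m(L)$ finishes the proof.

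The only thing worth double-checking is the chain of equivalences in Remark \ref{rem_stressed} itself (namely that $UV = \{0\} \iff (UV,L) = 0 \iff (UL,V) = 0$), which rests on non-degeneracy of the form together with the defining identity $(u \cdot v, w) = (v \cdot w, u)$ of a SAA. Since that remark is already established in the text, the argument here is essentially a one-line rewriting; the main ``work'' is just recognising that $J^\perp$ is the natural substitute for $V$ in the remark, and that $(J^\perp)^\perp = J$.
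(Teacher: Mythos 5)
Your proof is correct and follows essentially the same route as the paper: both apply Remark \ref{rem_stressed} with $V = J^\perp$ (using $(J^\perp)^\perp = J$ by non-degeneracy) to get the equivalence, and both derive the second claim from $L^m \cdot L \leq L^{m+1}$ together with the identification $(L^{m+1})^\perp = Z_m(L)$ from Lemma \ref{beautiful_relation}. Nothing is missing.
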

\begin{proof}
From the property given in last remark, we know that $I$ annihilates $L/J$ if and only if $I$ annihilates $J^\perp$. The second part follows this, the fact that $L^m$ annihilates $L/L^{m+1}$, and the fact that $(L^{m+1})^\perp = Z_m(L)$.
\end{proof}
\begin{Remark}
It follows in particular that $I \cdot I^\perp =\{0\}$ for any ideal $I$. In particular any isotropic ideal is abelian. Notice also that the property $L^m Z_m(L)= \{ 0\}$ is equivalent to the fact that $Z_m(L)$ annihilates $L/Z_{m-1}(L)$.
\end{Remark}
\begin{Remark} 
We have seen in the introduction that it is not true in general that $L^iL^j \leq L^{i+j}$. As $(L^m)^\perp = Z_{m-1}(L)$, we however have that
\begin{align*}
L^iL^j \leq L^{i+j} &\Leftrightarrow (L^iL^j, Z_{i+j-1}(L)) = 0 
\Leftrightarrow (L^i Z_{i+j-1}(L), L^j) = 0 \\
&\Leftrightarrow L^i Z_{i+j-1}(L) \leq Z_{j-1}(L).
\end{align*}
The obvious fact that $L^m L \leq L^{m+1}$ thus gives us the interesting fact from last lemma that $L^m Z_m(L)= \{0\}$.
\end{Remark}
\begin{Lemma}\label{lma2.5gth}
Let $I$ be an ideal of $L$. Then $IL \leq I^\perp$ if and only if $I$ is abelian.
\end{Lemma}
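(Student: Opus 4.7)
The plan is to reduce both sides of the equivalence to a common vanishing condition on the alternating form, using the defining identity $(uv,w)=(vw,u)$ together with non-degeneracy of the form.

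First I would translate each side of the claimed equivalence into a statement about the form. On the one hand, $I$ being abelian means $I\cdot I = \{0\}$; by non-degeneracy of the form on $L$, this is equivalent to $(I\cdot I,\,L)=0$. On the other hand, $IL \leq I^\perp$ is by definition the statement $(IL,\,I)=0$.

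The core step is then to identify these two bilinear-form conditions. Specializing $(uv,w)=(vw,u)$ to $u,v\in I$ and $w\in L$ converts the triple $(uv,w)\in(I\cdot I,\,L)$ into $(vw,u)\in(I\cdot L,\,I)$, and the substitution can be reversed by picking $u,w\in I$ and $v\in L$. Hence the sets of form-values $(I\cdot I,\,L)$ and $(I\cdot L,\,I)$ coincide, so one vanishes if and only if the other does. Stringing the equivalences together gives
\[
I \text{ is abelian} \;\Longleftrightarrow\; (I\cdot I,\,L)=0 \;\Longleftrightarrow\; (I\cdot L,\,I)=0 \;\Longleftrightarrow\; IL \leq I^\perp,
\]
which is exactly the lemma.

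There is no real obstacle here; this is essentially a bookkeeping exercise identical in spirit to Remark~\ref{rem_stressed} and to Lemma~\ref{lma2.4gth} applied with $J=I^\perp$ (noting that $I^\perp$ is an ideal by Lemma~\ref{lma115}). The only thing to be careful about is the use of non-degeneracy to pass from $(I\cdot I,\,L)=0$ back to $I\cdot I=\{0\}$, and to remember that no assumption that $I$ is isotropic is needed: the identity $(uv,w)=(vw,u)$ does all the work.
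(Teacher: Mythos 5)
Your proof is correct and follows essentially the same route as the paper: the paper's one-line proof simply invokes Remark~\ref{rem_stressed} (``$I$ annihilates $I$ iff $I$ annihilates $L/I^\perp$''), and your argument is just that remark unpacked, using $(uv,w)=(vw,u)$ plus non-degeneracy to identify $(I\cdot I,L)=0$ with $(IL,I)=0$. Nothing is missing.
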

\begin{proof}
We have that $I$ annihilates $I$ if and only if $I$ annihilates $L/I^\perp$.
\end{proof}
\begin{Remark}
As $I$ is an ideal we have in fact that $IL \leq I^\perp $ if and only if $IL \leq I \cap I^\perp$. Here $I \cap I^\perp$ is the `isotropic part' of $I$.
\end{Remark}
\begin{Lemma}\label{lma2.8gth}
Let $I,J$ be ideals of a SAA $L$ and let $x \in L$. We have $Jx \leq I $ if and only if $I^\perp x \leq J^\perp $.
\end{Lemma}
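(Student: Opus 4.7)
The plan is to translate both containments into statements about the symplectic form and then pivot between them using the self-adjointness of right multiplication by $x$, which was observed right after the definition of an SAA: $(u \cdot x, v) = (v, u \cdot x) \cdot (\text{sign}) $ — more precisely, $(u \cdot x, v) = (u, v \cdot x)$ for all $u, v \in L$.

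First I would use Remark \ref{rem_stressed} (or its straightforward reformulation) to rewrite the containment $Jx \leq I$ as the orthogonality condition
\[
(Jx, I^\perp) = 0,
\]
which holds because $I = (I^\perp)^\perp$ and an element lies in $I$ iff it is orthogonal to everything in $I^\perp$. Similarly, the containment $I^\perp x \leq J^\perp$ is equivalent to
\[
(I^\perp x, J) = 0,
\]
since $J = (J^\perp)^\perp$.

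Next I would apply the self-adjointness of right multiplication by $x$ to bridge the two identities: for any $j \in J$ and $y \in I^\perp$,
\[
(jx, y) = (j, yx).
\]
By bilinearity, this identity gives $(Jx, I^\perp) = 0$ if and only if $(J, I^\perp x) = 0$, and the latter is the same statement as $(I^\perp x, J) = 0$ up to the sign convention of the alternating form. Chaining the three equivalences finishes the proof.

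The only subtlety I expect — and it is a minor one — is book-keeping: I must check that $Jx$ and $I^\perp x$ are actually subspaces against which we may test orthogonality, and that we are allowed to quantify $y$ over all of $I^\perp$ (not merely a basis). Both points are immediate from bilinearity of the product and of the form, so there is no real obstacle; the lemma is essentially a one-line consequence of the self-adjointness of right multiplication.
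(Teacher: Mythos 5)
Your proposal is correct and is essentially the paper's own argument: both reduce each containment to an orthogonality statement via $(I^\perp)^\perp = I$ and $(J^\perp)^\perp = J$, and then pivot using the self-adjointness of right multiplication by $x$, i.e.\ $(ux,v)=(u,vx)$. No gap; the book-keeping points you flag are indeed immediate from bilinearity and non-degeneracy.
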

\begin{proof}
We have that $Jx \leq I$ is equivalent to $(ux, v)=0$ for all $u \in J$ and $v \in I^\perp$. But this is equivalent to saying that $(vx, u)=0$ for all $v \in I^\perp$ and $u \in J={(J^\perp)}^\perp$ and this is the same as saying that $I^\perp x \leq J^\perp$.
\end{proof}
\noindent
In particular we have that \[\{0\} =  I_0 \leq I_1 \leq \cdots \leq I_m = L\] 
is an ascending central chain if and only if
\[L = I_0^\perp \geq I_1^\perp \geq \cdots \geq I_m^\perp = \{0\}\]
is a descending central chain.
\begin{Proposition}\label{pro2.10.1gth}
Let $L$ be a SAA. No term of the upper central series has co-dimension $1$. Equivalently, no term of the lower central series has dimension $1$.
\end{Proposition}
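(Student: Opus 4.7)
The plan is to argue by contradiction, translating the claim via Lemma \ref{beautiful_relation} into the equivalent lower-central-series statement: no $L^m$ has dimension exactly one. The case $m=1$ is vacuous since $\dim L$ is even, so I assume $m\geq 2$ and write $L^m=\mathbb{F}v$ with $v\neq 0$.

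The key ingredients I intend to combine are (i) the earlier lemma that every one-dimensional ideal of $L$ is contained in $Z(L)$, which forces $v\in Z(L)$ and hence $Z_{m-1}(L)=(L^m)^\perp=v^\perp$; (ii) Lemma \ref{lma2.4gth} applied with $I=L^{m-1}$ and $J=L^m$, giving $L^{m-1}\cdot Z_{m-1}(L)=0$; and (iii) the defining identity $(xy,z)=(yz,x)$ together with the alternating property $ww=0$. The steps then proceed as follows. Using non-degeneracy of the symplectic form, I pick $w\in L$ with $(v,w)=1$; then $w\notin v^\perp=Z_{m-1}(L)$. Since $L^m=L^{m-1}\cdot L$ and $L^{m-1}$ annihilates $Z_{m-1}(L)$, the decomposition $L=Z_{m-1}(L)+\mathbb{F}w$ shows that $L^m=L^{m-1}\cdot w$, so I can choose $u\in L^{m-1}$ with $uw=v$. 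The SAA axiom then yields
\[(uw,w)=(ww,u)=0,\]
while direct substitution gives $(uw,w)=(v,w)=1$, the desired contradiction.

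The main difficulty is less a technical obstacle than a matter of spotting the right test vector: one has to recognise that using the same $w$ both as the right factor of the product $uw$ and as the second argument of the form makes the alternating identity $ww=0$ collapse one side of the SAA axiom, while the other side is pinned to the non-trivial pairing $(v,w)$. Once this is seen, every step is forced by the duality $Z_n(L)=(L^{n+1})^\perp$ and by the lemma on one-dimensional ideals, and in particular no case split or characteristic restriction is required.
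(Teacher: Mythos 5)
Your argument is correct, but it runs in the opposite direction to the paper's. The paper first establishes the upper-central-series statement as an instance of the general fact that an alternating algebra $A$ never has $A/Z(A)$ one-dimensional (writing $L=Z_n(L)+\mathbb{F}u$ gives $L^2\leq Z_n(L)\cdot L+\mathbb{F}uu\leq Z_{n-1}(L)$, forcing $L=Z_n(L)$), and only then transfers the result to the lower central series via the duality $Z_{r-1}(L)=(L^r)^\perp$; that first step uses nothing about the symplectic form. You instead attack the lower-central-series statement head on: the duality and Lemma \ref{lma2.4gth} give $L=Z_{m-1}(L)+\mathbb{F}w$ with $L^{m-1}\cdot Z_{m-1}(L)=0$, hence $L^m=L^{m-1}w$ is the image of a linear map, so $v=uw$ for a single $u\in L^{m-1}$, and then $(v,w)=(uw,w)=(ww,u)=0$ contradicts $(v,w)=1$. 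The two proofs are essentially dual to one another --- both hinge on decomposing $L$ as a central-type ideal plus a line and on the alternating identity killing the self-product of the complementary vector --- but yours routes the contradiction through the form, in the spirit of the surrounding lemmas, while the paper's isolates the part of the argument valid for arbitrary alternating algebras and leaves the symplectic structure to do its work only in the final dualisation. One small remark: your step ``every one-dimensional ideal lies in $Z(L)$, which forces $v\in Z(L)$ and hence $Z_{m-1}(L)=v^\perp$'' is a non sequitur --- the equality $Z_{m-1}(L)=(L^m)^\perp=v^\perp$ is immediate from Lemma \ref{beautiful_relation} alone, and the containment $v\in Z(L)$ is never actually used --- but this is a harmless redundancy, not a gap.
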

\begin{proof}
The first fact is a well-known fact about alternating algebras and follows from the fact that if $A$ is an alternating algebra then $A/Z(A)$ cannot be one-dimensional. Now the interesting second statement is a consequence of this and the duality $(L^r)^\perp = Z_{r-1}(L)$.
\end{proof}
\noindent
From the last proposition we know that no term of the lower central series of a SAA can be $1$-dimensional. Next proposition shows that some of terms of the lower central series cannot be $2$-dimensional.
\begin{Proposition}\label{pro2.10gth}
Let $L$ be a SAA we have that $\mbox{dim\,} L^m \neq 2$ for $2 \leq m \leq 4$. Equivalently $Z_m(L)$ is not of co-dimension $2$ if $1 \leq m \leq 3$.
\end{Proposition}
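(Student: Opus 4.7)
Suppose for contradiction $\dim L^m = 2$ with $m \in \{2, 3, 4\}$. The duality $\dim L^m + \dim Z_{m-1}(L) = \dim L$ noted after Lemma \ref{beautiful_relation} gives $\dim L/Z_{m-1}(L) = 2$; choose $a, b \in L$ projecting to a basis of this quotient. The strategy is to show $a, b \in (L^m)^\perp = Z_{m-1}(L)$, which will be the contradiction.

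The main step is to show that $L^m$ is spanned by left-normed depth-$m$ products $p_1 p_2 \cdots p_m$ with every $p_i \in \{a, b\}$. For this it suffices that any replacement $p_i \mapsto p_i + z$ with $z \in Z_{m-1}(L)$ leaves such a product unchanged. At depth $i$ the discrepancy is $p_1 \cdots p_{i-1} \cdot z \in L^{i-1} \cdot Z_{m-1}(L)$, and iterated applications of $Z_j(L) \cdot L \subseteq Z_{j-1}(L)$ (together with anti-symmetry of the product to commute $L$-factors past $Z$-factors) give $L^{i-1} Z_{m-1}(L) \subseteq Z_{m-i}(L)$; the subsequent $m - i$ right-multiplications by $L$ then push this into $Z_0(L) = \{0\}$. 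Because $xx = 0$, the only such products that are not already $0$ satisfy $p_1 \neq p_2$, so $p_1 p_2 = \pm ab$, with $p_3, \ldots, p_m \in \{a, b\}$ otherwise free.

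It remains to verify $(c, w) = 0$ for every $c \in \{a, b\}$ and every spanning product $w$, using the self-adjointness $(c, X p) = (c p, X)$ and the full alternation of the trilinear form $T(x, y, z) := (xy, z)$. For $m = 2$: $(c, p_1 p_2) = -T(p_1, p_2, c) = 0$, because $c$ equals $p_1$ or $p_2$. For $m = 3$: $(c, (ab) p_3) = (cp_3, ab) = 0$, either because $cp_3 = 0$ (if $p_3 = c$) or because $cp_3 = \pm ab$ and $(ab, ab) = 0$. For $m = 4$: $(c, (ab \, p_3) p_4) = (cp_4, ab \, p_3) = 0$, either because $cp_4 = 0$ or because the pairing collapses to $\mp T(ab, p_3, ab) = 0$, slots $1$ and $3$ of $T$ being equal.

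Hence $(a, L^m) = (b, L^m) = 0$, so $a, b \in Z_{m-1}(L)$, contradicting the choice of $a, b$. The crux of the argument is the tracking of the upper central series in the spanning step, via iterates of Lemma \ref{lma2.4gth}; the computation afterwards is uniform and short. The identity $T(ab, p_3, ab) = 0$ driving the $m = 4$ case depends on the block $ab$ reappearing in the third slot of $T$, and for $m \geq 5$ the intermediate product has grown past this pattern, which explains why the proposition stops at $m = 4$.
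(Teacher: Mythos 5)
Your overall strategy---pick $a,b$ spanning a complement of $Z_{m-1}(L)$ and show $a,b\in (L^m)^\perp=Z_{m-1}(L)$---is sound, and for $m=2$ and $m=3$ your argument is complete. Indeed for $m=3$ it is a genuinely different and arguably cleaner route than the paper's, which passes through a minimal counterexample and a case analysis on whether $Z(L)$ is isotropic; your spanning step there only needs $Z_j(L)\cdot L\leq Z_{j-1}(L)$ and $L^2\cdot Z_2(L)=\{0\}$ (Lemma \ref{lma2.4gth}), both of which are available.

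The gap is in the spanning step for $m=4$, at $i=3$. You justify $L^{i-1}Z_{m-1}(L)\subseteq Z_{m-i}(L)$ by ``iterated applications'' of $Z_j(L)\cdot L\subseteq Z_{j-1}(L)$, but anticommutativity only lets you write $(p_1p_2)z_3=-z_3(p_1p_2)$ and apply the descent \emph{once}: $p_1p_2$ is a single element of $L$, and without associativity or the Jacobi identity you cannot peel its factors off one at a time. So you only get $L^2 Z_3(L)\subseteq Z_2(L)$, and the discrepancy $\bigl((p_1p_2)z_3\bigr)u_4$ lands in $Z_1(L)\cap L^4$, not in $\{0\}$. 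Worse, the containment you actually need, $L^2Z_3(L)\subseteq Z_1(L)$, is (by the remark following Lemma \ref{lma2.4gth}, with $i=j=2$) \emph{equivalent} to $L^2L^2\leq L^4$---precisely the inclusion that the $12$-dimensional example at the start of Chapter 3 shows can fail. It does hold under the hypothesis $\mbox{dim\,}L^4=2$, but proving that is the entire content of this case, not something that can be asserted.

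The repair stays inside your framework: run your orthogonality computation one level down first, to show $(ab,L^3)=0$. Since each $u_iu_j$ lies in $Z_2(L)+\mathbb{F}ab$, one has $(ab,u_1u_2u_3)=-(u_3(ab),u_1u_2)$, and both resulting pairings vanish: $(u_3(ab),v)=-(u_3v,ab)=0$ for $v\in Z_2(L)$ because $u_3v\in Z_1(L)=(L^2)^\perp$, and $(u_3(ab),ab)=0$ because two slots of $T$ coincide. Hence $ab\in (L^3)^\perp=Z_2(L)$, so $L^2\leq Z_2(L)$ and $L^2L^2\leq Z_2(L)\cdot L^2=\{0\}$; unfolding $(L,L^2L^2)=0$ twice gives $(L^4,L)=0$ and $L^4=\{0\}$. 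This is essentially the paper's own argument, which first establishes $L^2\leq Z_2(L)+\mathbb{F}uv$ and $L^2L^2=\{0\}$ before taking perps.
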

\begin{proof}
We first prove that $\mbox{dim\,} L^2 \neq 2$. 
We argue by contradiction and suppose $\mbox{dim\,} L^2 = 2$. Then
$$2 = \mbox{dim\,} L^2 = \mbox{dim\,} Z(L)^\perp = \mbox{dim\,} L  -\mbox{dim\,} Z(L).$$
Suppose $L = Z(L) +  \mathbb{F} u +  \mathbb{F} v$. Then $L^2 = \mathbb{F} uv$, which contradicts $\mbox{dim\,} L^2 = 2$.\\ \\
Next we turn to showing that $\mbox{dim\,} L^3 \neq 2$. 
We argue by contradiction and let $L$ be a counter example of smallest dimension. We first notice that $Z(L)$ must be isotropic as otherwise $L = I  \obot  I^\perp$ for some $2$-dimensional ideal $I = \mathbb{F} u+ \mathbb{F} v \leq Z(L)$ where  $(u, v) = 1$. But then $M = I^\perp$ is a SAA of smaller dimension where $M^3 = L^3$ is of dimension $2$. This however contradicts the minimality of $L$. 
We can thus assume that $Z(L)$ is isotropic. Notice that 
\[ 2 = \mbox{dim\,} L^3=\mbox{dim\,} Z_2(L)^\perp = \mbox{dim\,} L - \mbox{dim\,} Z_2(L). \]
Say, $L = Z_2(L) + {\mathbb F}x + \mathbb{F}y$. Then $L^2 = Z(L) + \mathbb{F} xy$ and, as $Z(L)$ is isotropic and $xy \in L^2 = Z(L)^\perp$, $L^2$ is isotropic. Thus $L^2 \leq (L^2)^\perp = Z(L)$ and we get the contradiction that $L^3 = \{ 0 \}$.\\ \\
It now only remains to deal with $L^4$. For a contradiction, suppose that $\mbox{dim\,} L^4=2$. Then
\[ 2 = \mbox{dim\,} L^4 = \mbox{dim\,} Z_3(L)^\perp = \mbox{dim\,} L - \mbox{dim\,} Z_3(L). \]
Say $L = Z_3(L) + \mathbb{F}u + \mathbb{F}v$. Then $L^2 \leq Z_2(L) + \mathbb{F} uv$ and using the fact that $Z_2(L) \cdot  L^2 = \{ 0 \}$ we get
\begin{align*}
 L^2 \cdot L^2 \leq (Z_2(L) + \mathbb{F}uv) \cdot L^2  = \mathbb{F} uv \cdot L^2 \leq \mathbb{F} uv \cdot (Z_2(L)  +\mathbb{F}uv ) = \mathbb{F}(uv) (uv) = {0}.
\end{align*}
Thus $0 = (L, L^2 \cdot L^2) \Rightarrow (L^3, L^2) = 0 \Rightarrow (L^4, L) = 0$, that gives us the contradiction that $L^4 = \{ 0 \}$.
\end{proof}
\noindent
\textbf{Example.} Let $L$ be the nilpotent alternating algebra with presentation \\
(We only list the triples that have non-zero value)
\[ {\mathcal P}: \quad (x_2 y_3, y_4)=r,\ (x_1 y_2, y_3)=1,\ (y_1y_2,y_4)=1. \]
Then inspection shows that $\mbox{dim\,} L^5 = 2$. The bound 4 in the last proposition is therefore the best one.
\section{Nilpotent Symplectic Alternating Algebras}
We next see that, like for Lie algebras, all minimal sets of generators have the same number of elements and we can thus introduce the notion of a rank.
\begin{defn}
Let $L$ be a nilpotent SAA. We say that $\{ x_1, \ldots, x_r\}$ is a \textit{minimal set of generators} if these generate $L$ (as an algebra ) and no proper subset generates $L$.
\end{defn}
\begin{Lemma} \label{lma2.1gth}
Let $L$ be a nilpotent SAA. Any minimal set of generators has the same size which is $\mbox{dim\,} L - \mbox{dim\,} L^2$.
\end{Lemma}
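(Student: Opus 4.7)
The plan is to establish the standard Burnside-type basis result: a subset $S\subseteq L$ generates $L$ as a SAA if and only if its image in $L/L^{2}$ spans $L/L^{2}$. Once this is in hand, minimal generating sets correspond exactly to bases of $L/L^{2}$, and the dimension count follows.

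First I would prove the crucial direction: if $L = \mathbb{F}x_1+\cdots+\mathbb{F}x_r + L^{2}$, then $M=\langle x_1,\ldots,x_r\rangle$ equals $L$. I would do this by induction on $k\ge 2$, showing $L\subseteq M+L^{k}$. The base case $k=2$ is the hypothesis. For the inductive step, assume $L\subseteq M+L^{k}$. Every element of $M\cdot L$ has the form $m\cdot(m'+z)=mm'+mz$ with $m'\in M$, $z\in L^{k}$; since $M$ is a subalgebra and, by the alternating property, $m z\in L\cdot L^{k}=L^{k}\cdot L=L^{k+1}$, we get $ML\subseteq M+L^{k+1}$. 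Then
\[
L = M+L^{2} = M + L\cdot L \subseteq M + L\cdot(M+L^{k}) = M + LM + L^{k+1} \subseteq M + L^{k+1},
\]
completing the induction. Since $L$ is nilpotent, $L^{k}=\{0\}$ for some $k$, and we conclude $L=M$. The mild pitfall here, which the paper explicitly flagged, is that $L^{i}L^{j}\subseteq L^{i+j}$ fails in general for SAA's; however we only need the trivial identity $L\cdot L^{k}=L^{k+1}$, which is immediate from the definition and the alternating law.

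The converse direction is immediate: if $\{x_1,\dots,x_r\}$ generates $L$ as an algebra then every element of $L$ is a sum of $x_i$'s and products, and products lie in $L^{2}$, so the images span $L/L^{2}$.

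Finally I would deduce the lemma from these two directions. Let $\{x_1,\ldots,x_r\}$ be a minimal generating set. Their images $\bar{x}_i\in L/L^{2}$ span $L/L^{2}$. If these images were linearly dependent, some $\bar{x}_j$ would be a linear combination of the others, so the remaining $\bar{x}_i$ would still span $L/L^{2}$; by the key direction proved above, $\{x_1,\ldots,x_r\}\setminus\{x_j\}$ would generate $L$, contradicting minimality. Hence $\bar{x}_1,\ldots,\bar{x}_r$ are a basis of $L/L^{2}$, so $r=\dim L-\dim L^{2}$, independent of the chosen minimal generating set. The main (very minor) obstacle is just keeping track of the subspace computation in the inductive step without accidentally invoking the false containment $L^{i}L^{j}\subseteq L^{i+j}$; everything else is formal.
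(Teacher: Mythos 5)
Your proof is correct and follows essentially the same route as the paper: reduce to the Burnside-basis statement that a set generates $L$ if and only if its image spans $L/L^2$, then run a Nakayama-style induction $L\subseteq M+L^{k}$ down the lower central series, using nilpotency to conclude $L=M$. Your inductive step is phrased slightly differently (you use only that $M$ is a subalgebra and the identity $L\cdot L^{k}=L^{k}\cdot L=L^{k+1}$, whereas the paper simultaneously tracks $L^{s}=M^{s}+L^{s+1}$), but this is the same argument, and you correctly avoid the invalid containment $L^{i}L^{j}\leq L^{i+j}$.
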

\begin{proof}
Let $x_1, \ldots, x_r \in L$ and let $M$ be the subalgebra of $L$ generated by these elements. 
It suffices to show that $L = M$ if and only if $x_1 + L^2, \ldots, x_r + L^2$ generate $L/L^2$ as a vector space. 
Suppose first that $L = M$. Notice that $ M = \mathbb{F} x_1 + \cdots + \mathbb{F} x_r +M \cap L^2$ and thus it is clear that 
$L/L^2$ is generated by $ x_1 + L^2, \ldots, x_r + L^2$ as a vector space. 
Conversely suppose now that the images of $x_1, \ldots, x_r$ in $L/L^2$ generate $L/L^2$ as a vector space. 
An easy induction shows that
\[ L = M + L^{s+1}, \quad L^s = M^s + L^{s+1} \]
for all integers $s \geq 1$. 
If the class of $L$ is $n$, we get in particular that
$L=M + L^{n+1} = M$.
\end{proof}
\begin{defn} 
Let $L$ be a nilpotent SAA. The unique smallest number of generators for $L$, as an algebra, is called the \emph{rank} of $L$ and is denoted $r(L)$. 
\end{defn}
\noindent
By last lemma we know that $r(L) = \mbox{dim\,} L - \mbox{dim\,} L^2.$ This has the following curious consequence.
\begin{corrollary}\label{corr16} 
Let $L$ be a nilpotent SAA. We have $r(L) = \mbox{dim\,} Z(L)$. In particular if $L \neq \{0\}$ then $\mbox{dim\,} Z(L) \geq 2$.
\end{corrollary}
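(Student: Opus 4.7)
The plan is to combine Lemma \ref{lma2.1gth}, which computes the rank as $\dim L - \dim L^2$, with the first instance ($n=1$) of the duality formula in Lemma \ref{beautiful_relation}, which says $Z(L) = Z_1(L) = (L^2)^\perp$.

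First I would apply Lemma \ref{lma2.1gth} to obtain $r(L) = \dim L - \dim L^2$. Since the alternating form on $L$ is non-degenerate, $\dim (L^2)^\perp + \dim L^2 = \dim L$, and combining with the identification $Z(L) = (L^2)^\perp$ from Lemma \ref{beautiful_relation} gives $\dim Z(L) = \dim L - \dim L^2 = r(L)$. This establishes the main equality in one short step.

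For the ``in particular'' assertion, I would invoke the observation made in the remark preceding this section: in any nilpotent alternating algebra $L$ with $\dim L \geq 2$, the subspace $L^2$ has codimension at least $2$. This is essentially the alternating-algebra analogue of the fact that no alternating (or Lie) algebra has a one-dimensional quotient modulo its derived subalgebra, which in turn dualises to Proposition \ref{pro2.10.1gth}. Since $L$ is a non-trivial symplectic space its dimension is even and at least $2$, so this codimension bound applies and yields $\dim Z(L) = \dim L - \dim L^2 \geq 2$.

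The proof is essentially a one-line bookkeeping argument, so there is no real obstacle; the only conceptual point worth highlighting is that the identification $r(L) = \dim Z(L)$ is a purely SAA phenomenon coming from the duality between the lower and upper central series (Lemma \ref{beautiful_relation})---in an arbitrary nilpotent algebra the rank and the dimension of the centre need not agree. The ``$\geq 2$'' consequence is then immediate from the codimension bound on $L^2$ already recorded.
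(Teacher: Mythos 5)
Your proof is correct and follows essentially the same route as the paper: the main equality comes from combining $r(L)=\dim L-\dim L^2$ (Lemma \ref{lma2.1gth}) with the duality $Z(L)=(L^2)^\perp$ (Lemma \ref{beautiful_relation}). For the final inequality the paper argues directly that $r(L)=1$ would force $L$ to be one-dimensional, which is impossible for a non-trivial symplectic space, while you cite the codimension-$2$ bound on $L^2$ from the earlier remark; these are two phrasings of the same underlying fact, so there is no substantive difference.
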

\begin{proof}
From Lemma \ref{beautiful_relation} we know that $Z(L) = (L^2)^\perp$. Therefore
\[ r(L)  = \mbox{dim\,} L - \mbox{dim\,} L^2 = \mbox{dim\,}(L^2)^\perp = \mbox{dim\,} Z(L) .\]
Finally, we cannot have $r(L) = 1$ as then we would have that $L$ is one-dimensional. Hence $\mbox{dim\,} Z(L) \geq 2$.
\end{proof}
\begin{Lemma}\label{lma2.3gth} 
Let $I, J$ be ideals of a nilpotent SAA where $I \leq J$. 
If $\mbox{dim\,} J = \mbox{dim\,} I + 1$ then $I \leq J$ is central.
If $I$ is an ideal such that $\mbox{dim\,} I < 2n = \mbox{dim\,} L$ 
then there exists an ideal $J$ such that $\mbox{dim\,} J = \mbox{dim\,} I + 1$.
If furthermore $I$ is an isotropic ideal and $\mbox{dim\,} I < n$ 
then $J$ can be chosen to be isotropic. 
\end{Lemma}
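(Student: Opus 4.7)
The plan is to handle the three claims in sequence, using nilpotence of $L$ to force central elements in suitable quotients. For the first claim I would form $K=I+JL$. Since $J$ is an ideal we have $JL\leq J$, and therefore $(JL)\cdot L\leq J\cdot L=JL$, so $JL$, and hence $K$, is an ideal sandwiched between $I$ and $J$. The codimension-one hypothesis forces $K\in\{I,J\}$. If $K=J$ then multiplying $I+JL=J$ on the right by $L$ and using $IL\leq I$ yields $I+JL^{k}=J$ for every $k\geq 1$ by induction; but nilpotence gives $JL^{k}\leq L^{k+1}=\{0\}$ for $k$ large, forcing the contradiction $J=I$. Hence $K=I$, i.e.\ $JL\leq I$, which is exactly the centrality statement.

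For the second claim I would pass to the quotient alternating algebra $A=L/I$. This is non-zero (since $\mbox{dim\,}I<\mbox{dim\,}L$) and nilpotent, because $A^{k}$ is the image of $L^{k}$ and so vanishes once $L^{k}=\{0\}$. For any non-zero nilpotent alternating algebra, the last non-zero term $A^{m}$ of its lower central series satisfies $A^{m}\cdot A=A^{m+1}=\{0\}$, so $A^{m}\leq Z(A)$; in particular $Z(A)\neq\{0\}$. Picking $x\in L\setminus I$ whose class lies in $Z(A)$ gives $xL\leq I$, and $J=I+\mathbb{F}x$ is then an ideal with $\mbox{dim\,}J=\mbox{dim\,}I+1$.

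For the third claim I would refine the choice of $x$ so that it lies in $I^{\perp}$. By Lemma \ref{lma115}, $I^{\perp}$ is an ideal, and $\mbox{dim\,}I<n$ gives $\mbox{dim\,}I^{\perp}=2n-\mbox{dim\,}I>\mbox{dim\,}I$, so $I^{\perp}/I$ is a non-zero ideal of $A=L/I$. A standard minimal-index argument on the upper central series (take the smallest $k$ with $Z_{k}(A)\cap(I^{\perp}/I)\neq\{0\}$; any non-zero $y$ there satisfies $yA\leq Z_{k-1}(A)\cap(I^{\perp}/I)=\{0\}$) shows that $Z(A)$ meets $I^{\perp}/I$ non-trivially. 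Lifting to $x\in I^{\perp}\setminus I$ with $xL\leq I$, the space $J=I+\mathbb{F}x$ is an ideal; it is isotropic because $(I,I)=0$, $(x,I)=0$ and the form is alternating so $(x,x)=0$. The step needing most care is this last selection — guaranteeing a central element of $L/I$ inside the particular ideal $I^{\perp}/I$; the remaining verifications are routine bookkeeping on top of the duality lemmas already established.
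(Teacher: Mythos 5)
Your proof is correct. The paper reaches the same conclusions by a slightly different route in the first and third claims. For the first claim it writes $J=I+\mathbb{F}x$ and computes element-wise: if $xy=u_1+ax$ with $u_1\in I$ and $a\in\mathbb{F}$, then $x\underbrace{y\cdots y}_{r}=u_r+a^rx$ with $u_r\in I$, and nilpotence forces $a=0$. Your version replaces this coordinate computation by the intermediate ideal $K=I+JL$ and the dichotomy $K\in\{I,J\}$, which is more structural and avoids choosing a complement; one notational caution is that your $JL^{k}$ must be read as the left-normed iterate $J\underbrace{L\cdots L}_{k}$ rather than as the product of $J$ with the lower central term $L^{k}$ (the paper stresses that such products do not behave as for Lie algebras), but your induction genuinely produces the left-normed iterate, which does lie in $L^{k+1}$, so the argument stands. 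For the second claim the paper's choice of the largest $m$ with $L^{m}\not\leq I$ is exactly your ``last non-zero term of the lower central series of $L/I$''. For the third claim the paper instead takes the largest $m$ with $I^{\perp}\underbrace{L\cdots L}_{m}\not\leq I$ and picks $u$ there, whereas you run the dual argument up the upper central series of $L/I$ intersected with $I^{\perp}/I$; both are standard and yield the same isotropic ideal $J=I+\mathbb{F}u$ with $u\in I^{\perp}$.
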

\begin{proof}
Suppose $J=I + {\mathbb F}x$ for some $x \in L$. Let $y \in L$. To show that $I \leq J$ is central, it suffices to show that $x \cdot y \in I $. Suppose that $xy = u_1 + a x$ for $u_1 \in I$ and  $a \in \mathbb{F}$. As  
$I \unlhd L$ it follows by induction that $xy^r = u_r + a^r x$ for some $u_r \in I$. If $L$ is nilpotent of class at most $m$ it follows that $ 0 = u_m + a^m x$ and hence $a=0$.\\ \\
For the latter part suppose first that $I$ is any ideal such that $\mbox{dim\,} I < 2n$. Let $m$ be the largest positive integer such that $L^{m} \not \leq I$. Pick $u \in L^{m}\, \setminus\, I$. Then $J = I + \mathbb{F} u$ 
is the required ideal such that $I \leq J$ is central.
Now suppose furthermore that $I$ is isotropic and that $\mbox{dim\,} I < n$. Then $I^\perp$ is also an ideal and $I < I^\perp$. Let $m$ be the largest non-negative integer such that $I^\perp \underbrace{L \cdots L}_{m} \not \leq I$. Let $u \in I^\perp \underbrace{L \cdots L}_{m} \setminus I$ and again the ideal $J = I + \mathbb{F} u$ is the one required.
\end{proof}
\noindent
\begin{Lemma}\label{lma2.6gth}
Let $L$ be a nilpotent SAA with ideals $I, J$ where $J = I + \mathbb{F}x + \mathbb{F}y$, $(x,y)=1$ and $\mathbb{F}x + \mathbb{F}y \leq I^\perp$. Then $JL \leq I$.
Furthermore if $I$ is isotropic then $J$ is abelian.
\end{Lemma}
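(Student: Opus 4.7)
The plan is to show $JL \leq I$ by analysing how an arbitrary $z \in L$ acts on the two-dimensional quotient $J/I$, and then to deduce the abelian statement as a short consequence once we combine this with the observation that $J \leq I^\perp$ when $I$ is isotropic. Fix $z \in L$. Since $J$ is an ideal, I would write
$$xz = i_1 + \alpha x + \beta y, \qquad yz = i_2 + \gamma x + \delta y,$$
with $i_1, i_2 \in I$ and scalars $\alpha, \beta, \gamma, \delta \in \mathbb{F}$, and the goal is to show that all four scalars vanish.

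The workhorse is the defining identity $(uv, w) = (vw, u)$. Applying it twice yields $(xz, x) = (zx, x) = (xx, z) = 0$; pairing the displayed expression for $xz$ against $x$ and using $x \in I^\perp$ (so $(i_1, x) = 0$), together with $(x, x) = 0$ and $(y, x) = -1$, then reads off $\beta = 0$. The symmetric computation with $yz$ against $y$ gives $\gamma = 0$. Finally, $(xz, y) = \alpha$ on one hand, while $(xz, y) = (zy, x) = -(yz, x) = \delta$ on the other, forcing $\alpha = \delta$. Thus $z$ acts on $J/I$ as the scalar $\alpha$, and an induction of exactly the form used in the proof of Lemma~\ref{lma2.3gth} gives $xz^r \equiv \alpha^r x \pmod{I}$. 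Since $L$ is nilpotent of some class $m$, we have $xz^m \in L^{m+1} = \{0\}$, so $\alpha^m x \in I$; but $(x, y) = 1$ together with $y \in I^\perp$ forces $x \notin I$, so $\alpha = 0$. Hence $xz, yz \in I$, and combining with $Iz \leq I$ we get $Jz \leq I$. As $z$ was arbitrary, $JL \leq I$.

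For the furthermore statement, assume $I$ is isotropic, so $I \leq I^\perp$, and hence $J = I + \mathbb{F}x + \mathbb{F}y \leq I^\perp$ because $x, y \in I^\perp$. For any $u, v \in J$ and $w \in L$, the first part gives $vw \in JL \leq I$, and $u \in J \leq I^\perp$ gives $(vw, u) = 0$. Therefore $(uv, w) = (vw, u) = 0$ for every $w \in L$, and non-degeneracy of the form forces $uv = 0$. Thus $JJ = \{0\}$, so $J$ is abelian.

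The only place where care is required is the step proving $\beta = \gamma = 0$: the naive argument $(xz, x) = -(xz, x)$ only yields zero when $\mathrm{char}\,\mathbb{F} \neq 2$, whereas reducing via $(uv, w) = (vw, u)$ to $(xx, z) = 0$ settles the point uniformly across all characteristics.
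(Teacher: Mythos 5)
Your proof is correct and follows essentially the same route as the paper's: the orthogonality computations giving $\beta=\gamma=0$ are exactly what the paper uses to see that $I+\mathbb{F}x$ and $I+\mathbb{F}y$ are ideals, your nilpotency induction is the one from Lemma \ref{lma2.3gth} (which the paper simply cites), and your final paragraph just unwinds Lemma \ref{lma2.5gth} rather than quoting it. The only cosmetic difference is that you treat $J/I$ as a single two-dimensional quotient on which each $z$ acts as a scalar, instead of splitting $J$ into two codimension-one ideals over $I$.
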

\begin{proof}
As $J$ is an ideal of $L$ and as $(xt, x) = 0$ for all $t \in L$ we have that $I + \mathbb{F}x$ is an ideal of $L$. By Lemma \ref{lma2.3gth} we have that $I \leq I + \mathbb{F}x$ is central. 
Similarly $I \leq  I + \mathbb{F}y$ is central and thus $JL \leq I$. 
For the second part notice that if $I$ is isotropic then $I=J \cap J^\perp$ thus $JL \leq I= J \cap J^\perp$ and 
by Lemma \ref{lma2.5gth} it follows that $J$ is abelian.
\end{proof}
\begin{Lemma}\label{lma2.7gth}
Let $L$ be a nilpotent SAA. Every ideal $I$ of dimension $2$ is contained in $Z(L)$. Equivalently, every ideal of codimension $2$ must contain $L^2$.\end{Lemma}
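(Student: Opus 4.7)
The plan is to argue by contradiction: suppose $I$ is a $2$-dimensional ideal with $I \not\leq Z(L)$. By the earlier lemma stating that every $2$-dimensional ideal of a SAA is abelian and contains a nonzero central element, I can write $I = \mathbb{F}z + \mathbb{F}w$ with $0 \neq z \in Z(L)$ and $w \notin Z(L)$, so in particular $wL \neq \{0\}$. The first step is to apply Lemma~\ref{lma2.3gth} to the ideal chain $\mathbb{F}z \leq I$ whose dimensions differ by one: this gives $IL \leq \mathbb{F}z$, so $wL$ is a nonzero subspace of the one-dimensional space $\mathbb{F}z$, forcing $wL = \mathbb{F}z$. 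I can then define a nonzero linear functional $\alpha : L \to \mathbb{F}$ by the rule $wl = \alpha(l)\,z$.

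The crux is a short calculation with the defining SAA identity. Cycling $(uv, w) = (vw, u)$ once yields $(uv, w) = (wu, v)$, and since $wu = \alpha(u) z$ this gives
\[ (uv, w) = \alpha(u)\,(z, v) \qquad \text{for all } u, v \in L. \]
Specialising to $v = u$ and using that the multiplication is alternating, so $uu = 0$, we deduce
\[ \alpha(u)\,(z, u) = 0 \qquad \text{for every } u \in L. \]
The point I want to stress is that collapsing the bilinear SAA identity into a \emph{single-variable} constraint by substituting $v = u$ is what makes the argument go through uniformly in every characteristic; the more obvious symmetrisation approach (deriving $\alpha(u)(z, v) + \alpha(v)(z, u) = 0$ from the alternation $uv = -vu$ and then setting $u = v$) loses its content in characteristic~$2$ because of the factor~$2$ that appears.

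To conclude, the pointwise equation above says that $L = \ker\alpha \cup \{z\}^{\perp}$ as a set. Since a vector space over any field is never the union of two proper subspaces, one of $\ker\alpha$ or $\{z\}^{\perp}$ must be all of $L$. In the first case $\alpha \equiv 0$, hence $wL = \{0\}$ and $w \in Z(L)$, contradicting the choice of $w$. In the second case $(z, L) = 0$, and non-degeneracy of the form forces $z = 0$, again a contradiction. The equivalent statement that every codimension-$2$ ideal contains $L^2$ follows immediately by applying perps, using the duality $Z(L) = (L^2)^{\perp}$ from Lemma~\ref{beautiful_relation}.
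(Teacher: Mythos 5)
Your proof is correct, but it takes a genuinely different route from the paper's. The paper proves the codimension-$2$ statement first, as a form-free fact about nilpotent alternating algebras: if $J$ has codimension $2$ then $L/J$ is a $2$-dimensional nilpotent alternating algebra, hence abelian, so $L^2 \leq J$; the dimension-$2$ statement then drops out of the duality $I \leq Z(L) \Leftrightarrow L^2 = Z(L)^{\perp} \leq I^{\perp}$. You instead attack the dimension-$2$ statement head-on inside the ambient algebra: Lemma~\ref{lma2.3gth} forces $wL = \mathbb{F}z$, the cycled identity $(uv,w) = (wu,v)$ with $v=u$ yields $\alpha(u)(z,u)=0$ for all $u$, and the observation that no vector space is the union of two proper subspaces (valid over every field, including $\mathbb{F}_2$) finishes the job; the codimension statement is then recovered by perps. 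Your remark about substituting $v=u$ rather than symmetrising is well taken --- alternation of the product means $uu=0$ outright, which is exactly what survives in characteristic $2$ where the symmetrised identity degenerates. What the paper's route buys is brevity and a clean separation of the quotient argument from the duality machinery used throughout; what yours buys is a self-contained computation showing directly how the ternary form $(uv,w)$ obstructs a non-central $2$-dimensional ideal. One small economy you could note: the containment $IL \leq \mathbb{F}z$ already follows from $I$ being abelian and $I \leq Z(L)^{\perp\perp}\cdots$ --- but the appeal to Lemma~\ref{lma2.3gth} is perfectly legitimate and arguably cleaner.
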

\begin{proof}
The second statement is a trivial fact that holds in all nilpotent alternating algebras.
The first statement is a consequence of this and that duality given by $I \leq Z(L) \Leftrightarrow L^2 = Z(L)^\perp \leq I^\perp$.
\end{proof}
\begin{Remark}\label{rem-nsaa4}
Suppose that $L$ is any nilpotent alternating algebra such that $L/L^2$ is 
$2$-dimensional. Then it follows immediately that the dimension of $L^2/L^3$ is at most $1$ and that the 
dimension of $L^3/L^4$ is at most $2$.
Using this general fact and Proposition \ref{pro2.10.1gth} one can quickly show that all nilpotent SAA's of dimension up to $4$ must be abelian. 
This is clear when the dimension is $2$. Now suppose that $L$ is a nilpotent SAA of dimension $4$. 
We know that $\mbox{dim\,} L/L^2 \geq 2$. 
If $\mbox{dim\,} L^2 = 2$ then by the reasoning above, we would have that $\mbox{dim\,} L^3 = 1$ that contradicts Proposition \ref{pro2.10.1gth}. 
By that proposition we neither can have that $\mbox{dim\,} L^2 = 1$. 
Thus we must have $L^2 = 0$ and $L$ is abelian.
\end{Remark}
\noindent
\textit{\bf A Useful Inequality.  }
Let $L$ be a nilpotent SAA. Then
\begin{align*}
&\mbox{dim\,} Z_i(L)  - \mbox{dim\,} Z_{i-1}(L)  \leq  \frac{1}{2}(\mbox{dim\,} Z_{i-1}(L)-\mbox{dim\,}Z_{i-2}(L))(\mbox{dim\,} Z_{i-1}(L)+\\
&\mbox{dim\,} Z_{i-2}(L)-1).
\end{align*}
\begin{proof}
Let $V_i$ be the complement of $L^{i+1}$ in $L^i$. Then 
\begin{eqnarray*}
L^i &=& V_i \obot L^{i+1}\\
L &=& V_1 \obot V_2 \obot \cdots \obot V_i \obot L^{i+1}\\
U_i &=&V_1 \obot V_2 \obot \cdots \obot V_i.
\end{eqnarray*}
Thus it follows that we have 
\begin{eqnarray*}
\mbox{dim\,} V_i&=& dim\ Z_i(L)-dim\ Z_{i-1}(L)\\
\mbox{dim\,} U_i&=& dim\ (L^{i+1})^\perp = dim\ Z_i(L).
\end{eqnarray*}
Now calculation gives that
\begin{eqnarray*}
V_i \obot L^{i+1}=L^i = L^{i-1}L &=& (V_{i-1} \obot L^i)(U_{i-1} \obot L^i)\\
&=& V_{i-1}U_{i-1} + L^{i+1}
\end{eqnarray*}
and we see that we have 
\begin{align*}
\mbox{dim\,} V_i  \, \leq &   \, \mbox{dim\,} V_{i-1}(U_{i-2} + V_{i-1})\\
\leq & \, (\mbox{dim\,} V_{i-1}) ( \mbox{dim\,} U_{i-2})+{\mbox{dim\,} V_{i-1} \choose 2}.
\end{align*}
It follows that
\begin{align*}
&\mbox{dim\,} Z_i(L)  - \mbox{dim\,} Z_{i-1}(L) \, \leq
\frac{1}{2}(\mbox{dim\,} Z_{i-1}(L)-\mbox{dim\,}Z_{i-2}(L))(\mbox{dim\,} Z_{i-1}(L)+ \\
&\mbox{dim\,} Z_{i-2}(L)-1).
\end{align*}
\end{proof}
\noindent
In particular as $\mbox{dim\,} L^{i} \, -\, \mbox{dim\,} L^{i+1} = \mbox{dim\,} {L^{i+1}}^\perp \, - \,  \mbox{dim\,} {L^i}^\perp = \mbox{dim\,} Z_i(L) \, -\,  \mbox{dim\,} Z_{i-1}(L) $, thus equivalently we have that\\
$$\mbox{dim\,} L^{i+1} \ \geq \ \mbox{dim\,} L^i \, - \, \frac{1}{2}(\mbox{dim\,} Z_{i-1}(L) \, - \, \mbox{dim\,} Z_{i-2}(L))(\mbox{dim\,} Z_{i-1}(L) \,+ \,\mbox{dim\,} Z_{i-2}(L)-1).$$
\noindent
\begin{Theorem}\label{thm2.9gth}
Let $L$ be a nilpotent SAA of dimension $2n \geq 2$. There exists an ascending chain of isotropic ideals
\[\{0\}=I_0 < I_1 < \cdots < I_{n-1} < I_n \]
such that $\mbox{dim\,} I_r = r$ for $r = 0, \ldots, n$. 
Furthermore, for $2n \geq 6$, $I_{n-1}^\perp$ is abelian and the ascending chain 
\[\{0\} < I_2 < I_3 < \cdots < I_{n-1} < I_{n-1}^\perp < I_{n-2}^\perp  < \cdots< I_2^\perp < L \]
is a central chain. 
In particular $L$ is nilpotent of class at most $2n-3$.
\end{Theorem}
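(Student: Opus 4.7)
The plan is to prove the two parts separately, leveraging the lemmas built up in the previous sections.

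\textbf{Step 1: building the isotropic chain.} I would construct $I_0, I_1, \ldots, I_n$ inductively. Start from $I_0 = \{0\}$, which is trivially isotropic. Given an isotropic ideal $I_r$ with $\mbox{dim\,}I_r = r < n$, Lemma~\ref{lma2.3gth} guarantees an ideal $I_{r+1}$ of dimension $r+1$ with $I_r \leq I_{r+1}$, and the same lemma allows us to pick $I_{r+1}$ isotropic as long as $\mbox{dim\,}I_r < n$. Iterating this process $n$ times yields the desired chain, and by the first part of Lemma~\ref{lma2.3gth} every inclusion $I_{r-1} < I_r$ is automatically central, i.e.\ $I_r \cdot L \leq I_{r-1}$.

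\textbf{Step 2: the middle step $I_{n-1} < I_{n-1}^\perp$.} Assume now $n \geq 3$. Since $I_{n-1}$ is an isotropic ideal of dimension $n-1$, Lemma~\ref{lma115} tells us $I_{n-1}^\perp$ is also an ideal, and it has dimension $n+1$. The restriction of the alternating form to $I_{n-1}^\perp$ has radical $I_{n-1} \cap I_{n-1}^\perp = I_{n-1}$, so it descends to a non-degenerate alternating form on the two-dimensional quotient $I_{n-1}^\perp / I_{n-1}$. Lift a symplectic basis of that quotient to elements $x, y \in I_{n-1}^\perp$ with $(x,y) = 1$. Then $J := I_{n-1} + \mathbb{F} x + \mathbb{F} y$ equals the ideal $I_{n-1}^\perp$, so Lemma~\ref{lma2.6gth} applies with $I = I_{n-1}$ and yields both $I_{n-1}^\perp \cdot L \leq I_{n-1}$ and, since $I_{n-1}$ is isotropic, that $I_{n-1}^\perp$ is abelian.

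\textbf{Step 3: assembling the central chain.} The inclusion $\{0\} < I_2$ is central exactly because $I_2 \leq Z(L)$, which is Lemma~\ref{lma2.7gth}; the dual inclusion $I_2^\perp < L$ is then central by Lemma~\ref{lma2.8gth}. The inclusions $I_r < I_{r+1}$ for $2 \leq r \leq n-2$ are central by Step~1, and the middle inclusion $I_{n-1} < I_{n-1}^\perp$ is central by Step~2. Finally, each dual inclusion $I_{r+1}^\perp < I_r^\perp$ for $2 \leq r \leq n-2$ is central by applying Lemma~\ref{lma2.8gth} to the central inclusion $I_r < I_{r+1}$. Counting the inclusions in
\[ \{0\} < I_2 < \cdots < I_{n-1} < I_{n-1}^\perp < \cdots < I_2^\perp < L \]
gives $1 + (n-3) + 1 + (n-3) + 1 = 2n - 3$ central steps, whence $L$ is nilpotent of class at most $2n - 3$.

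\textbf{Main obstacle.} The only truly new input beyond the ascending construction is Step~2. The key observation is symplectic-geometric: the radical of the form on $I_{n-1}^\perp$ is exactly $I_{n-1}$, so the hyperbolic pair $(x,y)$ required by Lemma~\ref{lma2.6gth} exists automatically. Once that geometric input is secured, the rest of the chain is pure bookkeeping via the duality $J \cdot L \leq I \Leftrightarrow I^\perp \cdot L \leq J^\perp$ encoded in Lemma~\ref{lma2.8gth}.
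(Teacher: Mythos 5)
Your proposal is correct and follows essentially the same route as the paper: iterate Lemma~\ref{lma2.3gth} to build the isotropic chain, use Lemma~\ref{lma2.7gth} for $I_2 \leq Z(L)$, Lemma~\ref{lma2.8gth} for the dual half of the chain, and Lemma~\ref{lma2.6gth} for the middle step and the abelianness of $I_{n-1}^\perp$. Your Step~2 merely makes explicit (via the radical of the restricted form) why the hyperbolic pair $(x,y)$ exists, a point the paper's proof asserts without comment.
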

\begin{proof}
Starting with the ideal $I_0=\{0\}$, we can apply Lemma \ref{lma2.3gth} iteratively to get the required chain 
\[\{0\}=I_0 < I_1 < \cdots < I_n. \]
By Lemma \ref{lma2.7gth} we have that $I_2 \leq Z(L)$. By this and Lemma \ref{lma2.3gth} we thus have that the chain 
\[I_0 < I_2 < I_3 < \cdots < I_{n-1}\]
is central in $L$. 
By Lemma \ref{lma2.8gth} it follows that the chain 
\[ I_{n-1}^\perp < I_{n-2}^\perp < \cdots < I_2^\perp < I_0^\perp \]
is also central. 
It is only remains to see that $I_{n-1} < I_{n-1}^\perp$ is central and that $I_{n-1}^\perp$ is abelian.
As $I_{n-1}^\perp= I_{n-1} + \mathbb{F}x + \mathbb{F}y$ for some $x, y \in L$ where $(x, y) = 1$ and as $I_{n-1}$ is isotropic, this follows from Lemma \ref{lma2.6gth}.
\end{proof}
\begin{Remark}\label{remlocalclass} 
When $\mbox{dim\,} Z(L) = r < n $, we can choose our chain such that $I_r = Z(L)$. 
We then get a central chain
\[I_0 < I_r < I_{r+1}< \cdots < I_{n-1} < I_{n-1}^\perp < I_{n-2}^\perp < \cdots < I_r^\perp < L .\]
In particular the class is then at most $2n - 3 - 2 (r-2) = 2n - 2 r +1$.
\end{Remark}
\noindent
{\bf Presentations of nilpotent symplectic alternating algebras. }  Last theorem tells us a great deal about the structure of nilpotent SAA's. A moments reflection should convince the reader that 
we can pick a standard basis $x_1, y_1, x_2, y_2,$
$\ldots, x_n, y_n$ such that
\[\mbox{}\mbox{}\mbox{}\mbox{}I_1={\mathbb F}x_n,\  I_2={\mathbb F}x_n + {\mathbb F} x_{n-1},\  \ldots,\  I_n={\mathbb F}x_n+\cdots+{\mathbb F}x_1,\]
\[{I^\perp_{n-1}}=I_n+{\mathbb F}y_1,\  {I^\perp_{n-2}}=I_n+{\mathbb F}y_1+{\mathbb F}y_2,\  \ldots,\  {I^\perp_0}=L=I_n+{\mathbb F}y_1+\cdots+{\mathbb F}y_n.\]
Now let $u, v, w$ be three of the basis elements. Since $I_n$ is abelian we have that $(uv, w) = 0$ whenever two of these three elements are from $\{ x_1, \ldots, x_n\}$. 
The fact that
\[ \{0\} < I_1 < \cdots < I_n \]
is central also implies that $(x_iy_j, y_k) =0 $ if $i \geq k$.
So we only need to consider the possible non-zero triples $(x_iy_j,y_k),\  (y_iy_j,y_k)$ for $1\leq i < j < k \leq n $. 
For each triple $(i, j, k)$ with $1 \leq i < j < k \leq n$, let $\alpha(i, j, k)$ and $\beta(i, j , k)$ be some elements in the field $\mathbb{F}$. 
We refer to the data  
\begin{equation*}\label{eq:nilp_per} 
{\mathcal P}: \quad (x_iy_j,y_k)=\alpha(i, j, k),\quad (y_iy_j,y_k)=\beta(i, j, k), \quad 1 \leq i < j < k \leq n 
\end{equation*}
as a \textit{nilpotent presentation}. We have just seen that every nilpotent SAA has a presentation of this type. 
Conversely, given any nilpotent presentation, let
 \[I_r={\mathbb F} x_n+{\mathbb F} x_{n-1}+\cdots+{\mathbb F} x_{n+1-r}\]
and we get an ascending central chain of isotropic ideals $\{0\}=I_0 < I_1 < \cdots < I_n$ such that $\mbox{dim\,} I_j = j$ for $j = 1, \ldots, n$. 
By Lemma \ref{lma2.8gth} we then get a central chain 
\[\{0\}=I_0 < I_1 < \cdots  < I_n  < I^\perp_{n-1} < I^\perp_{n-2} < \cdots < I^\perp_0=L \]
and thus $L$ is nilpotent. 
Thus every nilpotent presentation describes a nilpotent SAA.
\begin{Remark}
Notice that there are ${2 {n \choose 3}}$ parameters for these presentations. 
If $\mathbb{F}$ is a finite field this thus gives the value $|\mathbb{F}|^{2 {n \choose 3}}$ as the upper bound for the number of $2n$-dimensional nilpotent SAA's  over the field $\mathbb{F}$. 
Armed with this information it is not difficult to get some good information about the growth of nilpotent SAA's over a finite field $\mathbb{F}$. 
Let $V$ be a $2n$-dimensional vector space over $\mathbb{F}$ and consider $(\wedge^3 V)^*$. 
After fixing a standard basis for $V$, each presentation of a SAA corresponds to an element in $(\wedge^3 V)^*$. 
Now let ${\mathcal N}$ be the subset of $(\wedge^3 V)^*$ corresponding to all nilpotent presentations. 
The number of nilpotent SAA's of dimension $2n$ is the same as the number of ${\mathcal \mbox{Sp}}(V)$-orbits of $(\wedge^3 V)^*$ consisting of presentations that give nilpotent algebras. 
Suppose these are ${u}^{{\mathcal \mbox{Sp}}(V)}_i$, $i = 1, \ldots, m$. Then
\[ {\mathcal N} \subseteq  \bigcup_{i=1}^{m} u^{{\mathcal \mbox{Sp}}(V)}_i  \]
and thus $|\mathbb{F}|^{2 {n \choose 3}} = |{\mathcal N}| \leq m \cdot  |{\mathcal \mbox{Sp}}(V)|  \leq m \cdot |\mathbb{F}|^{ {2n+1 \choose 2}}$.
These calculations show that the number of nilpotent SAA's is
\[ m = |\mathbb{F}|^{ n^3/3 + O(n^2)} .\]
\end{Remark}
\section{The structure of nilpotent symplectic alternating algebras of maximal class}
We have seen previously that nilpotent SAA's of dimension $2n$ have class at most $2n-3$. For every algebra of dimension $2n \geq 8$ this bound is attained. 
As well as demonstrating this we will see that the structure of these algebras of maximal class is very restricted. \\ \\
\noindent
Let $L$ be a nilpotent SAA of dimension $2n \geq 8$ with an ascending chain of isotropic ideals
\[ \{0\}=I_0 < I_1 < \cdots < I_n, \]
where $\mbox{dim\,} I_j=j$ for  $j=1, \ldots, n$.
\begin{Theorem}\label{thm3.1gth} 
Suppose $L$ is of maximal class. Then 
\[  I_2=Z_1(L),\  I_3=Z_2(L),\   \ldots,\   I_{n-1}=Z_{n-2}(L), \]
\[ \mbox{} I^\perp_{n-1}=Z_{n-1}(L),\   I^\perp_{n-2}=Z_{n}(L),\   \ldots,\   I^\perp_{2}=Z_{2n-4}(L). \]
Furthermore $Z_0(L), Z_1(L), \ldots, Z_{2n-3}(L)$ are the unique ideals of $L$ of dimensions $0, 2, 3,$
$\ldots, n-1, n+1, n+2, \ldots, 2n-2, 2n$.
\end{Theorem}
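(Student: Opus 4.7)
The plan is to identify the central chain from Theorem~\ref{thm2.9gth} term-by-term with the upper central series, and then deduce uniqueness. Set $A_0 = \{0\}$, $A_i = I_{i+1}$ for $1 \leq i \leq n-2$, $A_{n-1+j} = I_{n-1-j}^\perp$ for $0 \leq j \leq n-3$, and $A_{2n-3} = L$; this is a central chain of length exactly $2n-3$, which equals the class. A routine induction using $A_i L \leq A_{i-1}$ gives $A_i \leq Z_i(L)$ for all $i$, so the task reduces to matching dimensions. I first pin the two ends: Corollary~\ref{corr16} gives $\dim Z(L) \geq 2$, and if $\dim Z(L) = r \geq 3$ then a short dimensional check (the strict descending chain $L^2 > L^3 > \cdots > L^{2n-3}$ of length $2n-4$ combined with $\dim L^{2n-3} \geq 2$ forces $\dim L^2 \geq 2n-3$, so $r \leq 3$, in particular $r < n$) puts us inside Remark~\ref{remlocalclass}, which bounds the class by $2n - 2r + 1 < 2n - 3$, a contradiction. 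Hence $\dim Z(L) = 2$ and Lemma~\ref{lma2.7gth} gives $Z_1(L) = I_2$. Dually, $L^{2n-3} \cdot L = \{0\}$ yields $L^{2n-3} \leq Z(L) = I_2$, and Proposition~\ref{pro2.10.1gth} forces $\dim L^{2n-3} \geq 2$, hence $L^{2n-3} = I_2$ and $Z_{2n-4}(L) = I_2^\perp$ by Lemma~\ref{beautiful_relation}.

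Now iterate inward from both ends. The useful inequality gives $\dim Z_2(L) \leq \dim Z_1(L) + \binom{\dim Z_1(L)}{2} = 3$, which combined with $I_3 \leq Z_2(L)$ yields $Z_2(L) = I_3$; dually, $L^{2n-4} \cdot L = L^{2n-3} = Z_1$ gives $L^{2n-4} \leq Z_2 = I_3$, and strict monotonicity forces $L^{2n-4} = I_3$, so $Z_{2n-5}(L) = I_3^\perp$. Writing $\Delta_i = \dim Z_i(L) - \dim Z_{i-1}(L)$, the identity $\sum_{i=1}^{2n-3} \Delta_i = 2n$ over positive terms, together with the established $\Delta_1 = \Delta_{2n-3} = 2$ and $\Delta_2 = \Delta_{2n-4} = 1$, leaves exactly one unit of excess among $\Delta_3, \ldots, \Delta_{2n-5}$; that is, exactly one of these equals $2$ and the rest equal $1$. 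The main obstacle is showing that this unique middle jump occurs at $i = n-1$. The containment $A_{n-1} \leq Z_{n-1}(L)$ with strict monotonicity immediately rules out positions $i \geq n$, since then $\dim Z_{n-1}(L) = n < n+1 = \dim A_{n-1}$. For positions $3 \leq i \leq n-2$, a jump $\Delta_i = 2$ would force a $2$-dimensional centre in the alternating quotient $L/I_i$; invoking the induced SAA structure on $I_i^\perp / I_i$ of dimension $2n-2i$ and applying the useful inequality iteratively from the lower central side should produce the required dimensional contradiction. Once all $\Delta_i$'s are pinned, $A_i = Z_i(L)$ follows from $A_i \leq Z_i(L)$ with equal dimensions, and the remaining equalities $I_{n-1-j}^\perp = Z_{n-1+j}(L)$ follow via Lemma~\ref{beautiful_relation}.

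For uniqueness of ideals of each listed dimension, I induct on the dimension. Dimension $2$ is Lemma~\ref{lma2.7gth}, which gives that the unique $2$-dimensional ideal is $Z_1(L) = I_2$. For $d = \dim Z_k(L)$ with $k \geq 2$, given an ideal $J$ of dimension $d$, the intersection $J \cap Z_{k-1}(L)$ is an ideal of strictly smaller dimension: by induction it coincides with some unique $Z_j(L)$ from the list, and the rigidity of the upper central series from the previous paragraph forces $J + Z_{k-1}(L) \leq Z_k(L)$, whence $J = Z_k(L)$ by equality of dimensions. Uniqueness at the complementary (codimension) dimensions follows by taking $\perp$ via Lemma~\ref{lma115}.
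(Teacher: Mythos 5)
Your overall strategy --- build the chain $A_\bullet$ from the $I_j$ and their perps, show $A_i\leq Z_i(L)$, and then force equality by a dimension count before deducing uniqueness --- is sound in outline, and the pieces you actually carry out are correct: $\dim Z(L)=2$ via Remark \ref{remlocalclass}, $Z_2(L)=I_3$ from the useful inequality, the dual identifications $Z_{2n-4}(L)=I_2^\perp$ and $Z_{2n-5}(L)=I_3^\perp$, the bookkeeping that leaves exactly one extra jump among $\Delta_3,\ldots,\Delta_{2n-5}$, and the exclusion of that jump at positions $i\geq n$ by comparing $\dim I_{n-1}^\perp=n+1$ with $\dim Z_{n-1}(L)=n$. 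But the entire weight of the theorem rests on the one step you leave as a hope: ruling out $\Delta_{i_0}=2$ for $3\leq i_0\leq n-2$. The tools you name do not do this. The useful inequality gives $\Delta_i\leq\frac{1}{2}\Delta_{i-1}(\dim Z_{i-1}(L)+\dim Z_{i-2}(L)-1)$, which permits $\Delta_i=2$ with $\Delta_{i-1}=1$ as soon as $\dim Z_{i-1}(L)+\dim Z_{i-2}(L)\geq 5$, i.e.\ already at $i_0=3$ once $n\geq 5$; likewise the rank-$2$ growth bound $\dim L^3/L^4\leq 2$ is consistent with a jump at $i_0=3$. (For $n=4$ the range $3\leq i_0\leq n-2$ is empty, so your argument happens to be complete in dimension $8$, but not for $2n\geq 10$.) The paper closes exactly this case by a non-numerical device: taking $i$ minimal with $J_i<Z_i(L)$, a witness $u\in Z_i(L)\setminus J_i$ that first enters the chain at the jump $I_{n-1}<I_{n-1}^\perp$ produces a maximal isotropic ideal $I=I_{n-1}+\mathbb{F}u$ with $IL\leq I_{n-2}$ (using minimality of $i$), and by Lemma \ref{lma2.8gth} also $I_{n-2}^\perp L\leq I^\perp=I$; splicing the single term $I$ in place of the pair $I_{n-1}<I_{n-1}^\perp$ yields a central chain of length $2n-4$, contradicting maximal class. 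Some argument of this kind is needed; a pure dimension count does not suffice, and indeed in the hypothetical bad scenario $Z_{n-2}(L)$ really would be a maximal isotropic ideal, which is precisely the configuration that has to be killed by exhibiting a shorter central chain.

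There is also a smaller gap in your uniqueness argument. The ideal $J\cap Z_{k-1}(L)$ need not have a dimension appearing on the list ($1$, $n$ and $2n-1$ are excluded from it), so the inductive hypothesis cannot be applied to it, and the assertion that rigidity "forces $J+Z_{k-1}(L)\leq Z_k(L)$" is not justified. The clean route (and the paper's) is: for $2\leq k\leq n-2$, an ideal $J$ of dimension $k+1$ contains an ideal of dimension exactly $k$ (nilpotence), which by the inductive hypothesis is $Z_{k-1}(L)$; then $J/Z_{k-1}(L)$ is one-dimensional, so $J\leq Z_k(L)$ and equality follows by dimension, with the base case $\dim=2$ given by Lemma \ref{lma2.7gth}. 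The complementary dimensions then follow by applying $\perp$ as you say.
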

\begin{proof}
Let $J_0=\{0\}, J_1=I_2, \ldots, J_{n-2}=I_{n-1}, J_{n-1}=I_{n-1}^\perp, J_n=I_{n-2}^\perp, \ldots, J_{2n-4}=I_2^\perp, J_{2n-3}=L$. 
By Theorem \ref{thm2.9gth}, the chain $J_0 < J_1 < \cdots < J_{2n-3}$ is central.
We argue by contradiction and let $i$ be the smallest integer between $1$ and $2n-4$ where $J_i < Z_i(L)$. 
Let $u \in Z_i(L)\, \setminus\, J_i$ and 
let $k$ be the smallest integer between $i$ and $2n-4$ such that $u \in J_{k+1}$. Then
\[ J_k < J_k + \mathbb{F} u \leq J_{k+1}.\]
If $J_{k+1}/J_k$ has dimension $1$ it follows that $J_{k+1} \leq Z_k(L)$ and we get the contradiction that the class is at most $2n-4$. 
We can thus suppose that $J_{k+1}/J_k$ has dimension $2$ and there are two cases to consider, either $k = n-2$ or $k = 2n-4$. 
In the former case we have
\[ I_{n-1} < I_{n-1} + \mathbb{F}u  \leq  I^\perp _{n-1}\]
which implies that $I = I_{n-1} + \mathbb{F}u$ is an isotropic ideal of maximal dimension $n$. 
As $u \in Z_{n-2}(L)$, we have that $I_{n-2} < I$ is centralised by $L$. 
By Lemma \ref{lma2.8gth} it follows that $I < I^\perp_{n-2}$ is also centralised by $L$ and we get a central series
\[  \{0\}=I_0 < I_2 < I_3< \cdots < I_{n-2} < I < I_{n-2}^\perp < I_{n-3}^\perp < \cdots < I_2^\perp < I_0^\perp=L \]
of length $2n-4$ and we get again the contraction that the class is less than $2n-3$. 
Finally suppose that $k = 2n-4$. So we have 
\[ I^\perp_2 < I^\perp_2 + \mathbb{F}u < L \]
and $u \in Z_{2n-4}(L)$. 
Now let $v \in L \,\setminus\, (I^\perp_2 + \mathbb{F}u)$. Then $L = I_2^\perp + \mathbb{F}u + \mathbb{F}v$ and $L^2 = (I^\perp_2 + \mathbb{F}u)L \leq Z_{2n-5}(L)$. 
Hence $ L \leq Z_{2n-4}(L)$ that again contradicts the assumption that $L$ is of class $2n-3$.\\ \\
We now want to show that these terms of the upper central series are the unique ideals of dimensions $ 0, 2, 3, \ldots, n-1, n+1, n+2, \ldots, 2n-2, 2n$.
First let $I$ be an ideal of dimension $2$. By Lemma \ref{lma2.7gth} we have that $I \leq Z(L)$ and as we have seen that $Z(L)$ has dimension $2$, it follows that $I=Z(L)$. 
Now suppose that for some $ 2 \leq k \leq n-2$ we know that $Z_{k-1}(L)$ is the only ideal of dimension $k$. 
Let $I$ be an ideal of dimension $k+1$. As $L$ is nilpotent 
we have that $I$ contains an ideal $J$ of dimension $k$. By the induction hypothesis we have that $J = Z_{k-1}(L)$ and as $I/J$ is of dimension $1$ we have that $I \leq Z_k(L)$. 
We have that $Z_k(L)$ has dimension $k+1$ and thus $I=Z_k(L)$.
We have thus seen that there are unique ideals of dimensions $0, 2, 3, \ldots, n-1$. 
Now let $I$ be an ideal of dimension $i \in \{ n+1, n+2, \ldots, 2n-2, 2n \}$. 
Then $I^\perp$ is an ideal whose dimension is in $\{0, 2, 3, \ldots, n-1\}$. 
By what we have just seen $I^\perp$ is unique and thus $I$ as well.
\end{proof}
\begin{Remark}
$(1)$ In particular it follows that $Z_k(L)^\perp=Z_{2n-3-k}(L)$ for $0 \leq k \leq 2n-3$.\\ \\
$(2)$ As $L^{k}=Z_{k-1}(L)^\perp$, it follows that $L, L^2, \ldots, L^{2n-2}$ are the unique ideals of dimensions $2n, 2n-2, 2n-3, \ldots, n+1, n-1, n-2, \ldots, 2, 0$. Also
\[L^k=Z_{k-1}(L)^\perp=Z_{2n-k-2}(L).\]
\end{Remark}
\begin{Remark}
Let $L$ be any nilpotent SAA of dimension $2n \geq 6$ with the property that $\mbox{dim\,} Z(L)=2$. Notice that $Z(L)$ must be isotropic since otherwise we would have a $2$-dimensional symplectic subalgebra $I$ within $Z(L)$ and we would get a direct sum $I \operp I^\perp$ of two SAA's. As $I^\perp$ has non-trivial center this would contradict the assumption that $Z(L)$ is $2$-dimensional. Now $L$ has rank $2$. Suppose it is generated by $x, y$. Then $L$ is generated by $x, y, xy$ modulo $L^3$ and thus $ \mbox{dim\,} Z_2(L) = \mbox{dim\,} (L^3)^\perp = \mbox{dim\,} L - \mbox{dim\,} L^3 = 3.$
\end{Remark}
\noindent
{\bf The complete list of ideals of $L$.}  
We have seen that there is a unique ideal of dimension $k$ for any $0 \leq k \leq 2n$ apart from $k=1, k=n$ and $k=2n-1$. 
Let us now turn to the remaining dimensions. 
Now every ideal of dimension $1$ is contained in $Z(L)$ and conversely every subspace of dimension $1$ in $Z(L)$ is an ideal.\\ \\
Next consider an ideal $I$ of dimension $2n-1$. Then $I^\perp$ is an ideal of dimension $1$ and is thus any subspace of dimension 1 such that
 \[ \{0\} < I^\perp < Z(L) \]
Equivalently, $I$ is any subspace of dimension $2n-1$ such that
\[ L^2=Z(L)^\perp < I < \{0\}^\perp=L .\]
Finally consider an ideal $I$ of dimension $n$. Since $L$ is nilpotent there exists an ideal $J$ of dimension $n+1$ containing $I$. 
By last theorem we have that $J=L^{n-1}=Z_{n-2}(L)^\perp$. Also %
$I$ contains an ideal of dimension $n-1$  that we know is $Z_{n-2}(L)$. Thus
\[ Z_{n-2}(L) < I < Z_{n-2}(L)^\perp . \]
We also know from our previous work that $Z_{n-2}(L)$ is an isotropic ideal of dimension $n-1$. $I$ is thus an isotropic ideal of the form 
\[ Z_{n-2}(L) + {\mathbb F} u \]
For some $u \in Z_{n-2}(L)^\perp \ \setminus \ Z_{n-2}(L)$. 
Conversely, as $Z_{n-2}(L)^\perp L \leq Z_{n-2}(L)$ we have that for any intermediate subspace $I$ of dimension $n$ between $Z_{n-2}(L)$ and $Z_{n-2}(L)^\perp$, $I$ is an ideal.\\ \\
We thus have a complete picture of the ideals of $L$.\\ \\
We now focus on the characteristic ideals. It turns out that there are as well always characteristic ideals of dimension $1, n$ and $2n-1$ when $2n \geq 10$.
\begin{Remark} 
Notice that if $I$ is a characteristic ideal then the ideal $I^\perp$ is also characteristic. 
To see this let $\phi$ be any automorphism of the SAA $L$ and let $a \in I^\perp$. 
As $\phi$ is an automorphism we have that $\phi(a) \in \phi(I)^\perp = I^\perp$.
\end{Remark}
\begin{Theorem}\label{thm3.2gth}
Let $L$ be a nilpotent SAA of dimension $2n \geq 10$ that is of maximal class. $L$ has a chain of characteristic ideals
\[\{0\}=I_0 < I_1 < \cdots < I_{n} < I^\perp_{n-1} <  \cdots < I^\perp_1 < I^\perp_0=L \]
where for $0 \leq k \leq n$, $I_k$ is isotropic of dimension $k$.
\end{Theorem}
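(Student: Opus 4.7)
The plan is to supplement the chain of characteristic ideals implicit in Theorem~\ref{thm3.1gth} --- namely $I_k = Z_{k-1}(L)$ for $2 \leq k \leq n-1$, which are isotropic by Theorem~\ref{thm2.9gth} --- with two missing pieces: a characteristic isotropic $I_1$ of dimension $1$ and a characteristic isotropic $I_n$ of dimension $n$. Once these are constructed, the ideals $I_k^\perp$ are characteristic by the Remark preceding the theorem, and assembling everything gives the full chain in the statement.

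Both missing ideals will come from a single characteristic $1$-dimensional subspace $K$ of $L/L^2$, and it is in locating $K$ that the hypothesis $2n \geq 10$ (i.e.\ $n \geq 5$) is essential. I would pick any $k$ with $2 \leq k \leq n-3$; by Theorem~\ref{thm3.1gth} together with the duality $L^r = Z_{r-1}(L)^\perp$ both $L^k/L^{k+1}$ and $L^{k+1}/L^{k+2}$ are $1$-dimensional, so right multiplication defines a linear map $\psi : L \to \mathrm{Hom}(L^k/L^{k+1}, L^{k+1}/L^{k+2}) \cong \mathbb{F}$ by $w \mapsto (\bar u \mapsto \overline{uw})$. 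Iterating $L \cdot Z_m \leq Z_{m-1}$ (a consequence of Lemma~\ref{lma2.4gth}) gives $L^2 Z_{k+1} \leq Z_{k-1}$, which via the standard duality is equivalent to $L^2 L^k \leq L^{k+2}$; so $\psi$ descends to a linear functional $\bar\psi : L/L^2 \to \mathbb{F}$ which is nonzero because $L^{k+1} \neq L^{k+2}$. Hence $K := \ker \bar\psi$ is genuinely $1$-dimensional in the $2$-dimensional $L/L^2$, and it is characteristic because every automorphism of $L$ preserves the terms of the lower central series individually and therefore commutes with $\psi$.

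From $K$ I would construct $I_1$ and $I_n$ simultaneously. For $I_1$, let $\widetilde K \leq L$ be the preimage of $K$, a hyperplane containing $L^2$, and set $I_1 := \widetilde K^\perp$; since $L^2 \leq \widetilde K$ we have $I_1 \leq (L^2)^\perp = Z(L)$, making $I_1$ a $1$-dimensional characteristic isotropic ideal. For $I_n$, the same duality arguments show that the natural map $(L/L^2) \otimes (L^{n-2}/L^{n-1}) \to L^{n-1}/L^n$, $\bar x \otimes \bar v \mapsto \overline{xv}$, is well-defined (using $L^2 L^{n-2} \leq L^n$) and surjective (using $L^{n-1} = L \cdot L^{n-2}$), and hence an isomorphism between two $2$-dimensional spaces. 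Transporting $K$ across this isomorphism yields a characteristic line $\bar J \leq L^{n-1}/L^n$, and I take $I_n$ to be its preimage $L^n + \mathbb{F} u$ in $L^{n-1}$. This $I_n$ has dimension $n$, contains $I_{n-1} = Z_{n-2}(L) = L^n$, is an ideal (as $uL \leq L^n$), and is isotropic because $L^n$ is isotropic while $(L^n, u) = 0$ for $u \in L^{n-1} = (L^n)^\perp$.

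The main obstacle I anticipate is the bookkeeping of the various inclusions $L^a L^b \leq L^c$ --- the chapter's opening example warns that these can fail badly in a general SAA --- but every instance needed above reduces via the equivalence $L^a L^b \leq L^c \Leftrightarrow L^a Z_{c-1} \leq Z_{b-1}$ to a short iteration of $L \cdot Z_m \leq Z_{m-1}$.
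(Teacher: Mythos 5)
Your high-level plan is the same as the paper's: keep the characteristic ideals supplied by Theorem~\ref{thm3.1gth} and manufacture the two missing characteristic ideals $I_1$ and $I_n$ (with $I_1^\perp$ giving the one of dimension $2n-1$). The gap lies in the inclusions of the form $L^aL^b \leq L^{a+b}$ on which both of your constructions rest. The justification ``iterating $L\cdot Z_m \leq Z_{m-1}$ gives $L^2Z_{k+1}\leq Z_{k-1}$'' is a non-associativity fallacy: iteration yields $(Z_{k+1}L)L\leq Z_{k-1}$, which is not the subspace $(LL)Z_{k+1}=L^2Z_{k+1}$; the $12$-dimensional example at the start of Chapter~3 ($L^2L^2\not\leq L^4$) shows no such formal argument can exist. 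Concretely, the inclusion $L^2L^k\leq L^{k+2}$ that you need for $K$ is \emph{false} for $3\leq k\leq n-3$ even in the maximal-class case: in the adapted standard basis the only obstruction is the triple value $(x_{n-k-1}y_{n-k},y_j)$ for $n-k<j\leq n-2$, and these are free parameters; when one is nonzero, $y_{n-k}y_j\in L^kL^2$ lies outside $L^{k+2}$. So ``pick any $k$'' does not work. It does hold for $k=2$ (a product $y_iy_j$ with $i<j\leq n-2$ can pair nontrivially with $x_m$ only if $m<i<j$, impossible for $m\geq n-3$), but this has to be argued from the maximal-class basis, not from central-series bookkeeping. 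With that repair your $I_1$ is exactly the paper's $Z_3(L)\cdot L^2=\mathbb{F}x_{n-3}y_{n-2}$, seen through its perp, so this half is salvageable.

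The construction of $I_n$ cannot be repaired as stated. Well-definedness of the map $(L/L^2)\otimes(L^{n-2}/L^{n-1})\to L^{n-1}/L^n$ requires $L^2L^{n-2}\leq L^n$, and this genuinely fails for maximal-class algebras. For example, take $2n=10$ and the nilpotent presentation $(x_2y_3,y_5)=(x_3y_4,y_5)=(x_1y_2,y_4)=(y_1y_2,y_3)=(x_1y_2,y_3)=1$; by Theorem~\ref{thm3.4gth} this is of maximal class, yet $y_2\in L^3=L^{n-2}$, $y_3\in L^2$, while $y_2y_3=x_1-y_1\notin L^5=\mathbb{F}x_5+\cdots+\mathbb{F}x_2$ because $(y_2y_3,x_1)=(x_1y_2,y_3)=1$. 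In this algebra the characteristic subspace $\widetilde{K}L^{n-2}+L^n$ fills out all of the $2$-dimensional $L^{n-1}/L^n$ (it contains both $\bar{x}_1$ and $\bar{y}_1$), so no characteristic line $\bar{J}$, and hence no $I_n$, is produced. The paper sidesteps this by using the $2$-dimensional characteristic subspace $L^{n-1}L^{n-2}=\mathbb{F}x_1y_2+\mathbb{F}y_1y_2$, which lands inside $I_{n-2}$ rather than in a quotient, and then cutting it down against the already-constructed characteristic chain $I_2<I_3<\cdots$ to isolate a line $J$ and the element $u\in\mathbb{F}x_1+\mathbb{F}y_1$ with $uL^{n-2}=J$; some step of this kind, anchored to the chain itself, is needed to finish your argument.
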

\begin{proof}
By Theorems \ref{thm2.9gth} and \ref{thm3.1gth}, we know that we can get such a chain of ideals where all the ideals apart from $I_1, I_n$ and $I_{2n-1}$ are characteristic. 
We want to show that we can choose our chain such that $I_1, I_n$ and $I_{2n-1}$ are also characteristic. 
Let $x_1, y_1, \ldots, x_n, y_n$ be a standard basis such that
\[ I_k = \mathbb{F} x_n + \mathbb{F} x_{n-1} + \cdots + \mathbb{F} x_{n+1-k}\]
for $1 \leq k \leq n$. Then $I_4 I_2^\perp= \mathbb{F} x_{n-3}y_{n-2}$ is a characteristic ideal. 
We claim that this is non-trivial. Otherwise $x_{n-3}y_{n-2}=0$ and then $(x_{n-3} u, y_{n-2}) = 0$ for all $u \in L$ that implies that $x_{n-3}L \leq {\mathbb F} x_n+{\mathbb F} x_{n-1}$ 
and we get the contradiction that $x_{n-3} \in Z_2(L)=I_3$. 
Thus we have got a characteristic ideal of dimension $1$, namely $I_4I_2^\perp=Z_3(L) \cdot L^2$. 
Notice that we are assuming here that $n \geq 5$. From this we get that $(I_4I_2^\perp)^\perp$ is a characteristic ideal of dimension $2n-1$.\\ \\
It remains to find a characteristic ideal of dimension $n$.
We know that 
$L^n = \mathbb{F} x_n + {\mathbb F} x_{n-1}+\cdots + {\mathbb F} x_2,\ L^{n-1} = I^\perp_{n-1} = {\mathbb F} x_n + \cdots + {\mathbb F} x_1 + {\mathbb F} y_1$ 
and 
$L^{n-2}=I^\perp_{n-2}=I^\perp_{n-1}+{\mathbb F} y_2$. 
As $L^{n-1}=L^{n-2} \cdot L$ it follows that $L^n + {\mathbb F} x_1+{\mathbb F} y_1 = (L^{n-1} + {\mathbb F} y_2)L$ and thus 
\[ L^n + {\mathbb F} x_1 + {\mathbb F} y_1 = L^n + y_2 L.\]
Thus there exist $u, v \in L$ such that $y_2u + L^n = x_1 + L^n$ and $y_2v + L^n = y_1 + L^n$. Then
\[ (y_2 u, x_1) =0,\ (y_2 u, y_1) \neq 0,\ (y_2 v, x_1) \neq 0,\ (y_2 v, y_1) =0.\] 
Equivalently
\[ (x_1y_2, u) =0,\ (x_1y_2, v) \neq 0,\ (y_1y_2, u) \neq 0,\ (y_1y_2, v) =0\]
and this implies that $x_1y_2, y_1y_2$ are linearly independent (something that will also be useful later). 
Consider next the $2$-dimensional characteristic subspace 
\[ L^{n-1}L^{n-2} = {\mathbb F} x_1y_2 +{\mathbb F} y_1y_2.\]
Notice that $L^{n-1}L^{n-2} \leq I_{n-2}$. Let $k$ be the smallest positive integer between $1$ and $n-3$ such that $L^{n-1}L^{n-2} \leq I_{k+1}$. 
Let $J=L^{n-1} L^{n-2} \cap I_k$. Then $\mbox{dim\,} J=1$ and there is a unique one-dimensional subspace 
$\mathbb{F}u$ of ${\mathbb F} x_1+ {\mathbb F} y_1$ such that ${\mathbb F} u {L^{n-2}}=J$. 
Now $I= I_{n-1} + {\mathbb F} u$ is the characteristic ideal of dimension $n$ that we wanted. Notice that $I= \{ x \in I_{n+1}: x I_{n+2} \leq  J \}$.
\end{proof}
\begin{Remark}\label{rem-char-in-dim8}
If $L$ is a nilpotent SAA of dimension $8$ that is of maximal class then there is no characteristic ideal of 
dimension $1$. The reader can convince himself of this by looking at the classification of these algebras 
given in the next chapter.
\end{Remark}
\noindent
\begin{corrollary}\label{corr3.3gth}
 Let $L$ be a nilpotent SAA of maximal class and dimension $2n \geq 10$. The automorphism group of $L$ is nilpotent-by-abelian.
 \end{corrollary}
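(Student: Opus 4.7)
The plan is to exploit the chain of characteristic ideals provided by Theorem \ref{thm3.2gth}. I would first relabel that chain as a single refined filtration $\{0\}=L_0<L_1<\cdots<L_{2n}=L$ of length $2n$, setting $L_k=I_k$ for $0\leq k\leq n$ and $L_k=I^\perp_{2n-k}$ for $n\leq k\leq 2n$ (these agree at $k=n$ because $I_n$ is maximal isotropic). The dimensions run $0,1,2,\ldots,2n$, so every successive quotient $L_k/L_{k-1}$ is one-dimensional. Since each $L_k$ is characteristic, every $\phi\in\mbox{Aut\,} L$ preserves every $L_k$ and therefore induces on each $L_k/L_{k-1}$ multiplication by some scalar $\lambda_k(\phi)\in\mathbb{F}^*$.

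This gives a group homomorphism
\[
\rho\colon \mbox{Aut\,} L\longrightarrow (\mathbb{F}^*)^{2n},\qquad \phi\longmapsto (\lambda_1(\phi),\ldots,\lambda_{2n}(\phi))
\]
whose image lies in an abelian group. It therefore suffices to show that $N=\ker\rho$ is nilpotent, for then $\mbox{Aut\,} L$ will be nilpotent-by-abelian.

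To prove nilpotency of $N$, I would introduce the descending ``unitriangular'' filtration
\[
N_j=\{\phi\in\mbox{Aut\,} L:(\phi-\mbox{id})(L_k)\subseteq L_{k-j}\text{ for all }k\},
\]
with the convention $L_i=\{0\}$ for $i\leq 0$, so that $N=N_1\geq N_2\geq\cdots\geq N_{2n+1}=\{\mbox{id}\}$. The key step is to check that $[N_i,N_j]\leq N_{i+j}$: writing $\phi=\mbox{id}+a$ and $\psi=\mbox{id}+b$ for $\phi\in N_i$, $\psi\in N_j$, direct expansion gives $\phi\psi-\psi\phi=ab-ba$ and hence
\[
\phi\psi\phi^{-1}\psi^{-1}-\mbox{id}=(ab-ba)\phi^{-1}\psi^{-1};
\]
since $\phi^{-1}$ and $\psi^{-1}$ preserve each $L_k$ while $ab-ba$ maps $L_k$ into $L_{k-(i+j)}$, this yields $[\phi,\psi]\in N_{i+j}$. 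The resulting central series forces $N$ to be nilpotent of class at most $2n$.

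The only real obstacle is the bookkeeping in this commutator computation; no further SAA-specific input is required once the characteristic chain of Theorem \ref{thm3.2gth} is in hand, and the argument is essentially the standard one that the stabiliser of a complete flag on a vector space is nilpotent-by-abelian.
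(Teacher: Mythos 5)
Your proposal is correct and follows essentially the same route as the paper: both arguments rest on the complete flag of characteristic subspaces supplied by Theorem \ref{thm3.2gth}, with respect to which every automorphism acts by upper triangular matrices. The paper simply cites this triangularity and says ``the result follows,'' whereas you spell out the standard final step (the diagonal quotient is abelian and the unitriangular kernel is nilpotent via the filtration $[N_i,N_j]\leq N_{i+j}$), which is a correct and complete elaboration of the same idea.
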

\begin{proof}
Consider a chain of characteristic ideals as given in the last theorem 
\[\{0\}=I_0 < I_1 < \cdots < I_{n} < I_{n-1}^\perp <   I_{n-2}^\perp < \cdots < I_0^\perp=L .\]
Consider the ordered basis $(x_n, x_{n-1}, \ldots, x_1, y_1, \ldots, y_n)$ associated with this chain, that is 
$I_k = \mathbb{F}x_n + \mathbb{F} x_{n-1} + \cdots + \mathbb{F} x_{n+1-k}$. 
As the ideals in the chain are all characteristic we see that the matrix of any automorphism with respect to that ordered basis will be upper triangular. The result follows.
\end{proof}
\noindent
We next move on to presentations of nilpotent SAA's of maximal class. Suppose $L$ is any nilpotent SAA with a presentation
\[{\mathcal P}: \quad (x_iy_j,y_k)=\alpha_{i j k},\quad (y_iy_j,y_k)=\beta_{i j k}, \quad 1 \leq i < j < k \leq n .\]
We would like to read from the presentation whether the algebra is of maximal class. This turns out to be possible.
\begin{Theorem}\label{thm3.4gth}
Let $L$ be a nilpotent SAA of dimension $2n \geq 8$ given by some nilpotent presentation ${\mathcal P}$. The algebra is of maximal class 
if and only if 
$x_{n-2}y_{n-1},\ $
$x_{n-3}y_{n-2},\  \ldots$
$,\ x_2y_3$ are non-zero 
and 
$x_1y_2, y_1y_2$ are linearly independent.
\end{Theorem}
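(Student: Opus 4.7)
The plan is to handle the forward and backward implications separately: the forward direction is essentially bookkeeping from Theorem~\ref{thm3.1gth}, while the backward direction is an induction along the central chain whose crux is the extraction of $x_1$ from products, where the linear independence hypothesis is indispensable.

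For the forward direction, assume $L$ is of maximal class. Theorem~\ref{thm3.1gth} identifies the isotropic chain of Theorem~\ref{thm2.9gth} with the upper central series, so $Z_{k-1}(L)=I_k$ for $2\le k\le n-1$ and $Z_{n-1}(L)=I_{n-1}^\perp$. For $2\le i\le n-2$ the element $x_i$ lies in $Z_{n-i}(L)\setminus Z_{n-i-1}(L)$, so $x_iL\not\le I_{n-i-1}$. Expanding $x_iy_k$ from the nilpotent presentation shows that $x_iy_j=0$ for $j\le i$ and that for $k>i+1$ the $x_{i+1}$-component of $x_iy_k$ equals $-\alpha_{i,i+1,k}$; hence $x_iL\not\le I_{n-i-1}$ is equivalent to $x_iy_{i+1}\ne 0$. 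The linear independence of $x_1y_2$ and $y_1y_2$ is obtained by replicating the argument used in the proof of Theorem~\ref{thm3.2gth}: from $L^{n-1}=L^n+\mathbb{F}x_1+\mathbb{F}y_1$ and $L^{n-2}=L^{n-1}+\mathbb{F}y_2$ (both by maximal class) we get $L^{n-1}=L^n+y_2L$, producing elements $u,v\in L$ with $y_2u\equiv x_1$ and $y_2v\equiv y_1\pmod{L^n}$, and the SAA pairings then separate $x_1y_2$ from $y_1y_2$.

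For the converse, I show by induction on $k$ that $L^k=I_k^\perp$ for $2\le k\le n-1$, that $L^n=I_{n-1}$, and that $L^k=I_{2n-k-1}$ for $n<k\le 2n-3$; by Lemma~\ref{beautiful_relation} this matches the central chain with the upper central series and forces maximal class via Theorem~\ref{thm3.1gth}. The base case $Z(L)=I_2$ is checked directly: $x_iy_{i+1}\ne 0$ (the case $i=1$ being subsumed in the linear independence hypothesis) rules out $x_i\in Z(L)$ for $1\le i\le n-2$; the non-vanishing of $x_{j-1}y_j$ for $3\le j\le n-1$ and of $y_1y_2$ handles $y_j$ for $1\le j\le n-1$; and $y_n\notin Z(L)$ follows from $x_{n-2}y_n=-\alpha_{n-2,n-1,n}x_{n-1}\ne 0$. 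For the inductive step, $L^k=I_{k-1}^\perp L\le I_k^\perp$ is automatic from the central chain, and for the reverse inclusion one extracts each basis vector of $I_k^\perp$ from products in $L^k$: each $x_j$ with $2\le j\le n$ by downward induction on $j$ using $x_{j-1}y_{k_0}$ with $\alpha_{j-1,j,k_0}\ne 0$; each $y_l$ with $2\le l\le n-k$ by upward induction on $l$ using $y_{l+1}y_{j_0}$ with $\alpha_{l,l+1,j_0}\ne 0$; and finally $x_1,y_1$ by choosing $w\in L$ such that $y_2w$ has the required component (valid since $k\le n-1$ places $y_2$ in $I_{k-1}^\perp$). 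The cases $k\ge n$ run analogously, with the structural fact $y_1L\le I_{n-1}$ accounting consistently for the dimension drop at the middle of the series.

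The main obstacle is the extraction of $x_1$ and $y_1$ in the inductive step. The $x_1$-coefficient of $y_2w$ depends linearly on the $y$-coefficients of $w$ through the vector $(\beta_{1,2,l})$, while its $y_1$-coefficient depends through $(\alpha_{1,2,l})$; since these are precisely the coordinate vectors of $y_1y_2$ and $x_1y_2$, the linear independence hypothesis is exactly what allows $w$ to be chosen with one coefficient vanishing and the other non-zero. Without this hypothesis the directions $x_1$ and $y_1$ would collapse in $L^k$, leaving a positive-dimensional subspace of $Z_{k-1}(L)\setminus I_k$ and obstructing maximal class, so this is the one place in the argument where the non-vanishing conditions alone do not suffice.
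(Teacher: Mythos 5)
Your argument is correct and is essentially the paper's proof: necessity by identifying the isotropic chain with the central series (the paper phrases this through the lower central series, you through the upper, which are dual via Lemma \ref{beautiful_relation}) together with the argument from the proof of Theorem \ref{thm3.2gth} for the independence of $x_1y_2$ and $y_1y_2$; and sufficiency by propagating the lower central series down the chain $I_2^\perp, I_3^\perp,\ldots,I_{n-1}^\perp, I_{n-1},\ldots, I_2$ one term at a time, with the linear independence hypothesis used precisely at the step $I_{n-2}^\perp L=I_{n-1}^\perp$, where both $x_1$ and $y_1$ must be extracted from $y_2L$. The one slip is the boundary case $j=n$ of your extraction of the $x_j$: $x_{n-1}y_{k_0}=0$ identically (there is no admissible triple $(n-1,n,k_0)$ with $k_0\leq n$, and indeed $x_{n-1}\in Z(L)=I_2$), so $x_n$ cannot be reached from $x_{n-1}L$; instead $x_n$ and $x_{n-1}$ arise simultaneously from $x_{n-2}L$ via $x_{n-2}y_{n-1}=\alpha(n-2,n-1,n)\,x_n$ and $x_{n-2}y_n=-\alpha(n-2,n-1,n)\,x_{n-1}$ --- exactly the paper's step $I_3L=x_{n-2}L=\mathbb{F}x_n+\mathbb{F}x_{n-1}=I_2$ --- which is also why the list of hypotheses begins at $x_{n-2}y_{n-1}$ rather than at $x_{n-1}y_n$.
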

\begin{proof} 
Let us first see that these conditions are necessary. Suppose that $L$ is of maximal class. In the proof of Theorem \ref{thm3.2gth} we have already seen that $x_1y_2$ and $y_1y_2$ must be linearly independent. As before we let $I_k = {\mathbb F} x_n+ \cdots + {\mathbb F} x_{n+1-k}$. As $x_{n-2} \not \in Z(L)$, we have $(x_{n-2} y_{n-1}, y_n) \neq 0$ and thus $x_{n-2}y_{n-1} \neq 0$. As the terms of the central chain 
\[ I_0 < I_2 < I_3 < \cdots < I_{n-1} < I^\perp_{n-1} < I^\perp_{n-2} < \cdots < I^\perp_2 < I^\perp_0 \]
are the terms of the lower central series, we know that $I_{k+1}L=I_k$ for $2 \leq k \leq n-2$. Thus we have for $3 \leq k \leq n-2$ that
\[ I_{k-1}+{\mathbb F} x_{n-k+1} = (I_k + {\mathbb F} x_{n-k})L.\]
From this it follows $I_{k-1}+ {\mathbb F} x_{n-k+1} = I_{k-1}+ x_{n-k}L$. In particular there exist $u \in L$ such that 
		$I_{k-1}+ x_{n-k+1}= I_{k-1}+ x_{n-k}u$. 
It follows that $0 \neq ( x_{n-k} u, y_{n-k+1}) = - (x_{n-k} y_{n-k+1}, u)$. 
Hence $ x_{n-k} y_{n-k+1}$ is non-zero for $ 3 \leq k \leq n-2$.\\\\
Let us then see that these conditions are sufficient. We do this by showing that $I_2 = I_3L,\ I_3=I_4L,\ \ldots ,\ I_{n-2} = I_{n-1}L,\ I_{n-1}={I^\perp_{n-1}} L,\ {I^\perp_{n-1}} = {I^\perp_{n-2}} L,\ 
\ldots,\ {I^\perp_3} = {I^\perp_2} L,\ {I^\perp_2} = {I^\perp_0} L$. This is sufficient as this would imply that $L^{2n-3} = I_2 \neq 0$ and thus $L$ is nilpotent of class $2n-3$. Firstly as $x_{n-2}y_{n-1} \neq 0$ we have that $(x_{n-2}y_{n-1}, y_n) \neq 0$ and thus $I_3L = x_{n-2}L = {\mathbb F} x_n + {\mathbb F} x_{n-1} = I_2$. Now suppose we have already established that $I_k = I_{k+1}L$ for all $2 \leq k \leq m$ where $2 \leq m \leq n-3$. Then
\[I_{m+2}L = (I_{m+1}+ {\mathbb F} x_{n-m-1})L = I_m + x_{n-m-1} \cdot L \]
As $x_{n-m-1}y_{n-m} \neq 0$ we have $(x_{n-m-1}u, y_{n-m}) = - (x_{n-m-1}y_{n-m}, u) \neq 0$ for some $u \in L$ and thus 
$I_{m+2} L = I_m + x_{n-m-1}L = I_m +{\mathbb F} x_{n-m} = I_{m+1}$.
We have thus established by induction that
\[I_2 = I_3L,\  \ldots,\  I_{n-2} = I_{n-1}L.\]
We next show that ${I^\perp_{n-1}} L =I_{n-1}$. As $x_1y_2 \neq 0$ we have that there exist $u \in L$ such that $0 \neq (x_1y_2, u) = - (x_1u, y_2)$ and 
\[{I^\perp_{n-1}} L = (I_{n-1} + {\mathbb F} x_1+ {\mathbb F} y_1) L = I_{n-2}+{\mathbb F} x_2 = I_{n-1}.\]
Next we show that ${I^\perp_{n-2}} L =I^\perp_{n-1}$. As $x_1y_2, y_1y_2$ are linearly independent there exist $u, v \in L$ such that
\[ (x_1y_2, u) =0,\ (x_1y_2, v) \neq 0,\ (y_1y_2, u) \neq 0,\ (y_1y_2, v) =0\]
and thus 
\[ (y_2 u, x_1) =0,\ (y_2 u, y_1) \neq 0,\ (y_2 v, x_1) \neq 0,\ (y_2 v, y_1) =0.\]
Hence
\[{I^\perp_{n-2}} L = ({I^\perp_{n-1}}+ {\mathbb F}y_2)L = I_{n-1} + {\mathbb F} y_2 L= I_{n-1} + {\mathbb F}x_1 + {\mathbb F}y_1 = {I^\perp_{n-1}}.\]
Now suppose that we have established that ${I}^\perp_{k-1} L ={I}^\perp_k$ for $m+1 \leq k \leq n-1$ where $ 3 \leq m \leq n-2$. As $x_{n-m}y_{n-m+1} \neq 0$ it follows that
 there exists $ u \in L$ such that $ 0 \neq ( x_{n-m}y_{n-m+1}, u)= ( y_{n-m+1} u, x_{n-m})$. Thus
\[ {I^\perp_{m-1}} L = ( {I^\perp _{m}}+ y_{n-m+1})L  = {I^\perp_{m+1}} + y_{n-m+1}L = {I^\perp_{m+1}} + {\mathbb F} y_{n-m} = {I^\perp_{m}} .\]
It now only remains to see that ${I^\perp_0} L = {I^\perp_{2}}$. But this follows from $x_{n-2}y_{n-1} \neq 0$ that implies that $(y_{n-1}y_n, x_{n-2})= ( x_{n-2} y_{n-1}, y_n) \neq 0$. Thus
\[ {I^\perp_0} L = ( {I^\perp_{2}} + {\mathbb F} y_{n-1}+ {\mathbb F} y_n) L = {I^\perp_{3}} + ( {\mathbb F} y_{n-1}+ {\mathbb F} y_n) L = {I^\perp_{3}} + {\mathbb F} y_{n-2} = {I^\perp_{2}} .\]
This finishes the proof.
\end{proof}
\begin{Remark}
In particular it follows that for each $2n \geq 8$ there exist a nilpotent symplectic alternating algebra of maximal class. One just needs to choose the presentation such that the conditions from Theorem \ref{thm3.4gth} hold. One possibility is
\begin{align*}\label{maxeg}
 {\mathcal P}: \ (x_{n-2}y_{n-1},y_n)&=-1, (x_{n-3}y_{n-2},y_n)=-1,\ldots, (x_2y_3, y_n)=-1,
(x_1y_2, y_n)=-1,  \\ (y_1y_2, y_{n-1})&=-1. 
\end{align*}
In fact the conditions are not a strong constraint. 
In particular the values of $(x_iy_j, y_k),$
$\ (y_i y_j, y_k)$ where $j - i \geq 2$ can be chosen freely. 
The number of such triples is $2 {n-1 \choose 3}$ that is a polynomial in $n$ of degree $3$ with leading coefficient $1/3$.
Let ${\mathbb F}$ be any finite field. By a similar argument as we used for determining the growth of nilpotent SAA's we see that the number $m(n)$ of nilpotent SAA's of maximal class satisfies
\[ m(n) = |{\mathbb F}|^{n^3/3 + O(n^2)} .\] 
\end{Remark}
\begin{Remark}
$(1)$ Let $L$ be a nilpotent SAA of dimension $2n \geq 10$ that is of maximal class and consider a chain $\{0\}=I_0 < \cdots < I_n$ of characteristic ideals where 
$I_k$ is of dimension $k$. We have, for $4 \leq m \leq n-1$,
\[ I_m I^\perp_{m-2} = \mathbb{F} x_{n+1-m}y_{n+2-m} \]
and thus we get that $\mathbb{F} x_2y_3, \mathbb{F} x_3y_4, \ldots, \mathbb{F} x_{n-3}y_{n-2}$ are one-dimensional characteristic subspaces of $L$. Also,
\[ I_{n-1}^\perp I_{n-2}^\perp = \mathbb{F} x_1y_2+\mathbb{F} y_1y_2 \]
is a characteristic subspace. So is
$ I_n^\perp I_{n-2}^\perp = \mathbb{F} x_1y_2 .$\\\\
$(2)$ If $V$ is a characteristic subspace of dimension $d$ then we get a chain of characteristic subspaces
\[ V \cap I_1 \leq V \cap I_2 \leq \cdots \leq V \cap I_n \leq V \cap I^\perp_{n-1} \leq \cdots \leq V \cap I^\perp_0 =V .\]
Thus there is a chain of characteristic subspaces $V_1 < V_2 < \cdots < V_d$ where $V_i$ is of dimension $i$.
\end{Remark}
\chapter{Nilpotent Symplectic Alternating Algebra of dimension $2 \lowercase{n} \leq 8$}
The classification of the nilpotent SAA's of dimensions at most $8$ is implicit in \cite{2} although this is not done explicitly and the context there is a more general setting.
To demonstrate the machinery that we have developed 
we will offer a much shorter approach here. Throughout this chapter we will be working with an arbitrary field $\mathbb{F}$.\\\\
We have observed earlier that nilpotent SAA's of dimensions $2$ or $4$ must be abelian.
\section{Algebras of dimension $6$}
Let $L$ be a non-abelian nilpotent SAA of dimension $6$ with a nilpotent presentation ${\mathcal P}$. There are at most two non-zero triple values
\[ (x_1y_2, y_3) = a,\ (y_1y_2, y_3) = b. \]
As $L$ is non-abelian, one of these must be non-zero and, by replacing 
$x_1, y_1$ by $-y_1, x_1$ if necessary,
we can assume that $b \neq 0$.
Replacing then $x_3, y_3$ by $b x_3, \frac{1}{b} y_3$ implies that we can further assume that $(y_1y_2, y_3) = 1$. Finally replacing $x_1, y_1$ by $x_1 - a y_1, y_1$ and we can also assume that $(x_1y_2, y_3) = 0$. 
Apart from the abelian algebra, there is thus only one algebra of dimension $6$ with presentation
\[ {\mathcal P}_{6}^{(3,1)}:\ \  (y_1y_2, y_3) = 1. \]
(We will normally only write down those triples where the value is non-zero).
\section{Algebras of dimension $8$}
First suppose that $Z(L)$ is not isotropic. We can then choose our standard basis such that $I = \mathbb{F} x_4 + \mathbb{F} y_4 \leq Z(L)$ and we get a direct sum
$ I \operp I^\perp$
of SAA's of dimensions $2$ and $6$. From $4.1$ we then know that apart from the abelian algebra, there is only one such algebra 
$L =  \mathbb{F} x_4 +  \mathbb{F} x_3 +  \mathbb{F} x_2 +  \mathbb{F} x_1 +  \mathbb{F} y_1 +  \mathbb{F} y_2 +  \mathbb{F} y_3 +  \mathbb{F} y_4$ with presentation
\[ {\mathcal P}_{8}^{(5,1)}: \ \  (y_1y_2, y_3) = 1. \]
We then turn to the situation where $Z(L)$ is isotropic. Let us first see that $\mbox{dim\,} Z(L) \neq 4$. We argue by a contradiction and suppose that $\mbox{dim\,} Z(L) = 4$. Pick a standard basis such that 
$ Z(L) = \mathbb{F}x_4 + \mathbb{F}x_3 + \mathbb{F}x_2 + \mathbb{F}x_1$. Now $L$ is not abelian and thus $(y_iy_j, y_k) \neq 0$ for some $ 1 \leq i < j < k \leq 4$. Without loss of generality, we can suppose that $(y_1y_2, y_3) = 1$. Suppose now that $(y_1y_2,y_4) = a,\, (y_2y_3,y_4) = b$ and $(y_3y_1,y_4) = c.$
Let $\bar{y_4} = y_4  - b y_1 - cy_2 - a y_3 $. 
Inspection shows that $\bar{y_4}$ is orthogonal to $L^2 = \mathbb{F} y_1y_2 + \mathbb{F} y_2y_3 + \mathbb{F} y_3y_1 + \mathbb{F} \bar{y_4}y_1 + \mathbb{F} \bar{y_4}y_2 + \mathbb{F} \bar{y_4}y_3$. Thus $\bar{y_4} \in (L^2)^\perp = Z(L)$ and we get the contradiction that $\mbox{dim\,} Z(L) \geq 5$.
Thus we have shown that $\mbox{dim\,} Z(L) \neq 4$ and as $\mbox{dim\,} Z(L)$ is always at least 2, we have two cases to consider: $\mbox{dim\,} Z(L) = 3$ and $\mbox{dim\,} Z(L) = 2$.
\subsection{Algebras with an isotropic center of dimension $3$} 
We can choose the standard basis such that
\begin{eqnarray*}
Z(L) & =& \mathbb{F} x_4 + \mathbb{F} x_3 + \mathbb{F} x_2 \\
L^2 = Z(L)^\perp &=& \mathbb{F} x_4 + \mathbb{F} x_3 + \mathbb{F} x_2 + \mathbb{F} x_1 + \mathbb{F} y_1.
\end{eqnarray*}
By Theorem \ref{thm2.9gth}, we know that $L^3 = L^2L \leq Z(L)$ and by Propositions \ref{pro2.10.1gth} and \ref{pro2.10gth} we must then have $L^3 = Z(L)$. 
As $x_1 \not \in Z(L)$, we must have $(x_1 y_i, y_j) \neq 0$ for some $2 \leq i < j \leq 4$. Without loss of generality $(x_1 y_2, y_3) \neq 0$. 
By replacing $y_4, y_1$ by $y_4 + a x_1, y_1 + a x_4$ for a suitable $a$, we can assume that $(y_2y_4, y_3) = 0$. 
Let $ V = \mathbb{F} y_2 + \mathbb{F} y_3 +  \mathbb{F} y_4$. 
Now $(y_2y_3, y_4) = 0$ and $L^2 = Z(L) + \mathbb{F} x_1 + \mathbb{F} y_1$. 
As $L = L^2 + V$ it follows that $V^2 = \mathbb{F} x_1 + \mathbb{F} y_1$ and as $V^2$ is not isotropic we must have that some two of $y_2y_3, y_4y_3, y_2y_4$ are not isotropic. 
Without loss of generality we can suppose that these are $y_2y_3$ and $y_4y_3$. By replacing $y_4, x_4$ by $a y_4, \frac{1}{a} x_4$ for a suitable $a \in \mathbb{F}$, we can furthermore assume that $(y_2y_3, y_4y_3) = 1$. Thus
\[ \mathbb{F} x_1 + \mathbb{F} y_1 = V^2 = \mathbb{F} y_2y_3 + \mathbb{F} y_4y_3  \]
and $y_2y_4 = a y_2y_3 + b y_4y_3$ for some $a, b \in \mathbb{F}$. It follows that $(y_2 + b y_3) (y_4 - ay_3) = 0$. Now replace $y_2, y_4, x_3$ by $y_2 + b y_3, y_4 - a y_3, x_3 - b x_2 + a x_4$ and then replace $x_1, y_1$ by $y_2y_3, y_4y_3$. It follows that we get a new standard basis where
\[y_2y_3 = x_1,\  y_4y_3 = y_1,\ y_2y_4 = 0 .\]
This implies that the only non-zero triples are $(y_1y_2, y_3) = 1$ and $(x_1y_3, y_4) = 1$. There is thus only one possible candidate here, the algebras $L$ with presentation
\begin{align*}
{\mathcal P}_{8}^{(3,2)}: \ \  (y_1y_2, y_3) = 1,\  (x_1y_3, y_4) = 1.
\end{align*}
Conversely, one sees by inspection that 
$Z(L) = \mathbb{F} x_4 + \mathbb{F} x_3 + \mathbb{F} x_2$
and this candidate is a genuine example with $\mbox{dim\,} Z(L)=3$. 
\subsection{Algebras with an isotropic center of dimension $2$} 
We know that the class of $L$ is at most $2 \cdot 4 - 3= 5$ and thus $L^5 \leq Z(L)$. Let $k$ be the smallest positive integer $2 \leq k \leq 5$ such that $ L^k \leq Z(L)$. As $\mbox{dim\,} L^k \leq 2$, it follows from Proposition \ref{pro2.10gth} that $k=5$. Hence $L$ is of maximal class and by Theorem \ref{thm3.1gth} 
can choose our standard basis such that, we get ideals $I_k = Fx_4 +\cdots + Fx_{4+1-k}$,
$k=0, \ldots, 4$ where
\[ \{ 0 \} = I_0 < I_1 < \cdots < I_4 = I^\perp_4 < I^\perp_3 < \cdots < I^\perp_0 = L \]
is a central series with $I_2 = Z(L) = L^5,\ I_3 =Z_2(L) = L^4,\ I^\perp_3 = Z_3(L) = L^3$ and $I^\perp_2 = Z_4(L) = L^2$. Thus we have
%
%
\begin{alignat*}{2}
\color{cyan}L^5=Z(L)\ &\boxed{
\begin{matrix}
\color{green} x_4\\\color{green} x_3
\end{matrix}
}\ \ 
\begin{matrix}
y_4\\y_3
\end{matrix}\\
\color{cyan}L^4=Z_2(L)\ &\boxed{
\begin{matrix}
\color{magenta} x_2
\end{matrix}} 
\boxed{
\begin{matrix}
\color{magenta} y_2
\end{matrix}
} \ \color{cyan}L^2=Z_4(L)\\
&\boxed{
\begin{matrix}
\color{blue} x_1
\end{matrix}}
\boxed{
\begin{matrix}
\color{blue} y_1
\end{matrix}
}\ \color{cyan}L^3=Z_3(L)
\\
\end{alignat*}
%
%
By Theorem \ref{thm3.4gth} we furthermore have that $x_1y_2, y_1y_2$ are linearly independent and thus a basis for $Z(L) = \mathbb{F} x_4 + \mathbb{F} x_3$. We can now pick our standard basis such that 
$ x_1y_2 = x_4$ and $y_1y_2 = x_3$.
As $x_2 \not \in Z(L)$, we also have that $(x_2 y_3, y_4) \neq 0$. This means that we have the non-zero triples $(x_1y_2, y_4) = 1,\ (y_1y_2, y_3) = 1$ and $(x_2 y_3, y_4) = r \neq 0$. 
The only remaining triples that are possibly non-zero are $(x_1 y_3, y_4), (y_1y_3, y_4)$ and $(y_2y_3, y_4)$. 
Replacing $x_1, y_1$ and $y_2$ by $x_1 - a x_2, y_1 - b x_2$ and $y_2 + a y_1 - b x_1 - c x_2$ for suitable $a, b, c \in \mathbb{F}$, we can assume that these extra triples are zero. 
We can thus choose our basis so that our algebra $L$ has presentation
\begin{align} \label{eq:dim8max}
  {\mathcal P}_{8}^{(2,3)}(r): \ \  (x_2 y_3, y_4) = r,\  (x_1y_2, y_4) = 1,\  (y_1y_2, y_3)=1. 
\end{align}
We finally need to sort out when, for $r, s \in \mathbb{F}^* = \mathbb{F} \, \setminus \, \{0\}$, 
the two presentations ${\mathcal P}_{8}^{(2,3)}(r)$ and ${\mathcal P}_{8}^{(2,3)}(s)$
describe the same algebra $L$. 
We will see that this happens if and only if $s/r \in (\mathbb{F}^*)^3$.
To see that this is a sufficient condition, suppose we have an algebra $L$ that has a presentation ${\mathcal P}_{8}^{(2,3)}(r)$ with respect to some standard basis $x_1, y_1, x_2, y_2, x_3, y_3, x_4, y_4$. 
Suppose that $s = a^3 r$ for some $a \in \mathbb{F}^*$. Let 
$\tilde{x_1} = x_1,\ \tilde{y_1} = y_1,\ \tilde{x_2} = a x_2,\ \tilde{y_2} = \frac{1}{a} y_2,\ \tilde{x_3} = \frac{1}{a} x_3,\ \tilde{y_3} = ay_3,\ \tilde{x_4} = \frac{1}{a} x_4$ and $\tilde{y_4} = a y_4$. 
Inspection shows that $L$ has presentation ${\mathcal P}_{8}^{(2,3)}(s)$ with respect to the new basis.
It remains to see that the condition is also necessary. 
Consider the algebra $L$ with presentation ${\mathcal P}_{8}^{(2,3)}(r)$ and take an arbitrary new standard basis $\tilde{x_1}, \tilde{y_1}, \tilde{x_2}, \tilde{y_2}, \tilde{x_3}, \tilde{y_3}, \tilde{x_4}, \tilde{y_4}$ 
such that $L$ satisfies the presentation ${\mathcal P}_{8}^{(2,3)}(s)$ for some $s \in \mathbb{F}^*$. 
We want to show that $s/r \in (\mathbb{F}^*)^3$. Now
\begin{align*}
 \tilde{y_{3}} &= ay_{3}+by_{4}+u,\\
\tilde{y_{4}} &= cy_{3}+dy_{4}+v ,
\end{align*}
for some $u,v \in L^2$ and $a, b, c, d \in \mathbb{F}$ where $ad-bc \neq 0$. As $\mbox{dim\,} L^2 - \mbox{dim\,} L^3 =1$ it follows readily that $L^2L^2 \leq L^4$ and it follows that
\begin{align*}
\tilde{y_{3}}{\tilde{y_{4}}} \tilde{y_{3}} & = (ay_{3}+by_{4}) (cy_{3}+dy_{4} ) (ay_{3}+by_{4}) + w,\\
\tilde{y_{3}}{\tilde{y_{4}}}{\tilde{y_{4}}} & = (ay_{3}+by_{4}) (cy_{3}+dy_{4} ) (cy_{3}+dy_{4} ) + z,
\end{align*}
for some $w, z \in L^4$. As $L^6 = 0$ we have that $L^4$ is orthogonal to $L^3$ and thus 
in the following direct calculations we can omit $w$ and $z$. We have
\[ -s^2 = (\tilde{y_{3}}{\tilde{y_{4}}} \tilde{y_{3}}, \tilde{y_{3}}{\tilde{y_{4}}}{\tilde{y_{4}}}) = - (ad - bc)^3 r^2 .\]
Hence $s/r \in (\mathbb{F}^*)^3$.
%
%


\part{The Classification of nilpotent SAA of dimension $10$}
\noindent

\thispagestyle{empty}
\chapter*{INTRODUCTION}
\vspace{-2.1cm}
\hspace{500cm}
\noindent
\noindent

This part will be mostly about extending the classification of nilpotent SAA's to algebras of dimension $10$. Throughout this part we will be working with an arbitrary field $\mathbb{F}$.
The classification depends strongly on the field we are working with. When the field is algebraically closed we will see that there are $22$ such algebras of dimension $10$. Over the field $\mbox{GF\,}(3)$, where there is a $1$-$1$ correspondence with a class of powerful $2$-Engel $3$-groups, there are $25$ algebras. \\ \\
We will first consider the case when the center of $L$ is not isotropic as then we can use our classification of algebras of lower dimension.\\ \\
By a similar argument as in the previous chapter we see that $L$ is a direct sum of an abelian algebra 
			$\mathbb{F}x_5 + \mathbb{F}y_5$ 
of dimension $2$ and an algebra of dimension $8$. Our classification of the latter yields that we get the following presentations for algebras of dimension $10$ with non-isotropic center:
\begin{align*}
 {\mathcal Q}_{10}^{(7,1)}:& \ \  (y_1y_2, y_3) = 1. \\
{\mathcal Q}_{10}^{(5,1)}:& \ \ (y_1y_2, y_3) = 1,\ (x_1y_3, y_4) = 1. \\
{\mathcal Q}_{10}^{(4,1)}(r):& \ \ (x_2 y_3, y_4) = r,\ (x_1y_2, y_4) = 1,\ (y_1y_2, y_3)=1,
\end{align*}
where ${{\mathcal Q}_{10}}^{(4,1)}(r)$ and ${{\mathcal Q}_{10}}^{(4,1)}(s)$ describe the same algebra $L$ if and only if $s/r \in (\mathbb{F}^*)^3$.\\ \\
We now turn into the case where the center $Z(L)$ is isotropic. We have that $\mbox{dim\,}Z(L)$ is between $2$ and $5$. Thus we have $4$ cases to deal with in the following $4$ chapters.
\thispagestyle{empty}

\clearpage

\chapter{Algebras with an isotropic center of dimension $5$}
Let $L$ be a nilpotent SAA of dimension $10$ with an isotropic center of dimension $5$. We can then choose a standard basis $x_1, y_1, x_2, y_2, x_3, y_3, x_4, y_4, x_5, y_5$ such that
\[ Z(L) = \mathbb{F}x_5+\mathbb{F}x_4+\mathbb{F}x_3+\mathbb{F}x_2+\mathbb{F}x_1. \]
Here $x_1, y_1, x_2, y_2, x_3, y_3, x_4, y_4, x_5, y_5$ will be determined later such that some further conditions hold. The elements $y_1, \ldots , y_5$ are not in $Z(L)$ and without loss of generality we can assume that $(y_1y_2, y_3) = 1$. 
Now suppose that $(y_iy_j, y_4) = \alpha_{ij}$ and $(y_iy_j, y_5) = \beta_{ij}$ for $1 \leq i, j \leq 3$.
 Replacing $x_1, x_2, x_3, y_4, y_5$ by
\begin{align*}
\tilde{x_1} &= x_1 + \alpha_{23} x_4 + \beta_{23} x_5, & 
\tilde{y_4} &= y_4 - \alpha_{12}y_3 - \alpha_{23}y_1 - \alpha_{31}y_2,\\
\tilde{x_2}  &= x_2 + \alpha_{31} x_4 + \beta_{31} x_5, &
\tilde{y_5} &= y_5 - \beta_{12}y_3 - \beta_{23}y_1 - \beta_{31}y_2,\\
\tilde{x_3}  &= x_3 + \alpha_{12} x_4 + \beta_{12} x_5,& \mbox{}
\end{align*}
We can assume that our standard basis has the further property that $(y_iy_j, y_4) = (y_i y_j, y_5) = 0$ for $ 1 \leq i < j \leq 3$. 
As $y_4 \not \in Z(L)$, we know that one of $(y_1y_4, y_5), (y_2y_4, y_5),$
$ (y_3y_4, y_5)$ is non-zero. 
Without loss of generality we can assume that $(y_1 y_4, y_5) = 1$. The only triples whose values are not known are then $\alpha = (y_2y_4, y_5)$ and $\beta = (y_3y_4, y_5)$. 
Replacing $x_1, y_2, y_3$ by $\tilde{x_1} = x_1 + \alpha x_2 + \beta x_3, \tilde{y_2} = y_2 - \alpha y_1, \tilde{y_3} = y_3 - \beta y_1$, we get a new standard basis where the only nonzero triple values are $(y_1y_2, y_3) = 1$ and $(y_1 y_4, y_5) = 1$. We have thus proved the following result.
\begin{Proposition}
There is a unique nilpotent SAA of dimension $10$ that has isotropic center of dimension $5$. $L$ can be described by the nilpotent presentation
\[ {\mathcal P}_{10}^{(5, 1)}: \  (y_1y_2,y_3)=1,\  (y_1y_4,y_5)=1 .\]
\end{Proposition}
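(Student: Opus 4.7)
The plan is to exploit the extremal dimension of the center. Since $Z(L)$ is isotropic of the maximal possible dimension $n=5$, we have $Z(L)\subseteq Z(L)^\perp$ with both subspaces of dimension $5$, so $Z(L)=Z(L)^\perp$. By Lemma \ref{beautiful_relation} this gives $L^2=Z(L)^\perp=Z(L)$, hence $L^3=Z(L)\cdot L=\{0\}$ and $L$ has class $2$. All structural information is therefore encoded in the alternating trilinear form $(\bar u,\bar v,\bar w)\mapsto (uv,w)$ on the five-dimensional quotient $L/Z(L)$.

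I would then pick a standard symplectic basis $x_1,y_1,\dots,x_5,y_5$ with $Z(L)=\mathbb{F}x_1+\cdots+\mathbb{F}x_5$. Because $Z(L)$ is abelian and equal to $L^2$, the only potentially non-zero triples are $(y_iy_j,y_k)$ for $1\le i<j<k\le 5$ (ten parameters), and the task reduces to normalising a single alternating $3$-form on $\mathbb{F}^5$ under the group of symplectic basis changes that preserve the Lagrangian decomposition $\mathrm{span}(x_i)\oplus \mathrm{span}(y_j)$. Such transformations are exactly those induced by $\mathrm{GL}_5(\mathbb{F})$ acting on the $y$-side together with the contragredient action on the $x$-side.

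I would carry out the reduction in three bookkeeping stages:
\begin{enumerate}
\item[(i)] Since no $y_i$ is central, some triple is non-zero; after relabelling and rescaling, assume $(y_1y_2,y_3)=1$.
\item[(ii)] Kill the six triples $(y_iy_j,y_4)$ and $(y_iy_j,y_5)$ for $1\le i<j\le 3$ by replacing $y_4,y_5$ with $y_4-\sum\alpha_{ij}y_k$ and $y_5-\sum\beta_{ij}y_k$ (indices permuted cyclically from the target triple), with compensating modifications of $x_1,x_2,x_3$ by multiples of $x_4,x_5$ to keep the symplectic pairing standard. Using the cyclic identity $(y_iy_j,y_k)=(y_jy_k,y_i)=(y_ky_i,y_j)$ together with $(y_1y_2,y_3)=1$, these substitutions zero out the targeted triples while preserving the normalisation in (i).
\item[(iii)] Now $y_4\notin Z(L)$, so some $(y_iy_4,y_5)$ with $i\in\{1,2,3\}$ is non-zero; after relabelling and rescaling inside $\{y_1,y_2,y_3\}$, assume $(y_1y_4,y_5)=1$. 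Finally, replace $y_2,y_3$ by $y_2-\alpha y_1,y_3-\beta y_1$, compensated by $\tilde x_1=x_1+\alpha x_2+\beta x_3$, to clear $(y_2y_4,y_5)$ and $(y_3y_4,y_5)$ without disturbing the two identities already secured.
\end{enumerate}

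At every substitution the two checks are automatic: the $x$-adjustments live in $Z(L)=L^2$, so they contribute nothing to any triple $(y_ay_b,y_c)$, and the previously-normalised triples are protected by the explicit choice of compensating terms, which have been arranged precisely so that the unwanted contributions cancel via the cyclic symmetry of the form. Uniqueness then follows because the presentation is fully determined; existence is immediate since the given presentation is a nilpotent presentation and one checks directly that the center of the resulting algebra is $\mathbb{F}x_1+\cdots+\mathbb{F}x_5$. The main obstacle is purely combinatorial: tracking the simultaneous changes of basis so that steps (ii) and (iii) do not interfere with one another, which the staged order above handles cleanly.
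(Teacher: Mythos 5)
Your proposal is correct and follows essentially the same route as the paper: observe that all products involving the central $x_i$ vanish so the structure reduces to the ten triples $(y_iy_j,y_k)$, normalise $(y_1y_2,y_3)=1$, clear the triples ending in $y_4$ or $y_5$ by adjusting $y_4,y_5$ (with compensating $x$-corrections), then normalise $(y_1y_4,y_5)=1$ and clear the last two triples by adjusting $y_2,y_3$. The only addition is your explicit preliminary observation that $L^2=Z(L)=Z(L)^\perp$ and hence $L$ has class $2$, which the paper leaves implicit.
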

\chapter{Algebras with an isotropic center of dimension $4$}
The situation here is far more complicated and we will need to consider several subcases. 
Let $L$ be a nilpotent SAA of dimension $10$ with an isotropic center of dimension $4$. 
We can pick our standard basis such that
\[ Z(L)=\mathbb{F} x_5+\mathbb{F} x_4+\mathbb{F} x_3+\mathbb{F} x_2.\]
Then
\[ L^2= Z(L)^\perp=Z(L)+\mathbb{F} x_1+\mathbb{F} y_1 \]
and by Lemma \ref{lma2.6gth} we know that $L^3=L^2L \leq Z(L)$. As $L^2 \not \leq Z(L)$ we have $L^3 \neq \{0\}$ and by Propositions \ref{pro2.10.1gth} and \ref{pro2.10gth} we then have that $3 \leq \mbox{dim\,} L^3 \leq 4$. We will consider the cases $L^3 < Z(L)$ and $L^3=Z(L)$ separately. 
\section{The algebras where $L^3 < Z(L)$}
In this case we can choose our basis such that
\newpage
\begin{multicols}{2}
\begin{alignat*}{2}
\color{cyan}L^3\ &\boxed{
\begin{matrix}
\color{green} x_5\\\color{green} x_4\\\color{green}x_3
\end{matrix}
}\ \ 
\begin{matrix}
y_5\\y_4\\y_3
\end{matrix}\\
\color{cyan}Z(L)\ &\boxed{
\begin{matrix}
\color{blue} x_2
\end{matrix}}
\boxed{
\begin{matrix}
\color{blue}y_2
\end{matrix}
}\ \color{magenta}Z_2(L)\\
&\boxed{
\begin{matrix}
\color{magenta}x_1
\end{matrix}\ \  
\begin{matrix}
\color{magenta}y_1
\end{matrix}
}\ \color{cyan}L^2=Z(L)^\perp
\end{alignat*}
\break
 \begin{align*}
L^3& = \mathbb{F} x_5+\mathbb{F} x_4+\mathbb{F} x_3\\
Z(L)& = L^3+\mathbb{F} x_2\\
L^2 = Z(L)^\perp &= L^3+\mathbb{F} x_2+\mathbb{F} x_1+\mathbb{F} y_1\\
Z_2(L) = (L^3)^\perp & =Z(L)+\mathbb{F} x_1+\mathbb{F} y_1+\mathbb{F} y_2
\end{align*}
\end{multicols}
\noindent
Notice that, as $Z_2(L) \cdot L^2=\{0\}$, 
we have that $Z_2(L)$ is abelian and thus in particular $x_1y_2 = y_1y_2 = 0$. 
As $Z_2(L)$ is an abelian ideal we also have that $Z_2(L)L$ is orthogonal to $Z_2(L)$ and thus  $Z_2(L) \cdot L \leq Z_2(L)^\perp = L^3$. It follows that 
		\[ \mathbb{F} x_2+\mathbb{F} x_1+\mathbb{F} y_1 + L^3 = L^2 = \mathbb{F} y_3y_4+\mathbb{F} y_3y_5+\mathbb{F} y_4y_5 + L^3. \]
Suppose
 		$$x_2+L^3=\alpha y_3y_4+\beta y_3y_5+\gamma y_4y_5+L^3.$$
Now at least one of $\alpha, \beta, \gamma$ is nonzero and by the symmetry in $y_3,y_4,y_5$ we can assume 
that $\alpha \neq 0$. Thus
\begin{align*}
			x_2+L^3 =(y_3-\frac{\gamma}{\alpha}y_5)(\alpha y_4+\beta y_5)+L^3.
\end{align*}
By replacing $x_4, x_5, y_3, y_4$ by 
	$\tilde{x_4} = \frac{1}{\alpha}x_4, 
	\tilde{x_5} = x_5 -\frac{\beta}{\alpha}x_4 +\frac{\gamma}{\alpha}x_3, 
	\tilde{y_3} = y_3-\frac{\gamma}{\alpha}y_5,
	\tilde{y_4} = \alpha y_4+\beta y_5$, 
we can then assume that $x_2 + L^3=y_3 y_4 + L^3$. In particular 
			$(y_2y_3,y_4)=1$.
Suppose that $(y_3y_4,y_5) = \tau$. 
Replacing $x_2, y_5$ by $ \tilde{x_2} = x_2+ \tau x_5$ and $\tilde{y_5} = y_5 - \tau y_2$ 
we can furthermore assume that 
			$(y_3y_4,y_5)=0$. 
If we let 
			$V = \mathbb{F}y_3 + \mathbb{F}y_4 + \mathbb{F}y_5$, 
it follows that we now have 
\begin{equation}\label{eq:4.I.1} 
			V^2 = \mathbb{F}x_2 + \mathbb{F}x_1 + \mathbb{F}y_1 \mbox{\ and \ } y_3y_4 = x_2 .
\end{equation}
By \eqref{eq:4.I.1} we know that 
\[ x_1 = \alpha y_3y_4 + \beta  y_3y_5 + \gamma y_4y_5 = \alpha x_2 + \beta y_3y_5 + \gamma y_4y_5, \]
where without loss of generality we can assume that $\gamma \neq 0$. Then
\begin{align*}
		x_1 = (y_4+\frac{\beta}{\gamma}y_3)(-\alpha y_3 +\gamma y_5).
\end{align*}
Replacing $x_3, x_5, y_4, y_5$ by 
	$\tilde{x_3} = x_3-\frac{\beta}{\gamma}x_4+\frac{\alpha}{\gamma}x_5, 
	\tilde{x_5} = \frac{1}{\gamma}x_5, 
	\tilde{y_4} = y_4+\frac{\beta}{\gamma}y_3$ and 
	$\tilde{y_5} = -\alpha y_3 + \gamma y_5$, 
we obtain 
\begin{equation}\label{eq:4.I.2} 
			y_4y_5 = x_1 . 
\end{equation}
Notice that \eqref{eq:4.I.1} is not affected by these changes. 
Finally we know from \eqref{eq:4.I.1} and \eqref{eq:4.I.2} that
		$$y_1= - \alpha x_2 - \beta x_1+\gamma y_5y_3$$
for some $0 \neq \gamma \in \mathbb{F}$. Then
\begin{align*}
		y_1=(y_5 + \frac{\alpha}{\gamma}y_4)(\gamma y_3 + \beta y_4).
\end{align*}
Now replace $x_3, x_4, y_3, y_5$ by 
	$\tilde{x_3} = \frac{1}{\gamma}x_3$, 
	$\tilde{x_4} = x_4 - \frac{\alpha}{\gamma}x_5 - \frac{\beta}{\gamma}x_3$, 
	$ \tilde{y_3} = \gamma y_3 + \beta y_4$ and
	$\tilde{y_5} = y_5 + \frac{\alpha}{\gamma}y_4$. 
This gives us 
\begin{equation}\label{eq:4.I.3} 
			y_5y_3 = y_1 . 
\end{equation}
This does not affect \eqref{eq:4.I.2} but instead of \eqref{eq:4.I.1} we get $y_3y_4=\gamma x_2$. Now we
 make the final change by replacing $x_2$ and $y_2$ by 
 		$\gamma x_2$ and $\frac{1}{\gamma} y_2$ 
and we can assume that \eqref{eq:4.I.1}, \eqref{eq:4.I.2} and \eqref{eq:4.I.3} hold.
We had seen earlier that $Z_2(L)$ is abelian and thus 
all triple values involving two elements from $\{ x_5, x_4, x_3, x_2, x_1, y_1, y_2\}$ is trivial. 
Thus all the nontrivial triple values involve two of $y_3,y_4,y_5$ but from
 \eqref{eq:4.I.1}, \eqref{eq:4.I.2} and \eqref{eq:4.I.3} we know what these are. 
 We have thus proved
\begin{Proposition}\label{pro4p1}
There is a unique nilpotent SAA of dimension $10$ that has an isotropic center of dimension $4$ and where $L^3 < Z(L)$. 
This algebra can be given by the nilpotent presentation
\begin{align*}
 {\mathcal P}_{10}^{(4,1)}: \quad (y_2y_3,y_4)=1,\ (y_1y_4,y_5)=1,\ (x_1y_3,y_5)=1.
\end{align*}
\end{Proposition}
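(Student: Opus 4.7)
The plan is to exploit the filtration $L^3 < Z(L) < L^2 < Z_2(L) < L$ by ideals, build a standard symplectic basis adapted to it, and use the fact that $Z_2(L)$ is abelian to collapse the problem to normalising a small number of products.

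First I would determine the dimensions: $\mbox{dim\,} L^2 = 6$ since $L^2 = Z(L)^\perp$, Propositions \ref{pro2.10.1gth} and \ref{pro2.10gth} rule out $\mbox{dim\,} L^3 \in \{1,2\}$, and $L^3 < Z(L)$ gives $\mbox{dim\,} L^3 = 3$. Then $L^4 = L^3 L \leq Z(L) L = 0$, so $L$ has class exactly $3$ and $Z_2(L) = (L^3)^\perp$ has dimension $7$. I would then construct a standard basis $x_1,y_1,\ldots,x_5,y_5$ adapted to the flag, with $L^3 = \mathbb{F}x_3+\mathbb{F}x_4+\mathbb{F}x_5$, $Z(L)=L^3+\mathbb{F}x_2$, $L^2=Z(L)+\mathbb{F}x_1+\mathbb{F}y_1$, and $Z_2(L)=L^2+\mathbb{F}y_2$. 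Since each stage of the flag is either isotropic or the symplectic dual of an isotropic stage, such a basis exists.

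The key structural fact is that $Z_2(L)$ is abelian: Lemma \ref{lma2.4gth} gives $L^2\cdot Z_2(L)=0$, and since $Z_2(L) = L^2 + \mathbb{F}y_2$ with $L^2 \leq Z_2(L)$, this upgrades to $Z_2(L)\cdot Z_2(L) = 0$. Using the cyclic identity $(uv,w) = (vw,u)$, any triple of basis elements with two entries from $Z_2(L)$ must then vanish. The only potentially nonzero triples come from products $y_iy_j$ with $i,j\in\{3,4,5\}$, paired against some element of $L$. Writing $V = \mathbb{F}y_3+\mathbb{F}y_4+\mathbb{F}y_5$, I would argue $L^2 = L^3 + V^2$ using $Z_2(L)L \leq Z(L)$ together with $V^2L = L^3$ (which is forced because $L^2L = L^3$ and the $Z_2(L)$-part of $L^2$ annihilates $L$). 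Hence $y_3y_4, y_3y_5, y_4y_5$ span a $3$-dimensional complement of $L^3$ in $L^2$.

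The bulk of the work is then a sequence of symplectic changes of coordinates that normalise
\[
y_3 y_4 = x_2,\quad y_4 y_5 = x_1,\quad y_5 y_3 = y_1,
\]
together with the triple $(y_3y_4, y_5) = 0$. Each change is of the form ``rescale one dual pair $(x_i, y_i)$ by $(a, 1/a)$, combine one or two $y_j$'s with other $y$'s, and absorb the change into the matching $x$'s'', and the abelianness of $Z_2(L)$ guarantees that the corrections inside $Z_2(L)$ do not manufacture any new nonzero triples. Once those three equations hold, the only surviving nonzero triples are $(y_2y_3,y_4)$, $(y_1y_4,y_5)$ and $(x_1y_3,y_5)$, which gives exactly the presentation ${\mathcal P}_{10}^{(4,1)}$. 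The main obstacle is the interlocking of the three normalisations: the vectors $y_3,y_4,y_5$ appear in all three products simultaneously, so the coordinate changes must be sequenced carefully to avoid undoing earlier steps. The delicate endgame is that the third normalisation necessarily introduces a scalar $\gamma$ into the first equation (i.e.\ $y_3y_4 = \gamma x_2$), which must be absorbed by a final rescaling of the pair $(x_2,y_2)$ by $(\gamma,1/\gamma)$ — a move that is symplectic and leaves the other two equations intact, but only because of the precise form of the previous changes.
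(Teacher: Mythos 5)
Your proposal follows essentially the same route as the paper: the same flag of ideals, the same key observation that $Z_2(L)$ is abelian, the same identification $L^2 = L^3 + V^2$ with $V = \mathbb{F}y_3+\mathbb{F}y_4+\mathbb{F}y_5$, and the same three-stage normalisation $y_3y_4=x_2$, $y_4y_5=x_1$, $y_5y_3=y_1$, ending with the rescaling of $(x_2,y_2)$ by $(\gamma,1/\gamma)$. One small correction: to get $L^2=L^3+V^2$ (and in particular $x_2\in V^2+L^3$) you need $Z_2(L)\cdot L\leq Z_2(L)^\perp=L^3$, which follows from Lemma \ref{lma2.5gth} because $Z_2(L)$ is an abelian ideal, not merely $Z_2(L)\cdot L\leq Z(L)$; your parenthetical claim that the $Z_2(L)$-part of $L^2$ annihilates $L$ is false, since $L^2\leq Z_2(L)$ while $L^2\cdot L=L^3\neq 0$.
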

\begin{Remark}
Inspection shows that the algebra with that presentation has a center of dimension 4 and the property that  $L^3 < Z(L)$.
\end{Remark}
\section{The algebras where $L^3=Z(L)$}
We will see that this case is quite intricate and we will need to consider several subcases.
%
\begin{alignat*}{2}
\color{cyan}L^3=Z(L)\ &\boxed{
\begin{matrix}
\color{green} x_5\\\color{green} x_4\\\color{green}x_3\\\color{green}x_2
\end{matrix}
}\ \ 
\begin{matrix}
y_5\\y_4\\y_3\\y_2
\end{matrix}\\
&\boxed{
\begin{matrix}
\color{magenta}x_1
\end{matrix}\ \  
\begin{matrix}
\color{magenta}y_1
\end{matrix}
}\ \color{blue}L^2=Z(L)^\perp={L^3}^\perp=Z_2(L)
\end{alignat*}
\begin{align*}
Z(L)&=L^3=\mathbb{F} x_5+ \mathbb{F} x_4+ \mathbb{F} x_3 + \mathbb{F} x_2\\
Z_2(L)&=L^2=Z(L)^\perp=Z(L)+\mathbb{F} x_1+\mathbb{F} y_1
\end{align*}
%
In order to clarify the structure further, we will associate to any such algebra 
a family of new alternating forms that are defined as follows.
For each $\bar{z}=z+Z(L) \in L^2/Z(L)$, we obtain the alternating form 
		$$\phi_{\bar{z}}:\, L/L^2 \times L/L^2 \longrightarrow \mathbb{F} $$ 
given by
		\[ \phi_{\bar{z}}(\bar{u},\bar{v}) = (zu,v) \]
where $\bar{u}=u+L^2$ and $\bar{v}=v+L^2$. Notice that this is a well defined function as $L^2$ is abelian.
\begin{Remark}\label{rem442}
$(1)$ If $0 \neq \bar{z}=z+Z(L) \in L^2/Z(L)$, then $\phi_{\bar{z}} \neq 0$. 
Otherwise we would have $(zu,v)=0$ for all $u,v \in L$ that would give the contradiction that $z \in Z(L)$ and thus $ \bar{z}=0$.\\ \\
$(2)$ There is no non-zero element in $V=L/L^2$ that is common to the isotropic part of $V=L/L^2$ with respect to all the alternating forms $\phi_{\bar{z}}$ with $\bar{z} \in L^2/Z(L)$. Otherwise there would be some $0 \neq t \in \mathbb{F} y_5+ \mathbb{F} y_4+ \mathbb{F} y_3 + \mathbb{F} y_2$ such that $(zt,u)=0$ for all $z \in L^2$ and all $u \in L$. But then $ut \in (L^2)^\perp =Z(L)$ for all $u \in L$ that gives the contradiction that $t \in Z_2(L)=L^2$.
\end{Remark}
\noindent
We divide the algebras into three categories.\\ \\
{\bf A}. The algebras where there exists a basis $\bar{z},\bar{t}$ for $L^2/Z(L)$ such that 
the alternating forms $\phi_{\bar{z}}, \phi_{\bar{t}}$ are both degenerate.\\ \\
{\bf B}. The algebras where there exists $ 0 \neq \bar{z} \in L^2/Z(L)$ such that 
$\phi_{\bar{z}}$ is degenerate but $\phi_{\bar{t}}$ is non-degenerate for all $\bar{t} \in L^2/Z(L)$ that are not in $\mathbb{F}\bar{z}$.\\\\
{\bf C}. The algebras where $\phi_{\bar{z}}$ is non-degenerate for all $0 \neq$ $\bar{z} \in L^2/Z(L).$ 
%
\subsection{Algebras of type $A$}
Pick $x_1, y_1 \in L^2 \, \setminus \, Z(L)$ such that $\phi_{\bar{x_1}}$ and $\phi_{\bar{y_1}}$
are degenerate and such that $(x_1, y_1) = 1$. 
By the remarks above we thus know that the isotropic part of $L/L^2$ with respect to both the alternating forms 
$\phi_{\bar{x_1}}$ and $ \phi_{\bar{y_1}}$ is of dimension $2$ and the intersection of the two is trivial. Thus we can pick a basis 
	     $\bar{y_5}=y_5+L^2, 
		\bar{y_4}=y_4+L^2,  
		\bar{y_3}=y_3+L^2, 
		\bar{y_2}=y_2+L^2$ 
for $L/L^2$ such that
\begin{align*}
		\mathbb{F}\bar{y_4}+\mathbb{F}\bar{y_5} \mbox{ is the isotropic part of }L/L^2 \mbox{ with respect to }\phi_{\bar{x_1}}.\\
		\mathbb{F}\bar{y_3}+\mathbb{F}\bar{y_2} \mbox{ is the isotropic part of }L/L^2 \mbox{ with respect to } \phi_{\bar{y_1}}.
\end{align*}
This shows that we can pick our standard basis such that
\begin{eqnarray*}
		(x_1y_2,y_3)=1 && (y_1y_2,y_3)=0\\ 
		(x_1y_2,y_4)=0 && (y_1y_2,y_4)=0\\ 
		(x_1y_2,y_5)=0 && (y_1y_2,y_5)=0\\  
		(x_1y_3,y_4)=0 && (y_1y_3,y_4)=0\\
		(x_1y_3,y_5)=0 && (y_1y_3,y_5)=0\\ 
		(x_1y_4,y_5)=0 && (y_1y_4,y_5)=1.
\end{eqnarray*}
To determine the structure fully we are only left with the triples $(y_iy_j, y_k) =r_{ijk}$ for $2\leq i < j < k \leq 5$. Let 
\begin{eqnarray*}
		\tilde{y_i} & = & y_i + \alpha_i x_1 + \alpha_i y_1,\\
		\tilde{x_1} & =&  x_1 - ( \alpha_2 x_2 + \alpha_3 x_3 + \alpha_4 x_4 + \alpha_5 x_5),\\
		\tilde{y_1} & =&  y_1 + \alpha_2 x_2 + \alpha_3 x_3 +  \alpha_4 x_4 + \alpha_5 x_5.
\end{eqnarray*}
Inspection shows that we can choose $\alpha_2, \ldots, \alpha_5$ such that 
$(\tilde{y_i} \tilde{y_j}, \tilde{y_k})=0$ for all $2 \leq i < j < k \leq 5$. In fact this works for $\alpha_2 = - r_{245}, \alpha_3=- r_{345},
 \alpha_4 = - r_{234}$ and $\alpha_5 = - r_{235}$.
We have thus proved the following result.
\begin{Proposition}\label{pro4p2} 
There is a unique nilpotent SAA of dimension $10$ with an isotropic center of dimension $4$ and where $L^3=Z(L)$ that is of type $A$. This algebras can be given by the presentation
\begin{align*}
{\mathcal P}_{10}^{(4,2)}: \quad(x_1y_2,y_3)=1,\ (y_1y_4,y_5)=1.
\end{align*}
\end{Proposition}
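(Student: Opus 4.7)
The plan is to exploit the Type A hypothesis to engineer a standard basis in which all triples involving $x_1$ or $y_1$ are forced to their advertised values, and then kill the four remaining triples by a single-parameter change of basis.

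First, since $Z(L) = L^3$ is an isotropic subspace of $L^2 = Z(L)^\perp$, the symplectic form descends to a non-degenerate alternating form on the $2$-dimensional quotient $L^2/Z(L)$. Rescale the Type A pair $\bar z, \bar t$ so that $(\bar z, \bar t) = 1$ on this quotient and lift to $x_1, y_1 \in L^2$ with $(x_1, y_1) = 1$. For each nonzero $\bar w \in L^2/Z(L)$ the form $\phi_{\bar w}$ on $V = L/L^2$ is a nonzero alternating form on a 4-dimensional space (Remark~\ref{rem442}(1)), so its radical has dimension $0$, $2$ or $4$; the degeneracy of $\phi_{\bar{x_1}}$ and $\phi_{\bar{y_1}}$ forces their radicals $R_1, R_2$ to have dimension exactly $2$. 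Remark~\ref{rem442}(2) combined with the linearity of $\bar w \mapsto \phi_{\bar w}$ then gives $R_1 \cap R_2 = 0$, whence $V = R_1 \oplus R_2$. Choose bases $\bar{y_4}, \bar{y_5}$ of $R_1$ and $\bar{y_2}, \bar{y_3}$ of $R_2$. Since $R_2$ is a complement of $R_1$, the form $\phi_{\bar{x_1}}$ restricted to $R_2$ is non-degenerate, so after rescaling $(x_1 y_2, y_3) = 1$; symmetrically $(y_1 y_4, y_5) = 1$. By construction every other triple of the form $(x_1 y_j, y_k)$ or $(y_1 y_j, y_k)$ with $2 \leq j < k \leq 5$ vanishes.

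Lift $\bar{y_2}, \ldots, \bar{y_5}$ to representatives in $L$ and extend to a standard basis using dual vectors $x_2, \ldots, x_5 \in Z(L)$. The only remaining undetermined triples are the four values $r_{ijk} = (y_i y_j, y_k)$ for $2 \leq i < j < k \leq 5$. Set $p = x_1 + y_1$ and consider
\[
\tilde y_i = y_i + \alpha_i p \quad (i = 2,3,4,5), \qquad \tilde x_1 = x_1 - \sum_{i=2}^5 \alpha_i x_i, \qquad \tilde y_1 = y_1 + \sum_{i=2}^5 \alpha_i x_i.
\]
A direct computation (using $(p,p) = 0$ and $p \cdot p = 0$) verifies that the standard-basis relations are preserved and that the two normalised triples are unchanged. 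Expanding $\tilde y_i \tilde y_j = y_i y_j - \alpha_j p y_i + \alpha_i p y_j$ and using $(p y_i, p) = 0$ yields the purely linear shift
\[
(\tilde y_i \tilde y_j, \tilde y_k) = r_{ijk} + \alpha_k (p y_i, y_j) - \alpha_j (p y_i, y_k) + \alpha_i (p y_j, y_k).
\]
Since $(p y_i, y_j) = (x_1 y_i, y_j) + (y_1 y_i, y_j)$ is nonzero only when $\{i, j\} \in \{\{2, 3\}, \{4, 5\}\}$, the four shifts reduce to $r_{234} + \alpha_4$, $r_{235} + \alpha_5$, $r_{245} + \alpha_2$ and $r_{345} + \alpha_3$. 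Setting $\alpha_4 = -r_{234}$, $\alpha_5 = -r_{235}$, $\alpha_2 = -r_{245}$, $\alpha_3 = -r_{345}$ simultaneously kills all four triples, yielding the presentation ${\mathcal P}_{10}^{(4,2)}$.

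The main obstacle is the interplay between the two radicals and the single-parameter substitution: the fact that the four shifts are supported in disjoint positions $\alpha_4, \alpha_5, \alpha_2, \alpha_3$ reflects the disjoint supports of the two nonzero pairings $(x_1 y_2, y_3) = 1$ and $(y_1 y_4, y_5) = 1$ coming from $R_1$ and $R_2$. Existence of an algebra realising ${\mathcal P}_{10}^{(4,2)}$ is immediate by direct inspection: one verifies that $Z(L) = \mathbb{F} x_2 + \mathbb{F} x_3 + \mathbb{F} x_4 + \mathbb{F} x_5$ is isotropic of dimension $4$, that $L^3 = Z(L)$, and that both $\phi_{\bar{x_1}}, \phi_{\bar{y_1}}$ are degenerate (with radicals $\mathbb{F}\bar{y_4} + \mathbb{F}\bar{y_5}$ and $\mathbb{F}\bar{y_2} + \mathbb{F}\bar{y_3}$ respectively), so $L$ is of type A. This completes the proof.
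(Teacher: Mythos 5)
Your argument is correct and follows essentially the same route as the paper: choose $x_1,y_1$ with $(x_1,y_1)=1$ realising the two degenerate forms, show their radicals are complementary planes in $L/L^2$, adapt the basis $\bar{y_2},\dots,\bar{y_5}$ to that decomposition to force the table of triples involving $x_1,y_1$, and then kill the remaining $r_{ijk}$ with exactly the substitution $\tilde y_i=y_i+\alpha_i(x_1+y_1)$, $\tilde x_1=x_1-\sum\alpha_i x_i$, $\tilde y_1=y_1+\sum\alpha_i x_i$ and the same choice $\alpha_2=-r_{245}$, $\alpha_3=-r_{345}$, $\alpha_4=-r_{234}$, $\alpha_5=-r_{235}$. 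The only difference is that you spell out the computations the paper leaves to ``inspection,'' and your explicit shift formula and the final type-$A$ verification of the model algebra are both accurate.
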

\noindent
Notice that inspection shows that the algebra with this presentation indeed has the properties stated in the proposition.
%
%
\subsection{Algebras of type $B$ }
Suppose that $\phi_{\bar{x_1}}$ is degenerate and that the isotropic part of $L/L^2$ with respect to this alternating form is 
$\mathbb{F}\bar{y_4}+\mathbb{F}\bar{y_5}$. We are now assuming that $\phi_{\bar{z}}$ is non-degenerate for all 
$\bar{z} \notin \mathbb{F} \bar{x_1}$. Pick $y_1 \in L^2$ such that $(x_1, y_1) = 1$.
\begin{Remark}\label{lma443}
We must have $\phi_{\bar{y_1}}(\bar{y_4},\bar{y_5})=0$. 
Otherwise we would get a basis $\bar{y_2}, \bar{y_3}, \bar{y_4}, \bar{y_5}$ for $L/L^2$ such that
\[ \phi_{\bar{y_1}}(\bar{y_4},\bar{y_5})=1,\ \phi_{\bar{y_1}}(\bar{y_2},\bar{y_3})=1,\ \phi_{\bar{x_1}}(\bar{y_2},\bar{y_3}) = \alpha \neq 0 \]
and where 
			$\phi_{\bar{y_1}}(\bar{y_i},\bar{y_j})=0$ 
and likewise 
			$\phi_{\bar{x_1}}(\bar{y_i},\bar{y_j})=0$ 
for any pair $\bar{y_i}, \bar{y_j}$ such that $2 \leq i < j \leq 5$ that is not included above. 
But then inspection shows that $\phi_{\alpha \bar{y_1} - \bar{x_1} }$ is degenerate where the corresponding isotropic part of 
$L/L^2$ is $\mathbb{F}\bar{y_2}+\mathbb{F}\bar{y_3}$. 
But this contradicts our assumptions.
\end{Remark}
%
\noindent
We thus know that 
			$\phi_{\bar{y_1}}(\bar{y_4},\bar{y_5}) =0$. 
As $\phi_{\bar{y_1}}$ is non-degenerate we know that there exists some $\bar{y_2} \in L/L^2$
%
%
such that 
\begin{equation}\label{eq:4.II.3}
			\phi_{\bar{y_1}}(\bar{y_2},\bar{y_4}) =1. 
\end{equation}
 Replacing $y_5$ and $y_3$ by some suitable $y_5 + \alpha y_4$ and $y_3 + \beta y_4+ \gamma y_2$ we can furthermore assume that
 \begin{equation}\label{eq:4.II.4}  \begin{aligned}
			\phi_{\bar{y_1}}(\bar{y_2},\bar{y_5}) = 
			\phi_{\bar{y_1}}(\bar{y_3},\bar{y_4}) = 
			\phi_{\bar{y_1}}(\bar{y_2},\bar{y_3}) =0.
 \end{aligned} \end{equation}
As $\phi_{\bar{x_1}}$ is non-zero we must have 
			$\phi_{\bar{x_1}}(\bar{y_2}, \bar{y_3}) \neq 0$ 
and by replacing $\bar{y_3}$ by a multiple of itself we can assume that
\begin{equation}\label{eq:4.II.5} 
			\phi_{\bar{x_1}}(\bar{y_2},\bar{y_3})=1 .
\end{equation}
Notice that this does not affect $\eqref{eq:4.II.3}$ and \eqref{eq:4.II.4}. 
As $\phi_{\bar{y_1}}$ is non-degenerate we cannot have that $\bar{y_3}$ is isotropic to all vectors in $L/L^2$ with
 respect to this alternating form. Thus by \eqref{eq:4.II.4} we must have 
 			$\phi_{\bar{y_1}}(\bar{y_3},\bar{y_5}) \neq 0$ 
and by replacing $\bar{y_5}$ by a multiple of itself we can assume that 
\begin{equation}\label{eq:4.II.6} 
			\phi_{\bar{y_1}}(\bar{y_3},\bar{y_5})=1. 
\end{equation}
Again equations $\eqref{eq:4.II.3},$ \eqref{eq:4.II.4} and $\eqref{eq:4.II.5}$ are not affected. 
We thus see that we can choose a standard basis such that 
\begin{eqnarray*}
			(x_1y_2,y_3)=1 && (y_1y_2,y_3)=0\\ 
			(x_1y_2,y_4)=0 && (y_1y_2,y_4)=1\\ 
			(x_1y_2,y_5)=0 && (y_1y_2,y_5)=0\\  
			(x_1y_3,y_4)=0 && (y_1y_3,y_4)=0\\
			(x_1y_3,y_5)=0 && (y_1y_3,y_5)=1\\ 
			(x_1y_4,y_5)=0 &&  (y_1y_4,y_5)=0.
\end{eqnarray*}
As in case $A$ we are only left with the triples $(y_i y_j,y_k) =r_{ijk}$ for all $2 \leq i< j < k \leq 5$.
As in that case we let 
\begin{eqnarray*}
			\tilde{y_i} & =& y_i + \alpha_i x_1 + \alpha_i y_1,\\
			\tilde{x_1} & =& x_1 - (\alpha_2 x_2 + \alpha_3 x_3 + \alpha_4 x_4 + \alpha_5 x_5),\\
			\tilde{y_1} & =& y_1 + \alpha_2 x_2 + \alpha_3 x_3 +  \alpha_4 x_4 + \alpha_5 x_5.
\end{eqnarray*}
Inspection shows that we can choose $\alpha_2, \alpha_3, \alpha_4, \alpha_5$ such that 
			$(\tilde{y_i}\tilde{y_j}, \tilde{y_k})=0$ 
for $2 \leq i < j < k \leq 5$. 
We thus get the following result.
\begin{Proposition}\label{pro4p3}
There is a unique nilpotent SAA of dimension $10$ with isotropic center of dimension $4$ where $L^3=Z(L)$ and $L$ is of type $B$. 
This algebra can be given by the presentation 
\begin{align*}
{\mathcal P}_{10}^{(4,3)}: \quad   (x_1y_2,y_3) =1,\ (y_1y_2,y_4) =1,\  (y_1y_3,y_5) =1. 
\end{align*}
\end{Proposition}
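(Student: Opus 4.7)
The plan is to finish the normalization begun in the excerpt by solving for the four parameters $\alpha_2,\alpha_3,\alpha_4,\alpha_5$ so that all remaining triples $r_{ijk}=(y_iy_j,y_k)$, $2\leq i<j<k\leq 5$, are killed, and then to verify that the resulting presentation genuinely lies in type $B$ with isotropic centre of dimension $4$ and $L^3=Z(L)$.

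First, I would verify that the proposed substitutions preserve the standard-basis conditions. The computation $(\tilde{x_1},\tilde{y_k})=-\alpha_k(x_k,y_k)+\beta_k(x_1,y_1)$ forces the coefficient of $y_1$ in $\tilde{y_k}$ to equal the coefficient of $x_1$, which explains the common choice $\alpha_i$ used for both. A similar check shows $(\tilde{y_1},\tilde{y_k})=0$ and $(\tilde{x_1},\tilde{y_1})=1$ automatically. No triple of the form $(\tilde{x_1}\tilde{y_i},\tilde{y_j})$ or $(\tilde{y_1}\tilde{y_i},\tilde{y_j})$ is disturbed by this perturbation: since $x_1,y_1\in L^2$, all correction products lie in $L^2\cdot L^2\leq L^3=Z(L)$, and $Z(L)\perp L^2$ (as $Z(L)=(L^2)^\perp$), so quadratic-in-$\alpha$ contributions drop out of any triple involving $\tilde{x_1},\tilde{y_1},\tilde{y_k}$ as one of its entries. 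The only triples that can change are the $(\tilde{y_i}\tilde{y_j},\tilde{y_k})$.

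Next, I would expand $(\tilde{y_i}\tilde{y_j},\tilde{y_k})$ explicitly using the already established values $(x_1y_2,y_3)=1$, $(y_1y_2,y_4)=1$, $(y_1y_3,y_5)=1$ (and all other $(x_1y_a,y_b)$ or $(y_1y_a,y_b)$ equal to $0$), together with the cyclic rule $(uv,w)=(vw,u)=(wu,v)$. The four equations indexed by $(i,j,k)\in\{(2,3,4),(2,3,5),(2,4,5),(3,4,5)\}$ reduce to the triangular linear system
\begin{align*}
r_{234}+\alpha_4-\alpha_3 &= 0,\\
r_{235}+\alpha_5+\alpha_2 &= 0,\\
r_{245}+\alpha_5 &= 0,\\
r_{345}-\alpha_4 &= 0,
\end{align*}
which has the unique solution $\alpha_4=r_{345}$, $\alpha_5=-r_{245}$, $\alpha_3=r_{234}+r_{345}$, $\alpha_2=-r_{235}+r_{245}$. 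Thus every algebra in this case is isomorphic to one with presentation ${\mathcal P}_{10}^{(4,3)}$, giving uniqueness.

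Finally, I would verify that the presentation ${\mathcal P}_{10}^{(4,3)}$ actually realises the three structural conditions. Direct inspection of the triples shows $Z(L)=\mathbb{F}x_2+\mathbb{F}x_3+\mathbb{F}x_4+\mathbb{F}x_5$ is isotropic of dimension $4$ and $L^3\leq Z(L)$; noting that $y_1y_2=x_4,\ y_1y_3=x_5,\ x_1y_2=x_3$ all lie in $L^3$ and span three dimensions, together with one more generator (produced from $y_2y_3$ via the identity $(y_2y_3,x_1)=(x_1y_2,y_3)=1$) forces $\dim L^3=4=\dim Z(L)$. For type $B$, the matrix of $\phi_{\bar{x_1}}$ with respect to $\bar{y_2},\bar{y_3},\bar{y_4},\bar{y_5}$ has rank $2$ with radical $\mathbb{F}\bar{y_4}+\mathbb{F}\bar{y_5}$, so $\phi_{\bar{x_1}}$ is degenerate; the matrix of $\phi_{\lambda\bar{x_1}+\mu\bar{y_1}}$ for $\mu\neq 0$ has non-zero determinant (a short computation in the $4\times 4$ skew-symmetric case), so every other one-parameter direction gives a non-degenerate form. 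The main obstacle is the last bookkeeping step: one must argue carefully that the pencil $\phi_{\lambda\bar{x_1}+\mu\bar{y_1}}$ is non-degenerate for all $(\lambda,\mu)$ with $\mu\neq 0$, which is what distinguishes type $B$ from type $A$ and from type $C$.
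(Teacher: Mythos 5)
Your proof is correct and follows essentially the same route as the paper: the triangular linear system you derive (with solution $\alpha_4=r_{345}$, $\alpha_5=-r_{245}$, $\alpha_3=r_{234}+r_{345}$, $\alpha_2=r_{245}-r_{235}$) is exactly what the paper's ``inspection shows'' step amounts to, your observation that the correction terms land in $L^3=Z(L)=(L^2)^\perp$ and hence do not disturb the $(x_1y_iy_j)$, $(y_1y_iy_j)$ triples is the right justification, and your Pfaffian computation for the pencil $\phi_{\lambda\bar{x_1}+\mu\bar{y_1}}$ (degenerate only at $\mu=0$) is the same non-degeneracy check the paper performs after normalising to $r\bar{x_1}+\bar{y_1}$. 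The only slip is cosmetic: the fourth basis vector of $L^3$ is $x_2=-x_1y_3=-y_1y_4$, not something obtained directly from the degree-two product $y_2y_3$ (which equals $-y_1$ and lies in $L^2\setminus L^3$), though the conclusion $\dim L^3=4=\dim Z(L)$ holds regardless.
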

\begin{proof}
We have already shown that this algebra is the only candidate. Inspection 
shows that conversely this algebra has isotropic center of dimension $4$ and $L^3=Z(L)$. 
It remains to see that the algebra is of type $B$. Thus 
let $r \in \mathbb{F}$. We want to show that $\phi_{r\bar{x_1}+\bar{y_1}}$ is non-degenerate. 
Let $t=\alpha y_2+\beta y_3+\gamma  y_4+ \delta y_5$ such that 
$\phi_{r\bar{x_1}+\bar{y_1}}(\bar{t},\bar{u})=0$ for all $\bar{u} \in L/L^2$ where $\bar{t}= t + L^2$. Then
\begin{align*}
			0 &=\phi_{r\bar{x_1}+\bar{y_1}}(\bar{t},\bar{y_5})= \beta \\
			0 &=\phi_{r\bar{x_1}+\bar{y_1}}(\bar{t},\bar{y_4})= \alpha \\
			0 &=\phi_{r\bar{x_1}+\bar{y_1}}(\bar{t},\bar{y_3})= r \alpha - \delta  = - \delta \\
			0 &=\phi_{r\bar{x_1}+\bar{y_1}}(\bar{t},\bar{y_2})=- r \beta  - \gamma  = -\gamma .
\end{align*}
			Thus $ \bar{t}=0$.
\end{proof}
\subsection{Algebras of type $C$ }
Here we are assuming that $\phi_{z}$ is non-degenerate for all $0 \neq z \in L^2/Z(L)$.
Let $L$ be any nilpotent SAA of type $C$. Notice that 
			$L^2/Z(L) = L^2/(L^2)^\perp$ 
naturally becomes a $2$-dimensional symplectic vector space with inherited alternating form from $L$. Thus $(u + Z(L), v + Z(L)) = (u, v)$ for $u, v \in L^2$. 
We pick a basis $x,y$ for $L^2/Z(L)$ such that $(x, y) = 1$ and then choose some fixed elements $x_1, y_1 \in L^2$ such that 
			$x=\bar{x_1}=x_1+Z(L)$ and $y=\bar{y_1}=y_1+Z(L)$.
 For any vector $u \in L/L^2 $ we will denote by ${\langle u \rangle^\perp_{1}}$ the subspace of $L/L^2$ consisting of 
 all the vectors that are isotropic to $u$ with respect to $\phi_{\bar{x_1}}$. Likewise we will denote by ${\langle u \rangle^\perp_{2}}$ the subspace of
 $L/L^2$ consisting of all the vectors that are isotropic to $u$ with respect to $ \phi_{\bar{y_1}}$. 
\begin{defn}
We say that a subspace of $L/L^2$ is {\it totally isotropic} if it is isotropic with respect to $\phi_{z}$ for all $z \in L^2/Z(L)$.
\end{defn}
\begin{Lemma}\label{lma445}
For each $0\neq u \in L/L^2$ there exists a unique totally isotropic plane through $u$. 
\end{Lemma}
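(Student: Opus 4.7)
The plan is to exhibit the desired plane as $\langle u \rangle^\perp_1 \cap \langle u \rangle^\perp_2$ and verify that this intersection has dimension exactly $2$, is totally isotropic, and must contain any totally isotropic plane through $u$. Note first that $\dim L/L^2 = 4$ and $\dim L^2/Z(L) = 2$, so each $\phi_{\bar{z}}$ is a non-degenerate alternating form on a $4$-dimensional space for any $0 \neq \bar{z} \in L^2/Z(L)$, and every totally isotropic subspace is automatically a subspace of dimension at most $2$. Moreover, since the map $\bar{z} \mapsto \phi_{\bar{z}}$ is linear, a subspace of $L/L^2$ is totally isotropic as soon as it is isotropic with respect to the two forms $\phi_{\bar{x_1}}$ and $\phi_{\bar{y_1}}$.

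For existence, fix $0 \neq u \in L/L^2$ and set $W = \langle u \rangle^\perp_1 \cap \langle u \rangle^\perp_2$. Since each $\phi_{\bar{x_1}}, \phi_{\bar{y_1}}$ is non-degenerate, the subspaces $\langle u \rangle^\perp_1$ and $\langle u \rangle^\perp_2$ are each of dimension $3$, and both contain $u$ because the forms are alternating. The standard dimension formula then gives $\dim W \geq 3 + 3 - 4 = 2$. The key step is to rule out $\dim W = 3$. If we had equality $\langle u \rangle^\perp_1 = \langle u \rangle^\perp_2 = W$, then the two linear functionals $v \mapsto \phi_{\bar{x_1}}(u,v)$ and $v \mapsto \phi_{\bar{y_1}}(u,v)$ on $L/L^2$ would share the same $3$-dimensional kernel $W$ and both be non-zero, hence proportional. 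Thus there exists $c \in \mathbb{F}$ with $\phi_{\bar{y_1} - c\bar{x_1}}(u, v) = 0$ for all $v \in L/L^2$, so $u$ lies in the radical of the non-degenerate form $\phi_{\bar{y_1} - c\bar{x_1}}$, forcing $u = 0$, a contradiction. Hence $\dim W = 2$, and since $W$ is isotropic for both $\phi_{\bar{x_1}}$ and $\phi_{\bar{y_1}}$, it is a totally isotropic plane through $u$.

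For uniqueness, suppose $P$ is any totally isotropic plane containing $u$. Then for every $v \in P$ we have $\phi_{\bar{x_1}}(u,v) = \phi_{\bar{y_1}}(u,v) = 0$, so $P \subseteq W$. Since both $P$ and $W$ are $2$-dimensional, $P = W$, proving uniqueness. The main potential obstacle is the equality case $\dim W = 3$, but the Type $C$ assumption (non-degeneracy of $\phi_{\bar{z}}$ for every non-zero $\bar{z}$) is precisely what rules it out via the proportionality argument above.
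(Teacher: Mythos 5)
Your proof is correct and takes essentially the same route as the paper: the dimension count $\dim(\langle u\rangle^\perp_1 \cap \langle u\rangle^\perp_2) = 2$ provided $\langle u\rangle^\perp_1 \neq \langle u\rangle^\perp_2$, and the equality case is excluded by producing a non-zero combination of $\bar{x_1}$ and $\bar{y_1}$ whose form has $u$ in its radical (the paper phrases this as replacing $x_1$ by $x_1+\alpha y_1$; your proportionality-of-functionals argument is the same idea). Your explicit uniqueness paragraph and the observation that isotropy of a plane through $u$ reduces to isotropy of its elements against $u$ are welcome clarifications of steps the paper leaves implicit.
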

\begin{proof}
We know that ${\langle u \rangle^\perp_{1}}$ and ${\langle u \rangle^\perp_{2}}$ are $3$-dimensional. Thus if they are not equal then 
			\[ 4  = \mbox{dim\,} ({\langle u \rangle_{1}^\perp }+ {\langle u \rangle_{2}^\perp}) 
			       = \mbox{dim\,} {\langle u \rangle^\perp_{1}} + \mbox{dim\,} {\langle u \rangle^\perp_{2}} -\mbox{dim\,} ({\langle u \rangle_{1}^\perp} \cap  {\langle u \rangle_{2}^\perp}). \]
Therefore $ \mbox{dim\,} ({\langle u \rangle_{1}^\perp} \cap  {\langle u \rangle_{2}^\perp}) = 6-4 = 2$. 
Thus the collection of all the elements in $L/L^2$ that are isotropic to $u$ with respect to $\phi_{z}$ for all $z \in L^2/Z(L)$, namely 
${\langle u \rangle^\perp_{1}} \cap  {\langle u \rangle^\perp_{2}}$, is a plane.\\ \\
It remains to see that ${\langle u \rangle^\perp_{1}} \neq {\langle u \rangle_{2}^\perp}$. We argue by contradiction and pick a basis 
$u, v, w$ for this common subspace and add a vector $t$ to get a basis for $L/L^2$. By replacing $x_1$ by some suitable 
$x_1 + \alpha y_1$, we can assume that $\phi_{\bar{x_1}}(u,t)=0$. But then $u$ is isotropic to all elements of $L/L^2$ with respect to 
$\phi_{\bar{x_1}}$ that contradicts the assumption that $\phi_{\bar{x_1}}$ is non-degenerate.
\end{proof}
\noindent
The alternating forms $\phi_{z}$ with $z \in L^2/Z(L)$ will help us understanding the structure of algebras of type $C$. 
We will next come up with a special type of presentations for algebras of type $C$ based on the geometry arising from the family of the auxiliary alternating forms.\\ \\
Let $L$ be an algebra of type $C$. As a starting point we pick two distinct totally isotropic planes 
			$P_1,P_2 \leq L/L^2$ 
and we pick some non-zero vector 
			$\bar{y_2}$ on $P_1$. 
By Lemma \ref{lma445}, we have that $P_1 \cap P_2 =\{0\}$ and thus $L/L^2=P_1 \oplus P_2$. 
Now ${\langle \bar{y_2} \rangle^\perp_1}$ is $3$-dimensional and contains $P_1$. Thus ${\langle \bar{y_2} \rangle^\perp_1} \cap P_2$ is $1$-dimensional 
and not contained in ${\langle \bar{y_2} \rangle_2^\perp}$ by Lemma \ref{lma445}. Thus there is unique vector 
$\bar{y_5} \in P_2$ where $\phi_{\bar{x_1}}(\bar{y_2},\bar{y_5}) = 0 $ 
and 
$\phi_{\bar{y_1}}(\bar{y_2},\bar{y_5})=1$. 
\begin{center}
\begin{tikzpicture}
\centering
       \draw [thick, magenta , dashed] (0,0) -- (0,2)      
        node [above, black] {$\bar{y_2}$};              
	 \node [below] at (0,0) {$\bar{y_3}$};            
	 
   	\draw [thick, magenta, dashed ] (4,0) -- (4,2)      
        node [above, black] {$\bar{y_4}$};   
	 \node [below, black] at (4,0)  {$\bar{y_5}$}; 
      \draw [thick, magenta, dashed ] (-1,-0.05) -- (6.5 ,-0.05) ;   
 	\draw [thick, magenta , dashed ] (-1,2) -- (6.5,2) ;     
              %
       %
      \draw [draw=blue, thick] (4,0) -- (0,2);     
	\draw [draw=blue, thick] (-1,2.05) -- (6.5,2.05) ;
	\draw [draw=blue, thick] (-1,0) -- (6.5,0) ;
      \node [left] at (7.25,2) {$P_1$};                
      \node [left] at (7.25,0) {$P_2$};               
    %
    %
     \filldraw[black] (0,2.02) circle (1pt) node[anchor=west]{};
     \filldraw[black] (4,2.02) circle (1pt) node[anchor=west]{};
      \filldraw[black] (0,-0.02) circle (1pt) node[anchor=west]{};
     \filldraw[black] (4,-0.02) circle (1pt) node[anchor=west]{};
\end{tikzpicture}
\end{center}
\noindent
Similarly we have a unique element 
$\bar{y_3} \in P_2$ such that $\phi_{\bar{y_1}}(\bar{y_2},\bar{y_3}) = 0$ 
and 
$\phi_{\bar{x_1}}(\bar{y_2},\bar{y_3}) =1 $. 
By Lemma \ref{lma445} we have ${\langle \bar{y_5} \rangle^\perp_2} \neq {\langle \bar{y_3} \rangle^\perp_2 }$. Thus there exists a unique 
$\bar{y_4} \in P_1$ such that $\phi_{\bar{y_1}}(\bar{y_4},\bar{y_5}) = 0$ 
and 
$\phi_{\bar{y_1}}(\bar{y_3},\bar{y_4}) =1$. 
Notice also that 
			$\phi_{\bar{x_1}}(\bar{y_4},\bar{y_5}) \neq 0 $ 
and that $\bar{y_2}, \bar{y_3}, \bar{y_4}, \bar{y_5}$ form a basis for $L/L^2$. It follows from the discussion that, 
for some $\alpha,\beta \in \mathbb{F} $ with $\beta \neq 0 $, we have
\begin{equation} \label{eq:4.II.9}
\begin{aligned}
\phi_{\bar{x_1}}(\bar{y_2},\bar{y_3}) &=1 & \phi_{\bar{y_1}}(\bar{y_2},\bar{y_3}) &= 0\\ 
\phi_{\bar{x_1}}(\bar{y_2},\bar{y_4}) & = 0 & \phi_{\bar{y_1}}(\bar{y_2},\bar{y_4}) &= 0\\ 
\phi_{\bar{x_1}}(\bar{y_2},\bar{y_5}) &= 0 & \phi_{\bar{y_1}}(\bar{y_2},\bar{y_5}) &=1\\ 
\phi_{\bar{x_1}}(\bar{y_3},\bar{y_4}) &=\alpha & \phi_{\bar{y_1}}(\bar{y_3},\bar{y_4}) &= 1\\ 
\phi_{\bar{x_1}}(\bar{y_3},\bar{y_5}) &= 0 & \phi_{\bar{y_1}}(\bar{y_3},\bar{y_5}) &= 0\\ 
\phi_{\bar{x_1}}(\bar{y_4},\bar{y_5}) &=\beta & \phi_{\bar{y_1}}(\bar{y_4},\bar{y_5}) &= 0 .
\end{aligned} 
\end{equation}
\noindent
The matrix for the alternating form $\phi_{r\bar{x_1}+s\bar{y_1}}$ with respect to the ordered basis $(\bar{y_2}, \bar{y_4}, \bar{y_3}, \bar{y_5})$ is then
\begin{align*}
r \left[ {\begin{array}{rrrr}
0 & 0 & 1 & 0\\
0 & 0 & -\alpha & \beta \\
-1 & \alpha & 0 & 0 \\
0 & -\beta & 0 & 0 
\end{array} }\right ] +
s \left[ {\begin{array}{rrrr}
0 & 0 & 0 & 1\\
0 & 0 & -1 &0 \\
0 & 1 & 0 & 0 \\
-1 & 0 & 0 & 0 
\end{array} }\right  ]
\end{align*}
with determinant $(\beta r^2  +\alpha r s +s^2)^2$. As we are dealing here with algebras of type $C$ 
this determinant must be non-zero for all $(r,s) \neq (0,0)$. Equivalently we must have that the polynomial 
			$$t^2 +\alpha t + \beta $$ 
is irreducible in $\mathbb{F}[t]$.
 \noindent
Using this and \eqref{eq:4.II.9} we will now obtain a full presentation for our algebra. 
As before we are only left with the triples $(y_i y_j,y_k) =r_{ijk}$ for $2 \leq i< j < k \leq 5$. We will see that we can choose a standard basis such that
			$x_1 + Z(L) =x,\ y_1 + Z(L) = y$ and $y_i + L^2 =\bar{y_i}$ for $2 \leq i \leq 5$.
 It turns out that we do not have to alter our basis elements $x_5,\ldots, x_2$ of the center. We do this with a change of basis of the form
\begin{eqnarray*}
			\tilde{x_1} & =& x_1 - ( \alpha_2 x_2 + \alpha_3 x_3 + \alpha_4 x_4 + \alpha_5 x_5),\\
			\tilde{y_1} & =& y_1 + (\alpha_2 x_2 + \alpha_3 x_3 +  \alpha_4 x_4 + \alpha_5 x_5), \\
			\tilde{y_i} & =& y_i + \alpha_i x_1 + \alpha_i y_1.
\end{eqnarray*}
\noindent
Inspection shows that the equations $(\tilde{y_i}\tilde{y_j}, \tilde{y_k})=0$, $2 \leq i < j < k \leq 5$ are equivalent to
%
%
\begin{eqnarray*}
\left[\begin{array}{cc}
 - 1 & 1 \\
\beta  & \alpha + 1 
\end{array} \right]
 \left[ \begin{array}{r} 
 \alpha_3 \\
  \alpha_5
 \end{array} \right] &
=& \left[ \begin{array}{r}
 - r_{235} \\
  - r_{345}
   \end{array} \right] , \\
\left[\begin{array} {cc}
\alpha + 1 & 1 \\
\beta  & - 1 
\end{array}\right]
 \left[\begin{array}{r} 
 \alpha_2 \\
  \alpha_4 
  \end{array} \right] &
=& \left[\begin{array}{r}
 - r_{234} \\
  - r_{245}
   \end{array} \right] .
\end{eqnarray*}
\noindent
Notice that we cannot have that $\alpha + \beta + 1 = 0$ since otherwise $1$ is a root of $t^2 + \alpha t + \beta $ that is absurd as the polynomial is irreducible. 
We thus have solution $(\alpha_2, \alpha_3, \alpha_4, \alpha_5)$ to the equation system 
and we arrive at a standard basis that gives us the following presentation
\begin{equation}\label{eq:case434}
\begin{aligned} 
\quad(x_1 y_2, y_3)&= 1, & (y_1y_2,y_5)&=1, \\ 
{\mathcal P}_{10}^{(4,4)} (\alpha,\beta) :\quad(x_1 y_3, y_4)&=\alpha, & (y_1 y_3 , y_4)&=1, \\ 
\quad(x_1 y_4 , y_5)&= \beta ,& 
\end{aligned}
\end{equation}
where the polynomial $t^2+\alpha t +\beta$ is irreducible. 
Conversely, inspection shows that any algebra with such presentation, where $t^2+\alpha t +\beta$ is irreducible, 
gives us an algebra of type $C$.\\ \\
We next turn to the isomorphism problem. That is we want to understand when two pairs 
			$(\alpha,\beta)$ and $(\tilde{\alpha},\tilde{\beta})$ 
describe the same algebra. As a first step we first prove the following lemma.
\begin{Lemma} \label{lma2.5pII} 
Let $x,y$ be elements in $L^2/Z(L)$ such that $(x,y)=1$. 
We have that the values of $\alpha$ and $\beta$ remain the same for all presentations 
of the form ${\mathcal P}_{10}^{(4,4)}(\alpha,\beta)$ where, for the given standard basis, $x_1+Z(L)=x$ and $y_1+Z(L)=y$.
\end{Lemma}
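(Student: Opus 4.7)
The plan is to read $\alpha$ and $\beta$ off as a Pfaffian invariant of the pair of auxiliary alternating forms on $L/L^2$.

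First, I would observe that the forms $\phi_{\bar{x_1}}$ and $\phi_{\bar{y_1}}$ depend only on the cosets $x = \bar{x_1}$ and $y = \bar{y_1}$ in $L^2/Z(L)$, not on the specific lifts: adding an element of $Z(L)$ to $x_1$ does not change $(x_1 u, v)$ since $Z(L)$ annihilates $L$. Hence for any two presentations of the form $\mathcal{P}_{10}^{(4,4)}$ with the same $x, y$, the forms $\phi_x, \phi_y$ on $L/L^2$ coincide as intrinsic objects; what differs between the two presentations is only the choice of basis of $L/L^2$.

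The core of the argument is then a Pfaffian computation. The defining relations of $\mathcal{P}_{10}^{(4,4)}(\alpha, \beta)$ prescribe, in the basis $(\bar{y_2}, \bar{y_3}, \bar{y_4}, \bar{y_5})$ of $L/L^2$, that the matrix $\Phi_y$ of $\phi_y$ is a fixed antisymmetric matrix whose entries are $0$ and $\pm 1$, independent of $\alpha$ and $\beta$, and that $\mathrm{Pf}(\Phi_y) = 1$. The matrix $\Phi_x(\alpha,\beta)$ of $\phi_x$ carries the two parameters, and as in the derivation of the presentation one computes $\mathrm{Pf}\bigl(r\Phi_x(\alpha,\beta) + s\Phi_y\bigr) = \beta r^2 + \alpha r s + s^2$. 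Now given a second presentation $\mathcal{P}_{10}^{(4,4)}(\tilde\alpha, \tilde\beta)$ with the same $x, y$, let $P$ be the change-of-basis matrix on $L/L^2$ sending the first basis to the second. Applying the defining relations for $\phi_y$ to both presentations gives $P^T \Phi_y P = \Phi_y$, so $P$ lies in the symplectic group of $\phi_y$; taking Pfaffians of both sides yields $\det P \cdot \mathrm{Pf}(\Phi_y) = \mathrm{Pf}(\Phi_y)$, hence $\det P = 1$. Using the transformation law $\mathrm{Pf}(P^T A P) = \det P \cdot \mathrm{Pf}(A)$ with $A = r\Phi_x(\alpha,\beta) + s\Phi_y$ (which is carried by $P$ to $r\Phi_x(\tilde\alpha,\tilde\beta) + s\Phi_y$), one obtains the polynomial identity $\beta r^2 + \alpha r s + s^2 = \tilde\beta r^2 + \tilde\alpha r s + s^2$ in $r, s$, forcing $\alpha = \tilde\alpha$ and $\beta = \tilde\beta$.

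I do not foresee a real obstacle; the argument reduces to the classical transformation law for the Pfaffian. The conceptual key is the identification $\det P = 1$, which flows from the rigidity of the presentation: the defining relations pin the matrix of $\phi_y$ to the exact same $\Phi_y$ in both bases, forcing the comparison map $P$ to be symplectic with respect to $\phi_y$. The only step that needs a careful (but immediate) check is that the entries prescribed by the presentation for $\phi_y$ really do not involve $\alpha$ or $\beta$, which is visible from the list of triple values.
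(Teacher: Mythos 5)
Your proof is correct, and it takes a genuinely different route from the paper's. The paper proves the lemma by tracking the explicit procedure that produces $\alpha$ and $\beta$: it verifies invariance under rescaling and shearing of the initial vector $\bar{y_2}$ in $P_1$, then under swapping the two totally isotropic planes, then under replacing one plane by a third, and finally assembles these cases. You instead package the two parameters into the single basis-dependent quantity $\mbox{Pf}\,(r\Phi_x+s\Phi_y)=\pm(\beta r^2+\alpha rs+s^2)$ and observe that the normalisation built into the presentation --- namely that $\phi_y$ has the \emph{same} fixed nonsingular matrix $\Phi_y$ in both bases --- forces the comparison matrix $P$ to satisfy $\det P=1$, so the Pfaffian of the pencil is an honest invariant and the coefficients $\alpha,\beta$ can be read off. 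All the ingredients check out: $\phi_{\bar z}$ does depend only on the coset $z+Z(L)$, the entries of $\Phi_y$ prescribed by ${\mathcal P}_{10}^{(4,4)}(\alpha,\beta)$ are $(y_1y_2,y_5)=(y_1y_3,y_4)=1$ and the rest zero (so independent of $\alpha,\beta$), the forms are alternating so the Pfaffian identity $\mbox{Pf}\,(P^TMP)=\det P\cdot\mbox{Pf}\,(M)$ applies over $\mathbb{F}[r,s]$ in every characteristic, and equating coefficients of the resulting polynomial identity is legitimate even over small fields. The only blemishes are cosmetic: with the paper's ordering $(\bar{y_2},\bar{y_4},\bar{y_3},\bar{y_5})$ one gets $\mbox{Pf}\,(\Phi_y)=-1$ and $\mbox{Pf}\,(r\Phi_x+s\Phi_y)=-(\beta r^2+\alpha rs+s^2)$, but the signs cancel and only nonvanishing of $\mbox{Pf}\,(\Phi_y)$ is needed. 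Your argument is shorter, avoids all case analysis, works directly from the form of the presentation rather than from the generating procedure, and moreover explains conceptually why $t^2+\alpha t+\beta$ is intrinsic to the pair $(x,y)$ --- it is essentially the Pfaffian of the pencil of auxiliary forms; the paper's computation buys, in exchange, the explicit transformation formulae for $\tilde{y_3},\tilde{y_4},\tilde{y_5}$ that are reused immediately afterwards in deriving Proposition \ref{prop2.6pII}.
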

\begin{proof} 
Our method for producing $\alpha$ and $\beta$ was based on choosing some distinct totally isotropic planes $P_1, P_2$ and 
some non-zero vector $\bar{y_2}$ on $P_1$. From this we came up with a procedure that provided us with 
unique vectors $\bar{y_2}, \bar{y_4} \in P_1$ and $\bar{y_3},\bar{y_5} \in P_2$
from which the values $\alpha$ and $\beta$ can be calculated as 
			\[ \alpha=\phi_{x}(\bar{y_3},\bar{y_4}),\ \beta=\phi_{x}(\bar{y_4},\bar{y_5}) .\] 
We want to show that if $x_1+Z(L),\ y_1+Z(L)$ are kept fixed the procedure will always produce the same value for $\alpha$ and $\beta$. 
As a starting point we will see that the values do not depend on which vector $\bar{y_2}$ from $P_1$ we choose. 
We do this in two steps. First notice that if we choose $\tilde{y_2} = a \bar{y_2}$ for some $ 0 \neq a \in \mathbb{F}$, 
then the procedure gives us the new vectors $\tilde{y_3} = \frac{1}{a} \bar{y_3},\  \tilde{y_5} = \frac{1}{a} \bar{y_5}$ and $\tilde{y_4} = a \bar{y_4}$ and this gives us the values
$$\begin{array}{l}
\tilde{\alpha}=\phi_{\bar{x_1}}(\tilde{y_3},\tilde{y_4}) = \phi_{\bar{x_1}}(\frac{1}{a}\bar{y_3}, a \bar{y_4}) = \alpha,\\
\tilde{\beta}=\phi_{\bar{x_1}}(\tilde{y_4},\tilde{y_5}) = \phi_{\bar{x_1}}( a \bar{y_4}, \frac{1}{a} \bar{y_5}) = \beta.
\end{array}$$
It remains to consider the change $\tilde{y_2}= \bar{y_4}+a \bar{y_2}$. Following the mechanical procedure above produces the elements 
\begin{eqnarray*}
\tilde{y_5}&=& \frac{-\beta }{a^2-a \alpha  +\beta }\, \bar{y_3}+\frac{a-\alpha }{a^2-a \alpha +\beta } \,\bar{y_5},\\
\tilde{y_3}&=& \frac{a}{a^2-a \alpha +\beta }\, \bar{y_3}+\frac{1}{a^2-a \alpha +\beta } \, \bar{y_5}, \\  
\tilde{y_4} &=& -\beta \bar{y_2}+(a-\alpha ) \bar{y_4}.
\end{eqnarray*}
Inspection shows that again we have $\phi_{\bar{x_1}}(\tilde{y_3},\tilde{y_4}) = \alpha$ and $\phi_{\bar{x_1}}(\tilde{y_4},\tilde{y_5}) = \beta$.\\\\
We have thus seen that for a given pair $P_1, P_2$ of distinct totally isotropic planes we get unique values $\alpha(P_1,P_2)$ and $\beta(P_1,P_2)$ 
not depending on which vector $\bar{y_2}$ from $P_1$ we choose for the procedure above. The next step is to see that 
 			$\alpha(P_2,P_1)=\alpha(P_1,P_2)$ and $\beta(P_2,P_1)=\beta(P_1,P_2)$. 
So suppose we have some standard basis with respect to the pair $P_1,P_2$ that gives us the presentation 
${\mathcal P}_{10}^{(4,4)}(\alpha,\beta)$. 
Now consider $\tilde{y_2}=\bar{y_5},  \tilde{y_4}=\beta \bar{y_3} \in P_2$ and $\tilde{y_3}=\frac{-1}{\beta}\bar{y_4}, \tilde{y_5}=-\bar{y_2}\in P_1$. Inspection shows that
this is standard basis for the pair $P_2,P_1$ and 
$$\begin{array}{l}
\alpha(P_2,P_1)= \phi_{\bar{x_1}}( \tilde{y_3}, \tilde{y_4}) =\phi_{\bar{x_1}}( \frac{-1}{\beta}\bar{y_4}, \beta \bar{y_3})  =\alpha(P_1,P_2) \\
\beta(P_2,P_1) =\phi_{\bar{x_1}}( \tilde{y_4}, \tilde{y_5}) =
\phi_{\bar{x_1}}( \beta \bar{y_3}, - \bar{y_2}) = \beta(P_1,P_2).
\end{array}$$
Now pick any totally isotropic plane $P_3$ that is distinct from $P_1,P_2$. The aim is to show that 
 			$\alpha(P_3,P_2)=\alpha(P_1,P_2)$ and $\beta(P_3,P_2)=\beta(P_1,P_2)$. 
Take any basis for $P_3$. This must be of the form $u_1+v_1, u_2+v_2$ with $u_1,u_2 \in P_1$ and $v_1,v_2 \in P_2$. 
Notice first that $u_1+P_2, u_2+P_2$ are linearly independent vectors in $P_1+P_2/P_2$. To see this, 
take $a,b \in \mathbb{F}$ such that
			\[ P_2=a u_1+b u_2+P_2 = a (u_1+v_1) + b (u_2+v_2) + P_2.\] 
Then $ a (u_1+v_1) + b (u_2+v_2)  \in P_2 \cap P_3=\{0\}$. 
As the vectors $u_1+v_1, u_2+v_2$ are linearly independent it follows that $ a=b=0 $.
In particular 
we can choose our basis for $P_3$ to be of the form $\bar{y_2}+u, \bar{y_4}+v$ with $u,v \in P_2$. Inspection shows that for 
$\tilde{y_2}=\bar{y_2}+u, \tilde{y_4}=\bar{y_4}+v \in P_3$ and $\bar{y_3}, \bar{y_5} \in P_2$  we have a standard basis with respect to the pair $P_3, P_2$. Furthermore the corresponding parameters are
$ \phi_{\bar{x_1}}( \bar{y_3}, \tilde{y_4})=\alpha$ and $ \phi_{\bar{x_1}}( \tilde{y_4}, \bar{y_5})=\beta $.\\\\
We have now all the input we need to finish the proof of the lemma. 
Take any four totally isotropic planes $P_1, P_2, P_3, P_4$ in $L/L^2$ such that $P_1 \neq P_2$ and $P_3 \neq P_4$. 
If these planes are not all distinct then we get directly from the analysis above that 
			$\alpha(P_3,P_4)=\alpha(P_1,P_2)$ and $\beta(P_3,P_4)=\beta(P_1,P_2)$. 
Now suppose the planes are distinct. Then 
			$\alpha(P_3,P_4)=\alpha(P_1,P_4)=\alpha(P_1,P_2)$
and
			$\beta(P_3,P_4)=\beta(P_1,P_4)=\beta(P_1,P_2)$. 
This finishes the proof of the lemma.
\end{proof}
\noindent 
If follows from the lemma that if we want to obtain a new presentation for some given algebra $L$, 
then we must choose different vectors $x,y$ for $L^2/Z(L)$.
We thus only need to consider a change of standard basis for $L$ of the form 
			$\tilde{x_1}= a x_1 + b y_1, \tilde{y_1} =c x_1 + d y_1$ where $1=(\tilde{x_1},\tilde{y_1})=ad-bc$. 
Suppose that we have a presentation ${\mathcal P}_{10}^{(4,4)}(\alpha,\beta)$ with respect to some standard basis 
$x_1, y_1, \ldots, x_5, y_5$ and let $\tilde{x_1},\tilde{y_1}$ be given as above.
Going again through the standard procedure with respect to 
			$P_1=\mathbb{F} \bar{y_2}+\mathbb{F} \bar{y_4},\  
			P_2=\mathbb{F} \bar{y_3}+\mathbb{F} \bar{y_5}$ 
			and
			$\bar{y_2} \in P_1$ 
gives us the new basis $\bar{y_2}, \tilde{y_3}, \tilde{y_5}, \tilde{y_4}$ 
where
\begin{eqnarray*}
\tilde{y_5}&=& -b \bar{y_3}+a\bar{y_5} \\ 
\tilde{y_3}&=& d\bar{y_3}-c\bar{y_5}\\
 \tilde{y_4}&=&\frac{-\alpha bc-\beta ac-bd}{\beta c^2+\alpha cd+d^2} \, \bar{y_2}+\frac{1}{\beta c^2+\alpha cd +d^2} \, \bar{y_4}.
 \end{eqnarray*}
 From this we can calculate the new parameters $\tilde{\alpha}$ and $\tilde{\beta}$ and we obtain the following proposition.
\begin{Proposition}\label{prop2.6pII}
Let $L$ be a nilpotent SAA of dimension $10$ with an isotropic center of dimension $4$ that is of type $C$. Then $L$ has a presentation of the form
\begin{equation*}\label{pro4p4} 
\begin{aligned} 
\quad(x_1 y_2, y_3)&= 1, & \\ 
{\mathcal P}_{10}^{(4,4)}(\alpha,\beta) :\quad(x_1 y_3, y_4)&=\alpha, & (x_1 y_4 , y_5)&= \beta, \\ 
\quad   (y_1y_2,y_5)&=1, &  (y_1 y_3 , y_4)&=1 , 
\end{aligned}
\end{equation*}
where the polynomial $t^2+\alpha t +\beta$ is irreducible in $\mathbb{F}[t]$. 
Conversely any such presentation gives us an algebra of type $C$. Furthermore 
two presentations ${\mathcal P}_{10}^{(4,4)}(\alpha, \beta)$ and ${\mathcal P}_{10}^{(4,4)}(\tilde{\alpha}, \tilde{\beta})$ 
describe the same algebra if and only if 
\begin{eqnarray*}
\tilde{\alpha} &=&\frac{ (ad+bc) \alpha + 2ac \beta + 2bd }{ d^2 +  c d \alpha+ c^2 \beta}\\
\tilde{\beta} &=& \frac{ b^2 + ab \alpha + a^2 \beta}{ d^2 +cd \alpha + c^2 \beta}
\end{eqnarray*}
for some $a,b,c,d \in \mathbb{F}$ where $ad-bc=1$.
\end{Proposition}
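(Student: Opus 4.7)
The existence half and the converse half of the proposition are essentially already in place: the preceding discussion has shown that every algebra of type~$C$ admits a presentation of the form ${\mathcal P}_{10}^{(4,4)}(\alpha,\beta)$ with $t^2+\alpha t+\beta$ irreducible, and routine inspection of the multiplication table verifies that any presentation of this shape, under the irreducibility assumption, yields an algebra of type~$C$. So my plan concentrates on the isomorphism statement.

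The plan is to reduce the problem, step by step, to a small linear-algebraic calculation in $L^2/Z(L)$. First I would observe that by Lemma~\ref{lma2.5pII}, the parameters $(\alpha,\beta)$ produced by the procedure depend only on the pair $(\bar{x_1},\bar{y_1})$ in $L^2/Z(L)$, not on the auxiliary choices of totally isotropic planes or basepoint. Consequently, two presentations of the same algebra yielding different parameters can only arise from different choices of a symplectic basis $(\tilde{x_1},\tilde{y_1})$ of a complement of $Z(L)$ in $L^2$; and since central summands have no effect on any of the auxiliary alternating forms $\phi_{\bar{z}}$, it suffices to consider $\tilde{x_1}=ax_1+by_1,\ \tilde{y_1}=cx_1+dy_1$ with $ad-bc=1$.

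Next I would rerun the canonical procedure on the new pair $(\tilde{x_1},\tilde{y_1})$, keeping the same pair of totally isotropic planes $P_1=\mathbb{F}\bar{y_2}+\mathbb{F}\bar{y_4}$ and $P_2=\mathbb{F}\bar{y_3}+\mathbb{F}\bar{y_5}$ (which are totally isotropic by \eqref{eq:4.II.9}) and the same starting vector $\bar{y_2}\in P_1$. Using the identity $\phi_{\tilde{x_1}}=a\phi_{\bar{x_1}}+b\phi_{\bar{y_1}}$ and $\phi_{\tilde{y_1}}=c\phi_{\bar{x_1}}+d\phi_{\bar{y_1}}$, the defining conditions for $\tilde{y_5},\tilde{y_3}\in P_2$ and $\tilde{y_4}\in P_1$ become small $2\times 2$ linear systems, whose solutions are precisely the formulas
\begin{align*}
\tilde{y_5}&= -b\,\bar{y_3}+a\,\bar{y_5}, \\
\tilde{y_3}&= d\,\bar{y_3}-c\,\bar{y_5}, \\
\tilde{y_4}&= \frac{-\alpha bc-\beta ac-bd}{\beta c^2+\alpha cd+d^2}\,\bar{y_2}+\frac{1}{\beta c^2+\alpha cd+d^2}\,\bar{y_4}
\end{align*}
given just before the statement of the proposition.

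Finally, I would compute $\tilde{\alpha}=\phi_{\tilde{x_1}}(\tilde{y_3},\tilde{y_4})$ and $\tilde{\beta}=\phi_{\tilde{x_1}}(\tilde{y_4},\tilde{y_5})$ by substituting these expressions and expanding via $\phi_{\tilde{x_1}}=a\phi_{\bar{x_1}}+b\phi_{\bar{y_1}}$, repeatedly using the base values from \eqref{eq:4.II.9}. This is where the bulk of the bookkeeping lies and is the main technical obstacle: collecting the numerators in the forms $b^2+ab\alpha+a^2\beta$ and $(ad+bc)\alpha+2ac\beta+2bd$ over the common denominator $d^2+cd\alpha+c^2\beta$ requires care, but is purely mechanical. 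The denominator is nonzero precisely because the polynomial $t^2+\alpha t+\beta$ is irreducible over $\mathbb{F}$: when $c=0$ we have $d\neq 0$ by $ad-bc=1$, and when $c\neq 0$ the denominator equals $c^2\bigl((d/c)^2+\alpha(d/c)+\beta\bigr)\neq 0$. Conversely, the above computation shows that starting from any $(\alpha,\beta)$ and any $(a,b,c,d)$ with $ad-bc=1$, the resulting $(\tilde\alpha,\tilde\beta)$ is realized by an honest change of standard basis, giving the two-way implication.
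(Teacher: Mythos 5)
Your proposal is correct and follows essentially the same route as the paper: reduce via Lemma \ref{lma2.5pII} to a symplectic change of basis $\tilde{x_1}=ax_1+by_1$, $\tilde{y_1}=cx_1+dy_1$ in $L^2/Z(L)$, rerun the canonical procedure with the same totally isotropic planes and the same $\bar{y_2}$ to obtain the stated formulas for $\tilde{y_3},\tilde{y_4},\tilde{y_5}$, and then read off $\tilde{\alpha}$ and $\tilde{\beta}$. Your added remarks on why the denominator $d^2+cd\alpha+c^2\beta$ is nonzero and why central summands may be ignored are correct and slightly more explicit than what the paper records.
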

 \noindent
\subsection{Further analysis of algebras of type $C$ and some examples }
In order to get a more transparent picture of the algebras of type $C$, it turns out to be useful to consider the case 
when the characteristic is $2$ and the case when the characteristic is not $2$ separately.
\begin{Lemma}\label{lma2.7pII}
Let $L$ be an algebra of type $C$ over a field $\mathbb{F}$ of characteristic that is not $2$. 
Then $L$ has a presentation of the form ${\mathcal P}_{10}^{(4,4)}(0,\beta)$ with respect to some standard basis.
 \end{Lemma}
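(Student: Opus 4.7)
The plan is to apply the isomorphism criterion from Proposition \ref{prop2.6pII} and reduce to $\tilde{\alpha}=0$ by a carefully chosen change of basis that amounts to completing the square on the polynomial $t^2+\alpha t+\beta$.

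First I would note that since $L$ is of type $C$ it admits some presentation ${\mathcal P}_{10}^{(4,4)}(\alpha,\beta)$ by the discussion preceding Proposition \ref{prop2.6pII}. By Proposition \ref{prop2.6pII}, the admissible pairs $(\tilde{\alpha},\tilde{\beta})$ describing the same algebra are obtained by choosing $a,b,c,d\in\mathbb{F}$ with $ad-bc=1$ and applying the stated transformation. The natural choice to try is $c=0$, $d=1$, $a=1$ (which automatically satisfies $ad-bc=1$) and $b$ to be determined. Substituting this into the formula for $\tilde{\alpha}$ gives
\[
\tilde{\alpha}=\frac{(1)\alpha+0+2b}{1+0+0}=\alpha+2b.
\]
Since $\mathrm{char}\,\mathbb{F}\neq 2$ we may set $b=-\alpha/2$, which forces $\tilde{\alpha}=0$.

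With this choice the formula for $\tilde{\beta}$ then yields
\[
\tilde{\beta}=\frac{b^{2}+\alpha b+\beta}{1}=\frac{\alpha^{2}}{4}-\frac{\alpha^{2}}{2}+\beta=\beta-\frac{\alpha^{2}}{4}.
\]
Thus $L$ has a presentation ${\mathcal P}_{10}^{(4,4)}(0,\tilde{\beta})$ with $\tilde{\beta}=\beta-\alpha^{2}/4$.

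The only remaining point to mention is that the polynomial $t^{2}+\tilde{\beta}$ is indeed irreducible over $\mathbb{F}$; this is automatic because the change of variables $t\mapsto t-\alpha/2$ sends $t^{2}+\alpha t+\beta$ to $t^{2}+(\beta-\alpha^{2}/4)$, so irreducibility is preserved, but since Proposition \ref{prop2.6pII} already guarantees that every presentation arising from a valid change of basis is of the required form with the associated polynomial irreducible, no further verification is needed. There is no significant obstacle here: the key observation is simply that completing the square is always possible when $2$ is invertible, which is exactly the hypothesis.
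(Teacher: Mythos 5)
Your proposal is correct and follows essentially the same route as the paper: both invoke the isomorphism criterion of Proposition \ref{prop2.6pII} and pick a specific quadruple $(a,b,c,d)$ with $ad-bc=1$ that forces $\tilde{\alpha}=0$, using the invertibility of $2$ (the paper takes $a=0$, $b=1$, $c=-1$, $d=\alpha/2$, while you take $a=d=1$, $c=0$, $b=-\alpha/2$; both work). Your explicit computation of $\tilde{\beta}=\beta-\alpha^{2}/4$ and the remark on irreducibility are correct but not needed beyond what the proposition already guarantees.
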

\begin{proof} 
By Proposition \ref{pro4p4} we know that we can choose a standard basis for $L$ so that $L$ has a presentation of the form 
${\mathcal P}_{10}^{(4,4)}(\alpha,\beta)$ with respect to this basis.
Now let 
			$a=0$, $b=1$, $c=-1$ and $d = \alpha/2$. 
Then $ad - bc = 1$ and by Proposition \ref{pro4p4} again we know that there is presentation for $L$ of the form 
${\mathcal P}_{10}^{(4,4)}(\tilde{\alpha}, \tilde{\beta})$ where $\tilde{\alpha} = 0$.
\end{proof}
\begin{Proposition}\label{prop2.8pII}
Let $L$ be a nilpotent SAA of type $C$ over a field $\mathbb{F}$ of characteristic that is not $2$. 
Then $L$ has a presentation of the form
			\[ {\mathcal P}(\beta) :\quad (x_1y_2,y_3)=1,\ (x_1y_4,y_5)=\beta,\ (y_1y_2,y_5)=1,\ (y_1y_3,y_4)=1,\]
where $\beta \notin - \mathbb{F}^2$. 
Conversely any such presentation gives us an algebra of type $C$. 
Furthermore two such presentations ${\mathcal P}(\beta)$ and ${\mathcal P}(\tilde{\beta})$ describe the same algebra 
					if and only if 
		\[ \tilde{\beta}/\beta = (a^2+b^2 \beta)^2 \]
for some $(a,b) \in \mathbb{F} \times  \mathbb{F} \, \setminus \, \{(0,0)\}$.
\end{Proposition}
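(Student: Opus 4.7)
The plan is to specialize Proposition~\ref{prop2.6pII} to the situation forced by Lemma~\ref{lma2.7pII} and then parametrize the resulting constraint. By Lemma~\ref{lma2.7pII}, in characteristic not $2$, every algebra of type $C$ admits a presentation ${\mathcal P}_{10}^{(4,4)}(0,\beta)$, which is precisely ${\mathcal P}(\beta)$. For this to be an algebra of type $C$ we need $t^2+\beta$ to be irreducible in $\mathbb{F}[t]$; since $\mathrm{char}\,\mathbb{F}\neq 2$ this is equivalent to $-\beta$ not being a square, i.e.\ $\beta\notin -\mathbb{F}^2$. Conversely, any presentation ${\mathcal P}(\beta)$ with $\beta\notin -\mathbb{F}^2$ is a special case of ${\mathcal P}_{10}^{(4,4)}(\alpha,\beta)$ with irreducible $t^2+\alpha t+\beta$, and so gives an algebra of type $C$ by Proposition~\ref{prop2.6pII}.

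For the isomorphism criterion, the plan is to apply Proposition~\ref{prop2.6pII} with $\alpha=\tilde\alpha=0$. The equations governing a passage between two such presentations become $ad-bc=1$ together with $ac\beta+bd=0$ (the vanishing of the $\tilde\alpha$--numerator), and in that case the $\tilde\beta$--formula collapses to
\[
\tilde\beta \;=\; \frac{b^{2}+a^{2}\beta}{d^{2}+c^{2}\beta}.
\]
I would now solve the constraint system explicitly in two subcases. If $b=0$, the constraint $ac\beta=0$ forces $c=0$, whence $d=1/a$ and $\tilde\beta=a^{4}\beta$. If $b\neq 0$, one solves $d=-ac\beta/b$ and then, using $ad-bc=1$, obtains
\[
c \;=\; -\frac{b}{a^{2}\beta+b^{2}}, \qquad d \;=\; \frac{a\beta}{a^{2}\beta+b^{2}},
\]
where $a^{2}\beta+b^{2}\neq 0$ since $-\beta$ is not a square. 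A short substitution gives $d^{2}+c^{2}\beta=\beta/(a^{2}\beta+b^{2})$ and therefore
\[
\tilde\beta \;=\; \frac{(a^{2}\beta+b^{2})^{2}}{\beta}, \qquad \frac{\tilde\beta}{\beta} \;=\; \Bigl(\tfrac{a^{2}\beta+b^{2}}{\beta}\Bigr)^{2}.
\]

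The last step is to recognise these quotients as squared norms from $\mathbb{F}(\sqrt{-\beta})$ to $\mathbb{F}$. Indeed, in the case $b=0$ the quotient is $(a^{2})^{2}=(p^{2}+q^{2}\beta)^{2}$ with $(p,q)=(a,0)$, while in the case $b\neq 0$, setting $p=a$, $q=-b/\beta$ gives $p^{2}+q^{2}\beta=a^{2}+b^{2}/\beta=(a^{2}\beta+b^{2})/\beta$, so again $\tilde\beta/\beta=(p^{2}+q^{2}\beta)^{2}$. For the reverse direction, given $(p,q)\neq(0,0)$ one recovers $(a,b)$ by $a=p$ and $b=-q\beta$ (which is allowed since $p^{2}+q^{2}\beta\neq 0$ because $-\beta\notin\mathbb{F}^{2}$); the corresponding $(c,d)$ are then produced by the formulas above, so every value of $(p^{2}+q^{2}\beta)^{2}$ is realised.

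The main obstacle is purely bookkeeping: keeping the norm interpretation in sight while pushing through the $SL_{2}$--parametrisation, and in particular verifying that the denominator $a^{2}\beta+b^{2}$ never vanishes (which is exactly where the hypothesis $\beta\notin -\mathbb{F}^{2}$ is essential). Once that is in place, the equivalence $\tilde\beta/\beta=(a^{2}+b^{2}\beta)^{2}$ drops out of the change of variables $p=a,\ q=-b/\beta$ (respectively $(a,0)$ when $b=0$), and gives both directions of the isomorphism criterion.
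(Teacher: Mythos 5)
Your proposal is correct and follows the paper's own route: reduce to Proposition~\ref{prop2.6pII} with $\alpha=\tilde{\alpha}=0$ (using Lemma~\ref{lma2.7pII} for existence and the equivalence of irreducibility of $t^2+\beta$ with $\beta\notin-\mathbb{F}^2$), then solve the system $ac\beta+bd=0$, $ad-bc=1$ and reparametrise via $q=b/\beta$ to land on $(p^2+q^2\beta)^2$. The only difference is that you carry out explicitly (including the $b=0$ subcase and the verification that $a^2\beta+b^2\neq 0$) the computation the paper compresses into ``solving these together shows\ldots'', and your arithmetic checks out.
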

\begin{proof}
From Lemma \ref{lma2.7pII} we know that such a presentation exists and the polynomial $t^2 + \beta$ is irreducible if and only if 
$\beta \notin -\mathbb{F}^2$.
%
%
By Proposition \ref{prop2.6pII} we then know that 
${\mathcal P}(0,\beta)$ and ${\mathcal P}(0,\tilde{\beta})$ describe 
the same algebra if and only if there are $a,b,c,d \in \mathbb{F}$ such that
\[ 0 = ac \beta +bd ,\ ad-bc = 1 \]
and
\[\tilde{\beta} = \frac{b^2 + a^2 \beta}{d^2 + c^2 \beta}. \]
Solving these together shows that 
these conditions are equivalent to saying that
			$$\tilde{\beta}= ((\frac{b}{\beta})^2 \beta + a^2)^2 \beta$$
for some $a,b \in {\mathbb F}$. As $\frac{b}{\beta}$ is arbitrary, the second part of the proposition follows.
\end{proof}
\noindent
\textbf{Examples.} (1) If $\mathbb{F} =\mathbb{C}$ then there are no algebras of type $C$.
This holds more generally for any field $\mathbb{F}$ whose characteristic is not $2$ and where all elements in $\mathbb{F}$ have a square root in $\mathbb{F}$. \\ \\
(2) Suppose $\mathbb{F} =\mathbb{R}$. Here $\beta \notin -\mathbb{R}^2$ if and only if $\beta >0$ 
in which case there exist
$a \in \mathbb{R}$ such that $\beta = a^4$.
Hence, by Proposition \ref{prop2.8pII}, ${\mathcal P}(\beta)$ describes the same algebra as ${\mathcal P}(1)$. 
This shows that there is only one algebra of type $C$ over $\mathbb{R}$ that can be given by the presentation
\[ {\mathcal P}(1) : \quad (x_1y_2,y_3) = 1,\ (x_1y_4,y_5) = 1,\ (y_1y_2,y_5) = 1,\ (y_1y_3,y_4) = 1.\]
(3) Let $\mathbb{F}$ be a finite field of some odd characteristic $p$. Suppose that $|\mathbb{F}| = p^n$. 
The non-zero elements form a cyclic group $\mathbb{F}^*$ of order $p^n -1$ that is divisible by $2$.
Thus there are two cosets of $(\mathbb{F}^*)^2$ in $\mathbb{F}^*$ and 
			\[ \mathbb{F}^* = -(\mathbb{F}^*)^2 \cup \beta (\mathbb{F}^*)^2 \]
for some $\beta \in \mathbb{F}^*$.
Suppose $\tilde{\beta} = \beta c^2$ is an arbitrary field element that is not in $- \mathbb{F}^2$. 
As there are $(|\mathbb{F}| + 1) / 2$ squares in $\mathbb{F}$ we have that the set 
			$c - \mathbb{F}^2 $ and $\beta \mathbb{F}^2$ intersect. 
Hence there exist $a , b \in \mathbb{F}$ such that 
			$c = a^2 + b^2 \beta $ and thus 
			$\tilde{\beta} = (a^2 + b^2 \beta )^2 \beta $. 
Hence the situation is like in (2) and we get only one algebra with presentation
		\[{\mathcal P}(\beta) : \  (x_1y_2,y_3) = 1,\ (x_1y_4,y_5) = \beta,\ (y_1y_2,y_5) = 1,\ (y_1y_3,y_4) = 1,\]
where $\beta$ is any element not in $- \mathbb{F}^2$.
\begin{Remark}\label{lma456}
Let $\beta \in \mathbb{F}$ that is not in $-\mathbb{F}^2$ and consider a splitting field $\mathbb{F}[\gamma]$ of the polynomial 
$t^2 +\beta $ in $\mathbb{F}[t]$ where $\gamma ^2=- \beta $. So 
			$a^2 + b^2 \beta $
 is the norm 
 			$N(a+\gamma b)$ of $a+\gamma b$ 
that is a multiplicative function and thus 
\[G(\beta)=\{ (a^2 + b^2 \beta)^2 :\, (a,b) \in \mathbb{F} \times \mathbb{F} \, \setminus \, \{(0,0)\} \} \]
is a subgroup of $(\mathbb{F}^*)^2$.
Let $S=\{\beta \in \mathbb{F} :\, \beta ^2 \not \in - \mathbb{F}^2 \}$, we now have a relation on $S$ given by 
\[ \tilde{\beta} \sim \beta  \mbox{\  if and only if\  } \tilde{\beta} / \beta \in G(\beta) \]
From Proposition \ref{prop2.8pII}, we know that this is an equivalence relation. We can also 
see this directly. First notice that 
\[ (a^2 + b^2 \beta)^2 = (a^2 + (b/c)^2 \beta c^2)^2 \]
for all $c \in \mathbb{F}^*$. Hence 
			$G(\beta ) = G(\beta c^2)$ 
for all $c \in \mathbb{F}^*$. In particular we have that 
			 $G(\tilde{\beta})=G(\beta)$ if $ \tilde{\beta} \sim \beta$. 
%
%
Let us now see that $\sim$ is an equivalence relation. Firstly it is reflexive as $\beta/\beta=1 \in G(\beta)$, the latter being a group. To see that $\sim$ is symmetric, 
notice that $G(\tilde{\beta})=G(\beta)$ is a group and thus $ \tilde{\beta}/ \beta \in G(\beta)$ if and only if $\beta / \tilde{\beta} \in G(\tilde{\beta})$. 
Finally to see that $\sim$ is transitive, let $\alpha, \beta, \delta \in S$ such that $\alpha \sim \beta$ and $\beta \sim \delta$. Then 
$G(\alpha)=G(\beta)=G(\delta)$ and $\beta/\alpha, \delta/\beta \in G(\alpha)$ implies that their product $\delta/\alpha \in G(\alpha)$. 
\end{Remark}
\noindent
Let us now move to the case when the characteristic of $\mathbb{F}$ is $2$. We first see that the algebras here split naturally into two classes. 
\begin{Lemma} \label{lma2.9pII}
Let $L$ be an algebra of type $C$ over a field $\mathbb{F}$ of characteristic $2$. Then $L$ cannot have both a presentation of the form ${\mathcal P}_{10}^{(4,4)}(0,\beta)$ and 
${\mathcal P}_{10}^{(4,4)}(\alpha,\gamma)$ where $\alpha \neq 0$.
\end{Lemma}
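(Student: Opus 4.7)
The plan is to apply the isomorphism criterion for algebras of type $C$ given in Proposition \ref{prop2.6pII}. Suppose for contradiction that $L$ has both presentations ${\mathcal P}_{10}^{(4,4)}(\alpha,\gamma)$ with $\alpha \neq 0$ and ${\mathcal P}_{10}^{(4,4)}(0,\beta)$. By the criterion there exist $a,b,c,d \in \mathbb{F}$ with $ad-bc = 1$ such that
\[
0 \; = \; \frac{(ad+bc)\alpha + 2ac\gamma + 2bd}{d^2 + cd\alpha + c^2\gamma}.
\]
In characteristic $2$ the terms $2ac\gamma$ and $2bd$ vanish, and $-1=1$ gives $ad+bc = ad-bc = 1$. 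Hence the numerator reduces to $\alpha$, and the equation above becomes
\[
0 \; = \; \frac{\alpha}{d^2 + cd\alpha + c^2\gamma}.
\]

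The small technical step is to verify that the denominator is nonzero, so that the expression makes sense and the vanishing of the whole fraction genuinely forces $\alpha=0$. If $c=0$ then $ad-bc=ad=1$ forces $d \neq 0$, so the denominator equals $d^2 \neq 0$. If $c \neq 0$ then we can factor out $c^2$ and obtain
\[
d^2 + cd\alpha + c^2\gamma \; = \; c^2\bigl((d/c)^2 + (d/c)\alpha + \gamma\bigr),
\]
which is nonzero because the polynomial $t^2 + \alpha t + \gamma$ is irreducible in $\mathbb{F}[t]$ by the definition of ${\mathcal P}_{10}^{(4,4)}(\alpha,\gamma)$, so $d/c$ is not a root.

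Consequently the equation $0 = \alpha/(d^2 + cd\alpha + c^2\gamma)$ forces $\alpha = 0$, contradicting the assumption $\alpha \neq 0$. This completes the argument. There is essentially no serious obstacle here: the result is immediate once one substitutes $\mathrm{char}\,\mathbb{F} = 2$ into the transformation rule for the first parameter, and the only point requiring justification is the non-vanishing of the denominator, which is exactly where irreducibility of $t^2+\alpha t+\gamma$ enters.
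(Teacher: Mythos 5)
Your proof is correct and follows exactly the paper's approach: apply the isomorphism criterion of Proposition \ref{prop2.6pII}, observe that in characteristic $2$ the numerator of the transformation rule for the first parameter collapses to $(ad+bc)\alpha = (ad-bc)\alpha = \alpha$, and conclude $\alpha = 0$, a contradiction. Your additional verification that the denominator $d^2 + cd\alpha + c^2\gamma$ is nonzero (via irreducibility of $t^2+\alpha t+\gamma$) is a sensible detail that the paper leaves implicit.
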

\begin{proof}
We argue by contradiction and suppose we have an algebra satisfying both types of presentations. 
By Proposition \ref{prop2.6pII} we then have $0=(ad+bc)\alpha =(ad-bc) \alpha = \alpha $ that contradicts 
our assumption that $\alpha \neq 0$. 
\end{proof}
\noindent
For the algebras where $\alpha =0$, the same analysis works as for algebras with $\mbox{char\,} {\mathbb F} \neq 2$ and we get the same result as in Proposition \ref{prop2.8pII}. 
This leaves us with algebras where $\mbox{char\,} \mathbb{F} = 2$ and where $\alpha \neq 0$. 
Notice that Proposition \ref{prop2.6pII} tells us here that ${\mathcal P}_{10}^{(4,4)}(\alpha,\beta)$ and 
${\mathcal P}_{10}^{(4,4)}(\tilde{\alpha},\tilde{\beta})$ describe the same algebra if and only if 
\begin{eqnarray*}\label{eq:charFis2}
\tilde{\alpha} &=& \frac{\alpha} {d^2 + cd \alpha+ c^2 \beta }\\
\tilde{\beta} &=& \frac{ b^2 + ab \alpha + a^2 \beta }{d^2 + cd \alpha+ c^2 \beta}
\end{eqnarray*}
for some $a,b,c,d \in \mathbb{F}$ where $ad+bc=1$.\\ \\
We don't take the analysis further but end by considering an example, the finite fields of characteristic $2$.
%
%
Let $\mathbb{F}$ be the finite field of order $2^n$. 
%
%
As a first step we show that we can always in that case, choose our presentation such that $\beta=1$.
To see this take first some arbitrary $\alpha$ and $\beta$ such that $L$ satisfies the presentation ${\mathcal P}_{10}^{(4,4)}(\alpha, \beta)$. 
The groups of units $\mathbb{F}^*$ is here a cyclic group of odd order $2^n - 1$ 
and thus $(\mathbb{F}^*)^2 = \mathbb{F}^*$. Now pick $b \in \mathbb{F}^*$ such that 
			$b^2=\beta/\alpha$
and let
			 $a=0$, $c=1/b$ and $d=b$.
Then $ad+bc=1$ and 
			\[ \tilde{\beta} = \frac{b^2}{b^2 + \alpha + \beta/b^2}=1.\]
Thus we can assume from now on that $\beta = 1$.
Now let $b \in \mathbb{F}$ be arbitrary and let 
			$a=b+1$, $c=b$ and $d=b+1$. 
Then  
		$ad+bc=1$,
		$\tilde{\beta}=1$ 
and
		$\tilde{\alpha}=\frac{\alpha}{b(b+1) \alpha+1}$. 
The number of such values of $\tilde{\alpha}$ is $2^{n-1}$ that gives us all possible values such that $t^2+\tilde{\alpha} t +1$ is irreducible
(easy to count the number of reducible polynomials of the form $t^2 + ut + 1$ is $2^{n-1}$). 
We thus conclude that there is only one algebra of type $C$ in this case.

\chapter{Algebras with an isotropic center of dimension $3$}
In this chapter we will be assuming that $Z(L)$ is isotropic of dimension $3$. First we derive some properties that hold for these algebras. Here throughout
\begin{eqnarray*}
Z(L)&=&\mathbb{F} x_5+ \mathbb{F} x_4 + \mathbb{F} x_3 \\
L^2&=& Z(L)+ \mathbb{F} x_2 + \mathbb{F} x_1 + \mathbb{F} y_1 + \mathbb{F} y_2.
\end{eqnarray*}
\begin{Lemma}\label{lma3.1pII} $Z(L) \leq L^3$.
\end{Lemma}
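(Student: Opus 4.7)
The plan is to argue by contradiction, assuming $Z(L) \not\leq L^3$. By Lemma~\ref{lma2.8gth}, this is equivalent to $Z_2(L) \not\leq L^2$. Let $d := \dim((Z_2(L) + L^2)/L^2) \geq 1$. Since $L$ has rank $3$ by Corollary~\ref{corr16}, $d \in \{1, 2, 3\}$. I will also use repeatedly that the useful inequality gives $\dim Z_2(L) \leq \dim Z(L) + \tfrac{1}{2}(3)(2) = 6$, hence $\dim L^3 \geq 4$, which will be the source of the final contradiction.

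The case $d \geq 2$ is quickly dealt with: picking two generators $g_1, g_2 \in Z_2(L)$ with linearly independent images in $L/L^2$, together with a third generator $h$, every pairwise product of generators has a factor in $Z_2(L)$ and thus lies in $Z(L)$. Hence $L^2 \subseteq Z(L) + L^3$, which yields $L^3 = L^2 \cdot L \subseteq (Z(L) + L^3)\cdot L = L^4$; nilpotence then forces $L^3 = 0$, contradicting $\dim L^3 \geq 4$.

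The case $d = 1$ is the substantive one. Pick $g \in Z_2(L) \setminus L^2$ and extend to minimal generators $\{g, h_2, h_3\}$. Routine manipulations then yield the decomposition $L^2 = Z(L) + \mathbb{F}(h_2 h_3) + L^3$, with $gh_2, gh_3 \in Z(L)$. The core of the plan is to compute $L^2 \cdot L^2$. Remark~\ref{remlocalclass} gives the class bound $2n - 2r + 1 = 5$, so $L^6 = 0$, which cascades to $L^5 \leq Z(L)$ and $L^4 \leq Z_2(L)$; combined with the criterion $L^i L^j \leq L^{i+j} \iff L^i Z_{i+j-1} \leq Z_{j-1}$ applied with $i = j = 3$ (using $Z_5 = L$), this yields $L^3 \cdot L^3 = 0$. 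Expanding $L^2 \cdot L^2$ using the decomposition, the cross-terms involving $Z(L)$ vanish since $Z(L) \cdot L = 0$, the $(h_2 h_3)^2$ term vanishes by alternation, and $L^3 \cdot L^3$ vanishes by the above, reducing everything to the subspace $(h_2 h_3) \cdot L^3$. An iterative descent based on the identity $L^3 = (h_2 h_3)L + L^4$ (obtained by multiplying the decomposition of $L^2$ by $L$ and invoking $Z(L)\cdot L = 0$) should force $(h_2 h_3) \cdot L^3 = 0$, giving $L^2 \cdot L^2 = 0$. The SAA identity $(uv, w) = (vw, u)$ then translates this into $L^3 \leq (L^2)^\perp = Z(L)$, contradicting $\dim L^3 \geq 4 > 3$.

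The main obstacle I anticipate is the iterative vanishing $(h_2 h_3) \cdot L^3 = 0$: because SAAs are non-associative, one cannot directly deduce this from $(h_2 h_3)^2 = 0$, and a careful inductive argument exploiting that $L^3/L^4$ is spanned by $(h_2 h_3) L$, together with nilpotence to terminate the descent, is required. Some care may also be needed to make the argument uniform in the characteristic of $\mathbb{F}$, since certain symmetry-based shortcuts that one would like to invoke become degenerate in characteristic $2$.
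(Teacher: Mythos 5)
Your route is genuinely different from the paper's, and most of your reductions are sound: the case $d\ge 2$ is complete and correct; the facts $L^6=0$, $L^4\le Z_2(L)$, $L^3\cdot L^3=0$, $L^2\cdot L^2=(h_2h_3)\cdot L^3$ and $L^3=(h_2h_3)L+L^4$ all check out; and the closing implication $L^2\cdot L^2=0\Rightarrow L^3\le (L^2)^\perp=Z(L)$, against $\mbox{dim\,}L^3\ge 4$, is right. However, the step you yourself flag as the main obstacle, namely $(h_2h_3)\cdot L^3=0$ in the case $d=1$, is not just delicate: it cannot be reached by the descent you describe, because the hypotheses of that very case imply $L^2\cdot L^2\neq 0$ by a short direct argument.

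Concretely, when $d=1$ we get $(Z(L)\cap L^3)^\perp=L^2+Z_2(L)$ of dimension $8$, so $\mbox{dim\,}(Z(L)\cap L^3)=2$ and $\mbox{dim\,}(Z(L)+L^3)=\mbox{dim\,}L^3+1$; since $L^2=Z(L)+\mathbb{F}(h_2h_3)+L^3$ has dimension $7$, this forces $\mbox{dim\,}L^3\ge 5$. If $\mbox{dim\,}L^3=6$ then $h_2h_3\in Z(L)+L^3$ and your argument closes at once via $L^3\cdot L^3=0$; but if $\mbox{dim\,}L^3=5$ (and this is the subcase that actually occurs: extending the isotropic ideal $Z(L)$ centrally by Lemma \ref{lma2.3gth} gives $\mbox{dim\,}(Z_2(L)\cap L^2)\ge 4$, hence $\mbox{dim\,}Z_2(L)\ge 5$ and $\mbox{dim\,}L^3\le 5$), then the radical of the form restricted to $L^3$ is $L^3\cap(L^3)^\perp=L^3\cap Z_2(L)$, which is a proper subspace of $L^3$ because $Z_2(L)\not\le L^2$ while $L^3\le L^2$ and both spaces have dimension $5$. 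Hence $(L^3,L^3)\neq 0$, and since $(L^2\cdot L^2,L)=(L^2\cdot L,L^2)=(L^3,L^2)\supseteq(L^3,L^3)$, we get $L^2\cdot L^2\neq 0$. Your descent reduces the goal to the isotropy of $M=(h_2h_3)L$; but $L^3=M+L^4$ with $(M,L^4)=(L^4,L^4)=0$ (as $L^4\le Z_2(L)=(L^3)^\perp$), so $(M,M)=(L^3,L^3)\neq 0$, and the residual pairing $((h_2h_3)h_2,(h_2h_3)h_3)$ and its iterates are exactly what survives: the identity $(uv,w)=(vw,u)$ only maps this quantity back to itself. So the contradiction must be extracted differently. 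The paper does it in a few lines by taking, alongside your $g\in Z_2(L)\setminus L^2$, a second element $x\in(Z_2(L)\cap L^2)\setminus Z(L)$ (it exists by the isotropic-ideal extension just cited); both annihilate $L^2$ and each other, so each multiplies $L$ into $Z(L)$ through the $2$-dimensional quotient $L/(L^2+\mathbb{F}g)$ and is governed by a single nonzero scalar, whence a suitable combination $\beta x-\alpha g$ lies in $Z(L)$ yet not in $L^2\supseteq Z(L)$. You would need to import an argument of this kind to finish case $d=1$.
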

\begin{proof} Otherwise $Z_2(L) = (L^3)^\perp \not \leq Z(L)^\perp = L^2$. Without loss of generality we can suppose that $y_3 \in Z_2(L) \, \setminus \, L^2$.
 As $Z_2(L) \cdot L^2  = \{0 \}$, we then have $y_3 \cdot L^2 = \{0 \}$. Now also $x_2 \cdot L^2 = \{ 0 \}$. Let $\alpha = (x_2 y_4, y_5)$ and $\beta = (y_3 y_4, y_5)$. 
Notice that $\alpha, \beta \neq 0$ as $x_2, y_3 \not \in Z(L)$. But then
\[ (( \beta x_2 - \alpha y_3) y_4, y_5) = 0 \]
that implies that $\beta x_2 - \alpha y_3 \in Z(L)$. This is absurd.
\end{proof}
\noindent
\begin{Lemma} $\mbox{dim\,} L^3 \geq 5$. \end{Lemma}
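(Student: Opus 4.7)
The plan is to rule out each of the dimensions $\mbox{dim\,} L^3 \in \{1,2,3,4\}$ so that $\mbox{dim\,} L^3 \geq 5$ must hold. By the preceding Lemma \ref{lma3.1pII}, $Z(L) \leq L^3$, hence $\mbox{dim\,} L^3 \geq 3$, while Propositions \ref{pro2.10.1gth} and \ref{pro2.10gth} exclude the values $1$ and $2$. So the task reduces to excluding the two cases $\mbox{dim\,} L^3 = 3$ and $\mbox{dim\,} L^3 = 4$.

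The case $\mbox{dim\,} L^3 = 3$ is short: since $Z(L) \leq L^3$ and both have dimension $3$, we get $L^3 = Z(L)$. The duality $Z_2(L) = (L^3)^\perp$ from Lemma \ref{beautiful_relation} then forces $Z_2(L) = L^2$, and Lemma \ref{lma2.4gth} gives $L^2 \cdot L^2 = L^2 \cdot Z_2(L) = 0$. Picking generators $a, b, c$ of $L$ modulo $L^2$, every product $xy$ in $L$ reduces, using $L^2 L^2 = 0$ and $L \cdot L^2 \leq L^3$, to $\mathbb{F}ab + \mathbb{F}ac + \mathbb{F}bc + L^3$. This would give $\mbox{dim\,} L^2/L^3 \leq 3$, whereas in fact $\mbox{dim\,} L^2/L^3 = 7 - 3 = 4$, a contradiction.

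The real obstacle is the case $\mbox{dim\,} L^3 = 4$. Here I would write $L^3 = Z(L) + \mathbb{F}w$ and note that $Z_2(L) = (L^3)^\perp$ has dimension $6$ with $Z(L) \leq Z_2(L) \leq L^2$. Pick any $e \in L^2 \setminus Z_2(L)$, so $L^2 = Z_2(L) + \mathbb{F}e$; the fact that $e \perp Z(L)$ but $e \not\perp L^3$ gives $(e, w) \neq 0$. Lemma \ref{lma2.4gth} yields $e \cdot Z_2(L) = 0$, so $L^3 = L^2 L = Z_2(L) L + eL \leq Z(L) + eL$. For any $y \in L$ write $ey = \alpha w + z$ with $z \in Z(L)$; applying the defining identity $(uv, w') = (vw', u)$ with $(u,v,w') = (y,e,e)$ gives $(ye, e) = (ee, y) = 0$, and hence $(ey, e) = 0$ by alternation. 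Since $(z, e) = 0$ because $z \in Z(L) = (L^2)^\perp$, we conclude $\alpha(w, e) = 0$ and therefore $\alpha = 0$. Hence $ey \in Z(L)$ for every $y$, so $eL \leq Z(L)$ and $L^3 \leq Z(L)$ has dimension at most $3$, contradicting $\mbox{dim\,} L^3 = 4$. The heart of this step is the identity $(ye, e) = (ee, y) = 0$, which produces a linear functional on $L^3/Z(L) = \mathbb{F}\bar{w}$ killing the ``new'' direction and forcing the collapse.
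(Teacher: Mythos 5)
Your proof is correct. The paper treats the two residual cases $\mbox{dim\,}L^3=3$ and $\mbox{dim\,}L^3=4$ uniformly: from $\mbox{dim\,}L^3\leq 4$ it gets $\mbox{dim\,}Z_2(L)\geq 6$, hence $L^2=Z_2(L)+\mathbb{F}y_2$, hence $L^2\cdot L^2=0$ (via $L^2\cdot Z_2(L)=0$), and then the computation $(x,abc)=-(x(ab),c)=0$ for $x\in L^2$ forces $L^3\leq (L^2)^\perp=Z(L)$; this kills $\mbox{dim\,}L^3=4$ outright and reduces the remaining case to the same generator count ($\mbox{dim\,}L^2/L^3\leq 3<4$) that you use. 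Your handling of the hard case $\mbox{dim\,}L^3=4$ is genuinely different: instead of proving $L^2$ abelian, you single out an element $e$ spanning $L^2/Z_2(L)$, observe $(e,w)\neq 0$ for $w$ spanning $L^3/Z(L)$, and exploit the self-adjointness of right multiplication, $(ye,e)=(ee,y)=0$, to conclude $eL\leq Z(L)$ and hence $L^3=Z_2(L)L+eL\leq Z(L)$, a contradiction. Both routes are sound; yours is somewhat leaner in the hard case (you never need $L^2$ abelian, and the fact $e\cdot Z_2(L)=0$ that you quote from Lemma \ref{lma2.4gth} is not actually used), while the paper's buys a single argument covering both dimensions at once. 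One small remark on your case $\mbox{dim\,}L^3=3$: the input $L^2L^2=0$ is superfluous there, since $L^2L^2\leq L^2L=L^3$ automatically, so the bound $\mbox{dim\,}L^2/L^3\leq 3$ already follows from the rank being $\mbox{dim\,}Z(L)=3$; this in fact shows $\mbox{dim\,}L^3\geq 4$ with no case analysis, so the entire content of the lemma is your argument excluding $\mbox{dim\,}L^3=4$.
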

\begin{proof} Otherwise $\mbox{dim\,} L^3 \leq 4$ and as $Z(L) \leq L^3 \leq L^2 = Z(L)^\perp$ we can choose our standard basis such that $Z(L) = \mathbb{F} x_5+ \mathbb{F} x_4 + \mathbb{F} x_3 $ and
\[ L^3 \leq \mathbb{F} x_5+ \mathbb{F} x_4 + \mathbb{F} x_3 + \mathbb{F} x_2. \]
This implies that $\mathbb{F} x_5+ \mathbb{F} x_4 + \mathbb{F} x_3 + \mathbb{F} x_2 + \mathbb{F} x_1 + \mathbb{F} y_1 \leq (L^3)^\perp = Z_2(L)$ and 
(notice that $Z_2(L) \leq L^2 $ as $Z(L) \leq L^3$) $ L^2 = Z_2(L) + \mathbb{F} y_2$ that implies that $L^2$ is abelian. Then for any $x \in L^2$ and $a, b, c \in L$, we have
\[ (x, abc) = - (x(ab), c) = - (0, c) = 0\]
and $L^3 \leq (L^2)^\perp = Z(L)$. Hence $L^3 = Z(L)$ and $Z_2(L) = L^2$. Suppose $L = Z_2(L) +  \mathbb{F} u_1  +  \mathbb{F} u_2 +  \mathbb{F} u_3 $. Then $L^2 = Z(L) +  \mathbb{F} u_1u_2 +  \mathbb{F} u_1u_3 + \mathbb{F} u_2u_3$ and we get the contradiction that $ 4 = \mbox{dim\,} L^2 - \mbox{dim\,} Z(L) \leq 3$.
\end{proof}
\begin{Lemma} If $\mbox{dim\,} L^3 = 5$, then $L^3$ is isotropic. \end{Lemma}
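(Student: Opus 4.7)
My plan is to reduce the claim to a dichotomy inside the $4$-dimensional symplectic quotient $L^2/Z(L)$ and then rule out the non-isotropic branch using $L^2 Z_2(L) = 0$ and $Z_2(L) L \leq Z(L)$.

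First I would observe that the duality $Z_2(L) = (L^3)^\perp$ gives $\mbox{dim\,} Z_2(L) = 5$, while $Z(L) \leq L^3$ yields $Z_2(L) \leq L^2$. Since $(L^2)^\perp = Z(L)$, the quotient $V := L^2/Z(L)$ inherits a non-degenerate alternating form and so is a $4$-dimensional symplectic space, in which $W_1 := L^3/Z(L)$ and $W_2 := Z_2(L)/Z(L)$ are $2$-dimensional subspaces with $W_2 = W_1^\perp$. A $2$-dimensional subspace of a $4$-dimensional symplectic space is either totally isotropic (equal to its perp) or symplectic (meeting its perp trivially). In the first case $L^3 = Z_2(L)$, which is the desired conclusion, so my task reduces to ruling out the second case.

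Assuming for contradiction that $L^3 \cap Z_2(L) = Z(L)$, I would choose a standard basis with $L^3 = Z(L) + \mathbb{F} x_1 + \mathbb{F} y_1$, $Z_2(L) = Z(L) + \mathbb{F} x_2 + \mathbb{F} y_2$ and $Z(L) = \mathbb{F} x_3 + \mathbb{F} x_4 + \mathbb{F} x_5$. Lemma~\ref{lma2.4gth} with $m=2$ gives $L^2 Z_2(L) = 0$, so all mutual products of $x_1, y_1, x_2, y_2$ vanish, and from $Z_2(L) L \leq Z(L)$ the elements $x_2, y_2$ send $L$ into $Z(L)$. For $j \in \{3,4,5\}$ the products $x_1 y_j$ and $y_1 y_j$ lie in $L^3$, so I can write $x_1 y_j = z_j + \alpha_j x_1 + \beta_j y_1$ and $y_1 y_j = w_j + \gamma_j x_1 + \delta_j y_1$ with $z_j, w_j \in Z(L)$. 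The SAA identity $(ab,c)=(bc,a)$ together with $x_1 x_1 = y_1 y_1 = 0$ should force $\beta_j = \gamma_j = 0$, and a second application the crucial equality $\alpha_j = \delta_j$; the same idea also gives $x_1 y_1 \in Z(L)$.

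Computing $L^4 = L^3 L = x_1 L + y_1 L$, the identities above show that modulo $Z(L)$ its generators are $\alpha_j x_1$ and $\alpha_j y_1$. If every $\alpha_j$ vanishes then $L^4 \leq Z(L)$, hence $L^3 \leq Z_2(L)$, which combined with $L^3 \cap Z_2(L) = Z(L)$ forces $L^3 \leq Z(L)$, contradicting $\mbox{dim\,} L^3 = 5$. Otherwise some $\alpha_j \neq 0$; then $z_j + \alpha_j x_1 \in L^4$ and $(z_j + \alpha_j x_1) u = \alpha_j x_1 u$ places $x_1 L$ inside $L^5$, and the equality $\delta_j = \alpha_j$ symmetrically puts $y_1 L \leq L^5$, so $L^4 = L^5$. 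By nilpotency this forces $L^4 = 0$, whence $L^3 L = 0$ and $L^3 \leq Z(L)$, the same contradiction. The delicate point in the plan will be the identity $\alpha_j = \delta_j$, since it is what synchronises the $x_1$ and $y_1$ halves of $L^3/Z(L)$ and makes the collapse $L^4 = L^5$ go through in the second subcase; without it one would obtain at most one of $x_1 L \leq L^5$ or $y_1 L \leq L^5$ and be unable to close.
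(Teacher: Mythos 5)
Your proof is correct, and it is in fact more complete than the argument the paper itself gives. Both proofs open the same way: assume $L^3$ is not isotropic, so that one may take $L^3 = Z(L) + \mathbb{F}x_1 + \mathbb{F}y_1$ and $Z_2(L) = (L^3)^\perp = Z(L) + \mathbb{F}x_2 + \mathbb{F}y_2$ for a suitable standard basis, and then invoke $L^2 \cdot Z_2(L) = \{0\}$. At that point the paper simply asserts that $L^2$ is abelian and reads off the contradiction $L^3 = L^2L \leq (L^2)^\perp = Z(L)$. But $L^2 \cdot Z_2(L) = \{0\}$ says nothing about $x_1y_1$, since neither factor lies in $Z_2(L)$; worse, $L^2$ abelian is equivalent (by Lemma \ref{lma2.5gth}) to $L^3 \leq Z(L)$, which is exactly what is in doubt, so $x_1y_1$ is precisely the product that must be controlled. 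Your computation that $x_1y_j = z_j + \alpha_j x_1$, $y_1y_j = w_j + \delta_j y_1$ with $\alpha_j = \delta_j$ and $x_1y_1 \in Z(L)$, followed by the dichotomy (all $\alpha_j = 0$ forces $L^4 \leq Z(L)$, hence $L^3 \leq Z_2(L)$ and so $L^3 = Z(L)$; some $\alpha_j \neq 0$ forces $L^4 = L^5$ and hence $L^4 = \{0\}$ by nilpotency), supplies exactly the missing step, and every individual verification checks out. The one sentence you should repair is ``all mutual products of $x_1, y_1, x_2, y_2$ vanish'': taken literally this includes $x_1y_1 = 0$, which does not follow from $L^2 \cdot Z_2(L) = \{0\}$ and which your own subsequent analysis correctly refrains from assuming. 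What you actually use, and do establish, is only $x_1x_2 = x_1y_2 = y_1x_2 = y_1y_2 = x_2y_2 = 0$ together with $x_1y_1 \in Z(L)$, so the slip is cosmetic rather than structural.
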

\begin{proof} Otherwise we can choose our basis such that $L^3 = \mathbb{F} x_5+ \mathbb{F} x_4 + \mathbb{F} x_3 + \mathbb{F} x_1 + \mathbb{F} y_1 $ and then
$Z_2(L) = (L^3)^\perp = \mathbb{F} x_5+ \mathbb{F} x_4 + \mathbb{F} x_3 + \mathbb{F} x_2 + \mathbb{F} y_2 $ and as $L^2 \cdot Z_2(L) = \{0 \}$, it follows that $x_1 y_2 = y_1y_2 = 0$. Then $L^2$ is abelian and thus we get the contradiction that $L^3 \leq Z(L)$.
\end{proof}
\begin{Lemma}\label{zlinl4} $Z(L) \leq L^4$ .\end{Lemma}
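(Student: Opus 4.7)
The plan is to set up a small central chain through $Z(L)$ via Theorem~\ref{thm2.9gth}, then split on $\mbox{dim\,}L^3$ and finish by dimension bookkeeping using Propositions~\ref{pro2.10.1gth} and~\ref{pro2.10gth}. By Remark~\ref{remlocalclass} applied with $r = 3 < 5 = n$, I would pick a chain of isotropic ideals $\{0\} = I_0 < I_3 < I_4$ with $I_3 = Z(L)$ and $\mbox{dim\,}I_4 = 4$, giving the central chain $\{0\} < I_3 < I_4 < I_4^\perp < I_3^\perp < L$. Reading off the successive central conditions, $I_4 \cdot L \leq I_3 = Z(L)$ shows $I_4 \leq Z_2(L)$, while $L^3 = I_3^\perp \cdot L \leq I_4^\perp$ combined with $I_4^\perp \cdot L \leq I_4$ gives $L^4 = L^3 \cdot L \leq I_4 \leq Z_2(L)$.

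Next I would record the basic dimension constraints. By the previous two lemmas $\mbox{dim\,}L^3 \geq 5$, and $\mbox{dim\,}L^3 \leq 6$ because $L^3 < L^2$ (otherwise $L^n = L^2 \neq \{0\}$ for all $n$, contradicting nilpotency). Also $\mbox{dim\,}L^4 \geq 3$: $L^4 = \{0\}$ would give $L^3 \leq Z(L)$, contradicting $\mbox{dim\,}L^3 \geq 5 > 3$, and $\mbox{dim\,}L^4 \in \{1,2\}$ is forbidden by Propositions~\ref{pro2.10.1gth} and~\ref{pro2.10gth}.

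The finish splits on $\mbox{dim\,}L^3$. If $\mbox{dim\,}L^3 = 5$, the previous lemma tells us $L^3$ is isotropic, so $L^3 \leq (L^3)^\perp = Z_2(L)$, and equality of dimensions forces $L^3 = Z_2(L)$; hence $L^4 = Z_2(L) \cdot L \leq Z(L)$, and $\mbox{dim\,}L^4 = 3 = \mbox{dim\,}Z(L)$ yields $L^4 = Z(L)$. If $\mbox{dim\,}L^3 = 6$, then $\mbox{dim\,}Z_2(L) = 4$ and $\mbox{dim\,}L^4 \in \{3,4\}$. When $\mbox{dim\,}L^4 = 4$, one gets $L^4 = Z_2(L) \supseteq Z(L)$ immediately. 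When $\mbox{dim\,}L^4 = 3$, I would iterate once more: $L^5 = L^4 \cdot L \leq Z_2(L) \cdot L \leq Z(L)$, and nilpotency forces $L^5 \subsetneq L^4$, so $\mbox{dim\,}L^5 \leq 2$; the excluded dimensions $1$ and $2$ then force $L^5 = \{0\}$, giving $L^4 \leq Z(L)$, which by equality of dimensions means $L^4 = Z(L)$.

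The main obstacle is this final subcase: $L^4$ has dimension $3$ and sits inside $Z_2(L)$, but is not a priori contained in $Z(L)$, so one cannot exclude that $L^4$ is a three-dimensional subspace of $Z_2(L)$ transverse to $Z(L)$. The resolution is to iterate to $L^5$ and exploit the exclusion of $\mbox{dim\,}L^5 \in \{1,2\}$ to collapse $L^5$ to zero, after which the dimension match with $Z(L)$ closes the gap.
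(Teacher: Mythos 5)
Your reduction is clean up to the last subcase: the chain from Remark \ref{remlocalclass} giving $L^4\leq I_4=Z_2(L)$ and $L^5\leq Z(L)$, the dimension bookkeeping, the case $\mbox{dim\,}L^3=5$, and the subcase $\mbox{dim\,}L^3=6$, $\mbox{dim\,}L^4=4$ are all correct (and the five-dimensional case is arguably tidier than the paper's own contradiction argument there). But the final subcase $\mbox{dim\,}L^3=6$, $\mbox{dim\,}L^4=3$ contains a genuine gap. You dispose of it by asserting that the dimensions $1$ and $2$ are excluded for $L^5$, but Proposition \ref{pro2.10gth} only forbids $\mbox{dim\,}L^m=2$ for $2\leq m\leq 4$, and the example given immediately after that proposition is a nilpotent SAA with $\mbox{dim\,}L^5=2$, showing the bound $4$ is sharp. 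Only $\mbox{dim\,}L^5=1$ is excluded (by Proposition \ref{pro2.10.1gth}); the possibility $\mbox{dim\,}L^5=2$ remains open, and in that event $L^4\not\leq Z(L)$, the intersection $L^4\cap Z(L)$ is only two-dimensional, and your argument produces no contradiction. This is not a removable technicality: in this very chapter the algebras with $\mbox{dim\,}L^3=6$ really do have $\mbox{dim\,}L^5=2$ (Lemma \ref{lmaodd}), so no general exclusion of that dimension is available; what must be ruled out is the specific configuration $\mbox{dim\,}L^4=3$, or equivalently one must prove $Z(L)\leq L^4$ by other means.

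The paper closes exactly this case by a hands-on computation rather than dimension counting: choosing a standard basis with $L^4\leq \mathbb{F}x_5+\mathbb{F}x_4+\mathbb{F}x_2$ and $Z(L)\cap L^4\leq \mathbb{F}x_5+\mathbb{F}x_4$, it uses $y_3\in (L^4)^\perp=Z_3(L)$ together with $Z_3(L)\cdot L^3=\{0\}$ to force $x_1y_3=x_2y_3=y_1y_3=0$, deduces that $x_1y_2,\ y_1y_2,\ y_3y_2$ all lie in the two-dimensional space $\mathbb{F}x_5+\mathbb{F}x_4$ and hence are linearly dependent, and then manufactures a nonzero element of the form $-sx_2+r\alpha x_1+r\beta y_1+r\gamma y_3$ (or $x_2$ itself, if $r=0$) that would have to lie in $Z(L)$ --- absurd. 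You would need an argument of this kind, i.e.\ some structural input beyond the lower-central-series dimension restrictions you cite, to finish your remaining subcase.
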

\begin{proof}
We have seen that $\mbox{dim\,} L^3 \geq 5$. So we can choose our standard nilpotent basis such that either
\[L^3 =   \mathbb{F}x_5 + \mathbb{F}x_4 + \mathbb{F}x_3+ \mathbb{F}x_2 + \mathbb{F}x_1 \]
or
\[L^3 =   \mathbb{F}x_5 + \mathbb{F}x_4 + \mathbb{F}x_3+ \mathbb{F}x_2 + \mathbb{F}x_1 + \mathbb{F}y_1. \]
We consider the two cases in turn beginning with the first case. If $Z(L) \not \leq L^4$, then $\mbox{dim\,} Z(L) \cap L^4 \leq 2$ and thus $\mbox{dim\,} L^2 + Z_3(L) = \mbox{dim\,} (Z(L) \cap L^4)^\perp \geq 8$. Suppose $L = L^2 + Z_3(L) + \mathbb{F} u + \mathbb{F} v$. Then $L^2 = L^3 + Z_2(L) + \mathbb{F} uv = L^3 + \mathbb{F} uv$ and we get the contradiction that $\mbox{dim\,} L^2 \leq 5 + 1 = 6$. We now turn to the second case where $L^3 = \mathbb{F}x_5 + \mathbb{F}x_4 + \mathbb{F}x_3+ \mathbb{F}x_2 + \mathbb{F}x_1 + \mathbb{F}y_1$. We argue by contradiction and suppose that $Z(L) \cap L^4 < Z(L)$. Then we can choose our basis such that
\[ L^4 \leq  \mathbb{F}x_5 + \mathbb{F}x_4 + \mathbb{F}x_2  \]
and $Z(L) \cap L^4 \leq \mathbb{F}x_5 + \mathbb{F}x_4$. Now $y_3 \in (L^4)^\perp = Z_3(L)$ and as $Z_3(L) \cdot L^3 = \{0\}$, it follows that
\[ x_1y_3 = x_2y_3 = y_1y_3 = 0.\]
It follows from this that $x_1y_2, y_1y_2, y_3y_2 \in \mathbb{F}x_5 + \mathbb{F}x_4$. Thus in particular these three elements are linearly dependent and we have $(\alpha x_1 + \beta y_1 + \gamma y_3) y_2 = 0$ where not all of $\alpha, \beta , \gamma $ are zero. Then $x_2 , \alpha x_1 + \beta y_1 + \gamma y_3$ commute with all the basis elements except possibly $y_4$ and $y_5$. Suppose
\begin{eqnarray*} (x_2y_4, y_5) & =& r\\
((\alpha x_1 + \beta y_1 + \gamma y_3)y_4, y_5) &=&  s.
\end{eqnarray*}
If $ r =0$ then we get the contradiction that $x_2  \in Z(L)$ and if $r \neq 0$, we get the contradiction that $ - s x_2 + r \alpha x_1 + r \beta y_1 + r \gamma y_3 \in Z(L)$.
\end{proof}
\noindent
After these more general results we classify all the algebras where $Z(L)$ is isotropic of dimension $3$. We consider the two subcases $\mbox{dim\,} L^3 = 5$ and $\mbox{dim\,} L^3 = 6$ separately. 
%
\section{ The algebras where $\mbox{dim\,} L^3 = 5$ }
We have seen that $L^3$ must be isotropic and thus in particular we have that $L^3 = (L^3)^\perp = Z_2(L)$ that implies that $L^4 \leq Z(L)$. By Lemma \ref{zlinl4} we thus have $L^4 = Z(L)$. 
We have thus determined the terms of the lower and the upper central series
%
%
%
\begin{multicols}{2}
%
\begin{alignat*}{2}
\color{magenta}L^2 \cdot L^2\ &\boxed{
\begin{matrix}
\color{green} x_5
\end{matrix}
}\ \ 
\begin{matrix}
y_5
\end{matrix} \\
\color{cyan}Z(L)=L^4\ &\boxed{
\begin{matrix}
\color{green} x_4\\\color{green}x_3
\end{matrix}
}
\boxed{
\begin{matrix}
\color{green}y_4\\ \color{green}y_3
\end{matrix}} \ \color{magenta}(L^2 \cdot L^2)^\perp\\
\color{cyan}Z_2(L)=L^3\ &\boxed{
\begin{matrix}
\color{blue} x_2\\ \color{blue}x_1
\end{matrix}}
\boxed{
\begin{matrix}
\color{blue}y_2\\\color{blue}y_1
\end{matrix}
}\ \color{cyan}Z_3(L)=L^2
\end{alignat*} 
\break
\begin{align*}
Z(L) &= L^4 = \mathbb{F} x_5+ \mathbb{F} x_4 + \mathbb{F} x_3 \\
Z_2(L) &= L^3=Z(L)+\mathbb{F} x_2 + \mathbb{F} x_1 \\
Z_3(L) &= L^2=Z_2(L)+\mathbb{F} y_1 + \mathbb{F} y_2 
%
\end{align*}
\end{multicols}
\begin{Remark} \label{rem33I.1}
As $L^2\cdot Z_2(L)=\{0\}$ we see that $x_1y_2=0$. Now
 $L^2$ is not abelian as this would imply that $L^3 \leq  Z(L)$. It follows
 that $y_1y_2 \neq 0$ and we get a one dimensional characteristic subspace
\[ L^2 \cdot L^2=\mathbb{F} y_1y_2 .\]
\end{Remark}
\noindent
Notice that $y_1y_2 \in Z(L)$. We choose our basis such that $y_1y_2=x_5$. 
We will also work with the $9$ dimensional characteristic subspace
\[V= (L^2 \cdot L^2)^\perp=  \mathbb{F} x_5+ \mathbb{F} x_4 + \mathbb{F} x_3 + \mathbb{F} x_2 +
 \mathbb{F} x_1+ \mathbb{F} y_1 + \mathbb{F} y_2+ \mathbb{F} y_3 + \mathbb{F}
 y_4. \]
As $x_1y_2=0$ we have that $x_1y_3, x_1y_4 \perp y_2$. 
As $y_1y_2=x_5$ we also have that $y_1y_3, y_1y_4 \perp y_1,y_2$ and $y_2y_3,
 y_2y_4 \perp y_1,y_2$. It follows that 
\begin{align*}
V^2+L^4= (L^2+\mathbb{F} y_3 + \mathbb{F} y_4)(L^2+\mathbb{F} y_3 +
 \mathbb{F} y_4)+L^4
=  \mathbb{F} y_3y_4+L^4.
\end{align*}
We consider few subcases.
%
\subsection{ Algebras where $ V^2 \leq L^4$ }
Notice then that $y_3y_4 \in \mathbb{F} x_5$ and thus $x_1y_3, x_2y_3, x_1y_4, x_2y_4 \in
 \mathbb{F} x_5$. As $L^3 = L^4+ \mathbb{F} x_2 + \mathbb{F} x_1$, we have 
\begin{equation*}
\begin{aligned}
L^4 = (\mathbb{F} x_2 + \mathbb{F} x_1)( \mathbb{F} y_3 + \mathbb{F} y_4 +
  \mathbb{F} y_5) = \mathbb{F} x_5 + \mathbb{F} x_2y_5+ \mathbb{F} x_1y_5.
\end{aligned}
\end{equation*}
\noindent 
Pick $x_5, y_5, x_2, x_1, y_1$ satisfying the conditions above and let 
$$ x_4 = - x_2 y_5,\ x_3= - x_1 y_5 $$
We can then extend $x_5, x_4, x_3, y_1, y_2, y_5$ to a standard basis 
$x_5, x_4, x_3, x_2, x_1, y_1, y_2,$
$y_3, y_4, y_5$ satisfying the conditions above.
All triples involving both $x_1$ and $y_2$ are $0$. The remaining ones are
\begin{alignat*}{3}
 (x_{1}y_{3},y_{5})&=1 &\quad  (x_{1}y_{3},y_{4})&=0  &\quad  (x_{1}y_{4},y_{5})&=0\\
 (x_{2}y_{3},y_{4})&=0 &\quad  (x_{2}y_{3},y_{5})&=0  &\quad  (x_{2}y_{4},y_{5})&=1\\
 (y_{1}y_{2},y_{3})&=0 &\quad  (y_{1}y_{2},y_{4})&=0  &\quad  (y_{1}y_{2},y_{5})&=1\\
 (y_{2}y_{3},y_{4})&=0 &\quad  (y_{2}y_{3},y_{5})&=\alpha  &\quad  (y_{2}y_{4},y_{5})&=\beta\\
 (y_{1}y_{3},y_{4})&=0 &\quad   (y_{1}y_{3},y_{5})&=\gamma  &\quad  (y_{1}y_{4},y_{5})&=\delta\\
 (y_{3}y_{4},y_{5})&=r  &\quad   &\quad  \mbox{}  \\
\end{alignat*}
Now let
\begin{eqnarray*}
\tilde{y_3}&=& y_3+ \alpha y_1 - \gamma y_2 - s x_2 - s \gamma x_3 - s \delta x_4 \\
\tilde{y_4}&=& y_4+ \beta y_1 - \delta y_2\\
\tilde{x_1}&= & x_1 - \alpha x_3 - \beta x_4\\
\tilde{x_2}&=  & x_2+ \gamma x_3 + \delta x_4 \\
\tilde{y_2}&= & y_2 - s x_3
\end{eqnarray*}
where $s=r + \alpha \delta - \beta \gamma$. One checks readily that we get a new standard basis with a presentation like the one above where
$\tilde{\alpha}=\tilde{\beta}=\tilde{\gamma}=\tilde{\delta}=\tilde{r}=0$.\\ \\
So we arrive at a unique algebra with presentation 
\begin{align*}
\quad(x_1y_3,y_5)&=1\\
{\mathcal P}_{10}^{(3,1)}: \quad(x_2y_4,y_5)&=1\\
\quad(y_1y_2,y_5)&=1.
\end{align*}
One can check that the center has dimension $3$ and that $L^3$ has dimension $5$. Also $((L^2 \cdot L^2)^\perp)^2=\mathbb{F}x_5 \leq L^4$.
%
\subsection{ Algebras where $ V^2 \nleq L^4$ but $V^2 \leq L^3$ }
Here we can pick our basis such that
\[ V^2+L^4 =  \mathbb{F} x_5 + \mathbb{F} x_4 +  \mathbb{F} x_3 + \mathbb{F} x_2 .\]
Notice that $V^3 = \mathbb{F} x_2 y_3+ \mathbb{F} x_2 y_4$ and as $(y_3 y_4, x_2)=0$ we have that $V^3 \leq \mathbb{F} x_5$. 
As $x_2 \not \in Z(L)$, we furthermore must have that $\mbox{dim\,} V^3=1$. This means that there is a characteristic ideal $W$ of codimension $1$ in $V$ such that $x_2W = V^2W = \{0\}$. We choose our basis such that
\[ W = \mathbb{F} x_5 + \mathbb{F} x_4 +  \mathbb{F} x_3 + \mathbb{F} x_2+  \mathbb{F} x_1 + \mathbb{F} y_1 + \mathbb{F} y_2 +  \mathbb{F} y_3 .\]
It follows that we have a chain of characteristic ideals : 
\newpage
\begin{multicols}{2}
%
\begin{alignat*}{2}
\color{magenta}L^2 \cdot L^2\ &\boxed{
\begin{matrix}
\color{green} x_5
\end{matrix}
}\ \ 
\begin{matrix}
y_5
\end{matrix} \\
\color{magenta}W^\perp \ &\boxed{
\begin{matrix}
\color{green} x_4
\end{matrix}
}
\boxed{\begin{matrix}
\color{green}y_4
\end{matrix}} \ \color{magenta}V\\
\color{cyan}Z(L)= L^4\ &\boxed{
\begin{matrix}
\color{green}x_3
\end{matrix}
}
\boxed{
\begin{matrix}
 \color{green}y_3
\end{matrix}} \ \color{magenta}W\\
\color{magenta}V^2+L^4\ &\boxed{
\begin{matrix}
\color{blue} x_2
\end{matrix}}
\boxed{
\begin{matrix}
\color{blue}y_2
\end{matrix}
}\ \color{cyan}Z_3(L)=L^2\\
\color{cyan}Z_2(L)=L^3\ &\boxed{
\begin{matrix}
\color{blue} x_1
\end{matrix}}
\boxed{
\begin{matrix}
\color{blue}y_1
\end{matrix}
}\ \color{magenta}(V^2)^\perp \cap L^2
\end{alignat*}
\break
\begin{equation*}\label{eq:33IB.1}
\begin{aligned}
L^2 \cdot L^2 & =\mathbb{F} x_5\\
W^\perp &= \mathbb{F} x_5 + \mathbb{F} x_4\\
Z(L)=L^4&=\mathbb{F} x_5+ \mathbb{F} x_4 + \mathbb{F} x_3 \\
V^2+L^4 &=\mathbb{F} x_5+ \mathbb{F} x_4 + \mathbb{F} x_3 + \mathbb{F} x_2 \\
Z_2(L)=L^3 &=Z(L) +\mathbb{F} x_2 + \mathbb{F} x_1 \\
(V^2)^\perp \cap L^2&=L^3 + \mathbb{F} y_1 \\
Z_3(L)=L^2 &=Z_2(L) +\mathbb{F} y_1 + \mathbb{F} y_2 \\
W&=L^3+\mathbb{F} y_1 +\mathbb{F} y_2 +\mathbb{F} y_3\\
V&=L^3+\mathbb{F} y_1 +\mathbb{F} y_2 +\mathbb{F} y_3+\mathbb{F} y_4
\end{aligned}
\end{equation*}
\end{multicols}
%
\noindent
We want to show that there is again a unique algebra satisfying these conditions. 
We modify the basis and reach a unique presentation. Notice that $V^2W=\{0\}$ and $L^2 \cdot Z_2(L)=\{0\}$ imply that
		\[ x_1y_2 = x_2 y_3 = 0 .\]
We have also chosen our basis such that
\begin{equation}\label{eq:33IB.2} \begin{aligned}
			 y_1y_2 = x_5 .
\end{aligned} \end{equation}
Notice next that $x_2y_3=0$ implies that $x_2y_4$ is orthogonal to $y_3$ and $y_4$ and thus 
			$x_2y_4=r x_5$ 
where $r$ must be nonzero as $x_2 \notin Z(L)$. By replacing $y_4$ and $x_4$ by $r y_4$ and 
$\frac{1}{r} x_4$, we can assume that
\begin{equation}\label{eq:33IB.3} \begin{aligned} 
			x_2y_4 = x_5 .
\end{aligned} \end{equation}
As $y_3y_4 \in V^2 \leq L^3$ and as $x_1y_2=0$ we have that $x_1y_4$ is orthogonal to $y_2, y_3, y_4$. Thus 
		$x_1y_4 = \alpha x_5$ 
for some $\alpha \in \mathbb{F}$. Replacing $x_1, y_2$ by $x_1 - \alpha x_2$ and $y_2+\alpha y_1$ 
we get a new standard basis where
\begin{equation}\label{eq:33IB.4} \begin{aligned} 
			x_1y_4=0 .
\end{aligned} \end{equation}
Notice that the change in $y_2$ does not affect \eqref{eq:33IB.2}. We next turn our 
attention to $x_1y_3$. As $x_1y_2=0$ and $x_1y_4 = 0$, we have that $x_1y_3$ is 
orthogonal to $y_1, y_2, y_3, y_4$ and thus 
			$x_1y_3= r x_5$ 
where $r$ is nonzero since $x_1 \notin Z(L)$. Be replacing $y_3$ and $x_3$ by $r y_3$ 
and $\frac{1}{r} x_3$ we get 
\begin{equation}\label{eq:33IB.5} \begin{aligned} 
			x_1y_3=x_5 . 
\end{aligned} \end{equation}
Now we see, as $y_1y_2=x_5$ and $y_3y_4 \in L^4+ V^2$, that $y_1y_3$ is orthogonal 
to $y_1, y_2, y_3$ and $y_4$. Thus 
			$y_1y_3=a x_5$ 
for some $a \in \mathbb{F}$. Replacing $y_1$ by $y_1-a x_1$ we can assume that
\begin{equation}\label{eq:33IB.6} \begin{aligned} 
			y_1y_3=0. 
\end{aligned} \end{equation}
As $x_1y_2=0$ the change in $y_1$ does not affect \eqref{eq:33IB.2}. From the discussion above we know 
that $y_1y_4$ is orthogonal to $y_1, y_2, y_3$ and $y_4$ and thus 
			$y_1y_4= a x_5$ 
for some $a \in \mathbb{F}$. Replacing $y_4, x_2$ by $ y_4-a y_2,$ $x_2+a x_4$, we get 
a new standard basis where
\begin{equation}\label{eq:33IB.7} \begin{aligned} 
			y_1y_4=0. 
\end{aligned} \end{equation}
These changes do not affect \eqref{eq:33IB.3} and \eqref{eq:33IB.4}. As $y_3y_4 \in V^2+L^4$ but 
not in $L^4$ we know that $(y_2y_3, y_4)=r$ for some nonzero $r \in \mathbb{F}$. Suppose also that $(y_3y_4,y_5)=\alpha$. Then $y_3y_4= r x_2 + \alpha x_5$. Replace $x_2$ and $y_5$ by $x_2 +\frac{\alpha}{r} x_5$ and $y_5-\frac{\alpha}{r} y_2$. Then
\begin{equation}\label{eq:33IB.9} \begin{aligned} 
			y_3y_4=r x_2 .
\end{aligned} \end{equation}
The changes do not affect \eqref{eq:33IB.3}. Then consider the triples
\[ (y_2y_3, y_5)=a,\ \  (y_2y_4, y_5)=b .\]
Replacing $y_5, x_4, x_3$ by $y_5-\frac{a}{r} y_4 + \frac{b}{r} y_3,\ x_4 + \frac{a}{r} x_5$ and $x_3 -\frac{b}{r} x_5$ we can assume that
\begin{equation}\label{eq:33IB.8} \begin{aligned} 
			(y_2y_3, y_5)= (y_2y_4, y_5)=0. 
\end{aligned} \end{equation}
 We have then arrived at a presentation where the only nonzero triples are
\[  (x_2y_4, y_5)=1,\ (x_1y_3, y_5)=1,\ (y_1y_2, y_5) =1,\ (y_2y_3, y_4 )=r  .\]
Replacing $x_1, y_1, x_2, y_2, x_3, y_3, x_4, y_4$ by $\frac{1}{r} x_1, r y_1, r x_2, \frac{1}{r} y_2, \frac{1}{r} x_3, r y_3, r x_4, \frac{1}{r} y_4$, we get a unique algebra with presentation:
\begin{align*}
								   (x_2y_4 ,y_5)&=1\\
{\mathcal P}_{10}^{(3,2)}: \quad (x_1y_3 ,y_5)&=1 \\
								   (y_1y_2 ,y_5)&=1\\
								   (y_2y_3 ,y_4)&=1.
\end{align*}
One can easily check that conversely this algebra belongs to the category that we have been studying.
%
\subsection{ Algebras where $V^2 \leq L^2$ but $V^2 \nleq L^3$}
Pick our basis such that
\[ V^2 + L^4 = \mathbb{F} x_5+ \mathbb{F} x_4 + \mathbb{F} x_3 +\mathbb{F} y_1 \]
Notice then that
\begin{align*}
 \mathbb{F} x_5+ \mathbb{F} x_4 + \mathbb{F} x_3 +\mathbb{F} x_2 = (V^2+L^4)^\perp \cap L^3 &= (V^2)^\perp \cap L^3 
\end{align*}
is a characteristic ideal of $L$. As $y_3y_4 \in V^2+L^4$ we have that $x_2y_3 \perp y_4$ and $x_2y_4 \perp y_3$. 
Thus $x_2V \leq  \mathbb{F} x_5$. As $x_2 \notin Z(L)$, we must furthermore have that
$ x_2 V= ((V^2)^\perp \cap L^3)V= \mathbb{F} x_5 .$
This implies that the centraliser of $(V^2)^\perp \cap L^3$ in $V$ is a characteristic ideal $W$ of codimension $1$. 
We can choose our basis such that 
\[W=\mathbb{F} x_5+ \mathbb{F} x_4 + \mathbb{F} x_3 +\mathbb{F} x_2 + \mathbb{F} x_1+\mathbb{F} y_1+\mathbb{F} y_2+\mathbb{F} y_3.\]
We now get a chain of characteristic ideals as before
\newpage
\begin{multicols}{2}
%
\begin{alignat*}{2}
\color{magenta}L^2 \cdot L^2\ &\boxed{
\begin{matrix}
\color{green} x_5
\end{matrix}
}\ \ 
\begin{matrix}
y_5
\end{matrix} \\
\color{magenta}W^\perp \ &\boxed{
\begin{matrix}
\color{green} x_4
\end{matrix}
}
\boxed{\begin{matrix}
\color{green}y_4
\end{matrix}} \ \color{magenta}V\\
\color{cyan}Z(L)=L^4\ &\boxed{
\begin{matrix}
\color{green}x_3
\end{matrix}
}
\boxed{
\begin{matrix}
 \color{green}y_3
\end{matrix}} \ \color{magenta}W\\
\color{magenta}(V^2)^\perp \cap L^3\ &\boxed{
\begin{matrix}
\color{blue} x_2
\end{matrix}}
\boxed{
\begin{matrix}
\color{blue}y_2
\end{matrix}
}\ \color{cyan}Z_3(L) = L^2\\
\color{cyan}Z_2(L)= L^3\ &\boxed{
\begin{matrix}
\color{blue} x_1
\end{matrix}}
\boxed{
\begin{matrix}
\color{blue}y_1
\end{matrix}
}\ \color{magenta} {V^2} +  L^3
\end{alignat*}
\break
%
\begin{equation*}\label{eq:33IC.1}
\begin{aligned}
L^2 \cdot L^2 & =\mathbb{F} x_5\\
W^\perp &= \mathbb{F} x_5 + \mathbb{F} x_4\\
Z(L) = L^4&=\mathbb{F} x_5+ \mathbb{F} x_4 + \mathbb{F} x_3 \\
(V^2)^\perp \cap L^3 &=\mathbb{F} x_5+ \mathbb{F} x_4 + \mathbb{F} x_3 + \mathbb{F} x_2 \\
Z_2(L) = L^3&=Z(L) +\mathbb{F} x_2 + \mathbb{F} x_1 \\
V^2 + L^3 &=L^3 +\mathbb{F} y_1 \\
Z_3(L) = L^2&=Z_2(L) +\mathbb{F} y_1 + \mathbb{F} y_2 \\
W&=L^3 +\mathbb{F} y_1 +\mathbb{F} y_2 +\mathbb{F} y_3\\
V&=L^3+\mathbb{F} y_1 +\mathbb{F} y_2 +\mathbb{F} y_3+\mathbb{F} y_4
\end{aligned}
\end{equation*}
\end{multicols}
\noindent
As $((V^2)^\perp \cap L^3)W=\{0\}$ and $L^2 \cdot Z_2(L)=\{0\}$, we see that
			\[x_1y_2= x_2y_3 =0. \]
We have also chosen our basis such that
\begin{equation}\label{eq:33IC.2} \begin{aligned} 
			y_1y_2=x_5. 
\end{aligned} \end{equation}
Notice next that $x_2y_3=0$ implies that $x_2y_4$ is orthogonal to $y_3$ and $y_4$ and thus 
			$x_2y_4=r x_5$ 
where $r$ must be nonzero as $x_2 \notin Z(L)$. By replacing $y_4$ and $x_4$ by $r y_4$ and 
$\frac{1}{r} x_4$, we can assume that
\begin{equation}\label{eq:33IC.3} \begin{aligned} 
			x_2y_4=x_5.
\end{aligned} \end{equation}
As $y_3y_4 \in V^2+L^4$ and $y_1y_2=x_5$, we have that $y_1y_4$ is orthogonal to $y_2, y_3, y_4$. Thus 
			$y_1y_4= a x_5$ 
for some $a \in \mathbb{F} $. Replacing  $y_4$, $x_2$ by $y_4 - a y_2$ and $x_2 + a x_4$ we get
\begin{equation}\label{eq:33IC.4} \begin{aligned} 
			y_1y_4=0 .
\end{aligned} \end{equation}
Notice that the change does not affect \eqref{eq:33IC.3}. Next notice similarly that $y_1y_3$ is orthogonal 
to $y_2, y_3, y_4$ and thus 
			$y_1y_3=a x_5$ 
for some $a \in \mathbb{F}$. Replacing $y_3$ and $x_2$ by $y_3 - a y_2$ and $x_2+a x_3$ we get
\begin{equation}\label{eq:33IC.5} \begin{aligned} 
			y_1y_3=0.
 \end{aligned} \end{equation}
Notice that \eqref{eq:33IC.4} is not affected by this change. We know that $x_1y_2=0$. 
The possible nonzero triples involving $x_1$ are then
		\[(x_1y_3, y_4)=r,\  (x_1y_3, y_5)=a,\  (x_1y_4, y_5)=b .\]
Notice that as $y_3y_4 \in (Z(L)+\mathbb{F}y_1) \, \setminus \, Z(L)$ we must have that $r \neq 0$. 
Replace $y_5, x_4, x_3$ by $y_5 - \frac{a}{r}y_4+\frac{b}{r}y_3$, $x_4+\frac{a}{r}x_5$ and $x_3 - \frac{b}{r}x_5$ 
and we get a new standard basis where
			\[x_1y_3 = r x_4,\ \  x_1y_4=-r x_3.\]
Replacing $y_3, x_3$ by $ry_3, \frac{1}{r} x_3$ gives
\begin{equation}\label{eq:33IC.6} \begin{aligned} 
 x_1y_3 = x_4,\ \ x_1y_4 = -x_3. 
 \end{aligned} \end{equation}
It follows that $(y_2y_3, y_4) = (y_3y_4, y_5) = 0$. Suppose $(y_2y_3, y_5) = a,\  (y_2y_4, y_5) = b$. 
Replace $y_3, y_4, x_1$ by $y_3+ a y_1$, $y_4+ b y_1$ and $x_1-a x_3- b x_4$. 
Notice that these changes do not affect the equations above and we now arrive at a unique algebra with presentation:
\begin{align*}
 								   (x_2 y_4,y_5)&=1\\
{\mathcal P}_{10}^{(3,3)}: \quad  (x_1 y_3,y_4)&=1\\
  								   (y_1 y_2,y_5)&=1.
\end{align*}
Calculations show that conversely this algebra belongs to the relevant category. 
There are thus exactly three algebras where $Z(L)$ is isotropic of dimension $3$ and where the dimension of $L^3$ is $5$.
\begin{Proposition}\label{pro33I}
There are exactly three nilpotent SAA's of dimension $10$ that have an isotropic center of dimension $3$ and where $\mbox{dim\,} L^3 = 5$. 
These are given by the presentations:
\begin{align*}
{\mathcal P}_{10}^{(3,1)}: &\quad (x_2y_4,y_5)=1,\ (x_1y_3,y_5)=1,\ (y_1y_2,y_5)=1 .\\
{\mathcal P}_{10}^{(3,2)}: &\quad (x_2y_4  ,y_5)=1,\ (x_1y_3 ,y_5)=1,\ (y_1y_2,y_5)  =1,\ (y_2y_4  ,y_3 )=1  .\\
{\mathcal P}_{10}^{(3,3)}: &\quad (x_2 y_4,y_5)=1,\ (x_1  y_3,y_4)=1,\ (y_1 y_2  ,y_5)=1. 
\end{align*} 
\end{Proposition}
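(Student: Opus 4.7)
The plan is to show that the three cases already analysed in Subsections~5.1.1--5.1.3 exhaust all possibilities and each produces exactly one isomorphism class. From the preliminary lemmas we know that $\dim L^3 \in \{5,6\}$, and when $\dim L^3=5$ the ideal $L^3$ is isotropic, so $L^3=Z_2(L)$ and by Lemma~\ref{zlinl4} together with Proposition~\ref{pro2.10.1gth} we get $L^4 = Z(L)$. Hence the terms of the upper and lower central series are rigidly fixed and $\dim V=9$ where $V=(L^2\cdot L^2)^\perp$ is the characteristic $9$-dimensional subspace provided by Remark~\ref{rem33I.1}.

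The case split is controlled by the natural filtration $L^4 \leq L^3 \leq L^2$ intersected with $V^2$. Computing $V^2 + L^4 = \mathbb{F} y_3 y_4 + L^4$ (using that $x_1 y_2 = 0$ and $y_1 y_2 = x_5$, both consequences of $L^2 \cdot Z_2(L)=\{0\}$ and $L^2\cdot L^2 = \mathbb{F} x_5$) shows that $V^2+L^4$ has dimension either $3$ or $4$, and in the latter case $V^2+L^4$ sits in $L^3$ or properly above $L^3$ in $L^2$. These three mutually exclusive and exhaustive possibilities are exactly the three subsections above, so the proof reduces to running each subsection and verifying two things: (i) after iterated standard basis changes every algebra in the category is reduced to the stated presentation, and (ii) the resulting algebra genuinely lies in its category.

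For (i), in each subsection we exploit the identified characteristic flag to pin down more and more triple values. In Subsection~5.1.1 the condition $y_3 y_4 \in \mathbb{F} x_5$ forces $L^4 = \mathbb{F} x_5 + \mathbb{F} x_2 y_5 + \mathbb{F} x_1 y_5$, which fixes $x_3, x_4$ up to sign, and then a direct substitution $\tilde y_3, \tilde y_4, \tilde x_1, \tilde x_2, \tilde y_2$ (as displayed) annihilates all of $\alpha, \beta, \gamma, \delta, r$ simultaneously. In Subsections~5.1.2 and 5.1.3 we further exploit the characteristic codimension-$1$ subspace $W$ of $V$ (the centralizer of $(V^2)^\perp\cap L^3$ in $V$) to pin down $y_3, y_4$ and then successively kill every non-canonical triple via the rescalings and translations enumerated in the text.

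For (ii), one simply reads off from each canonical presentation the dimensions of $Z(L)$, $L^3$, and $V^2$, verifying membership in its own category --- this is routine linear-algebraic bookkeeping. The main obstacle is the sheer length and care required for the basis-change computations, especially in Subsection~5.1.2 where one must sequence the modifications of $y_3, y_4, x_1, x_2, y_5$ so that later changes do not destroy earlier normalisations; I would structure the argument by introducing the normalisations in an order (first \eqref{eq:33IB.2}, then \eqref{eq:33IB.3}, \eqref{eq:33IB.4}, \ldots) guaranteed to be preserved by all subsequent moves, explicitly noting at each step which earlier equations remain intact, as done in the excerpt.
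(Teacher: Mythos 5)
Your proposal follows essentially the same route as the paper: the same reduction to $L^4=Z(L)$ and the fixed central series, the same trichotomy on $V^2+L^4$ relative to $L^4$ and $L^3$ (which is exactly the case split of the three subsections), and the same strategy of successive basis normalisations within each case. It is correct and matches the paper's own argument.
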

%
\section{The algebras where $\mbox{dim\,} L^3 = 6 $}
Here we are thus assuming that
\[ L^3 =  \mathbb{F}x_5 + \mathbb{F}x_4 + \mathbb{F}x_3+ \mathbb{F}x_2 + \mathbb{F}x_1 + \mathbb{F}y_1. \]
\begin{Lemma} We have $\mbox{dim\,} L^4 = 4$. \end{Lemma}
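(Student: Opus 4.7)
The plan is to establish matching bounds $\mbox{dim\,} L^4 \geq 4$ and $\mbox{dim\,} L^4 \leq 4$. The standing data are $\mbox{dim\,} L^2 = 7$, $\mbox{dim\,} Z_2(L) = 4$, and $Z_2(L) \leq L^3$ with $Z_2(L) = \mathbb{F}x_2 + \cdots + \mathbb{F}x_5$ isotropic (inherited from the basis choice $L^3 = \mathbb{F}x_5+\cdots+\mathbb{F}x_1+\mathbb{F}y_1$). By Lemma \ref{zlinl4}, $Z(L) \leq L^4$, so $\mbox{dim\,} L^4 \geq 3$ and dually $Z_3(L) \leq L^2$. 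I would also invoke Remark \ref{remlocalclass} with $r = \mbox{dim\,} Z(L) = 3 < 5 = n$, which bounds the class by $2n - 2r + 1 = 5$, forcing $L^6 = \{0\}$ and $L^5 \leq Z(L)$. For the lower bound, if $\mbox{dim\,} L^4 = 3$ then $L^4 = Z(L)$, so $Z_3(L) = Z(L)^\perp = L^2$, which forces $L^3 = L^2 \cdot L \leq Z_2(L)$, contradicting $\mbox{dim\,} L^3 = 6 > 4$.

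For the upper bound, the case $\mbox{dim\,} L^4 = 6$ gives $L^3 = L^4$, which by nilpotency forces $L^3 = \{0\}$, a contradiction. The main work is to rule out $\mbox{dim\,} L^4 = 5$; here $\mbox{dim\,} Z_3(L) = 5$, and I would split on whether $Z_3(L) \leq L^3$. In the first subcase, $Z_3(L) \not\leq L^3$: since $Z_3(L) \leq L^2$ and $L^2/L^3$ is one-dimensional, I can pick $y_2 \in Z_3(L) \setminus L^3$, so that $y_2 L \leq Z_2(L)$ and $L^2 = L^3 + \mathbb{F}y_2$. Then $L^3 = L \cdot L^2 = L^4 + y_2L \leq L^4 + Z_2(L) \leq L^3$, so equality holds and a dimension count yields $L^4 \cap Z_2(L) = Z(L)$. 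Hence $L^4 = L \cdot L^3 = LL^4 + LZ_2(L) = L^5 + Z(L)$, and $L^5 = LL^4 = LL^5 + 0 = L^6$; by nilpotency $L^5 = 0$, so $L^4 = Z(L)$ has dimension $3$, a contradiction.

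In the second subcase, $Z_3(L) \leq L^3$, dualising gives $L^4 = Z_3(L)^\perp \supseteq Z_2(L)$, hence $L^4 = Z_2(L) + \mathbb{F}v$ for some $v \in L^3 \setminus Z_2(L)$. Since $v \in L^3 = Z_2(L)^\perp$ and $Z_2(L)$ is isotropic, the whole of $L^4$ is isotropic; of dimension $n = 5$, it must be maximally isotropic, so $L^4 = (L^4)^\perp = Z_3(L)$. Then $L^5 = LL^4 = LZ_2(L) + Lv = Z(L) + vL$, and $v \notin Z_2(L)$ means $vL \not\leq Z(L)$, so $L^5$ strictly contains $Z(L)$. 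This contradicts $L^5 \leq Z(L)$ from the class bound. The main obstacle is this last subcase, where the contradiction depends essentially on combining the forced maximal isotropy of $L^4$ with the sharper class bound from Remark \ref{remlocalclass}, rather than the general bound $2n - 3$.
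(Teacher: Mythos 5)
Your proof is correct, but it takes a noticeably longer route than the paper's. The paper kills the upper bound in one stroke: since the class of $L$ is at most $5$ here (Remark \ref{remlocalclass} with $r=3$, $n=5$), $L^6=\{0\}$ gives $(L^4,L^3)\subseteq (L^5,L^2)\subseteq (L^6,L)=0$, i.e.\ $L^4\leq (L^3)^\perp=Z_2(L)$, a $4$-dimensional space; combined with $Z(L)\leq L^4$ this immediately sandwiches $\mbox{dim\,}L^4$ between $3$ and $4$, and only the value $3$ remains to be excluded. Your two-case argument ruling out $\mbox{dim\,}L^4=5$ is, in effect, a proof by contradiction of that same containment $L^4\leq Z_2(L)$ --- indeed your second subcase $Z_3(L)\leq L^3$ is precisely the dual statement $Z_2(L)\leq L^4$ --- so all of that work can be short-circuited by proving the containment directly. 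Your exclusion of $\mbox{dim\,}L^4=3$ is essentially the paper's, and arguably cleaner: from $Z_3(L)=L^2$ you get $L^3=L^2L\leq Z_2(L)$ of dimension $4$ at once, where the paper first deduces that $L^2$ is abelian and hence $L^3\leq Z(L)$. Two small points to tighten: the standing fact $Z_2(L)\leq L^3$, which you use to close the first subcase, deserves its own line (if $u\in Z_2(L)\setminus L^3$ then $L^2=L^3+\mathbb{F}u$ and $L^3=L^2L=L^4+uL\leq L^4+Z(L)=L^4$, contradicting nilpotency --- the same trick as your first subcase, one level down); and in the second subcase the phrase ``$L^5$ strictly contains $Z(L)$'' should be ``$L^5\not\leq Z(L)$'', which is what you actually establish and all that is needed against $L^5\leq Z(L)$.
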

\begin{proof} By Lemma \ref{zlinl4}, we know that $Z(L) \leq L^4$ and we also have that $L^4 \leq \mathbb{F}x_5 + \mathbb{F}x_4 + \mathbb{F}x_3+ \mathbb{F}x_2 $. 
Thus if the dimension of $L^4$ is not $4$, then $L^4 = Z(L) = \mathbb{F}x_5 + \mathbb{F}x_4 + \mathbb{F}x_3$ and $Z_3(L) = (L^4)^\perp = L^3 +  \mathbb{F}y_2$. 
As $Z_3(L) \cdot L^3 = \{0\}$, it follows that $x_1y_2 = y_1y_2 = 0$ and $L^2$ is abelian. Hence we get the contradiction that $L^3 \leq Z(L)$.
\end{proof}
\noindent
It follows that we have $L^4 =  \mathbb{F}x_5 + \mathbb{F}x_4 + \mathbb{F}x_3+ \mathbb{F}x_2 $.
\begin{Lemma}\label{lmaodd} We have $\mbox{dim\,} L^5 = 2$.
\end{Lemma}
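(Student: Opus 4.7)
The plan is to reduce the computation of $L^5$ to a single-element multiplication and then use antisymmetry. From the preceding lemma and Lemma \ref{zlinl4} we know $L^4 = \mathbb{F}x_5 + \mathbb{F}x_4 + \mathbb{F}x_3 + \mathbb{F}x_2$, with $Z(L) = \mathbb{F}x_5 + \mathbb{F}x_4 + \mathbb{F}x_3$. Since $x_3, x_4, x_5$ are central,
\[
L^5 = L^4 \cdot L = x_2 \cdot L.
\]
Moreover $x_2 \in L^4 = (L^3)^\perp = Z_2(L)$, so by Lemma \ref{lma2.4gth} one has $x_2 \cdot L \leq Z(L)$ and $x_2 \cdot L^2 = \{0\}$. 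Thus $x_2$ induces a well-defined linear map
\[
\phi : L/L^2 \longrightarrow Z(L), \qquad \bar{v} \mapsto x_2 v,
\]
between two $3$-dimensional spaces, and $L^5 = \operatorname{im}\phi$.

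The key step is to compute $\operatorname{rank}\phi$ using an auxiliary bilinear form. Define
\[
B : L/L^2 \times L/L^2 \longrightarrow \mathbb{F}, \qquad (\bar{u}, \bar{v}) \mapsto (x_2 u, v).
\]
I would first check $B$ is well-defined: changing $u$ by an element of $L^2$ uses $x_2 L^2 = \{0\}$; changing $v$ by an element of $L^2$ uses that $x_2 u \in Z(L)$ is orthogonal to $L^2 = Z(L)^\perp$. Next, by the SAA identity $(x_2 u, v) = (uv, x_2) = -(vu, x_2) = -(x_2 v, u)$, so $B$ is antisymmetric. An antisymmetric form on a $3$-dimensional space has rank $0$ or $2$. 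Since $\phi$ corresponds to $B$ under the perfect pairing $Z(L) \times L/L^2 \to \mathbb{F}$ induced by the symplectic form (here $(Z(L))^\perp = L^2$), we obtain $\operatorname{rank}\phi = \operatorname{rank} B \leq 2$, i.e.\ $\mbox{dim\,} L^5 \leq 2$.

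For the lower bound, note that $\mbox{dim\,} L^4 = 4 > 3 = \mbox{dim\,} Z(L)$, so $L^4 \not\leq Z(L)$ and therefore $L^5 = L^4 L \neq \{0\}$. By Proposition \ref{pro2.10.1gth} no term of the lower central series is one-dimensional, so $\mbox{dim\,} L^5 \geq 2$, giving $\mbox{dim\,} L^5 = 2$. The only subtle point is verifying well-definedness of $B$ on $L/L^2$ in the second argument, which is where the hypothesis that $Z(L)$ is isotropic of dimension $3$ (giving $Z(L) = L^4 \cap L^2$ in the duality $Z(L) \leq L^2 = Z(L)^\perp$) is doing the real work.
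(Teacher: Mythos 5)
Your argument is correct and is essentially the paper's: both proofs rest on the auxiliary alternating form $(\bar{u},\bar{v})\mapsto (x_2u,v)$ on the three-dimensional space $L/L^2$ together with the fact that such a form must be degenerate in odd dimension; the paper extracts a vector $y_3$ in its radical with $x_2y_3=0$, while you package the same fact as $\operatorname{rank}B\le 2$ via the perfect pairing $Z(L)\times L/L^2\to\mathbb{F}$. One small precision: since $\mathbb{F}$ may have characteristic $2$, antisymmetry alone does not force even rank, so you should note that $B$ is genuinely alternating, which is immediate from $B(\bar{u},\bar{u})=(x_2u,u)=(u\cdot u,x_2)=0$. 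Also, the parenthetical $Z(L)=L^4\cap L^2$ in your closing remark is a slip (as $L^4\le L^2$, that intersection is $L^4$); the well-definedness in the second argument needs only $x_2u\in Z(L)=(L^2)^\perp$, which you had already established from $x_2\in Z_2(L)$.
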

\begin{proof}
We have an alternating form
\[ \phi : L/L^2 \times L/L^2 \longrightarrow \mathbb{F}  \]
given by $\phi (\bar{y},\bar{z}) = (x_2 y, z)$.
As $L/L^2$ has odd dimension we know that the isotropic part must be non-trivial. Thus we can then choose our standard basis such that 
			$(x_2 y_3, y_4)= (x_2 y_3, y_5)=0$ and thus $x_2y_3 = 0$. 
It follows that $L^5 = \mathbb{F}x_2 (y_3 + y_4 + y_5) = \mathbb{F}x_2y_4 + \mathbb{F}x_2y_5$ and thus of dimension at most $2$. 
As $L^4 \not \leq Z(L)$ we have $\mbox{dim\,} L^5 >0$ and as we know by Proposition \ref{pro2.10.1gth} that  $\mbox{dim\,} L^5 \neq 1$ we must have that $\mbox{dim\,} L^5=2$.
\end{proof}
\noindent
We thus have determined the lower and upper central series of $L$. We have 
\newpage
\begin{multicols}{2}
\begin{alignat*}{2}
\color{cyan}L^5\ &\boxed{
\begin{matrix}
\color{green} x_5\\\color{green} x_4
\end{matrix}
}\ \ 
\begin{matrix}
y_5\\y_4
\end{matrix}\\
\color{cyan}Z(L)\ &\boxed{
\begin{matrix}
\color{green}x_3
\end{matrix}
}
\boxed{
\begin{matrix}
\color{red}y_3
\end{matrix}
}\ \color{cyan}Z_4(L)\\
\color{cyan}L^4=Z_2(L)\ &\boxed{
\begin{matrix}
\color{cyan} x_2
\end{matrix}}
\boxed{
\begin{matrix}
\color{magenta}y_2
\end{matrix}
}\ \color{cyan}L^2\\
&\boxed{
\begin{matrix}
\color{blue} x_1
\end{matrix}}
\boxed{
\begin{matrix}
\color{blue}y_1
\end{matrix}
}\ \color{cyan}L^3=Z_3(L)
\end{alignat*}
\break
\begin{align*}
L^5 &= \mathbb{F} x_5 + \mathbb{F} x_4\\
Z(L)&=\mathbb{F} x_5+ \mathbb{F} x_4 + \mathbb{F} x_3 \\
Z_2(L) = L^4&=\mathbb{F} x_5+ \mathbb{F} x_4 + \mathbb{F} x_3 + \mathbb{F} x_2 \\
Z_3(L)= L^3&=Z(L)+\mathbb{F} x_2 + \mathbb{F} x_1+\mathbb{F} y_1 \\
L^2&=L^3 + \mathbb{F} y_2 \\
Z_4(L)&=L^3 +\mathbb{F} y_2 +\mathbb{F} y_3
\end{align*} 
\end{multicols}
\noindent 
Notice that $x_2 \in L^4$ and $y_3 \in Z_4(L)$ and thus $x_2y_3=0$. Also
\[ L^3L^2 = \mathbb{F}  x_1y_2 + \mathbb{F}  y_1y_2 \leq Z(L). \]
Furthermore $x_1y_2$ and $y_1y_2$ are linearly independent. To see this we argue by contradiction and suppose that $0 = a x_1 y_2 + b y_1 y_2$ for some $a, b \in \mathbb{F} $
 where not both $a, b$ are zero. Then 
			$(a x_1 + b y_1) L \leq Z(L)$ 
that would give us the contradiction that $a x_1 + b y_1 \in Z_2(L)$. \\ \\
We thus have that $L^3L^2$ is a $2$-dimensional subspace of $Z(L)$ and we consider two possible cases namely $L^3L^2 = L^5$ and $L^3L^2 \neq L^5$. We consider the latter first.
%
\subsection{ Algebras where $L^3 \cdot L^2 \neq L^5$}
Here $L^3L^2 \cap L^5$ is one dimensional and we can choose our standard basis such that $L^3L^2 \cap L^5 = \mathbb{F}x_5$. 
In order to clarify the structure further we introduce the following isotropic characteristic ideal of dimension $5$:
\[ U = \{ x \in L^3:\, x L^2 \leq L^3L^2 \cap L^5 \} .\]
Now $L^3L^2$ is of dimension $2$ and $L^4L^2=0$ and thus $U$ is of codimension $1$ in $L^3$ and contains $L^4$. We can
 thus choose our standard basis such that $U=\mathbb{F} x_5+ \mathbb{F} x_4 + \mathbb{F} x_3 +\mathbb{F} x_2 +
 \mathbb{F} x_1$. We thus have the following picture\\
%
%
\begin{alignat*}{2}
\color{magenta}L^5 \cap L^3L^2\ &\boxed{
\begin{matrix}
\color{green} x_5
\end{matrix}
}\ \ 
\begin{matrix}
y_5
\end{matrix} \\
\color{cyan}L^5 \ &\boxed{
\begin{matrix}
\color{green} x_4
\end{matrix}
}
\boxed{\begin{matrix}
\color{green}y_4
\end{matrix}} \ \color{magenta}(L^5 \cap L^3L^2)^\perp\\
\color{cyan}Z(L)\ &\boxed{
\begin{matrix}
\color{green}x_3
\end{matrix}
}
\boxed{
\begin{matrix}
 \color{green}y_3
\end{matrix}} \ \color{cyan}Z_4(L)\\
\color{cyan}L^4=Z_2(L)\ &\boxed{
\begin{matrix}
\color{blue} x_2
\end{matrix}}
\boxed{
\begin{matrix}
\color{blue}y_2
\end{matrix}
}\ \color{cyan}L^2\\
\color{magenta}U\ &\boxed{
\begin{matrix}
\color{blue} x_1
\end{matrix}}
\boxed{
\begin{matrix}
\color{blue}y_1
\end{matrix}
}\ \color{cyan}L^3=Z_2(L)
\end{alignat*}
%
Notice that $UZ_4(L)= \mathbb{F} x_1y_2+  \mathbb{F} x_1 y_3 =  \mathbb{F} x_5+  \mathbb{F} x_1 y_3$, where $x_1y_3 \in L^5$. 
Again we consider two possible cases.
%
\subsubsection{I. Algebras where $ UZ_4(L)$ is $1$-dimensional }
Here $UZ_4(L)= x_1 Z_4(L)=\mathbb{F} x_5$ and there is a characteristic subspace $V$ of codimension $1$ in $Z_4(L)$ 
that contains $L^3$ given by the formula
			 $$V=\{ x \in Z_4(L):\, Ux =0\}.$$
%
We can then choose our standard basis such that
\[ V = L^3 + \mathbb{F} y_3 = U+ \mathbb{F} y_1 + \mathbb{F} y_3. \]
Notice that in particular $x_1y_3=0$. From this we also get a $1$-dimensional characteristic subspace 
			$V^2= \mathbb{F} y_1 y_3$. 
Notice that $(y_1y_3, y_2) \neq 0$ as otherwise $y_1y_2 \in L^5 \cap L^3L^2$ that contradicts our assumption that 
$L^3L^2 \neq L^5$. Thus $y_1y_3 \in L^4 \, \setminus \, Z(L)$ and we can choose our standard basis such that 
			$\mathbb{F}y_1 y_3 = \mathbb{F} x_2$. 
In fact it is not difficult to see that with the data we have acquired so far we can choose our standard basis such that
\begin{equation}\label{eq:33II-I.1} \begin{aligned} 
			x_1y_2 = x_5,\ y_1y_2 = x_3,\ x_1y_3 = 0,\ y_1y_3 = -x_2. 
\end{aligned} \end{equation}
This deals with all triple values apart from
\begin{alignat*}{4}
(x_1y_4, y_5)&=a,&\quad  (y_2y_3, y_4)&=c,&\quad  (y_2y_4, y_5)&=e,&\quad  (x_2y_4, y_5)&=r,\\
(y_1y_4, y_5)&=b,&\quad  (y_2y_3, y_5)&=d,&\quad  (y_3y_4, y_5)&=f, &\quad  \mbox{}
\end{alignat*}
Notice that $r \neq 0$ as $x_2y_3 = 0$ but $x_2 \not \in Z(L)$. 
We will show that we can choose a new standard basis so that the values of $a=b=c=d=e=f=0$ and $r=1$. 
First by replacing $x_2, x_1, y_4$ by $x_2 + a x_4, x_1+c x_4, y_4- a y_2 - c y_1$ we can assume that $a = c = 0$, that is 
\begin{equation}\label{eq:33II-I.2} \begin{aligned} 
			(x_1 y_4, y_5) = 0,\ (y_2y_3, y_4)=0. 
\end{aligned} \end{equation}
Inspection shows that \eqref{eq:33II-I.1} still holds under these changes. 
Next replacing $y_1, y_2$ by $ y_1-(b/r)x_2, y_2- (b/r) x_1 $ we can assume that $b=0$ and thus 
\begin{equation}\label{eq:33II-I.3} \begin{aligned} 
			(y_1y_4, y_5)=0. 
\end{aligned} \end{equation}
Again \eqref{eq:33II-I.1} and \eqref{eq:33II-I.2} are not affected by the changes. 
We continue in this manner always making sure that the previously established identities still hold. 
We first replace 
			$x_1, y_2, y_5$ by $x_1+ d x_5, y_2 - (e/r) x_2, y_5 - d y_1$
that gives
\begin{equation}\label{eq:33II-I.4} \begin{aligned} 
			(y_2y_3, y_5)=0,\ (y_2y_4, y_5)=0, 
\end{aligned} \end{equation}
then replace 
			$y_2, y_3$ by $y_2-(f/r) x_3, y_3 - (f/r) x_2$ 
that gives furthermore that 
\begin{equation}\label{eq:33II-I.5} \begin{aligned} 
			(y_3y_4, y_5)=0. 
\end{aligned} \end{equation}
Finally replacing $x_4, y_4$ by $rx_4, (1/r) y_4$ gives us a presentation with $r=1$. 
Thus we have that we get a unique algebra.
\begin{Proposition}\label{pro33I}
There is a unique nilpotent SAA $L$ of dimension $10$ with an isotropic center of dimension $3$ and where $\mbox{dim\,} L^3 =  6$ 
that has the further properties that $L^3L^2 \neq L^5$ and $\mbox{dim\,} UZ_4(L)=1$. This algebra is given by the presentation
\begin{align*}
{\mathcal P}_{10}^{(3,4)}: \  (x_1y_2,y_5)=1,\ (y_1y_2,y_3)=1,\ (x_2y_4,y_5)=1. 
\end{align*} 
\end{Proposition}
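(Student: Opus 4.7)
The plan is to exploit the chain of characteristic ideals already identified in the setup, namely
$L^5 \cap L^3L^2 < L^5 < Z(L) < L^4 < U < L^3 < V < Z_4(L) < L^2 < (L^5)^\perp < (L^5\cap L^3L^2)^\perp < L$,
and to choose a standard symplectic basis $x_1,y_1,\ldots,x_5,y_5$ adapted to this chain so that each ideal is spanned by an appropriate prefix of the $x_i$'s together with the $y_j$'s it contains. This forces most of the triple values $(uv,w)$ to vanish a priori and reduces the classification to normalising finitely many parameters.

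First I would fix the multiplication inside $L^3L^2$. Since $x_1y_2 \in L^5\cap L^3L^2 = \mathbb F x_5$ and is non-zero (because, as already noted, $x_1y_2$ and $y_1y_2$ are linearly independent), I rescale $x_5,y_5$ so that $x_1y_2 = x_5$. By the standing hypothesis $L^3L^2 \neq L^5$, the element $y_1y_2$ lies in $L^3L^2 \setminus L^5 \subset Z(L)$, and since $x_3$ spans the characteristic complement of $L^5$ in $Z(L)$, after scaling I can assume $y_1y_2 = x_3$. The hypothesis $\dim UZ_4(L) = 1$ combined with $x_1y_2 = x_5$ forces $x_1y_3 \in \mathbb F x_5$; a further small adjustment of $y_3$ modulo $L^3$ yields $x_1y_3 = 0$. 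The $1$-dimensional characteristic subspace $V^2 = \mathbb F y_1y_3$ then lies in $L^4\setminus Z(L)$ (otherwise $y_1y_2$ would land in $L^5$, contrary to hypothesis), so it is spanned by a scalar multiple of $x_2$, and a final rescaling gives $y_1y_3 = -x_2$.

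With these four relations in hand, only the seven triples $(x_1y_4,y_5)$, $(y_1y_4,y_5)$, $(y_2y_3,y_4)$, $(y_2y_3,y_5)$, $(y_2y_4,y_5)$, $(y_3y_4,y_5)$ and $(x_2y_4,y_5)=r$ remain undetermined, and $r\neq 0$ because $x_2 \notin Z(L)$ while $x_2y_3 = 0$ (a consequence of the argument that $\dim L^5 = 2$). The remaining six parameters are killed by a carefully scripted sequence of elementary substitutions, each of which modifies one $y_i$ and compensates by adjusting some $x_j$ so that the symplectic pairing is preserved. For example, the simultaneous change $x_2 \mapsto x_2+ax_4$, $x_1 \mapsto x_1+cx_4$, $y_4 \mapsto y_4-ay_2-cy_1$ kills $(x_1y_4,y_5)$ and $(y_2y_3,y_4)$ together; analogous substitutions then eliminate $(y_1y_4,y_5)$, and subsequently the pair $(y_2y_3,y_5),(y_2y_4,y_5)$, and finally $(y_3y_4,y_5)$. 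A concluding rescaling $x_4\mapsto rx_4$, $y_4\mapsto (1/r)y_4$ normalises $r = 1$, giving exactly the presentation $\mathcal{P}_{10}^{(3,4)}$. Conversely, direct inspection of $\mathcal{P}_{10}^{(3,4)}$ shows that the resulting algebra has isotropic centre of dimension $3$, $\dim L^3 = 6$, $L^3L^2 \neq L^5$ and $\dim UZ_4(L) = 1$, so the class is non-empty.

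The main obstacle is the bookkeeping: after each substitution one must verify both that the earlier normalisations remain valid and that the new basis is still standard, i.e.\ that the pairings $(x_i,x_j)=(y_i,y_j)=0$ and $(x_i,y_j)=\delta_{ij}$ all persist. The order in which the six parameters are eliminated must be chosen so that later changes never disturb relations already fixed, which is why the argument proceeds through a sequence of explicitly-tailored substitutions rather than via a single global change of basis.
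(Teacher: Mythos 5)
Your proposal is correct and follows essentially the same route as the paper: a basis adapted to the chain of characteristic subspaces, the normalisations $x_1y_2=x_5$, $y_1y_2=x_3$, $x_1y_3=0$, $y_1y_3=-x_2$, then the same ordered sequence of substitutions (beginning with $x_2\mapsto x_2+ax_4$, $x_1\mapsto x_1+cx_4$, $y_4\mapsto y_4-ay_2-cy_1$) to kill the remaining six parameters and a final rescaling to get $r=1$. One small slip: the adjustment that kills $x_1y_3$ is $y_3\mapsto y_3-\lambda y_2$, i.e.\ a change of $y_3$ modulo $L^2$ rather than modulo $L^3$ --- adding an element of $L^3=Z_3(L)$ to $y_3$ leaves $x_1y_3$ unchanged since $L^3\cdot Z_3(L)=0$.
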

\begin{Remark} 
As before, inspection shows that the algebra with the presentation above satisfies all the properties listed.
\end{Remark}
%
%
\subsubsection{II. Algebras where $UZ_4(L)$ is $2$-dimensional }
Here we can pick our standard basis such that $UZ_4(L) = \mathbb{F} x_5 + \mathbb{F} x_4$. 
As $L^3L^2 \neq L^5$ we know that $(y_1y_2, y_3) \neq 0$ and from this one sees that $L^3Z_4(L)=L^4$. 
Furthermore it is not difficult to see that we can choose our standard basis such that
\begin{equation}\label{eq:33II-I.6} \begin{aligned} 
			x_1y_2=x_5,\ x_1y_3=x_4,\ y_1y_2=x_3,\ y_1y_3=-x_2 .
 \end{aligned} \end{equation}
In order to clarify the structure further we are only left with the triple values
\begin{alignat*}{4}
(x_1y_4, y_5)&=a,&\quad  (y_2y_3, y_4)&=c,&\quad  (y_2y_4, y_5)&=e,&\quad   (x_2y_4, y_5)&=r,\\
(y_1y_4, y_5)&=b,& \quad (y_2y_3, y_5)&=d,&\quad  (y_3y_4, y_5)&=f, & \quad  \mbox{}
\end{alignat*}
We show as before that one can change the basis such that $a=b=c=d=e=f=0$ and $r=1$. First we replace 
			$x_1, y_2$ by $x_1-(a/r)x_2, y_2=y_2 + (a/r) y_1$ 
that gives furthermore
\begin{equation}\label{eq:33II-I.7} \begin{aligned} 
			(x_1y_4, y_5)=0, 
\end{aligned} \end{equation}
and then we replace 
			$ y_1, y_2$ by $y_1-(b/r) x_2, y_2 - (b/r) x_1$  
that gives
\begin{equation}\label{eq:33II-I.71} \begin{aligned}
			(y_1y_4, y_5)=0 . 
\end{aligned} \end{equation}
Next we replace 
			$x_1, y_4, y_5$ by $x_1+c x_4 + d x_5, y_4 - c y_1, y_5 - d y_1$ 
that allows us to further assume that
\begin{equation}\label{eq:33II-I.8} \begin{aligned} 
			(y_2y_3, y_4)=0,\ (y_2y_3, y_5)=0 
 \end{aligned} \end{equation}
and then 
			$y_2, y_3$ by $y_2 - (e/r) x_2 - (f/r) x_3, y_3 - (f/r) x_2$ 
that gives
\begin{equation}\label{eq:33II-I.9} \begin{aligned} 
			(y_2y_4, y_5)=0,\ (y_3y_4, y_5)=0 . 
\end{aligned} \end{equation}
Finally changing $x_2, x_3, x_4, x_5, y_2, y_3, y_4, y_5$ by $(1/r)x_2, r x_3, (1/r) x_4, r x_5, r y_2,(1/r) y_3,$
$ r y_4,$
$ (1/r) y_5$ gives us $(x_2 y_4, y_5) = 1$ and thus
we see again that we have a unique algebra.
\begin{Proposition}\label{pro33I}
There is a unique nilpotent SAA $L$ of dimension $10$ with an isotropic center of dimension $3$ and where $\mbox{dim\,} L^3 =  6$ 
that has the further properties that $L^3L^2 \neq L^5$ and $\mbox{dim\,} UZ_4(L) = 2$. This algebra can be given by the presentation
\begin{align*}
{\mathcal P}_{10}^{(3,5)}: \ (x_1y_2,y_5) = 1,\ (y_1y_2,y_3) = 1,\ (x_1y_3, y_4) = 1,\ (x_2y_4,y_5) = 1. 
\end{align*} 
\end{Proposition}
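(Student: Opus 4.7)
The plan is to mimic the structure of the previous subcase (the $1$-dimensional one), modifying only the step where the auxiliary geometry changes. First I would fix a standard basis compatible with the entire chain of characteristic ideals we have already established, namely
\[ L^5 < Z(L) < L^4 = Z_2(L) < L^3 = Z_3(L) < L^2 < Z_4(L) , \]
together with $U$ and the new $2$-dimensional characteristic subspace $UZ_4(L) = \mathbb{F}x_5+\mathbb{F}x_4$. The assumption $L^3L^2 \neq L^5$ forces $(y_1y_2,y_3) \neq 0$, so $y_1y_2$ and $x_1y_2$ together with their images under right multiplication by $y_3$ span a $4$-dimensional subspace of $Z(L)+L^5$, and from this we deduce $L^3 Z_4(L) = L^4$. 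A routine rescaling of $x_2,x_3,x_4,x_5$ and compensating adjustments of $y_2,y_3$ (analogous to the derivation of \eqref{eq:33II-I.1} in the preceding subcase) then puts the four relations
\[ x_1y_2 = x_5,\quad x_1y_3 = x_4,\quad y_1y_2 = x_3,\quad y_1y_3 = -x_2 \]
into the presentation.

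Next I would parametrise all remaining possibly-nonzero triples by the seven scalars $a,b,c,d,e,f,r$ as in the excerpt, observing that $r=(x_2y_4,y_5)\neq 0$ because $x_2\notin Z(L)$. I would then execute the same sequence of elementary basis substitutions already displayed in the excerpt, checking at each step that the relations previously achieved are preserved: replacing $x_1,y_2$ by $x_1-(a/r)x_2$, $y_2+(a/r)y_1$ kills $a$; then $y_1,y_2$ by $y_1-(b/r)x_2$, $y_2-(b/r)x_1$ kills $b$; then $x_1,y_4,y_5$ by $x_1+cx_4+dx_5$, $y_4-cy_1$, $y_5-dy_1$ kills $c$ and $d$; then $y_2,y_3$ by $y_2-(e/r)x_2-(f/r)x_3$, $y_3-(f/r)x_2$ kills $e$ and $f$; finally rescaling $x_2,x_3,x_4,x_5,y_2,y_3,y_4,y_5$ by $(1/r)x_2, rx_3,(1/r)x_4, rx_5, ry_2,(1/r)y_3, ry_4,(1/r)y_5$ normalises $r$ to $1$. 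The upshot is that every algebra in the class admits the stated presentation, hence there is at most one such algebra up to isomorphism.

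The converse is a direct verification: starting from the presentation ${\mathcal P}_{10}^{(3,5)}$, one computes $L^2$, $L^3$, $L^4$, $L^5$, $Z(L)$, $Z_4(L)$ and $U$ explicitly and checks that $Z(L)$ is isotropic of dimension $3$, that $\dim L^3 = 6$, that $L^3L^2 = \mathbb{F}x_3+\mathbb{F}x_4 \neq L^5 = \mathbb{F}x_5+\mathbb{F}x_4$, and that $UZ_4(L)$ is $2$-dimensional.

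The main obstacle I anticipate is purely bookkeeping: at each substitution one must verify that the previously fixed identities in $x_iy_j$ and $y_iy_j$ continue to hold, and in particular that the four "backbone" relations for $x_1y_2, x_1y_3, y_1y_2, y_1y_3$ are not disturbed. This is the place where the present subcase differs from the $1$-dimensional one, because $x_1y_3 = x_4$ is now nonzero; one must therefore be slightly more careful about which substitutions involving $y_3$ or $x_4$ can be used, and in particular confirm that the substitution killing $c$ and $d$ uses only $x_4,x_5$ on the $x_1$-side (so that $x_1y_2$ remains $x_5$ and $x_1y_3$ remains $x_4$).
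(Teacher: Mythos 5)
Your proposal is correct and follows essentially the same route as the paper: the same backbone relations $x_1y_2=x_5$, $x_1y_3=x_4$, $y_1y_2=x_3$, $y_1y_3=-x_2$, followed by exactly the same five basis substitutions, in the same order, to kill $a,b,c,d,e,f$ and normalise $r$ to $1$. Two small slips worth fixing: the elements that together with $x_1y_2$ and $y_1y_2$ span the $4$-dimensional space $L^3Z_4(L)=L^4$ are $x_1y_3$ and $y_1y_3$ (not the right-multiples of $x_1y_2$ and $y_1y_2$ by $y_3$, which vanish since $L^3L^2\leq Z(L)$), and in the converse verification one has $L^3L^2=\mathbb{F}x_5+\mathbb{F}x_3$ rather than $\mathbb{F}x_4+\mathbb{F}x_3$ --- still distinct from $L^5=\mathbb{F}x_5+\mathbb{F}x_4$, so the conclusion stands.
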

%
%
%
\newpage
\section{Algebras where $L^3L^2=L^5$}
\begin{multicols}{2}
%
\begin{alignat*}{2}
\color{cyan}L^5\ &\boxed{
\begin{matrix}
\color{green} x_5\\\color{green} x_4
\end{matrix}
}\ \ 
\begin{matrix}
y_5\\y_4
\end{matrix}\\
\color{cyan}Z(L)\ &\boxed{
\begin{matrix}
\color{green}x_3
\end{matrix}
}
\boxed{
\begin{matrix}
\color{red}y_3
\end{matrix}
}\ \color{cyan}Z_4(L)\\
\color{cyan}L^4=Z_2(L)\ &\boxed{
\begin{matrix}
\color{cyan} x_2
\end{matrix}}
\boxed{
\begin{matrix}
\color{magenta}y_2
\end{matrix}
}\ \color{cyan}L^2\\
&\boxed{
\begin{matrix}
\color{blue} x_1
\end{matrix}}
\boxed{
\begin{matrix}
\color{blue}y_1
\end{matrix}
}\ \color{cyan}L^3=Z_3(L)
\end{alignat*}
\break
%
\begin{align*}
L^5 &= \mathbb{F} x_5 + \mathbb{F} x_4\\
Z(L)&=\mathbb{F} x_5+ \mathbb{F} x_4 + \mathbb{F} x_3 \\
Z_2(L) = L^4&=\mathbb{F} x_5+ \mathbb{F} x_4 + \mathbb{F} x_3 + \mathbb{F} x_2 \\
Z_3(L) = L^3&=Z(L) +\mathbb{F} x_2 + \mathbb{F} x_1+\mathbb{F} y_1 \\
L^2&=L^3 + \mathbb{F} y_2 \\
Z_4(L)&=L^3 +\mathbb{F} y_2 +\mathbb{F} y_3
\end{align*}
\end{multicols}
\noindent
Here we are assuming that $L^5=L^3L^2= \mathbb{F} x_1y_2+  \mathbb{F}  y_1y_2$ and thus in particular we know that 
$x_1y_2, y_1y_2$ is a basis for $L^5$. We will now introduce some linear maps that will help us in understanding the structure. 
Consider first the linear maps
%
%
\begin{align*}
\phi &:\, L^3/L^4  \longrightarrow L^5,\  \bar{u}=u+L^4 \longmapsto u \cdot y_2 \\
\psi &:\, L^3/L^4  \longrightarrow L^5,\  \bar{u}=u+L^4  \longmapsto u \cdot y_3 .
\end{align*}
%
As $L^4 Z_4(L) = \{0\}$, these maps are well defined. 
As $L^3L^2=L^5$ we also know that $\phi$ is bijective. We thus have the linear map 
$$\tau= \psi \phi^{-1} :\, L^5 \longrightarrow L^5.$$
It is the map $\tau$ that will be our key towards understanding the structure of the algebra.
\begin{Lemma}\label{lma130}
The minimal polynomial of $\tau = \psi \phi^{-1}$ must be of degree $2$.
\end{Lemma}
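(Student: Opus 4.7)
The plan is to argue by contradiction, exploiting the duality between the lower and upper central series to convert an apparent operator identity into a membership statement about $y_3 - \lambda y_2$ in $Z_3(L) = L^3$, which will fail for the obvious dimensional reason that $y_3 \notin L^2$.

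First I would observe that since $L^5$ has dimension $2$, the minimal polynomial of $\tau$ automatically has degree at most $2$, so the only way it could fail to have degree exactly $2$ is if $\tau$ is a scalar multiple of the identity. So suppose for contradiction that $\tau = \lambda \cdot \mathrm{id}_{L^5}$ for some $\lambda \in \mathbb{F}$.

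Unwinding, $\tau = \psi\phi^{-1} = \lambda \cdot \mathrm{id}$ is equivalent to $\psi = \lambda \phi$ as maps on $L^3/L^4$, that is $u \cdot y_3 = \lambda(u \cdot y_2)$ in $L^5$ for every $u \in L^3$, or equivalently $u \cdot (y_3 - \lambda y_2) = 0$ for all $u \in L^3$. Applying the defining symplectic identity $(ab,c) = (bc,a)$, the statement $L^3 \cdot (y_3 - \lambda y_2) = 0$ is equivalent to $(y_3 - \lambda y_2) \cdot L$ being orthogonal to $L^3$, i.e.\ contained in $(L^3)^\perp = Z_2(L) = L^4$ by Lemma \ref{beautiful_relation}. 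By the definition of $Z_3(L)$ this forces $y_3 - \lambda y_2 \in Z_3(L)$, and we have recorded that in our present setting $Z_3(L) = L^3$.

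But with the standard basis we have arranged one has $L^2 = L^3 + \mathbb{F} y_2$ and $y_3 \notin L^2$, so $y_3 - \lambda y_2$ cannot possibly belong to $L^3 \subseteq L^2$ for any $\lambda \in \mathbb{F}$. This is the desired contradiction, and it forces the minimal polynomial of $\tau$ to have degree exactly $2$. The only subtle point, which I would want to state carefully, is the duality step converting $L^3 \cdot x = 0$ into $x \in Z_3(L)$ via non-degeneracy of the alternating form; this is the standard manoeuvre recorded in Lemma \ref{lma2.4gth} and used throughout the paper, so it is not really a genuine obstacle.
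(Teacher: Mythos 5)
Your proof is correct, but it takes a genuinely different and more economical route than the paper. The paper first normalises to $\lambda=0$ by replacing $y_3$ with $y_3-\lambda y_2$, and then carries out a sequence of explicit basis adjustments with the structure constants ($x_1y_3=y_1y_3=0$, then killing $y_2y_3$, then killing $(y_3y_4,y_5)$ using $(x_2y_4,y_5)\neq 0$) until it can conclude that the modified $y_3$ lies in $Z(L)$, which is absurd. You instead translate $\tau=\lambda\,\mathrm{id}$ directly into the annihilation statement $L^3\cdot(y_3-\lambda y_2)=0$ and invoke the duality of Remark \ref{rem_stressed} together with Lemma \ref{beautiful_relation}: $L^3\cdot x=0$ is equivalent to $xL\leq (L^3)^\perp=Z_2(L)$, i.e.\ $x\in Z_3(L)$, and in the present setting $Z_3(L)=L^3\leq L^2$, so $y_3\in L^3+\mathbb{F}y_2=L^2$, contradicting the choice of standard basis. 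All the steps check out: the identification $Z_3(L)=L^3$ is exactly the displayed upper/lower central series for this case (it follows from $Z_2(L)=L^4$ by taking perps), and the passage from $\psi=\lambda\phi$ on $L^3/L^4$ to $u(y_3-\lambda y_2)=0$ for every $u\in L^3$ is legitimate because $L^4Z_4(L)=0$. Your version avoids all basis changes and structure-constant computations and reaches the contradiction one level higher up the central series ($Z_3$ rather than $Z(L)$); the paper's version is more in keeping with the hands-on normalisation style of the surrounding classification but is strictly longer. Either is acceptable.
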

\begin{proof} 
We argue by contradiction and suppose that $\tau=\lambda \mbox{id} $. Replacing $y_3, x_2$ by $y_3- \lambda y_2, x_2+\lambda x_3$ 
gives us a new standard basis where $\tau =0 $. Pick our standard basis such that $\bar{x_1} = x_1 + L^4 = \phi^{-1}(x_4)$ and 
$\bar{y_1} = y_1 + L^4 = \phi^{-1}(x_5)$. We then have
\begin{align*}
			 x_1y_2=x_4,\ y_1y_2=x_5,\ x_1 y_3=0,\ y_1 y_3=0. 
\end{align*}
Now $y_2y_3 \perp x_1, y_1, y_2, y_3$ and thus 
			$$y_2y_3=a x_4+b x_5 $$
for some $a, b \in \mathbb{F}$. Replacing $y_3$ by $y_3+a x_1+b y_1$, $x_1$ by $x_1-b x_3$ and 
$y_1$ by $y_1+a x_3$, we can assume that $y_2 y_3=0$. \\ \\
Now suppose that $(y_3y_4,y_5)=a$ and $(x_2 y_4, y_5)=b$. Notice that $b \neq 0$ as $x_2 \not \in Z(L)$ and $x_2y_3 =0$. 
Replace $y_3, y_2$ by $y_3-(a/b)x_2,\ y_2-(a/b)x_3$
and we get a new standard basis where all the previous identities hold but also $(y_3 y_4, y_5)=0$. 
We thus get the contradiction that $y_3 \in Z(L)$. 
\end{proof}
\noindent
Notice next that if we have an alternative standard basis $\tilde{x_1}, \tilde{x_2}, \ldots, \tilde{y_5}$, then $\tilde{y_2} = c y_2+u$ and $\tilde{y_3} = a y_3 + b y_2+v$ 
where $a, c \neq 0$ and where $u,v \in L^3$. 
If the minimal polynomial of $\tau$ with respect to the old basis is $f(t)$ then the minimal polynomial with respect to the new basis is a 
		multiple of $f((c/a)(t-(b/c))$.
In particular we have the following possible distinct scenarios that do not depend on what standard basis we choose.\\ \\
{\bf A}. The minimal polynomial of $\tau$  has two distinct roots in $\mathbb{F}$.\\ \\
{\bf B}. The minimal polynomial of $\tau$ has a double root in $\mathbb{F}$. \\ \\
{\bf C}. The minimal polynomial of $\tau$ is irreducible in $\mathbb{F}[t]$.
%
%
\subsection{Algebras of type $A$.}
Suppose the two distinct roots of the minimal polynomial of $\tau = \psi \phi^{-1}$ are $\lambda$ and $\mu$. 
Pick some eigenvectors $x_4$ and $x_5$ with respect to the eigenvalues $\lambda$ and $\mu$ respectively. Thus
 \begin{eqnarray*} 
\psi \phi^{-1}(x_4) & = &  \lambda x_4,\\ 
\psi \phi^{-1}(x_5) & = &  \mu x_5 .
 \end{eqnarray*}
Replacing $y_3$, $x_2$ by $y_3 - \lambda y_2$, $x_2 + \lambda x_3$ we see that
$\psi \phi^{-1}(x_4) = 0$
and we can assume that $\lambda =0$. Then replace $y_3, x_3$ by $(1/\mu)y_3, \mu x_3$ and we get that 
$\psi \phi^{-1}(x_5) = x_5$ and we can now assume that $\mu = 1$.\\\\
We would like to pick our standard basis such that 
	$\bar{x_1} = x_1 + L^{4} = \phi^{-1}(x_4)$ and $\bar{y_1} = y_1 + L^{4} = \phi^{-1}(x_5)$ .
The only problem here is that we need $(x_1, y_1) = 1$ but this can be easily arranged. If $(x_1,y_1) = \sigma $ then we just need to replace
$y_1, x_5, y_5$ by $(1/\sigma) y_1, (1/\sigma) x_5, \sigma y_5$. 
We have thus seen that we can choose our standard basis such that
\begin{equation}\label{eq:33II-II.A.1} \begin{aligned} 
	x_1y_2 = x_4,\ y_1y_2 = x_5,\ x_1y_3 = 0,\ y_1y_3 = x_5. 
\end{aligned} \end{equation}
Recall also that $x_2y_3=0$ since $L^4Z_4(L)=0$. In order to fully determine the structure of the algebra we 
are only left with the following triple values
\begin{alignat*}{4}
(x_1y_4, y_5)&=a,&\quad  (y_2y_3, y_4)&=c,&\quad  (y_2y_4, y_5)&=e,& \quad
 (x_2y_4, y_5)&=r,\\
(y_1y_4, y_5)&=b,&\quad  (y_2y_3, y_5)&=d,& \quad (y_3y_4, y_5)&=f. &\quad 
\mbox{} 
\end{alignat*}
Notice that $r \neq 0$ as $x_2y_3 = 0$ but $x_2 \not \in Z(L)$. We will show that we can choose our basis 
such that $a=b=c=d=e=f=0$. 
First replace $x_1, y_1, y_2$ by $x_1-(a/r) x_2 , y_1-(b/r) x_2, y_2+(a/r) y_1 - (b/r) x_1$ and we see that we can assume that $a=b=0$, 
that is 
\begin{eqnarray}\label{eq:33II-II.A.2} 
		(x_1 y_4,y_5) = (y_1 y_4,y_5)=0. 
 \end{eqnarray}
Inspection shows that \eqref{eq:33II-II.A.1} remains same under these changes. Then replacing 
$x_1, y_1, y_3$ by $x_1-d x_3, y_1+c x_3, y_3+c x_1+d y_1$ we can furthermore assume that 
\begin{eqnarray}\label{eq:33II-II.A.3} 
			(y_2y_3,y_4) = (y_2y_3,y_5)=0. 
\end{eqnarray}
Finally replacing $x_1, y_2, y_3, y_4$ by $x_1-e x_4, y_2- ((f-e)/r) x_3, y_3-((f-e)/r) x_2, y_4+ e y_1$ 
and we can also assume that
\begin{eqnarray}\label{eq:33II-II.A.4} 
			(y_2y_4,y_5)=(y_3y_4,y_5)=0. 
\end{eqnarray}
We have thus seen that $L$ has a presentation of the form 
${\mathcal P}_{10}^{(3,6)}(r)$ as described in the next proposition. 
\begin{Proposition}\label{pro33IAA}
Let $L$ be a nilpotent SAA of dimension $10$ with an isotropic center of dimension $3$ that has the further properties that $\mbox{dim\, }L^3 =6$, $L^3L^2 = L^5$
and $L$ is of type $A$. Then $L$ has a presentation of the form
\begin{align*}
{\mathcal P}_{10}^{(3,6)}(r): \ \ (x_2y_4,y_5) = r,\ (x_1y_2,y_4) = 1,\ (y_1y_2,y_5) = 1,\ (y_1y_3, y_5) = 1
\end{align*} 
where $r \neq 0$. Furthermore the presentations ${\mathcal P}_{10}^{(3,6)}(r)$ and 
${\mathcal P}_{10}^{(3,6)}(s)$ describe the same algebra 
if and only if $s/r \in (\mathbb{F}^*)^{3}$.
\end{Proposition}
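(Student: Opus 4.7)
The existence of a presentation of the form $\mathcal{P}_{10}^{(3,6)}(r)$ with $r \neq 0$ has already been established in the paragraphs immediately preceding the statement, where the triples $a,b,c,d,e,f$ were successively eliminated by basis changes while $r$ stayed nonzero (since $x_2y_3 = 0$ but $x_2 \not\in Z(L)$ forces $r \neq 0$). The substantive content of the proposition is the isomorphism criterion, which I will prove by treating sufficiency and necessity in turn, modelled on the analogous $8$-dimensional case around \eqref{eq:dim8max}.

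For sufficiency, my plan is to write down an explicit diagonal rescaling. Given a basis realising $\mathcal{P}_{10}^{(3,6)}(r)$ and any $a \in \mathbb{F}^*$, set
\begin{equation*}
\tilde{x}_i = a x_i,\ \tilde{y}_i = a^{-1}y_i\ \text{for } i=1,2,3; \quad \tilde{x}_4 = x_4,\ \tilde{y}_4 = y_4;\quad \tilde{x}_5 = a^{-2}x_5,\ \tilde{y}_5 = a^2 y_5.
\end{equation*}
A direct check shows $(\tilde{x}_i,\tilde{y}_j) = \delta_{ij}$, so this is a standard basis. The three normalised relations are preserved (e.g.\ $(\tilde{y}_1\tilde{y}_2,\tilde{y}_5) = a^{-1}\cdot a^{-1}\cdot a^{2}\cdot 1 = 1$), while $(\tilde{x}_2\tilde{y}_4,\tilde{y}_5) = a\cdot 1\cdot a^{2}\cdot r = a^{3}r$. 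All other basis triples remain zero (being nonzero constants times the original zeros), so $L$ satisfies $\mathcal{P}_{10}^{(3,6)}(a^{3}r)$ in the new basis; this gives the "if" direction.

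For necessity, suppose $L$ also satisfies $\mathcal{P}_{10}^{(3,6)}(s)$ with respect to some alternative standard basis $\tilde{x}_1,\tilde{y}_1,\ldots,\tilde{x}_5,\tilde{y}_5$. Both bases are forced to respect all characteristic ideals, namely $L^2,L^3,L^4,L^5,Z(L),Z_4(L)$ and, since $L$ is of type A, the two eigenlines of $\tau$ inside $L^5$. Identifying these eigenlines in the normalised presentation with $\mathbb{F}x_4$ (the kernel of $\tau$) and $\mathbb{F}x_5$, one concludes that $\tilde{x}_4 = \lambda x_4$, $\tilde{x}_5 = \mu x_5$ for some $\lambda,\mu\in\mathbb{F}^*$ (the possible swap between the two eigenlines can be shown to be incompatible with $r,s \neq 0$). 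This triangulates the change of basis: writing the remaining $\tilde{x}_i$, $\tilde{y}_i$ as linear combinations of $x_j,y_j$ modulo the appropriate term of the central series and using Lemma \ref{lma2.4gth} repeatedly to discard contributions that are annihilated by pairings with $L^2, L^3$ or $L^4$, the four defining relations of $\mathcal{P}_{10}^{(3,6)}(s)$ reduce to a small system of equations in the diagonal scalars. Imitating the $8$-dimensional computation, I expect these to combine into a single identity of the form $s^k = (\text{cube})\cdot r^k$ for some $k$ coprime to $3$ (in the $8$-dimensional case $k=2$), whence $s/r \in (\mathbb{F}^*)^3$ follows because $\mathbb{F}^*/(\mathbb{F}^*)^3$ is a group of exponent dividing $3$.

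The main obstacle will be the bookkeeping: carefully tracking how the off-diagonal and lower-filtration components of each $\tilde{y}_i$ contribute to the four triple values, and showing that these contributions all die for reasons of degree in the central series, so that only the diagonal scalars survive. A secondary subtlety is ruling out the swap of the two characteristic eigenlines of $\tau$, which must be done before the triangular-scaling argument begins; I expect this to follow from the asymmetry between $(y_1y_2,y_5)=1$ and $(y_1y_3,y_5)=1$ on the one hand and $(x_1y_2,y_4)=1$ on the other, which pin the roles of $x_4$ and $x_5$ uniquely.
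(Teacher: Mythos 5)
Your sufficiency argument is correct: the diagonal rescaling you write down is a legitimate standard-basis change (the paper uses a slightly different one, scaling $x_2,x_3,x_4,x_5$ and their duals by $b^{\pm 1}$ while fixing $x_1,y_1,y_4$'s roles differently, but both produce $(\tilde{x}_2\tilde{y}_4,\tilde{y}_5)=a^3r$ and preserve the three normalised triples), and this settles the ``if'' direction.

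The necessity direction, however, contains a genuine gap: you describe a strategy and then state that you \emph{expect} the resulting system to collapse to an identity $s^k=(\text{cube})\cdot r^k$ with $k$ coprime to $3$, but the computation that would produce this identity is never performed, and it is precisely the substantive content of the claim. Your plan also carries avoidable burdens --- identifying the individual eigenlines of $\tau$ as characteristic (you only assert, without proof, that the swap is incompatible with $r,s\neq 0$; note that under a change of $y_2,y_3$ the minimal polynomial of $\tau$ transforms by an affine substitution, so only the \emph{unordered pair} of eigenlines is obviously characteristic) and then tracking all off-diagonal contributions through the filtration. The paper avoids all of this with a single invariant: writing $\tilde{y}_4=ay_4+by_5+u$, $\tilde{y}_5=cy_4+dy_5+v$ with $u,v\in Z_4(L)$, it uses $Z_4(L)L^2\leq L^4$, $Z_4(L)L\leq L^3$ and $(L^4,L^3)=0$ (since $L^6=0$) to discard all lower-filtration terms and computes
\[
s^2=(\tilde{y}_4\tilde{y}_5\tilde{y}_5,\ \tilde{y}_5\tilde{y}_4\tilde{y}_4)=r^2(ad-bc)^3,
\]
whence $(s/r)^2\in(\mathbb{F}^*)^3$ and therefore $s/r=\bigl((s/r)(ad-bc)^{-1}\bigr)^3\in(\mathbb{F}^*)^3$. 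If you want to complete your proof, you should either carry out your diagonal-scalar bookkeeping to the end (and prove the eigenline claim), or adopt an invariant of this kind, which is exactly the device already used in the $8$-dimensional case you cite as your model.
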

\begin{proof}
We have already seen that all such algebras have a presentation
of the form ${\mathcal P}_{10}^{(3,6)}(r)$ for some $0 \neq  r \in {\mathbb F}$.
Straightforward calculations show that conversely any algebra with such a presentation has the properties stated in the Proposition. 
It remains to prove the isomorphism property. To see that the property is sufficient, suppose that
we have an algebra $L$ with presentation ${\mathcal P}_{10}^{(3,6)}(r)$ with
respect to some given standard basis. Let $s$ be any element in ${\mathbb F}^{*}$ such that $s/r=b^3 \in ({\mathbb F}^*)^{3}$. 
Replace the basis with a new standard basis $\tilde{x_{1}}, \ldots, \tilde{y_{5}}$ where $\tilde{x_{1}}=x_1,\ \tilde{y_{1}}=y_1,\  
\tilde{x_{2}}=bx_2,\ \tilde{y_{2}}=(1/b)x_2,\ \tilde{x_{3}}=bx_3,\ \tilde{y_{3}}=(1/b)y_3,\ \tilde{x_{4}}=(1/b)x_4,\ \tilde{y_{4}}= by_4,\  
\tilde{x_5}=(1/b)x_5,\ \tilde{y_5}=by_5$. 
Direct calculations show that $L$ has presentation ${\mathcal P}_{10}^{(3,6)}(s)$ with respect to the new basis.\\ \\
It remains to see that the property is necessary. Consider again an algebra $L$ with presentation ${\mathcal P}_{10}^{(3,6)}(r)$
and suppose that $L$ has also a presentation ${\mathcal P}_{10}^{(3,6)}(s)$ with respect to some other standard basis $\tilde{x_{1}}, \cdots, \tilde{y_{5}}$. 
We want to show that $s/r \in  ({\mathbb F}^{*})^3$. 
We know that $L = {\mathbb F} \tilde{y_{5}} + {\mathbb F} \tilde{y_{4}} + Z_4(L) = {\mathbb F} y_{5} + {\mathbb F} y_{4} +Z_4(L) $. Thus
\begin{eqnarray*}
\tilde{y_4}&=& ay_4+by_5+u \\
\tilde{y_5}&=& cy_4+dy_5+v
\end{eqnarray*}
for some $u, v \in Z_4(L)$ and $a, b, c, d \in {\mathbb F}$ where $ad-bc \neq 0$. 
As $L^3L^2 = L^5 \perp Z_4(L)$ and as $Z_4(L)L^4 =0 $ we have $(Z_4(L) L^2, L^3) = (Z_4(L)L, L^4) =0$ and 
thus $Z_4(L)L^2 \leq (L^3)^\perp = L^4$ and $Z_4(L)L \leq (L^4)^\perp = L^3$. It follows that
\begin{eqnarray*}
\tilde{y_4}\tilde{y_5}\tilde{y_5}  &=& (ay_4 + by_5)(cy_4 + dy_5)(cy_4 + dy_5)+ w\\
\tilde{y_5} \tilde{y_4} \tilde{y_4} &=& (cy_4 + dy_5)(ay_4 + by_5)(ay_4 + by_5)+z
\end{eqnarray*}
for some $w, z \in L^4$. Using the fact that $(L^4, L^3) = 0$, as $L^6 = 0$, we then see that
\[s^2=(\tilde{y_4} \tilde{y_5}\tilde{y_5}, \tilde{y_5}\tilde{y_4}\tilde{y_4}) = r^2(ad-bc)^3.\]
Hence $s/r \in ({\mathbb F}^*)^3$.
\end{proof}
\begin{Remark}
Notice that it follows that we have only one algebra if $({\mathbb F}^*)^{3} = {\mathbb F}^*$. 
This includes all fields that are algebraically closed as well as ${\mathbb R}$.
For a finite field of order $p^n$ there are $3$ algebras if $3 | p^n-1$ but otherwise one.
For ${\mathbb Q}$ there are infinitely many algebras.
\end{Remark}
\subsection{Algebras of type $B$}
Suppose that the double root of the minimal polynomial of $\tau = \psi \phi^{-1}$ is $\lambda $. We can then have a basis $x_4$, $x_5$ for $L^5$ such that
\begin{eqnarray*}
\psi \phi^{-1}(x_4) &=& \lambda x_4 \\
\psi \phi^{-1}(x_5) &=& \lambda x_5+x_4 .
\end{eqnarray*}
If we replace $y_3, x_2$ by $y_3-\lambda y_2$, $x_2+\lambda  x_3$ then we can furthermore assume that $\lambda = 0$.
We want to pick our standard basis such that $\bar{x_1} = x_1 + L^{4} = \phi^{-1}(x_4)$ and $\bar{y_1} = y_1 + L^{4} = \phi^{-1}(x_5)$.
Again the only problem is to arrange for $(x_1, y_1)=1$.
But if $(x_1,y_1) = \sigma $ then we replace $x_5, x_3, y_1, y_3, y_5$ by $(1/\sigma) x_5, (1/\sigma) x_3, (1/\sigma) y_1 , \sigma y_3, \sigma y_5 $ and that gives  
		$(x_1, y_1)=1$.
We have thus seen that we can choose our standard basis such that
\begin{equation}\label{eq:33II-II.B.1} \begin{aligned} 
	x_1y_2=x_4,\ y_1y_2= x_5,\ x_1y_3=0,\ y_1y_3=x_4. 
\end{aligned} \end{equation}
As before we have furthermore $x_2y_3 = 0$ and we are only left with the following triple values 
\begin{alignat*}{4}
(x_1y_4, y_5)&=a,&\quad  (y_2y_3, y_4)&=c,&\quad  (y_2y_4, y_5)&=e,&\quad
 (x_2y_4, y_5)&=r,\\
(y_1y_4, y_5)&=b,&\quad  (y_2y_3, y_5)&=d,&\quad  (y_3y_4, y_5)&=f .&\quad
\mbox{} 
\end{alignat*}
Notice that $r \neq 0$ as $x_2y_3 = 0$ but $x_2 \not \in Z(L)$. We will show that we can choose a new standard basis so
that all the other values are zero. 
First by replacing $x_1, y_1, y_2$ by $x_1-(a/r) x_2, y_1-(b/r) x_2, y_2+(a/r) y_1 - (b/r) x_1$, we can assume that $a=b=0$. That is 
\begin{eqnarray}\label{eq:33II-II.B.2} 
			(x_1 y_4,y_5)=(y_1 y_4,y_5)=0. 
 \end{eqnarray}
These changes do not affect \eqref{eq:33II-II.B.1}. Then replace $x_1, y_1, y_3$ by $x_1-d x_3, y_1+c x_3, y_3+c x_1+d y_1$ we can
furthermore assume that 
\begin{eqnarray}\label{eq:33II-II.B.3}
			(y_2y_3,y_4)= (y_2y_3,y_5)=0. 
\end{eqnarray}
Finally replacing $x_1, y_4, y_5$ by 
$x_1-e x_4 + f x_5, y_4+ e y_1, y_5 - f y_1$ further allows us to assume that
\begin{eqnarray}\label{eq:33II-II.B.4}
			(y_2y_4,y_5)= (y_3 y_4, y_5)=0. 
 \end{eqnarray}
We thus arrive at a presentation of the form ${\mathcal P}_{10}^{(3,7)}(r)$ as given in the next proposition
\begin{Proposition}\label{pro33IB}
Let $L$ be a nilpotent SAA of dimension $10$ with an isotropic center of dimension $3$ that has the further properties that $\mbox{dim\,} L^3 = 6$, $L^3L^2 = L^5$
 and $L$ is of type $B$. Then $L$ has a presentation of the form
\begin{align*}
{\mathcal P}_{10}^{(3,7)}(r): \ \  (x_2y_4,y_5) = r,\ (x_1y_2,y_4) = 1,\ (y_1y_2,y_5) = 1,\ (y_1y_3, y_4) = 1
\end{align*} 
where $r \neq 0$. Furthermore the presentations ${\mathcal P}_{10}^{(3,7)}(r)$ and ${\mathcal P}_{10}^{(3,7)}(s)$ describe the same algebra 
if and only if $s/r \in (\mathbb{F}^*)^{3}$.
\end{Proposition}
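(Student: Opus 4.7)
The plan is to follow the template of the type $A$ case (Proposition~\ref{pro33IAA}) almost verbatim, with the distinctive step being the explicit invariant calculation. The existence of a presentation ${\mathcal P}_{10}^{(3,7)}(r)$ with $r \neq 0$ is essentially already in the text immediately preceding the proposition, where a standard basis is constructed via the normalisations $\psi\phi^{-1}(x_4) = 0$, $\psi\phi^{-1}(x_5) = x_4$ and the subsequent clean-up of the remaining seven triples. For the converse direction I would inspect the presentation directly: the four non-trivial triples expose $Z(L) = \mathbb{F}x_5 + \mathbb{F}x_4 + \mathbb{F}x_3$ (isotropic of dimension $3$), while expanding $L^2 = Z(L)^\perp$ and $L^3 = L^2 L$ gives $\mbox{dim\,} L^3 = 6$ and $L^3 L^2 = L^5$, and finally the relations $x_1 y_2 = x_4$, $y_1 y_2 = x_5$, $x_1 y_3 = 0$, $y_1 y_3 = x_4$ show that $\tau = \psi\phi^{-1}$ has minimal polynomial $t^2$, so $L$ is of type $B$.

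For the sufficiency of the isomorphism criterion, given $s/r = b^3$ with $b \in \mathbb{F}^*$, I would use the explicit scaling
\[
\tilde{x_2} = bx_2,\ \tilde{y_2} = (1/b)y_2,\ \tilde{x_3} = bx_3,\ \tilde{y_3} = (1/b)y_3,\ \tilde{x_4} = (1/b)x_4,\ \tilde{y_4} = by_4,
\]
\[
\tilde{x_5} = (1/b)x_5,\ \tilde{y_5} = by_5,\ \tilde{x_1} = x_1,\ \tilde{y_1} = y_1 .
\]
This is plainly a standard basis, and a direct evaluation of the four triples gives $(\tilde{x_2}\tilde{y_4}, \tilde{y_5}) = b^3 r = s$ while $(\tilde{x_1}\tilde{y_2}, \tilde{y_4})$, $(\tilde{y_1}\tilde{y_2}, \tilde{y_5})$ and $(\tilde{y_1}\tilde{y_3}, \tilde{y_4})$ remain equal to $1$, so $L$ satisfies ${\mathcal P}_{10}^{(3,7)}(s)$ with respect to the new basis.

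The main work, and where I expect the main obstacle, lies in the necessity direction, where I would mimic the invariant argument from type $A$. Given a second standard basis realising ${\mathcal P}_{10}^{(3,7)}(s)$, projecting to $L/Z_4(L)$ yields $\tilde{y_4} = ay_4 + by_5 + u$ and $\tilde{y_5} = cy_4 + dy_5 + v$ with $u, v \in Z_4(L)$ and $ad-bc \neq 0$. The facts $Z_4(L) L \leq L^3$, $Z_4(L) L^2 \leq L^4$, and $(L^3, L^4) = 0$ (from $L^6 = 0$) make the $u, v$ corrections drop out of the pairing $(\tilde{y_4}\tilde{y_5}\tilde{y_5}, \tilde{y_5}\tilde{y_4}\tilde{y_4})$, exactly as in type $A$. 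A direct calculation from the presentation gives $y_4 y_5 = -ry_2$, $y_2 y_5 = x_1$, $y_2 y_4 = -y_1$, and hence $y_4 y_5 y_5 = -rx_1$ and $y_5 y_4 y_4 = -ry_1$; pairing these yields $(y_4 y_5 y_5, y_5 y_4 y_4) = r^2$. Multilinear expansion in the tilde basis then gives $s^2 = (ad-bc)^3 r^2$, and setting $t = s/r$, $e = ad-bc$, the identity $t^2 = e^3$ forces $t = t^3/t^2 = t^3/e^3 = (t/e)^3 \in (\mathbb{F}^*)^3$, completing the plan. The subtle point is the verification that $y_4 y_5$ lands at $-ry_2$ (an element outside $L^3$) and that its further products inside $L^3$ generate exactly the same invariant shape as in type $A$ despite the different form of $\tau$.
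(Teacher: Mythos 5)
Your proposal is correct and follows exactly the route the paper intends: the printed proof of this proposition is just ``Similar to the proof of Proposition~\ref{pro33IAA}'', and you have carried out that adaptation faithfully, with the key products $y_4y_5=-ry_2$, $y_2y_5=x_1$, $y_2y_4=-y_1$ and the invariant $(y_4y_5y_5,\,y_5y_4y_4)=r^2$ all computed correctly, so that $s^2=(ad-bc)^3r^2$ yields $s/r\in(\mathbb{F}^*)^3$ just as in type $A$. The scaling you give for sufficiency is the same one used in the type $A$ case and preserves the remaining triple $(y_1y_3,y_4)=1$, so nothing further is needed.
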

\begin{proof} 
Similar to the proof of Proposition \ref{pro33IAA}.
\end{proof}
\subsection{Algebras of type $C$}
It turns out to be useful to consider the cases
$\mbox{char\,} \mathbb{F} \neq 2$ and 
$\mbox{char\,} \mathbb{F}=2$ separately.
\subsubsection{\underline{a. The algebras where $\mbox{char\,} \mathbb{F} \neq 2$}}
Suppose the minimal polynomial of $\tau = \psi \phi^{-1}$ is $t^2 + at + b$ with respect to some $y_2, y_3$. 
Replacing $y_3$ by $y_3 + (a/2) y_2$, one gets a minimal polynomial of the form $t^2 - s$ with $s \not \in  \mathbb{F}^{2}$.
\noindent
\begin{Remark}\label{lma133}
Let $\tilde{y_3} = \alpha y_3 + u$ where $\alpha \neq 0$ and $u \in L^2$. For the minimal polynomial of $\tau$ to have trivial linear term we must have 
$u \in L^3$. Thus $\mathbb{F} y_3 + L^3$ is a characteristic subspace of $L$.
\end{Remark}
\noindent
Pick any $0 \neq x_5 \in L^5$ and let 
			$x_4=\psi\phi^{-1}(x_5)$. Then $\psi\phi^{-1}(x_4) = s x_5$.
We want to pick our standard basis such that $\phi^{-1}(x_4)=x_1+L^4$, $\phi^{-1}(x_5)=y_1+L^4$. For this to work out we need $(x_1,y_1)=1$.
Again this can be easily arranged. If $(x_1,y_1)=\sigma$, then we replace $x_5, y_1, y_3$ by $(1/\sigma) x_5,$ $(1/\sigma)y_1,$ $\sigma y_3$
and we get $(x_1,y_1)=1$ and $\psi\phi^{-1}(x_4)=(\sigma^2 s) x_5$. 
We have thus seen that we can choose our standard basis such that
\begin{eqnarray}\label{eq:33II-II.C.1} 
		 x_1y_2=x_4,\ y_1y_2= x_5,\ x_1y_3 = s x_5,\ y_1y_3=x_4
\end{eqnarray}
for some $s \not \in \mathbb{F}^{2}$. In order to clarify the structure further we are only left with the following triple values
\begin{alignat*}{4}
(x_1y_4, y_5)&=a,&\quad  (y_2y_3, y_4)&=c,&\quad  (y_2y_4, y_5)&=e,&\quad 
 (x_2y_4, y_5)&=r,\\
(y_1y_4, y_5)&=b,&\quad  (y_2y_3, y_5)&=d,&\quad  (y_3y_4, y_5)&=f. & \quad
\mbox{} 
\end{alignat*}
Notice that $r \neq 0$ as $x_2y_3=0$ but $x_2 \not \in Z(L)$. We will show that the remaining values are zero.
First by replacing $x_1, y_1, y_2$ by $x_1-(a/r) x_2 , y_1-(b/r) x_2, y_2+(a/r) y_1 - (b/r) x_1$ we can assume that $a=b=0$, that is 
\begin{eqnarray}\label{eq:33II-II.C.2}
			(x_1 y_4,y_5)=(y_1 y_4,y_5)=0. 
 \end{eqnarray}
These changes do not affect \eqref{eq:33II-II.C.1}. Next replace $x_1, y_1, y_3$ by $x_1-d x_3, y_1+c x_3, y_3+c x_1+d y_1$ we see that we can further assume that
\begin{eqnarray}\label{eq:33II-II.C.3}
			(y_2y_3,y_4)=(y_2y_3,y_5)=0. 
\end{eqnarray}
Finally replacing $x_1, y_4, y_5$ by $x_1-e x_4 + f x_5, y_4+ e y_1, y_5 - f y_1$ we can also assume that
\begin{eqnarray}\label{eq:33II-II.C.4} 
			(y_2y_4,y_5)= (y_3 y_4, y_5)=0. 
 \end{eqnarray}
Thus $L$ has a presentation of the form ${\mathcal P}_{10}^{(3,8)}(r,s)$ as described in the next proposition.
\begin{Proposition}\label{pro33IIC}
Let $L$ be a nilpotent SAA of dimension $10$ over a field of characteristic that is not $2$ that has an isotropic center of dimension $3$.
Suppose also that $L$ has the further properties that $\mbox{dim\,} L^3 =6$, $L^3L^2 = L^5$ and $L$  is of type $C$.
Then $L$ has a presentation of the form
\begin{align*}
{\mathcal P}_{10}^{(3,8)}(r,s):\  (x_2y_4,y_5)&=r,\ (x_1y_3, y_5)= s,\ (x_1y_2,y_4) =1,\ (y_1y_2,y_5)=1,\\
 (y_1y_3, y_4)&=1
\end{align*} 
where $r \neq 0$ and $s \notin \mathbb{F}^2$. Furthermore the presentations ${\mathcal P}_{10}^{(3,8)}(\tilde{r}, \tilde{s})$ and 
${\mathcal P}_{10}^{(3,8)}(r,s)$ describe the same algebra if and only if $\frac{\tilde{r}}{r} \in (\mathbb{F}^*)^{3}$ 
and $\frac{s}{\tilde{s}} \in G(s) $ where $G(s)=\{(x^2 - y^2s)^2 :\ (x,y) \in \mathbb{F} \times \mathbb{F} \, \setminus \, \{( 0, 0 )\}\}$.
\end{Proposition}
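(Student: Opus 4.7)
The plan is to prove Proposition \ref{pro33IIC} in four stages. The existence of a presentation of the form ${\mathcal P}_{10}^{(3,8)}(r,s)$ has essentially been established in the derivation preceding the proposition, culminating in relations \eqref{eq:33II-II.C.1}--\eqref{eq:33II-II.C.4}, so I would begin by recording this and then verifying the converse by direct inspection: given any presentation ${\mathcal P}_{10}^{(3,8)}(r,s)$ with $r \neq 0$ and $s \notin \mathbb{F}^2$, one checks that $Z(L) = \mathbb{F} x_3 + \mathbb{F} x_4 + \mathbb{F} x_5$ is isotropic of dimension $3$, that $\dim L^3 = 6$, that $L^3 L^2 = L^5$, and that $\tau = \psi \phi^{-1}$ on $L^5$ has minimal polynomial $t^2 - s$, which is irreducible precisely when $s \notin \mathbb{F}^2$.

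For the sufficiency of the isomorphism criterion I would treat the two invariants separately. If $\tilde r / r = b^3$, I would mimic the diagonal rescaling used in the proof of Proposition \ref{pro33IAA} to produce a standard basis change sending $r$ to $b^3 r$ while keeping $s$ fixed. For the norm condition, suppose $s/\tilde s = (x^2 - y^2 s)^2$. I would identify $\mathbb{F}[\tau] \cong \mathbb{F}[\sqrt s]$ and use multiplication by $x + y \sqrt s$ inside this algebra to construct the corresponding change of $(y_2, y_3)$ modulo $L^3$, extended consistently to the full standard basis through the dual action on $x_1, y_1$ etc., then apply the usual shift $y_3 \mapsto y_3 + (\lambda/2) y_2$ to remove the linear term and arrive at a presentation ${\mathcal P}_{10}^{(3,8)}(\tilde r, \tilde s)$.

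The necessity part is the heart of the argument. For the cube invariant I would copy the Type A calculation from Proposition \ref{pro33IAA}: writing $\tilde y_4 = a y_4 + b y_5 + u$ and $\tilde y_5 = c y_4 + d y_5 + v$ with $u, v \in Z_4(L)$, evaluating the pairing $(\tilde y_4 \tilde y_5 \tilde y_5,\, \tilde y_5 \tilde y_4 \tilde y_4)$ yields $(\tilde r / r)^2 = (ad-bc)^3$, forcing $\tilde r / r \in (\mathbb{F}^*)^3$ by the same elementary argument that a cube whose square-root lies in $\mathbb{F}$ is itself a cube. For the norm invariant I would use Remark \ref{lma133} to observe that any admissible change of basis sends $\mathbb{F} y_3 + L^3$ to itself, so the admissible transformations on $(\bar y_2, \bar y_3) \in L^2/L^3$ form an invertible $2\times 2$ matrix group; the resulting $\tilde \tau$ equals $(\gamma + \delta \tau)(\alpha + \beta \tau)^{-1}$ inside the commutative algebra $\mathbb{F}[\tau] \cong \mathbb{F}[\sqrt s]$, hence $\tilde \tau^2 = (\gamma + \delta \tau)^2 (\alpha + \beta \tau)^{-2}$. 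After applying the normalising shift that kills the linear term, one reads off $\tilde s$ as a ratio of squared norms from $\mathbb{F}[\sqrt s]$, which lands in $G(s)$ since by definition $G(s)$ is the group of squared norms.

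The main obstacle will be bookkeeping: one has to keep track of how the transformations on $(y_2, y_3)$ interact with those on $(y_4, y_5)$ through the cross-triple $(x_2 y_4, y_5) = r$, and verify that the two invariants decouple so they can be extracted independently. A secondary but essential point is to confirm that every element of $G(s)$ is actually realised by some admissible basis change, which amounts to showing that the norm map $\mathbb{F}[\sqrt s]^* \to \mathbb{F}^*$ is surjective onto its image in the required sense; this should fall out of the explicit multiplication-by-$(x + y \sqrt s)$ construction from the sufficiency step.
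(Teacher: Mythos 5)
Your overall plan follows the paper's own route: existence from the preceding normalisation, the converse by inspection, sufficiency via an explicit basis change (the paper writes one down with $\alpha=\beta^3/(b^2-a^2s)$), and necessity of the cube condition by the triple-product computation $(\tilde{y_4}\tilde{y_5}\tilde{y_5},\tilde{y_5}\tilde{y_4}\tilde{y_4})$ copied from the type $A$ case; that computation does transfer, since it only uses $Z_4(L)L^2\leq L^4$, $Z_4(L)L\leq L^3$, $(L^4,L^3)=0$ and $L^6=0$, all of which hold here, and for the presentation ${\mathcal P}_{10}^{(3,8)}(r,s)$ one gets $y_4y_5=-ry_2$, $y_2y_5=x_1$, $y_2y_4=-y_1$, so the pairing again produces $r^2(ad-bc)^3$ and the standard $u^2=v^3\Rightarrow u=(u/v)^3$ trick finishes it.

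The gap is in the necessity of $s/\tilde{s}\in G(s)$. Tracking only the transformation of $(y_2,y_3)$ is not enough. Since $L^2$ and $Z_4(L)$ are characteristic and $L^3L^3=0$, an admissible change with both presentations normalised has $\tilde{y_2}=\beta y_2+u$, $\tilde{y_3}=\gamma y_3+v$ with $u,v\in L^3$, whence $\tilde{\phi}=\beta\phi$, $\tilde{\psi}=\gamma\psi$ and $\tilde{\tau}=(\gamma/\beta)\tau$, giving only $s/\tilde{s}=(\beta/\gamma)^2\in(\mathbb{F}^*)^2$. That is strictly weaker than membership in $G(s)$: a square in $\mathbb{F}^*$ need not be the square of a norm $x^2-y^2s$ (the norm subgroup can have large, even infinite, index, e.g.\ over $\mathbb{Q}$). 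Your remark that the invariants are coupled "through the cross-triple $(x_2y_4,y_5)=r$" points at the wrong place; the $r$-triple plays no role in constraining $\beta/\gamma$. The constraint you need comes from the $(x_1,y_1)$--$(x_4,x_5)$ sector: writing $\tilde{x_5}=ax_4+bx_5$, the relations $\tilde{y_1}\tilde{y_2}=\tilde{x_5}$ and $\tilde{x_1}\tilde{y_2}=\tilde{x_4}=\tilde{\tau}(\tilde{x_5})$ determine $\tilde{x_1},\tilde{y_1}$ modulo $L^4$ in terms of $a,b,\beta,\gamma$, and imposing $(\tilde{x_1},\tilde{y_1})=1$ forces $\gamma(b^2-a^2s)=\beta^3$. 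Substituting, $s/\tilde{s}=(\beta/\gamma)^2=\bigl((b/\beta)^2-s(a/\beta)^2\bigr)^2\in G(s)$, which is exactly the shape of the paper's sufficiency formula. Without deriving this relation your argument proves only $s/\tilde{s}\in(\mathbb{F}^*)^2$, so you should add this step explicitly (the paper itself leaves it to the reader, but it is the one genuinely nontrivial point of the proof).
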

\begin{proof}
We have already seen that any such algebra has a presentation of the given form. Direct calculations show that an algebra with a presentation 
${\mathcal P}_{10}^{(3,8)}(r,s)$ has the properties stated. 
We turn to the isomorphism property. To see that the condition is sufficient, suppose we have an algebra $L$ that has presentation 
${\mathcal P}_{10}^{(3,8)}(r,s)$ with respect to some standard basis $x_1, y_1, \ldots, x_5, y_5$. Suppose that 
	$\frac{\tilde{r}}{r}=\frac{1}{\beta^3}$ 
					and 
	$\frac{s}{\tilde{s}}= [(b/\beta)^2-s(a/\beta)^2]^2$ 
for some $(a,b) \in \mathbb{F} \times \mathbb{F} \, \setminus \, \{(0,0)\}$. 
Consider a new standard basis 
\begin{equation*}
\begin{aligned}
\tilde{x_1}&=(\alpha/\beta^2)(bx_1 + asy_1), &           \tilde{y_1}&=(1/\beta) (by_1+ax_1),\\
\tilde{x_2}&=(1/\beta)x_2  ,                           &           \tilde{y_2}&=\beta y_2, \\
\tilde{x_3}&=(1/\alpha)x_3       ,                    &            \tilde{y_3}&=\alpha y_3,\\
\tilde{x_4}&=(\alpha/\beta)(bx_4 + asx_5) ,    &            \tilde{y_4}&=(1/\beta^2)(by_4-ay_5),\\
\tilde{x_5}&=ax_4+bx_5    ,                                    &            \tilde{y_5}&=(\alpha/\beta^3)(b y_5-asy_4),
\end{aligned}
\end{equation*}
where $\alpha=\frac{\beta^3}{b^2-a^2s}$. 
Calculations show that $L$ has then presentation ${\mathcal P}_{10}^{(3,8)}(\tilde{r},\tilde{s})$ with respect to the new standard basis.\\ \\
It remains to see that the conditions are also necessary. 
Consider an algebra $L$ with presentation ${\mathcal P}_{10}^{(3,8)}(r, s)$ with respect to some standard basis $x_1, y_1, \ldots, x_5, y_5$. 
Take some arbitrary new standard basis $\tilde{x_1}, \tilde{y_1}, \ldots, \tilde{x_5}, \tilde{y_5}$ such that $L$ satisfies the presentation 
${\mathcal P}_{10}^{(3,8)}(\tilde{r},\tilde{s})$ with respect to the new basis
%
	for some $0 \neq \tilde{r} \in \mathbb{F}$ and $\tilde{s} \notin \mathbb{F}^{2}$.
Notice that
\begin{eqnarray*}
			\tilde{x_5} &=& a x_4 + b x_5\\
			\tilde{y_2} &=& \beta y_2 + u\\
			\tilde{y_3} &=& \gamma y_3 + v ,
\end{eqnarray*}
such that $u, v \in L^3$ and $0 \neq \alpha, \beta \in \mathbb{F}$. 
The reader can convince himself that $\tilde{r}/r \in (\mathbb{F}^{*})^3$ and $s/\tilde{s} \in G(s) $. 
\end{proof}
\noindent
\textbf{Examples. }(1) If $\mathbb{F} =\mathbb{C}$ then as any quadratic polynomial is
 reducible, there are not algebras of type $C$. This holds more generally for any field $\mathbb{F}$
 of characteristic that is not $2$ and where all elements in $\mathbb{F}$ have a square
 root in $\mathbb{F}$.\\ \\
(2) Suppose $\mathbb{F} =\mathbb{R}$. Let $ s \not \in \mathbb{R}^2$ and $ 0 \neq r \in \mathbb{R}$. Then 
			$r/1 = r \in (\mathbb{R}^{*})^3$ and $s < 0$. Also
			$s/(-1) = a^4 = (a^2 - 0^2  (-1))^2$ 
for some $a \in \mathbb{R}$. We thus have that ${\mathcal P}_{10}^{(3,8)}(r, s)$ describes the same algebra as 
${\mathcal P}_{10}^{(3,8)}(1, -1)$.  There is thus a unique algebra in this case.\\\\
(3) Let $\mathbb{F}$ be a finite field of some odd characteristic $p$. Suppose that 
$|\mathbb{F}|=p^n$. Let $s$ be any element that is not in $(\mathbb{F}^*)^2$.
Notice then that $\mathbb{F}^* = (\mathbb{F}^*)^2 \cup s (\mathbb{F}^*)^2$ and thus for any $\tilde{s}$ that is not in $\mathbb{F}^2$, we have  
$s/\tilde{s} \in (\mathbb{F}^*)^2  = G(s)$.  
We can keep $s$ fixed and each algebra has a presentation of the form $ {\mathcal Q}(r) = {\mathcal P}_{10}^{(3,8)}(r,s)$
where ${\mathcal Q}(\tilde{r}) $ and ${\mathcal Q}(r)$ describe the same algebra if and only if $\tilde{r}/ r \in (\mathbb{F}^*)^3$.
There are thus either three or one algebra 
according to whether $3$ divides $p^n -1$ or not.
\subsubsection{\underline{b. The algebras where $\mbox{char\,} \mathbb{F} = 2$}}
If the irreducible minimal polynomial of $\psi \phi^{-1}$ is 
			$t^2 + r t + s$ 
with respect to $y_2, y_3$ then the minimal polynomial with respect to $a y_2,\ b y_3 + c y_2$, where $a, b \neq 0$, is 
\[t^2 +  r (b/a)  t + [ (c/a)^2 - r (c/a) (b/a) + (b/a)^2 s].\] 
Thus we have two distinct subcases (that do not depend on the choice of the basis). Let $m = m(y_2, y_3)$ be the minimal polynomial of $\psi \phi^{-1}$
with respect to a given standard basis for $L$.\\ \\
(1) The minimal polynomial $m$ is of the form $t^2 - s$ for some $s \not \in \mathbb{F}^{2}$ .\\ \\
(2) The minimal polynomial $m$ is of the form $t^2 + r t + s$ where $r \neq 0$ and the polynomial is irreducible.\\ \\
For case (1) we get the same situation as in Proposition \ref{pro33IIC}.
\begin{Proposition}\label{pro33IICIIa}
Let $L$ be nilpotent SAA of dimension $10$ over a field of characteristic $2$ that has an isotropic center of dimension $3$. 
Suppose also that $L$ has the further properties that $\mbox{dim\,}L^3 =6$, $L^3L^2=L^5$ and $L$ is of type $C$ where
the minimal polynomial $m(y_2, y_3)$ is of the form $t^2 - s$ for some $s \not \in \mathbb{F}^2$. 
Then $L$ has a presentation of the form
\begin{align*}
{\mathcal P}_{10}^{(3,8)}(r,s): \ \  (x_2y_4,y_5)&=r,\ (x_1y_3, y_5)= s,\ (x_1y_2,y_4) =1,\  
								    (y_1y_2,y_5)=1,\\  (y_1y_3, y_4)&=1,  					      
\end{align*} 
where $r \neq 0$ and $s \notin \mathbb{F}^2$. Furthermore the presentations 
${\mathcal P}_{10}^{(3,8)}(\tilde{r}, \tilde{s})$ 
and 
${\mathcal P}_{10}^{(3,8)}(r,s)$
describe the same algebra if and only if $\frac{\tilde{r}}{r} \in (\mathbb{F}^{*})^3$ 
and $\frac{s}{\tilde{s}} \in G(s) $, where $G(s)=\{(x^2 - y^2s)^2 :\ (x,y) \in \mathbb{F} \times \mathbb{F}\, \setminus \, \{( 0, 0 )\}\}$.
\end{Proposition}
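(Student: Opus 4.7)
The plan is to follow exactly the same line of argument that established Proposition~\ref{pro33IIC}, noting that in the present situation the minimal polynomial of $\tau = \psi\phi^{-1}$ already has the same normal form $t^2 - s$ (with $s \notin \mathbb{F}^2$) as was achieved after clearing the linear term in the characteristic-not-two case. Thus the passage to a normal form for $\tau$ is free of charge here, and none of the subsequent manipulations require division by $2$.

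First I would fix a basis vector $0 \neq x_5 \in L^5$ and set $x_4 = \psi\phi^{-1}(x_5)$, so that $\psi\phi^{-1}(x_4) = s x_5$. Next I would choose $x_1, y_1 \in L^3$ with $\phi^{-1}(x_4) = x_1 + L^4$ and $\phi^{-1}(x_5) = y_1 + L^4$; if the resulting pairing $(x_1, y_1) = \sigma$ is not $1$, rescaling $x_5, y_1, y_3$ by $1/\sigma, 1/\sigma, \sigma$ repairs this without altering the form of the minimal polynomial. This yields the four relations
\begin{equation*}
x_1 y_2 = x_4, \quad y_1 y_2 = x_5, \quad x_1 y_3 = s x_5, \quad y_1 y_3 = x_4,
\end{equation*}
together with $x_2 y_3 = 0$ (since $L^4 Z_4(L) = 0$). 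The remaining parameters $a = (x_1 y_4, y_5)$, $b = (y_1 y_4, y_5)$, $c = (y_2 y_3, y_4)$, $d = (y_2 y_3, y_5)$, $e = (y_2 y_4, y_5)$, $f = (y_3 y_4, y_5)$ and $r = (x_2 y_4, y_5)$ are exactly as before, and $r \neq 0$ because $x_2 \notin Z(L)$ while $x_2 y_3 = 0$. I would then use verbatim the three substitutions from the proof of Proposition~\ref{pro33IIC}: replacing $(x_1, y_1, y_2)$ by $(x_1 - (a/r)x_2, y_1 - (b/r)x_2, y_2 + (a/r)y_1 - (b/r)x_1)$ kills $a, b$; replacing $(x_1, y_1, y_3)$ by $(x_1 - d x_3, y_1 + c x_3, y_3 + c x_1 + d y_1)$ kills $c, d$; and replacing $(x_1, y_4, y_5)$ by $(x_1 - e x_4 + f x_5, y_4 + e y_1, y_5 - f y_1)$ kills $e, f$. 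Each of these substitutions involves only additive manipulations, so the arguments remain valid in characteristic $2$, and one checks that each previously established identity is preserved. This produces the presentation $\mathcal{P}_{10}^{(3,8)}(r,s)$.

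For the isomorphism statement I would imitate the proof of Proposition~\ref{pro33IIC} directly. In one direction, given $\tilde{r}/r = 1/\beta^3$ and $s/\tilde{s} = ((b/\beta)^2 - s(a/\beta)^2)^2$ with $(a,b) \neq (0,0)$, the same explicit basis change (setting $\alpha = \beta^3/(b^2 - a^2 s)$ and rescaling the basis by the formulas displayed in that proof) is purely linear-algebraic and carries over to characteristic $2$ without modification. In the other direction, given any standard basis realising $\mathcal{P}_{10}^{(3,8)}(\tilde{r}, \tilde{s})$, one analyses $\tilde{x}_5 = a x_4 + b x_5$, $\tilde{y}_2 = \beta y_2 + u$, $\tilde{y}_3 = \gamma y_3 + v$ with $u, v \in L^3$, and extracts the constraints on $(\tilde{r}, \tilde{s})$ from matching the relations $\tilde{x}_1 \tilde{y}_2 = \tilde{x}_4$ etc., together with the action of $\tau$ on $L^5$; the fact that $G(s)$ is the image of the norm form of $\mathbb{F}[\sqrt{s}]/\mathbb{F}$ squared ensures that the admissible ratios $s/\tilde{s}$ are precisely the elements of $G(s)$.

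The one place where I would be most vigilant is checking that no step of the computation silently invoked division by $2$: this is mainly relevant in the normalisation of the minimal polynomial, but the present case avoids that issue since the minimal polynomial is already $t^2 - s$ by hypothesis. Apart from this, the proof is a faithful transcription of the proof of Proposition~\ref{pro33IIC}, so I would state it briefly, say that the argument is identical, and refer the reader back to that proof for the detailed verifications.
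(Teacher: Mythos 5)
Your proposal is correct and takes essentially the same route as the paper, which in fact gives no separate proof here but simply remarks that ``for case (1) we get the same situation as in Proposition \ref{pro33IIC}''; your observation that the only step of that proof requiring division by $2$ is the elimination of the linear term of the minimal polynomial, which is vacuous under the hypothesis $m = t^2 - s$, is exactly the justification the paper leaves implicit.
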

\noindent
\textbf{Example. }  Consider the field $\mathbb{Z}_2(x)$ of rational functions in one variable over $\mathbb{Z}_2$. Notice that 
		$$\mathbb{Z}_2(x)^* = \{ f(x)^2 + x g(x)^2: (f(x),g(x)) \in \mathbb{Z}_2(x) \times  \mathbb{Z}_2(x) \, \setminus \, \{(0,0)\}\}.$$ Thus 
$G(x)=(\mathbb{Z}_2(x)^{*})^2$ and last proposition tells us that 
${\mathcal P}_{10}^{(3,8)}(\tilde{r}(x), \tilde{s}(x))$ and ${\mathcal P}_{10}^{(3,8)}(r(x),s(x))$ describe the same algebra 
			if and only if 
	$\tilde{r}(x)/r(x) \in (\mathbb{Z}_2(x)^{*})^{3}$ and 
	$s(x)/\tilde{s}(x) \in (\mathbb{Z}_2(x)^{*})^{2} $. 
We thus have infinitely many algebras here.\\ \\              
We then move to the latter collection of algebras. For the rest of this subsection we will be assuming that the minimal polynomial of 
$\psi \phi^{-1}$ is an irreducible polynomial of the form $t^2 + rt + s$ where $r \neq 0$.\\\\
Pick $0 \neq x_5 \in L^5$ and let 
			$x_4=\psi\phi^{-1}(x_5)$. 
Then 
			$\psi\phi^{-1}(x_4)=r x_4 + s x_5$. 
We want to pick our standard basis such that 
			$ \bar{x_1} = x_1+L^4 = \phi^{-1}(x_4)$ and $ \bar{y_1} = y_1+L^4 = \phi^{-1}(x_5)$.
The only constraint to worry about is, as before, that $(x_1,y_1) = 1$. 
If $(x_1,y_1)=\sigma$, we just need to then replace 
$ y_3$ by $(1/\sigma)y_3$. 
Notice that this changes the minimal polynomial of $\psi \phi^{-1}$ to $t^2 + ( r /\sigma) t + ( s/\sigma^2)$.
In any case this shows that we can choose our standard basis such that 
\begin{eqnarray} \label{eq:33II-III.C.1}
		x_1y_2=x_4,\ y_1y_2= x_5,\ x_1y_3= r x_4 + s x_5,\ y_1y_3=x_4
  \end{eqnarray}
for some $r, s \in \mathbb{F}$ where $ r \neq 0$ and $t^2 + rt + s$ is irreducible.
As before we also know that $x_2y_3=0$. In order to clarify the structure further 
we are only left with the following triple values:
\begin{alignat*}{4}
(x_1y_4, y_5)&=a,&\quad  (y_2y_3, y_4)&=c,&\quad  (y_2y_4, y_5)&=e,&\quad 
 (x_2y_4, y_5)&= \alpha ,\\
(y_1y_4, y_5)&=b,&\quad  (y_2y_3, y_5)&=d,&\quad  (y_3y_4, y_5)&=f .& \quad 
\mbox{} 
\end{alignat*}
Notice that $\alpha \neq 0$ as $x_2y_3 =0 $ but $x_2 \not \in Z(L)$. We will show that we can choose a new standard
 basis so that the remaining values are zero. First replace $x_1, y_1, y_2$ by $x_1-(a/\alpha) x_2 , y_1-(b/\alpha) x_2,
 y_2+(a/\alpha) y_1 - (b/\alpha) x_1$ and we can assume that $a=b=0$, that is 
\begin{eqnarray}\label{eq:33II-III.C.2}
			(x_1 y_4,y_5) = (y_1 y_4,y_5)=0. 
\end{eqnarray}
Next replace $x_1, y_1, y_3$ by 
$x_1-d x_3, y_1+c x_3, y_3+c x_1+d y_1$ and we can furthermore assume that 
\begin{eqnarray}\label{eq:33II-II.C.3} \begin{aligned} 
			(y_2y_3,y_4)= (y_2y_3,y_5)=0. 
\end{aligned} \end{eqnarray}
Finally replacing $x_1, y_4, y_5$ by $x_1-e x_4 + f x_5, y_4+ e y_1, y_5 - f y_1$ allows us to further 
assume  that
\begin{eqnarray}\label{eq:33II-III.C.4} \begin{aligned} 
			(y_2y_4,y_5)=(y_3 y_4, y_5)=0. 
\end{aligned} \end{eqnarray}
We have thus arrived at a presentation of the form 
${\mathcal P}_{10}^{(3,9)}$ as described in the next proposition.
Before stating that proposition we introduce two groups that are going to play a role.
\begin{defn}  
For each minimal polynomial $t^2+rt+s$, we let
\begin{eqnarray*}
			H(r)    &=&  \{x^2 + rx              :\ x \in \mathbb{F}\} \\
			G(r,s)  &=&  \{x^2 + rxy + sy^2 :\  (x,y) \in \mathbb{F} \times \mathbb{F} \,\setminus \, \{(0,0)\} \}.
\end{eqnarray*}
\end{defn}
\noindent
\begin{Remark} \label{lma136} 
$(1)$ $H(r)$ is a subgroup of the additive group $\mathbb{F}$. \\ \\
$(2)$ Consider the splitting field $\mathbb{F}[\gamma]$ of the polynomial $t^2 + r t + s$ in $\mathbb{F}[t]$. Then
		$a^2+a b r+b^2s$ is the norm 
		$N(a+b \gamma) = (a+b \gamma)(a+b (\gamma+r))$ 
of $a+b \gamma$. As this is a multiplicative function we have that
$G(r,s)$ is a multiplicative subgroup of $\mathbb{F}^* $. 
\end{Remark}
\noindent
\begin{Proposition}\label{pro33IICl}
Let $L$ be a nilpotent SAA of dimension $10$ over a field of characteristic $2$ that has an isotropic center of dimension $3$.
Suppose also that $L$ has the further properties that $\mbox{dim\,}L^3 =6$, $L^3L^2 = L^5$ and $L$ is of type $C$ where
the minimal polynomial $m(y_2, y_3)$ is irreducible with a non-zero linear term. Then $L$ has a presentation of the form
\begin{align*}
 {\mathcal P}_{10}^{(3,9)}(\gamma, r, s) : \ \ & (x_2y_4,y_5) = \gamma,\ (x_1y_3,y_4) = r,\ (x_1y_3,y_5) = s,\ \\ 
											  & (x_1y_2,y_4) = 1,\ (y_1y_2,y_5) = 1,\ (y_1y_3,y_4) = 1
\end{align*} 
where $  \gamma, r \neq 0$ and $t^2 + r t + s$ is irreducible. Furthermore the presentations 
${\mathcal P}_{10}^{(3,9)}(\tilde{\gamma}, \tilde{r}, \tilde{s})$ and ${\mathcal P}_{10}^{(3,9)}(\gamma,r,s)$ describe the same algebra if and only if
 $\frac{\tilde{\gamma}}{\gamma} \in (\mathbb{F}^*)^{3}$, $\frac{\tilde{r}}{r} \in G(r, s)$ and $\tilde{s} - (\frac{\tilde{r}}{r})^2s \in H(\tilde{r}) $.
\end{Proposition}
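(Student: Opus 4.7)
The existence of a presentation of the form ${\mathcal P}_{10}^{(3,9)}(\gamma, r, s)$ has essentially been carried out in the discussion preceding the proposition (equations \eqref{eq:33II-III.C.1}--\eqref{eq:33II-III.C.4}): one chooses $x_5 \in L^5$, lets $x_4 = \psi\phi^{-1}(x_5)$ so that $\psi\phi^{-1}(x_4) = rx_4 + sx_5$, arranges $(x_1,y_1)=1$ by rescaling $y_3$ (which rescales $r, s$ but preserves irreducibility of $t^2+rt+s$ with $r \neq 0$), and kills the remaining seven triples by successive corrections of $x_1, y_1, y_2, y_3, y_4, y_5$. Conversely, given any presentation of this form, a direct inspection shows that $Z(L) = \mathbb{F}x_5 + \mathbb{F}x_4 + \mathbb{F}x_3$ is isotropic of dimension $3$, that $\mbox{dim\,}L^3 = 6$, that $L^3 L^2 = L^5 = \mathbb{F}x_4 + \mathbb{F}x_5$, and that the matrix of $\psi\phi^{-1}$ in the basis $x_4, x_5$ has minimal polynomial $t^2+rt+s$, so $L$ is of type $C$ in the required subcase.

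For sufficiency of the isomorphism conditions, suppose $\tilde\gamma/\gamma = 1/\beta^3$, $\tilde r/r = a^2 + abr + b^2 s$ for some $(a,b) \in \mathbb{F}\times\mathbb{F}\setminus\{(0,0)\}$, and $\tilde s - (\tilde r/r)^2 s = c^2 + \tilde r c$ for some $c \in \mathbb{F}$. The plan is to write down an explicit change of standard basis, modelled on the ones used in Propositions \ref{pro33IAA} and \ref{pro33IIC}, in which $\tilde y_2 = \beta y_2$ and $\tilde y_3 = \alpha(b y_3 + (a+c) y_2)$ modulo $L^3$ for a suitable scalar $\alpha$, while $\tilde x_4, \tilde x_5$ are determined by the induced transformation on $L^5$. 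The key point is that $(a,b) \mapsto a^2 + abr + b^2 s$ is the norm of $a + b\gamma$ in the quadratic extension $\mathbb{F}[\gamma]/(t^2+rt+s)$, so multiplication by $a+b\gamma$ realises precisely the scaling of $r$ by an element of $G(r,s)$; while the shift $y_3 \mapsto y_3 + c y_2$ changes the constant term of the minimal polynomial of $(\psi + c\phi)\phi^{-1}$ by exactly $c^2 + \tilde r c \in H(\tilde r)$. Any off-diagonal triples that reappear are killed by the clean-up procedure of the existence part.

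For necessity, suppose $L$ has both presentations with respect to two standard bases. The cube-root condition on $\tilde\gamma/\gamma$ is extracted exactly as in Proposition \ref{pro33IAA}: since $Z_4(L)\cdot L^2 \leq L^4$ and $L^4 \perp L^3$, expansion of $\tilde y_4 \tilde y_5 \tilde y_5$ and $\tilde y_5 \tilde y_4 \tilde y_4$ modulo $L^4$ together with the pairing yields $\tilde\gamma^2 = \gamma^2(ad-bc)^3$, where $\left(\begin{smallmatrix} a & b \\ c & d \end{smallmatrix}\right)$ is the matrix giving $\tilde y_4, \tilde y_5$ modulo $Z_4(L)$. The norm condition follows by writing $\tilde y_2 = \beta y_2 + u$ and $\tilde y_3 = \alpha y_3 + \delta y_2 + v$ with $u, v \in L^3$, recomputing $\psi \phi^{-1}$ in the new basis, and observing that its minimal polynomial transforms by the action of $\mathbb{F}[\gamma]^* \cdot \mathbb{F}^*$ on polynomials $t^2 + rt + s$, whose invariant under this action is exactly the $G(r,s)$-class of the linear coefficient. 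Finally, the residual freedom in $\delta$ forces $\tilde s$ to be well-defined only modulo the Artin--Schreier-type set $H(\tilde r)$ after the norm rescaling, giving the third condition.

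The main obstacle is the characteristic-$2$ subtlety: unlike Proposition \ref{pro33IIC}, we cannot complete the square to remove the linear term of the minimal polynomial, so the isomorphism class is refined by \emph{two} invariants --- the multiplicative norm class of the linear coefficient (captured by $G(r,s)$) and an additive coset of the constant term modulo $H(\tilde r)$. The delicate bookkeeping lies in tracking how a single change of basis simultaneously acts multiplicatively on $r$ and additively on $s$, while also respecting $(x_1,y_1)=1$ and the cube-root condition on $\gamma$; the algebraic identification of these two actions with $G(r,s)$ and $H(\tilde r)$, via Remark \ref{lma136}, is what makes the statement clean.
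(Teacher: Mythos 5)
Your proposal is correct and follows essentially the same route as the paper: existence and the converse are handled exactly as in the preceding discussion, sufficiency by an explicit change of standard basis of the form $\tilde{y_2}=\beta y_2$, $\tilde{y_3}=\alpha y_3+\delta y_2$ (plus the induced transformation $\tilde{x_5}=ax_4+bx_5$ on $L^5$), and necessity by writing the general form of a second standard basis and extracting the three conditions, with the norm interpretation of $G(r,s)$ and the additive group $H(\tilde r)$ playing exactly the roles you describe (cf.\ Remark \ref{lma136}). Your account is slightly more conceptual where the paper is computational, but the decomposition of the argument and the key mechanisms are the same.
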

\begin{proof} %
We have already seen that any such algebra has a presentation of the given form. Direct calculations show that conversely 
any algebra with a presentation for this type satisfies all the properties listed.
It remains to deal with the isomorphism property. To see that the condition is sufficient, suppose we have an algebra $L$ that has a presentation 
${\mathcal P}_{10}^{(3,9)}(\gamma, r, s) $ with respect to some standard basis $x_1, y_1, \ldots, x_5, y_5$. Suppose that 
			$\frac{\tilde{\gamma}}{\gamma}     =     \frac{1}{\beta^3}$, 
			$\frac{r}{\tilde{r}}                          =     (\frac{b}{\beta})^2+(\frac{b}{\beta})(\frac{a}{\beta})r+(\frac{a}{\beta})^2 s$ 
										     and
			 $ \tilde{s}-(\frac{\tilde{r}}{r})^2 s   =    (\frac{\delta}{\beta})^2+(\frac{\delta}{\beta}) \tilde{r}$ 
for some $a, b, \delta , \beta \in F$ where $\beta  \neq 0$. We let
			 $\alpha  =  \beta/((\frac{b}{\beta})^2+(\frac{b}{\beta})(\frac{a}{\beta})r+(\frac{a}{\beta})^2 s)$.
Consider the new standard basis 
\begin{equation*}
\begin{aligned}
\tilde{x_1}&=\frac{1}{\beta^2}[(\alpha a r+\alpha b+\delta a) x_1+(\delta b+\alpha a s) y_1],       &           \tilde{y_1}&=\frac{1}{\beta}(ax_1+by_1),\\
\tilde{x_2}&=\frac{1}{\alpha \beta}(\alpha x_2+\delta x_3)  ,                                                       &         \tilde{y_2}&=\beta y_2, \\
\tilde{x_3}&=\frac{1}{\alpha}x_3       ,                                                                                       &            \tilde{y_3}&=\alpha y_3+\delta y_2,\\
\tilde{x_4}&=\frac{1}{\beta}[(\alpha a r+\alpha b+\delta a) x_4+(\delta b+\alpha a s) x_5] ,          &            \tilde{y_4}&= \frac{1}{\beta^2}(by_4+ay_5),\\
\tilde{x_5}&=ax_4+bx_5     ,               &            \tilde{y_5}&=\frac{1}{\beta^3}[(\alpha a r+\alpha b+\delta a)y_5+\\
	      &					&			&(\delta b+\alpha a s) y_4].
\end{aligned}
\end{equation*}
Calculations show that $L$ has then presentation ${\mathcal P}_{10}^{(3,9)}(\tilde{\gamma}, \tilde{r}, \tilde{s})$ with respect to the new standard basis.\\ \\
It remains to see that the conditions are also necessary. 
Consider an algebra $L$ with presentation ${\mathcal P}_{10}^{(3,9)}(\gamma, r, s) $ with respect to some standard basis $x_1, y_1, \ldots, x_5,$
$ y_5$. Take some arbitrary new standard basis $\tilde{x_1}, \tilde{y_1}, \ldots, \tilde{x_5}, \tilde{y_5}$ such that $L$ has presentation 
${\mathcal P}_{10}^{(3,9)}(\tilde{\gamma}, \tilde{r}, \tilde{s})$ with respect to the new basis
	where $\tilde{\gamma}, \tilde{r} , \tilde{s} \neq 0$ and where $t^2 + \tilde{r} t + \tilde{s} $ is irreducible. Then
\begin{eqnarray*}
			\tilde{x_5} &=& a x_4 + b x_5,\\
			\tilde{y_2} &=& \beta y_2 + u,\\
			\tilde{y_3} &=&\alpha y_3+\delta y_2+v ,
\end{eqnarray*}
for some $u,v \in L^3$, $ \alpha, \beta , \delta \in \mathbb{F}$ where $\alpha, \beta \neq 0$.
The reader can convince himself that new standard basis that we get satisfies the conditions stated.
\end{proof}
\noindent
Before we give an example to an algebra of this form. We list some useful properties of the groups $G(r,s)$ and $H(r)$.
\begin{Lemma}\label{lem136} 
For any irreducible polynomials $t^2 + r t+ s $ and $t^2 + \tilde{r} t + \tilde{s}$, we have that
\begin{description}
\item$(1)$ $H(\tilde{r})=(\tilde{r}/r)^2H(r)$. 
\item$(2)$ $G(\tilde{r},\tilde{s})=G(r,s)$ if $\tilde{s}-(\tilde{r}/r)^2 s \in H(\tilde{r})$.
\end{description}
\end{Lemma}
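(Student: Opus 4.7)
The plan is to prove both identities by direct computation, keeping in mind that we are in characteristic $2$, so signs disappear and the Frobenius identity $(a+b)^2 = a^2 + b^2$ is available. For part (1) I would exploit the fact that the map $y \mapsto x^2 + rx$ is essentially homogeneous of degree $2$ in disguise, so scaling the input by a constant rescales the output by the square; for part (2) I would complete the quadratic form using an element certifying that $\tilde{s} - (\tilde{r}/r)^2 s \in H(\tilde{r})$.

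For part (1), I would substitute $y = (r/\tilde{r}) x$ and compute
\[ (\tilde{r}/r)^2 \bigl( y^2 + r y \bigr) = (\tilde{r}/r)^2 (r/\tilde{r})^2 x^2 + (\tilde{r}^2/r)(r/\tilde{r}) x = x^2 + \tilde{r} x. \]
Since $x \mapsto (r/\tilde{r}) x$ is a bijection on $\mathbb{F}$, this yields $H(\tilde{r}) \subseteq (\tilde{r}/r)^2 H(r)$, and the reverse inclusion follows by symmetrically multiplying through by $(r/\tilde{r})^2$.

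For part (2), I would write the hypothesis as $\tilde{s} = (\tilde{r}/r)^2 s + c^2 + \tilde{r} c$ for some $c \in \mathbb{F}$. Using $(x+cy)^2 = x^2 + c^2 y^2$ in characteristic $2$, I would expand
\begin{align*}
 x^2 + \tilde{r} x y + \tilde{s} y^2 &= (x^2 + c^2 y^2) + \tilde{r}(xy + c y^2) + (\tilde{r}/r)^2 s\, y^2 \\
 &= (x+cy)^2 + \tilde{r}(x+cy) y + (\tilde{r}/r)^2 s\, y^2,
\end{align*}
then substitute $u = x+cy$ and $v = (\tilde{r}/r) y$ to get $u^2 + r u v + s v^2 \in G(r,s)$. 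Since $(x,y)\neq(0,0)$ iff $(u,v)\neq(0,0)$, this gives $G(\tilde{r},\tilde{s}) \subseteq G(r,s)$.

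For the reverse inclusion I would combine the hypothesis with part (1): multiplying $\tilde{s} + (\tilde{r}/r)^2 s \in H(\tilde{r})$ through by $(r/\tilde{r})^2$ and using $(r/\tilde{r})^2 H(\tilde{r}) = H(r)$ from part (1) gives $s + (r/\tilde{r})^2 \tilde{s} \in H(r)$, which is exactly the hypothesis with $(r,s)$ and $(\tilde{r},\tilde{s})$ interchanged. Applying the same completion-of-square argument with roles swapped yields $G(r,s) \subseteq G(\tilde{r},\tilde{s})$. The only subtle point, and the main obstacle to watch for, is precisely this symmetrization step: one must use part (1) to verify that the hypothesis is invariant under interchange of $(r,s)$ and $(\tilde{r},\tilde{s})$ before one is entitled to run the forward argument backwards. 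Everything else reduces to the characteristic-$2$ algebraic manipulations displayed above.
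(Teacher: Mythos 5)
Your proof is correct and is exactly the ``straightforward calculations'' that the paper leaves to the reader: part (1) by the rescaling $y=(r/\tilde{r})x$, and part (2) by completing the square with a certificate $c$ for $\tilde{s}-(\tilde{r}/r)^2s\in H(\tilde{r})$ together with the substitution $u=x+cy$, $v=(\tilde{r}/r)y$. The observation that part (1) makes the hypothesis of part (2) symmetric in $(r,s)$ and $(\tilde{r},\tilde{s})$, which gives the reverse inclusion, is the right way to close the argument.
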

\begin{proof} Straightforward calculations. \end{proof}
\noindent
{\bf Example. }\ \ 
Let $\mathbb{F}$ be the finite field of order $2^n$. Let $\gamma, r, s, \tilde{\gamma}, \tilde{r}, \tilde{s} $ be as 
in last lemma. Then
$ G(\tilde{r}, \tilde{s}) = G (r, s) = \mathbb{F}^* $ and thus $\tilde{r}/r \in G(r,s)$.
Also $[ \mathbb{F}:\ H(\tilde{r})  ] = 2$ and thus $\tilde{s} - (\tilde{r}/r)^2 s \in H(\tilde{r})$.
It follows from last proposition that the presentations ${\mathcal P}_{10}^{(3,9)}(\gamma, r, s)$ and 
${\mathcal P}_{10}^{(3,9)}(\tilde{\gamma}, \tilde{r}, \tilde{s})$ describe the same algebra if and only if $\tilde{\gamma} / \gamma \in (\mathbb{F}^*)^3$.
There are thus either three algebras or one algebra according to whether $3$ divides $2^n -1$ or not.\\ \\
We end this section by giving a direct explanation why the relation
									\[ ( \tilde{r}, \tilde{s} ) \sim ( r ,s) 
													\mbox{ if and only if } 
		\frac{\tilde{r}}{r} \in G(r, s), \tilde{s} - (\frac{\tilde{r}}{r})^2s \in H(\tilde{r}) \]
is an equivalence relation. \\ \\
First it is easy to show $(r, s) \sim (r,s)$ as $1 \in G(r,s) $ and $0 \in H(r)$. 
Next if $(\tilde{r},\tilde{s}) \sim (r,s)$ then, as $G(r,s) = G(\tilde{r},\tilde{s})$ is a group, we have that 
						$r/\tilde{r} \in  G(\tilde{r},\tilde{s})$ 
										and
			 $s-(r/\tilde{r})^2\tilde{s}=(r/\tilde{r})^2 (\tilde{s}-(\tilde{r}/r)^2s) \in (r/\tilde{r})^2 H(\tilde{r})=H(r)$.
This shows that $\sim$ is symmetric. 
Finally suppose $(r^*, s^*) \sim ( \tilde{r},\tilde{s})$ and $( \tilde{r},\tilde{s}) \sim ( r, s)$. 
Then we have that 
			  $r^*/r = r^*/\tilde{r} \, \cdot \, \tilde{r}/r \in G(r,s) $ 
and
 				$s^*-(r^*/r)^2s = [s^* - (r^*/ \tilde{r})^2 \tilde{s}] + [ (r^*/ \tilde{r})^2 \tilde{s} -  (r^*/r)^2s] 
			=  [s^* - (r^*/ \tilde{r})^2 \tilde{s}] +  (r^*/ \tilde{r})^2 [ \tilde{s} -  (\tilde{r}/r)^2 s ]$
 is in $H(r^*) + (r^*/ \tilde{r})^2 H(\tilde{r}) 
			= H(r^*)$.
Hence $\sim$ is also transitive and we have an equivalence relation.
\noindent

\chapter{Algebras with an isotropic center of dimension $2$}
In this chapter we will be assuming that $Z(L)$ is isotropic of dimension $2$. 
Notice that if $L = {\mathbb F} u +  {\mathbb F} v + L^2$, then $L^2 = {\mathbb F} uv + L^3$. It follows that $L^2 = Z(L)^{\perp}$ is of dimension $8$ and that $L^3$ is of dimension $7$. We can then pick our standard basis such that
\begin{eqnarray*}
Z(L)&=& {\mathbb F} x_5+ {\mathbb F} x_4, \\
%
%
L^2&=&{\mathbb F} x_5+ \cdots  + {\mathbb F} x_1 + {\mathbb F} y_1 + {\mathbb F} y_2+ {\mathbb F} y_3,\\
L^3&=&{\mathbb F} x_5+ \cdots + {\mathbb F} x_1 + {\mathbb F} y_1 + {\mathbb F} y_2 .
\end{eqnarray*}
Furthermore $L^3 = {\mathbb F} uvu +  {\mathbb F} uvv + L^4$ and thus $\mbox{dim\,} L^4 \in \{5,6\}$. Let $k$ be the nilpotence class of $L$. We know that the maximal class is $7$ and as  $\mbox{dim\,} L^{k}  \neq 1$ and $L^{k} \leq Z(L)$, we must have that $L^{k} = Z(L)$. Moreover, we know that $\mbox{dim\,} L^{s} \neq 2$ for $ 1 \leq s \leq 4$ and thus $ 5 \leq k \leq  7$. If $L^{5} = Z(L)$  then 
 $\mbox{dim\,} Z_2(L) - \mbox{dim\,} Z(L) =\mbox{dim\,} L^2 - \mbox{dim\,} L^3 = 1$
and we get the contradiction that $L^{4} = Z_2(L)$ is of dimension $3$. Thus $6 \leq k \leq 7$. We will deal with the two cases separately.
\section{The algebras of class $6$}
As the class is $6$, it follows that $(L^4, L^4) =(L^7, L) = 0$ and thus
$L^4$ is isotropic. We have seen that the dimension of $L^4$ is at least $5$ and thus $\mbox{dim\,} L^4 = 5$. We can thus now furthermore choose our standard basis such that 
\newpage
%
\begin{multicols}{2}
\begin{alignat*}{2}
\color{cyan}L^6=Z(L)\ &\boxed{
\begin{matrix}
\color{green} x_5\\\color{green} x_4
\end{matrix}
}\ \ 
\begin{matrix}
y_5\\y_4
\end{matrix}\\
\color{cyan}L^5=Z_2(L)\ &\boxed{
\begin{matrix}
\color{magenta} x_3
\end{matrix}} 
\boxed{
\begin{matrix}
\color{magenta} y_3
\end{matrix}
} \ \color{cyan}L^2=Z_5(L)\\
\color{cyan}L^4=Z_3(L)\ &\boxed{
\begin{matrix}
\color{blue} x_2\\ \color{blue} x_1
\end{matrix}}
\boxed{
\begin{matrix}
\color{blue} y_2\\ \color{blue} y_1
\end{matrix}
}\ \color{cyan}L^3=Z_4(L)
\end{alignat*}
\break
%
\begin{align*}
Z(L)=L^6 &={\mathbb F} x_5+ {\mathbb F} x_4 \\
Z_2(L)=L^5 &={\mathbb F} x_5+ {\mathbb F} x_4 + {\mathbb F} x_3 \\
Z_3(L)=L^4 &={\mathbb F} x_5+ \cdots + {\mathbb F} x_1\\
Z_4(L)=L^3 &=L^4+ {\mathbb F} y_1+ {\mathbb F} y_2\\
Z_5(L)=L^5 &=L^4+ {\mathbb F} y_1+ {\mathbb F} y_2+ {\mathbb F} y_3.
%
\end{align*}
\end{multicols}
%

 %
%
\noindent
As $ L^4Z_4(L)=0$ we must have
\begin{equation*}\label{eq:32I-I.1} \begin{aligned} x_1y_2=0. \end{aligned} \end{equation*}
It then follows that $L^4L^3=0$ and then
\[L^3L^3 =  {\mathbb F} y_1y_2.\]
Notice that 
$L^3L^3 \neq 0$  since this would imply that $(L^6, L) = (L^3 , L^4 ) = (L^3 L^3 , L) =0 $ and we would get 
the contradiction that the class of $L$ is at most $5$.  
Next let us see that $x_1y_3$ and $x_2y_3$ are linearly independent. To see this we argue by contradiction and suppose that 
$(a x_1 + b x_2)y_3 = 0$ for some $a, b \in {\mathbb F} $ where not both $a, b$ are zero.
But this would imply that $(a x_1 + b x_2) L \in Z(L)$ and we would thus get the contradiction that $a x_1 + b x_2 \in Z_2(L) = L^5$. 
Thus
 $$L^4 L^2= {\mathbb F} x_1 y_3 + {\mathbb F} x_2 y_3 = Z(L).$$
Notice that $L^3L^3 =  {\mathbb F} y_1y_2$ is a one-dimensional characteristic subspace of $Z_2(L)$. We consider two possibilities:  $L^3L^3 \leq  Z(L)$ and $L^3L^3 \not \leq Z(L)$.
\subsection{Algebras where $L^3L^3 \leq Z(L)$ }
We pick our standard basis such that
 \begin{equation} \label{eq:32I-I.2} \begin{aligned} 
	L^3L^3 ={\mathbb F} y_1y_2 = {\mathbb F} x_5. 
\end{aligned} \end{equation}
We have seen above that $Z(L)  = {\mathbb F} x_2 y_3 + {\mathbb F} x_1 y_3 = L^4L^2$.
 In order to clarify the structure of $L$ we introduce the characteristic subspace
	\[ W = \{ x \in L^4 :\, x L^2  \leq   L^3L^3 \} . \] 
Notice that $W$ is the kernel of the surjective linear map $L^4 \rightarrow Z(L)/L^3L^3,\, x \mapsto xy_3 + L^3L^3$ and thus
$W$ is of codimension $1$ in $L^4$. Also $L^5 < W$. We can thus pick our standard basis such that
	$$W =  {\mathbb F} x_5+ {\mathbb F} x_4 + {\mathbb F} x_3 + {\mathbb F} x_2.$$
From this one sees that we have a chain of characteristic ideals of $L$
\begin{alignat*}{2}
\color{magenta}L^3L^3 \ &\boxed{
\begin{matrix}
\color{green} x_5
\end{matrix}
}\ \ 
\begin{matrix}
y_5
\end{matrix} \\
\color{cyan}L^6=Z(L) \ &\boxed{
\begin{matrix}
\color{green} x_4
\end{matrix}
}
\boxed{\begin{matrix}
\color{green}y_4
\end{matrix}} \ \color{magenta}(L^3L^3 )^\perp \\
\color{cyan}L^5=Z_2(L)\ &\boxed{
\begin{matrix}
\color{green}x_3
\end{matrix}
}
\boxed{
\begin{matrix}
 \color{green}y_3
\end{matrix}} \ \color{cyan}L^2=Z_5(L)\\
\color{magenta}W\ &\boxed{
\begin{matrix}
\color{blue} x_2
\end{matrix}}
\boxed{
\begin{matrix}
\color{blue}y_2
\end{matrix}
}\ \color{cyan}L^3=Z_4(L)\ \\
\color{cyan}L^4=Z_3(L)\ &\boxed{
\begin{matrix}
\color{blue} x_1
\end{matrix}}
\boxed{
\begin{matrix}
\color{blue}y_1
\end{matrix}
}\ \color{magenta}W^\perp
\end{alignat*}
Notice that ${\mathbb F} x_2 y_3 ={\mathbb F} x_5$. We continue considering characteristic subspaces. Let
\[ S = \{ x \in L^3 :\, x \cdot L^2 \leq L^3L^3 \}.\]
Notice that $L^3L^2 = Z(L)$ and that $S$ is the kernel of the surjective linear map 
$L^3 \rightarrow Z(L)/L^3L^3,\,$
$x\mapsto x \cdot y_3 + L^3L^3$ 
and is thus of codimension $1$ in $L^3$. Notice also that $x_1 \not \in S$
whereas $ W \leq S$. It follows that we can pick our standard basis such that
$$S = {\mathbb F} x_5+ {\mathbb F} x_4 + {\mathbb F} x_3+ {\mathbb F} x_2  + {\mathbb F} y_1 + {\mathbb F} y_2.$$
In particular we have $y_1y_3, y_2y_3 \in L^3L^3$. Notice that
$$S^\perp =L^5+ {\mathbb F} y_1$$
and that $L^2S^\perp = L^2y_1 = {\mathbb F} y_1 y_2+ {\mathbb F} y_1 y_3 = L^3L^3$. Let  
\[ T = \{ x \in L^2 :\, x S^\perp = 0\} .\]
Then $T$ is the kernel of the surjective linear map $L^2 \rightarrow L^3L^3,\,x\mapsto y_1x$. Notice that $W^\perp \leq T$ but that $y_2 \not \in T$. We can then pick our standard basis such that 
$$T = W^\perp+{\mathbb F} y_3.$$
In particular $ y_1 y_3=0$. We now have a characteristic isotropic subspace $T^\perp =  {\mathbb F} x_5+ {\mathbb F} x_4 + {\mathbb F} x_2$ where $T^\perp \cdot (L^3L^3)^\perp= x_2 (L^3L^3)^\perp =  L^3L^3$. We now let  
\[ R = \{ x \in (L^3L^3)^\perp :\, x  T^\perp = 0 \} .\]
This is the kernel of the surjective linear map $(L^3L^3)^\perp \rightarrow L^3L^3,\,x\mapsto x_2x$ that contains $L^3$. We now refine our standard basis such that
$$R = L^3 + {\mathbb F} y_4$$
and we have in particular $ x_2 y_4=0$. Let us summarize. For every standard basis that respects the list of characteristic subspaces above, we have
\begin{eqnarray*}
x_2y_4 & = & 0\\
\mbox{} {\mathbb F} x_2y_3 & = & {\mathbb F} x_5 \\
x_1y_2 & = & 0 \\
{\mathbb F} x_5 + {\mathbb F} x_1y_3 & = & {\mathbb F} x_5 + {\mathbb F} x_4\\
{\mathbb F} x_5 + {\mathbb F} x_1y_4 & = & {\mathbb F} x_5 + {\mathbb F} x_3 \\
 {\mathbb F} y_1y_2 & = & {\mathbb F} x_5\\
y_1y_3 & = & 0 \\
y_2y_3 & \in & {\mathbb F} x_5
\end{eqnarray*}
It is not difficult to see that we can furthermore refine our basis such that 
\begin{equation}\label{eq:32I-I.7} \begin{aligned}
 x_2y_3 = x_5,\  x_2y_4 = 0,\ x_1y_2 = 0,\ x_1y_3 = x_4,\ x_1y_4 = - x_3,\ y_1y_2 = x_5,\ y_1y_3 =0 .
\end{aligned} \end{equation}
 This deals with all triple values apart from
\begin{alignat*}{3}
(y_1y_4, y_5)&=a, &\quad   (y_2y_3, y_5)&=c , &\quad  (x_3y_4, y_5)&=r,\\
(y_3y_4, y_5)&=b, & \quad (y_2y_4, y_5)&=d, &\quad  \mbox{}
\end{alignat*}
where $r \neq 0$ as $x_3 \not \in Z(L)$. Replace $x_2, y_4$ by $x_2 + a x_4, y_4 - a y_2 $ and we can assume that 
\begin{equation}\label{eq:32I-I.2} (y_1y_4, y_5)=0.\end{equation}
This does not affect \eqref{eq:32I-I.7}. Then replace $y_2, y_3$ by $y_2 - (d/r) x_3 - c x_2, y_3 - (b/r) x_3 -  (d/r) x_2$ 
gives furthermore that
\begin{equation}\label{eq:32I-I.5} \begin{aligned}
(y_2y_3, y_5)=0, (y_2y_4, y_5)=0, (y_3 y_4, y_5)=0. 
\end{aligned} \end{equation}
Again these changes do not affect \eqref{eq:32I-I.7} and  \eqref{eq:32I-I.2}. Finally replacing $x_1, x_2, x_4, x_5, y_1,$
$y_2, y_4, y_5$ by $r^2 x_1, (1/r) x_2,$
$ r^2 x_4, (1/r) x_5$ 
$, (1/r^2) y_1, r y_2,$
$ (1/r^2) y_4, r y_5$ implies that we can assume that $(x_3y_4, y_5) =1$ while \eqref{eq:32I-I.7}-\eqref{eq:32I-I.5} remain nonaffected. We thus arrive at a unique presentation.
\begin{Proposition}\label{pro33I}
There is a unique nilpotent SAA $L$ of dimension $10$ with an isotropic center of dimension $2$ that has the further properties that $L$
is nilpotent of class $6$ and $L^3L^3 \leq Z(L)$. This algebra can be given by the presentation
\begin{align*}
{\mathcal P}_{10}^{(2,1)}: \ \  (x_3 y_4, y_5)=1,\  (x_2 y_3, y_5)=1,\  (x_1 y_3, y_4)=1,\  (y_1y_2,y_5)=1. 
\end{align*} 
\end{Proposition}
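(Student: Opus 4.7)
The plan is to follow the step-by-step refinement already outlined in the discussion preceding the proposition: at each stage identify a characteristic subspace of $L$, align one of the basis vectors with it, and show that the resulting constraints on the structure constants leave only one free algebra up to isomorphism. Since we already know the dimensions of every term in the lower and upper central series, together with the facts $x_1y_2=0$, $L^3L^3=\mathbb{F}y_1y_2 \neq 0$, and $L^4L^2={\mathbb F}x_1y_3+{\mathbb F}x_2y_3=Z(L)$, the task reduces to producing a normal form for the remaining triples.

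First I would use the hypothesis $L^3L^3 \leq Z(L)$ to pick a standard basis in which $y_1y_2 = x_5$, and then successively introduce the characteristic subspaces
\[ W = \{x \in L^4 : xL^2 \leq L^3L^3\}, \quad S = \{x \in L^3 : xL^2 \leq L^3L^3\}, \]
\[ T = \{x \in L^2 : xS^\perp = 0\}, \quad R = \{x \in (L^3L^3)^\perp : xT^\perp = 0\}. \]
Each of these is a hyperplane in the preceding characteristic subspace, obtained as the kernel of an evident surjective linear map whose codomain is a one-dimensional characteristic quotient. After aligning $x_2 \notin W/L^5$, $y_2 \notin S/W^\perp$, $y_3 \notin T/W^\perp$ and $y_4 \notin R/L^3$ respectively, the resulting standard basis satisfies $x_2y_4=0$, $x_1y_2=0$, $y_1y_3=0$, ${\mathbb F}x_2y_3={\mathbb F}x_5$, ${\mathbb F}x_5+{\mathbb F}x_1y_3={\mathbb F}x_5+{\mathbb F}x_4$, ${\mathbb F}x_5+{\mathbb F}x_1y_4={\mathbb F}x_5+{\mathbb F}x_3$ and $y_2y_3 \in {\mathbb F}x_5$. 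A routine rescaling of the pairs $(x_i,y_i)$ and subtraction of central terms then sharpens these to the four equations
\[ x_2y_3=x_5,\ x_1y_3=x_4,\ x_1y_4=-x_3,\ y_1y_2=x_5 \]
together with $x_2y_4=x_1y_2=y_1y_3=0$.

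It only remains to eliminate the parameters $a=(y_1y_4,y_5)$, $b=(y_3y_4,y_5)$, $c=(y_2y_3,y_5)$, $d=(y_2y_4,y_5)$ and to normalise $r=(x_3y_4,y_5)$, which is nonzero since $x_3 \notin Z(L)$. I would perform the substitutions
$x_2 \mapsto x_2+ax_4,\ y_4 \mapsto y_4-ay_2$ to kill $a$;
$y_2 \mapsto y_2-(d/r)x_3-cx_2,\ y_3 \mapsto y_3-(b/r)x_3-(d/r)x_2$ to kill $b,c,d$;
and finally the diagonal scaling $x_1,x_2,x_4,x_5,y_1,y_2,y_4,y_5 \mapsto r^2x_1,(1/r)x_2,r^2x_4,(1/r)x_5,(1/r^2)y_1,ry_2,(1/r^2)y_4,ry_5$ to achieve $r=1$. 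At each substitution one must check that the previously arranged identities remain intact; this is the main administrative obstacle but each change has been designed so that its effect on the already-fixed triples is trivial.

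The outcome is precisely the presentation ${\mathcal P}_{10}^{(2,1)}$, so any algebra satisfying the hypotheses is isomorphic to the one described. Conversely, a direct computation using the presentation shows that the algebra with presentation ${\mathcal P}_{10}^{(2,1)}$ has $Z(L)$ isotropic of dimension $2$, is nilpotent of class exactly $6$, and satisfies $L^3L^3={\mathbb F}y_1y_2={\mathbb F}x_5 \leq Z(L)$, completing the proof.
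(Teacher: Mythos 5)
Your proposal follows essentially the same route as the paper: the same chain of characteristic subspaces $W$, $S$, $T$, $R$ defined by the same annihilator conditions, the same resulting normal form for the products, and the same three substitutions to kill $a,b,c,d$ and normalise $r$. The only quibble is notational — in the alignment step the chosen basis vectors actually lie \emph{in} the respective subspaces (e.g.\ $x_2\in W$ with $x_1\notin W$, and $y_3\in T\setminus W^\perp$), so the symbols ``$x_2\notin W/L^5$'' etc.\ should be read as ``the coset spanning the relevant one-dimensional quotient''; the substance is correct.
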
 \noindent
One readily verifies that the algebra with the presentation above has the properties stated.
%
%
\subsection{Algebras where $L^3L^3  \nleq Z(L)$}
Recall that $x_1y_2 = 0$. As we had observed before, ${\mathbb F} y_1y_2 = L^3L^3 \leq Z_2(L) = Z(L) + {\mathbb F} x_3$. We had also seen that ${\mathbb F} x_1y_3 + {\mathbb F} x_2y_3 = Z(L)$. We can now pick our standard basis such that
\begin{equation}\label{eq:32I-II.1} \begin{aligned}
x_1y_2 = 0,\ y_1y_2 = x_3,\ x_1y_3=x_4,\ x_2y_3=x_5. 
\end{aligned} \end{equation}
This leaves us with the following list of triple values to determine.
\begin{alignat*}{4}
(x_{1}y_{4},y_{5})&=a,&\quad  (y_{2}y_{3},y_{4})&=c,&\quad  (y_{1}y_{3},y_{4})&=f, &\quad  (y_{1}y_{4},y_{5})&= h,\\
\mbox{} (x_{2}y_{4},y_{5})&=b,&\quad  (y_2y_3, y_5)&=d,&\quad  (y_{1}y_{3},y_{5})&=g, &\quad  (y_3 y_4, y_5)&=k,\\
\mbox{} (x_{3}y_{4},y_{5}) &= r, &\quad  (y_2 y_4, y_5) &= e, &\quad  \mbox{} &\quad 
\end{alignat*}
where $r \neq 0$ as $x_3 \not \in Z(L)$. We show that we can further refine the basis so that $a= b=c=d=e=f=g=h=k=0$. To start with replace
$x_1, x_2, y_3$ by $x_1 - (a/r)x_3, x_2 - (b/r) x_3, y_3 + (a/r) y_1 + (b/r) y_2$
and we see that we can assume that
\begin{equation}\label{eq:32I-II.2} \begin{aligned}
(x_1 y_4, y_5) = (x_2 y_4, y_5) =0. 
\end{aligned} \end{equation}
These changes do not affect \eqref{eq:32I-II.1}. Next we replace $y_1, y_2, y_3$ by $y_1 - c x_2, y_2 - c x_1 - d x_2 - (e/r) x_3, y_3 - (e/r) x_2$ that gives us
\begin{equation}\label{eq:32I-II.3} \begin{aligned}
(y_2 y_3, y_4) = (y_2 y_3, y_5) = (y_2 y_4, y_5) = 0, 
\end{aligned} \end{equation}
while these changes have no affect on \eqref{eq:32I-II.1}, \eqref{eq:32I-II.2}. Now replace $x_2, y_4, y_5$ by $x_2 - f x_4 - g x_5, y_4 + f y_2, y_5 + g y_2$ and we can now assume that
\begin{equation}\label{eq:32I-II.4} \begin{aligned}
(y_1 y_3, y_4) = (y_1 y_3, y_5) = 0. 
\end{aligned} \end{equation}
As before this has no effect on the previous established equations. Finally we replace $y_1, y_3$ by $y_1 - (h/r) x_3, y_3 - (h/r) x_1 - (k/r) x_3$ and one sees readily that we can furthermore assume that $(y_1 y_4, y_5) = (y_3 y_4, y_5) =0$. We thus arrive at a family of algebras given by the presentation ${\mathcal P}_{10}^{(2,2)}$ given in the next Proposition.
\begin{Proposition}\label{pro33I} Let $L$ be a nilpotent SAA of dimension $10$ with an isotropic center of dimension $2$ with the further properties that $L$ is of nilpotence class $6$ and $L^3L^3 \nleq Z(L)$. Then $L$ has a presentation of the form 
\begin{align*}
{\mathcal P}_{10}^{(2,2)}(r): \ \  (x_3 y_4, y_5)=r,\  (x_2 y_3, y_5)=1,\  (x_1 y_3, y_4)=1,\  (y_1y_2,y_3)=1,
\end{align*} 
where $r \neq 0$. Furthermore the presentations ${\mathcal P}_{10}^{(2,2)}(r)$ and ${\mathcal P}_{10}^{(2,2)}(s)$
 describe the same algebra if and only if $s/r \in ({\mathbb F}^{*})^4$.
\end{Proposition}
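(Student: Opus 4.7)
The plan is to handle existence, sufficiency, and necessity separately. Existence is already essentially established by the sequence of basis changes preceding the statement, which uses the chain of characteristic subspaces associated with $L^3 L^3 \not\leq Z(L)$ to normalize $(y_1 y_2, y_3) = (x_1 y_3, y_4) = (x_2 y_3, y_5) = 1$ and then kill the remaining extraneous triples one at a time. The value $r = (x_3 y_4, y_5)$ must be nonzero because $x_3 \notin Z(L)$.

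For sufficiency, assume $s = a^4 r$ with $a \in \mathbb{F}^*$. I would exhibit a diagonal rescaling $\tilde{x}_i = \lambda_i x_i$, $\tilde{y}_i = \lambda_i^{-1} y_i$ with $\lambda_3 = a$, $\lambda_1 = \lambda_3 \lambda_4$, $\lambda_2 = \lambda_3 \lambda_5$, and $\lambda_4 \lambda_5 = a^{-3}$ (the latter forced by the normalization $\lambda_1 \lambda_2 \lambda_3 = 1$ coming from $(\tilde{y}_1 \tilde{y}_2, \tilde{y}_3) = 1$). Direct verification then yields $(\tilde{x}_3 \tilde{y}_4, \tilde{y}_5) = \lambda_3 (\lambda_4 \lambda_5)^{-1} r = a \cdot a^3 r = a^4 r = s$, while the other three triples remain equal to $1$.

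The necessity is the main technical step. Suppose two standard bases realize ${\mathcal P}_{10}^{(2,2)}(r)$ and ${\mathcal P}_{10}^{(2,2)}(s)$. Since $\mathbb{F} x_3 = L^3 L^3$ is a $1$-dimensional characteristic subspace, $\tilde{x}_3 = \alpha x_3$ for some $\alpha \in \mathbb{F}^*$. Let $B, A \in \mbox{GL}_2(\mathbb{F})$ be the matrices by which $(\tilde{y}_1, \tilde{y}_2)$ and $(\tilde{y}_4, \tilde{y}_5)$ act on the characteristic quotients $L^3 / L^4$ and $L/L^2$, respectively. Two preliminary identifications, each exploiting the vanishing $L^m Z_m(L) = 0$, would establish the basic formulas. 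First, because $L^3 \cdot L^4 = 0$ and $L^4 \cdot L^4 \subseteq L^8 = 0$, the product $\tilde{y}_1 \tilde{y}_2$ reduces exactly to $\det(B) \cdot x_3$, so the identity $\tilde{y}_1 \tilde{y}_2 = \tilde{x}_3$ forces $\det B = \alpha$. Second, because $x_3 \cdot L^2 = 0$ (as $x_3 \in L^5 = Z_2(L)$), the product $\tilde{x}_3 \tilde{y}_4$ depends only on the $L/L^2$-components of $\tilde{y}_4$, giving $(\tilde{x}_3 \tilde{y}_4, \tilde{y}_5) = \alpha \det(A) \cdot r$ and hence $s = \alpha \det(A) \cdot r$.

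The decisive and most delicate step will be showing that $\det A = \alpha^3$. For this I would expand $\tilde{x}_1 \tilde{y}_3$ and $\tilde{x}_2 \tilde{y}_3$ in the original basis: writing $\tilde{y}_3 = \alpha^{-1} y_3 + u$ with $u \in L^3$, the term involving $u$ vanishes because $L^4 \cdot L^3 = 0$, and the $L^5$-tail of $\tilde{x}_i$ contributes nothing because $L^5 \cdot L^2 = 0$. Letting $C$ be the $2 \times 2$ matrix of $(\tilde{x}_1, \tilde{x}_2)$ in $(x_1, x_2)$ modulo $L^5$ and $D$ the matrix of $(\tilde{x}_4, \tilde{x}_5)$ in $(x_4, x_5)$, the identities $\tilde{x}_1 \tilde{y}_3 = \tilde{x}_4$ and $\tilde{x}_2 \tilde{y}_3 = \tilde{x}_5$ collapse to $\alpha^{-1} C = D$. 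Combining with the symplectic duality constraints $C = (B^{-1})^T$ and $D = (A^{-1})^T$ gives $A = \alpha B$, whence $\det A = \alpha^2 \det B = \alpha^3$. Therefore $s/r = \alpha \det A = \alpha^4 \in (\mathbb{F}^*)^4$, completing the proof.
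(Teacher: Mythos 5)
Your proposal is correct, and while the existence and sufficiency parts coincide with the paper's argument (the same diagonal rescaling, with the paper choosing $\lambda_1=1$, $\lambda_2=\lambda_4=1/b$, $\lambda_3=b$, $\lambda_5=1/b^2$), your necessity argument takes a genuinely different route. The paper avoids tracking the change-of-basis matrices altogether: it writes $\tilde{y_4}=ay_4+by_5+u_4$, $\tilde{y_5}=cy_4+dy_5+u_5$ with $u_4,u_5\in L^2$, and evaluates the single invariant $\bigl(\tilde{y_4}\tilde{y_5}\tilde{y_5}\cdot(\tilde{y_4}\tilde{y_5}),\,\tilde{y_5}\tilde{y_4}\tilde{y_4}\bigr)$, using $L^2L^2\leq L^4$, $L^3L^3\leq L^5$ and $L^7=0$ to discard the tails; this yields $s^3=(ad-bc)^4r^3$, from which $s/r\in(\mathbb{F}^*)^4$ follows by the coprimality trick $t=(t/u)^4$ when $t^3=u^4$. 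Your version instead exploits the full chain of characteristic subspaces ($Z(L)$, $L^5$, $L^4$, $L^3$, $L^2$ and the one-dimensional $L^3L^3=\mathbb{F}x_3$), relates the matrices $A$, $B$, $C$, $D$ on the successive quotients via the relations $\tilde{y_1}\tilde{y_2}=\tilde{x_3}$, $\tilde{x_i}\tilde{y_3}=\tilde{x}_{i+3}$ and the duality $C=(B^{-1})^T$, $D=(A^{-1})^T$, and lands directly on $s/r=\alpha^4$. This is longer to set up but more transparent: it exhibits the fourth power explicitly and identifies its root as the rescaling factor of $x_3$, whereas the paper's computation is shorter but needs the extra arithmetic observation at the end. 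I checked the individual steps (in particular $\lambda=\alpha^{-1}$ for the $y_3$-coefficient of $\tilde{y_3}$, forced by $(\tilde{x_3},\tilde{y_3})=1$ and $(x_3,L^3)=0$) and they all go through.

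One small caveat: in justifying $\tilde{y_1}\tilde{y_2}=\det(B)\,x_3$ you write ``$L^4\cdot L^4\subseteq L^8=0$''. The inclusion $L^iL^j\leq L^{i+j}$ is precisely one of the Lie-algebra facts that the paper's $12$-dimensional example shows can fail for SAA's, so you should not invoke it. It is harmless here because the vanishing you actually need already follows from your first justification: $L^4L^4\subseteq L^3\cdot L^4=L^3\cdot Z_3(L)=\{0\}$ by Lemma \ref{lma2.4gth}, which is also the correct reason for $L^3\cdot L^4=0$ and for the other annihilation statements ($L^5\cdot L^2=Z_2(L)\cdot L^2=0$, $x_3\cdot L^2=0$) that your argument uses.
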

\begin{proof}
We have already seen that all such algebras have a presentation of the form ${\mathcal P}_{10}^{(2,2)}(r)$ for some $0 \neq r \in {\mathbb F}$. Straightforward calculations show that conversely any algebra with such a presentation has the properties stated in the Proposition. It remains to prove the isomorphism property. To see that it is sufficient, suppose that we have an algebra $L$ with presentation 
${\mathcal P}_{10}^{(2,2)}(r)$ with respect to some given standard basis.
Let $s$ by any element in ${\mathbb F}^*$ such that $s/r = b^4 \in ({\mathbb F}^*)^4$. Replace the basis for $L$ with a new standard basis $\tilde{x_{1}}, \cdots, \tilde{y_{5}}$ where $\tilde{x_{1}} = x_1 $, $\tilde{y_{1}} = y_1 $, $\tilde{x_{2}}=(1/b)x_2, \tilde{y_{2}}=b y_2, \tilde{x_{3}} = bx_3, \tilde{y_{3}}=(1/b) y_3, \tilde{x_{4}}=(1/b) x_4, \tilde{y_{4}}=
b y_4, \tilde{x_5}=(1/b^2) x_5$ and $\tilde{y_5}=b^2 y_5$. Direct calculations show that $L$ has the presentation ${\mathcal P}_{10}^{(2,2)}(s)$ with respect to this new basis.\\ \\
It remains to see that the condition is necessary. Consider again an algebra $L$ with presentation ${\mathcal P}_{10}^{(2,2)}(r)$
and suppose that $L$ has also a presentation ${\mathcal P}_{10}^{(2,2)}(s)$ with respect to some other standard basis $\tilde{x_{1}}, \cdots, \tilde{y_{5}}$. We want to show that $s/r \in  ({\mathbb F}^{*})^4$. We know that $L = {\mathbb F} \tilde{y_{5}} + {\mathbb F} \tilde{y_{4}} + L^2 = {\mathbb F} y_{5} + {\mathbb F} y_{4} + L^2 $. Thus
\begin{eqnarray*}
\tilde{y_4}&=& ay_4+by_5+u_4 \\
\tilde{y_5}&=& cy_4+dy_5+u_5,
\end{eqnarray*}
for some $u_4, u_5 \in L^2$ and $a, b, c, d \in {\mathbb F}$ where $ad-bc \neq 0$. We know that $L^2L^2 \leq L^4$ and thus
\begin{eqnarray*}
\tilde{y_5}\tilde{y_4}\tilde{y_4}&=& (cy_4 + dy_5) (ay_4+by_5) (ay_4+by_5) + w\\
\tilde{y_4}\tilde{y_5}\tilde{y_5} &=& (ay_4+by_5) (cy_4 + dy_5) (cy_4 + dy_5) + z,
\end{eqnarray*}
where $w, z \in L^4$. We use the fact that $L^7=0$ and $L^3L^3 \leq L^5$ in the following calculation. We have
 $$s^3 = (\tilde{y_4}\tilde{y_5}\tilde{y_5} \cdot (\tilde{y_4}\tilde{y_5}),\tilde{y_5}\tilde{y_4}^2)=  (ad-bc)^4 r^3. $$
Hence $s/r \in ({\mathbb F}^*)^4$.
\end{proof}
\begin{Remark}
$(1)$ It thus depends on the field ${\mathbb F}$, how many algebras there are of this type. When $({\mathbb F}^*)^4 = {\mathbb F}^*$ there is just one algebra. This includes the case when ${\mathbb F}$ is algebraically closed or finite field of characteristic $2$.\\ \\
$(2)$ Let ${\mathbb F}$ be a finite field of order $p^n$ where $p$ is an odd prime. If $p \equiv 1\, (\mbox{mod\,}4)$ then there are $4$ algebras and if $p\equiv -1\,(\mbox{mod\,}4)$ then there are $4$ algebras when $n$ is even and $2$ algebras when $n$ is odd.\\ \\
$(3)$ For ${\mathbb F} =  {\mathbb R}$ there are two algebras, one for $r < 0$ and one for $r > 0$. For  ${\mathbb F} = {\mathbb Q}$ there are infinitely many algebras.
\end{Remark}
%
%
\section{The algebras of class $7$}
Here we are dealing with algebras of maximal class and thus we can make use of the general theory concerning these. In particular we know that we can choose our standard basis such that
\begin{eqnarray*}
L^7=Z(L) &= &{\mathbb F} x_5+ {\mathbb F} x_4, \\
\mbox{}L^6 = Z_2(L) & = & {\mathbb F} x_5+ {\mathbb F} x_4 + {\mathbb F} x_3, \\
\mbox{}L^5 = Z_3(L) & = & {\mathbb F} x_5+ {\mathbb F} x_4 + {\mathbb F} x_3 + {\mathbb F} x_2.
\end{eqnarray*}
We also know that $L^4 = Z_4(L) = (L^5)^\perp$, $L^3 = Z_5(L) = (L^6)^\perp $ and $L^2 = Z_6(L) = (L^7)^\perp$. Furthermore we know that we can also get characteristic ideals of dimension $1, 5$ and $9$ in the following way.
\\ \\
Firstly, we know from the general theory that $x_3y_4, x_2y_3 \neq 0$. As a result $L^5L^2 = {\mathbb F} x_2y_3 \neq 0$. This gives us a characteristic ideal of dimension $1$ and then $(L^5L^2)^\perp$ is a characteristic ideal of dimension $9$.\\ \\
We now turn to the description of a characteristic ideal of dimension $5$. From the general theory we also know that $x_1y_2, y_1y_2$ are linearly independent. Thus $L^4L^3 = {\mathbb F} x_1y_2 + {\mathbb F} y_1y_2$ is a $2$-dimensional characteristic subspace of $L^6$.
Let $I_1 = L^5L^2, I_2 = L^7$ and $I_3 = L^6$. Let $k$ be smallest such that $I_k \cap L^4L^3 \neq \{0\}$. Then
\[ U = \{ x \in L^4:\, xL^3 \leq I_k \} \]
is a characteristic ideal of dimension $5$. We can thus further refine our basis such that we have the following situation. 
%
\begin{alignat*}{2}
\color{magenta}L^5L^2\ &\boxed{
\begin{matrix}
\color{green} x_5
\end{matrix}
}\ \ 
\begin{matrix}
y_5
\end{matrix} \\
\color{cyan}L^7=Z(L) \ &\boxed{
\begin{matrix}
\color{green} x_4
\end{matrix}
}
\boxed{\begin{matrix}
\color{green}y_4
\end{matrix}} \ \color{magenta}(L^5L^2)^\perp\\
\color{cyan}L^6=Z_2(L)\ &\boxed{
\begin{matrix}
\color{green}x_3
\end{matrix}
}
\boxed{
\begin{matrix}
 \color{green}y_3
\end{matrix}} \ \color{cyan}L^2=Z_6(L)\\
\color{cyan}L^5=Z_3(L)\ &\boxed{
\begin{matrix}
\color{blue} x_2
\end{matrix}}
\boxed{
\begin{matrix}
\color{blue}y_2
\end{matrix}
}\ \color{cyan}L^3=Z_5(L)\\
\color{magenta}U\ &\boxed{
\begin{matrix}
\color{blue} x_1
\end{matrix}}
\boxed{
\begin{matrix}
\color{blue}y_1
\end{matrix}
}\ \color{cyan}L^4=Z_4(L)
\end{alignat*}
There are now few separate cases to consider according to whether $L^4L^3 = L^7$ or $L^4L^3 \neq L^7$ and whether or not $L^4L^3 \cap L^5L^2 \neq \{0\}$.
\subsection{Algebras where $L^4L^3 = L^7 $}
In this case we have 
\[ {\mathbb F} x_2y_3 = {\mathbb F} x_1y_2 = {\mathbb F} x_5,\ 
{\mathbb F} x_5 + {\mathbb F} y_1y_2 = {\mathbb F} x_5 + {\mathbb F} x_4.\]
Now consider the characteristic subspace $U L^2 = {\mathbb F} x_5 + {\mathbb F} x_1y_3$. There are again two subcases to consider
as either $UL^2$ has dimension $1$ or $2$.
%
%
\subsubsection{I. Algebras where $ UL^2$ is $1$-dimensional }
In this case we have that $x_1y_3 \in {\mathbb F} x_5$ and it follows that $L^4L^2 = UL^2 + y_1 L^2 = {\mathbb F} x_5 + y_1L^2 = Z(L)$. Consider the characteristic subspace
\[ V = \{ x \in L^2:\, L^4x\leq L^5L^2 \}.\]
Then $V$ is of codimension $1$ in $L^2$ and $L^4 \leq V$. Also $y_2 \not \in V$. One sees readily that we can refine our choice of basis further such that
\[ V = L^4 + {\mathbb F} y_3. \]
In particular $y_1y_3 \leq {\mathbb F} x_5$. Next consider the characteristic subspace
\[ W = \{ x \in L^4:\, xV =0 \}.\]
We have that $L^6 \leq W$ and that $x_2 \not \in W$. Also $W$ is the kernel of the surjective linear map $L^4 \rightarrow L^5L^2,\,x\mapsto xy_3$ and thus of codimension $1$ in $L^4$. We can now pick our basis further such that
\[ W = L^6 + {\mathbb F} x_1 + {\mathbb F} y_1 .\]
It is not difficult to see that such a choice is compatible to what we have done so far. Notice that it follows that $y_1y_3 = x_1y_3 =0$. Next one notices that $L^3V = Z(L)$ and that $L^3U \leq L^5L^2$. Let
\[ Z = \{ x \in V:\, L^3x \leq L^5L^2 \}.\]
Then $Z$ is of codimension $1$ in $V$ and $y_1 \not \in Z$. We can now further refine the basis such that
\[ Z = U +  {\mathbb F} y_3.\]
The reader can convince himself that this is compatible to our choice so far. In particular $y_2y_3 \in {\mathbb F} x_5$. Replacing $y_2$ by a suitable $y_2 - \alpha x_2$, we can furthermore assume that $y_2y_3=0$.
With this choice of basis we thus have $x_1y_3 = y_1y_3=y_2y_3=0$ as well as ${\mathbb F} x_1y_2 = {\mathbb F} x_2y_3 = {\mathbb F} x_5$ and ${\mathbb F} x_5 + {\mathbb F} x_4 = {\mathbb F} x_5 + {\mathbb F} y_1y_2$. It is not difficult to see that we can further refine our basis such that
\begin{equation}\label{eq:32II-I.10} \begin{aligned} x_1y_3 = y_1y_3 = y_2y_3= 0,\  x_1y_2 =  x_2y_3 = x_5,\ y_1y_2= x_4.
\end{aligned} \end{equation}
We are then only left with the following triple values
\begin{alignat*}{3}
	(x_{1}y_{4},y_{5})&=a,&\quad  (y_{1}y_{4},y_{5})&=c,&\quad  (y_3 y_4, y_5)&=e,\\
\mbox{} (x_{2}y_{4},y_{5})&=b,&\quad  (y_2 y_4, y_5) &=d,&\quad  (x_{3}y_{4},y_{5})&=r,
\end{alignat*}
where $r \neq 0$. Replacing $x_3, x_2, y_1, y_3, y_4, y_5$ by $x_3+ b x_4, x_2+a x_4 - c x_5, y_1+d x_4, y_3 -(e/r) x_3 - (be/r) x_4, y_4 = y_4 -b y_3 -a y_2 + d x_1, y_5+ cy_2$, gives us a new standard basis where we can assume that $a= b= c= d= e =0$. The reader can check that \eqref{eq:32II-I.10} is not affected by these changes. Finally by replacing $x_1, x_2, \ldots, y_5$ by $(1/r^3)x_1, (1/r)x_2, r x_3,$
$r^4 x_4, (1/r^2) x_5,$ 
$r^3 y_1,$
$r y_2, (1/r) y_3,$
$(1/r^4) y_4, r^2 y_5,$
we can furthermore assume that $r=1$. We thus arrive at a unique presentation for $L$.
\begin{Proposition}\label{pro33I}
There is a unique nilpotent SAA $L$ of dimension $10$ that has isotropic center of dimension $2$ with the further properties that the class is $7$, $L^4L^3 = L^7$ and $\mbox{dim\,}UL^2=1$. This algebra can be given by the presentation
\begin{align*}
{\mathcal P}_{10}^{(2,3)}: \ \  (x_3 y_4, y_5)=1,\  (x_2 y_3, y_5)=1,\  (x_1y_2,y_5)=1,\  (y_1y_2,y_4)=1. 
\end{align*} 
\end{Proposition}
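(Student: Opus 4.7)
The plan is to follow the structural setup already built in the preceding paragraphs and then carry out a careful sequence of basis refinements, at each stage preserving the earlier normalisations. The hypotheses $L^4L^3 = L^7$ and $\dim UL^2 = 1$ are the two key structural inputs, and they will successively force the characteristic subspaces to align with the basis.

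First I would record the chain of characteristic ideals coming from the maximal-class theory (Theorems \ref{thm3.1gth} and \ref{thm3.2gth}) together with the extra characteristic subspaces $L^5L^2$, $L^4L^3$, and $U$ introduced in the discussion, and pick a standard basis adapted to them. The assumption $L^4L^3 = L^7$ then gives $\mathbb{F}x_1y_2 = \mathbb{F}x_2y_3 = \mathbb{F}x_5$ and $\mathbb{F}x_5 + \mathbb{F}y_1y_2 = \mathbb{F}x_5 + \mathbb{F}x_4$. The assumption $\dim UL^2 = 1$ forces $x_1y_3 \in \mathbb{F}x_5$, so $L^4L^2 = Z(L)$.

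Next I would introduce the three auxiliary characteristic subspaces
\[ V = \{x \in L^2:\, L^4 x \leq L^5L^2\},\quad W = \{x \in L^4:\, xV = 0\},\quad Z = \{x \in V:\, L^3x \leq L^5L^2\}, \]
each a kernel of a surjective linear map, hence of known codimension. Refining the basis so that $V = L^4 + \mathbb{F}y_3$, $W = L^6 + \mathbb{F}x_1 + \mathbb{F}y_1$, and $Z = U + \mathbb{F}y_3$, I would read off $x_1y_3 = y_1y_3 = 0$ and $y_2y_3 \in \mathbb{F}x_5$; a final shift of $y_2$ by a multiple of $x_2$ kills $y_2y_3$. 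At this stage the basis satisfies
\[ x_1y_3 = y_1y_3 = y_2y_3 = 0,\quad x_1y_2 = x_2y_3 = x_5,\quad y_1y_2 = x_4, \]
after an inessential rescaling of the $x_i$'s so that the proportionality constants become $1$.

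It remains to handle the six triples $(x_iy_4,y_5)$ for $i=1,2,3$ and $(y_iy_4,y_5)$ for $i=1,2,3$, with $(x_3y_4,y_5) = r \neq 0$ since $x_3 \notin Z(L)$. A single composite change of basis, of the form shown in the preceding text (e.g.\ $x_3 \mapsto x_3 + bx_4$, $x_2 \mapsto x_2 + ax_4 - cx_5$, $y_1 \mapsto y_1 + dx_4$, $y_3 \mapsto y_3 - (e/r)x_3 - (be/r)x_4$, $y_4 \mapsto y_4 - by_3 - ay_2 + dx_1$, $y_5 \mapsto y_5 + cy_2$), simultaneously annihilates all five other triples while preserving every earlier relation. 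A final diagonal rescaling of the basis pairs $(x_i,y_i)$ with carefully chosen powers of $r$ makes $r = 1$, giving the presentation ${\mathcal P}_{10}^{(2,3)}$.

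The main obstacle I expect is purely bookkeeping: ensuring that at every step the fresh basis change is \emph{compatible} with the ones already made, i.e.\ that the previously normalised identities are preserved. This is why the changes must be grouped and ordered — each subsequent step is chosen to act trivially on the identities already established (for instance, by modifying $y$'s only by elements that the relevant $x$'s annihilate modulo the target ideal, and vice versa). Once the choices are set up in this careful order, the derivation is mechanical. Conversely, a direct inspection of the presentation ${\mathcal P}_{10}^{(2,3)}$ — computing $L^2, L^3, \ldots, L^7$ and $Z(L)$ explicitly — confirms that it indeed yields a nilpotent SAA of class $7$ with $2$-dimensional isotropic centre, with $L^4L^3 = L^7$ and $\dim UL^2 = 1$, so the presentation is realised and uniqueness is complete.
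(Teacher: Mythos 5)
Your proposal follows the paper's own argument essentially verbatim: the same characteristic subspaces $V$, $W$, $Z$ defined as kernels of the same surjective maps, the same normalisation $x_1y_3=y_1y_3=y_2y_3=0$, $x_1y_2=x_2y_3=x_5$, $y_1y_2=x_4$, the same composite basis change killing the remaining five triples, and the same final rescaling to set $r=1$. The approach and the execution plan are correct and match the paper's proof.
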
 \noindent
Direct calculations show that the algebra with this presentation has the properties stated.
%
%
\subsubsection{II. Algebras where $UL^2$ is $2$-dimensional }
In this case ${\mathbb F}x_2y_3 = {\mathbb F}x_1y_2 = {\mathbb F}x_5$ and ${\mathbb F}y_1y_2 + {\mathbb F}x_5 = {\mathbb F}x_1y_3 + {\mathbb F}x_5 = {\mathbb F}x_4 + {\mathbb F}x_5$. It is not difficult to see that we can choose our standard basis such that
\begin{equation}\label{eq:32II-I.1} \begin{aligned}
  x_1 y_2= x_2y_3=x_5,\  y_1y_2=x_4. 
\end{aligned} \end{equation}
Now $y_1y_3 = a x_5 + b x_4$ for some $a, b \in {\mathbb F}$. Replacing $x_2, y_1, y_2, y_3$ by $x_2 + b x_3, y_1 - a x_2, y_2 - a x_1, y_3 - b y_2 + ab x_1$ gives
\begin{equation}\label{eq:32II-I.2} \begin{aligned}
  y_1y_3=0, 
\end{aligned} \end{equation}
and the changes do not affect \eqref{eq:32II-I.1}. Next consider $y_2y_3 = a x_4 + b x_5$ and replace $x_1, y_2, y_3$ by $x_1 - a x_3,
 y_2  - b x_2, y_3 + a y_1$. These changes imply that we can assume furthermore that
\begin{equation}\label{eq:32II-I.3} \begin{aligned}
  y_2y_3=0.
\end{aligned} \end{equation}
Now consider $x_1y_3 = a x_5 + b x_4$ (where $b \neq 0$ by our assumptions). Replacing $x_1, y_2$ by $x_1 - a x_2, y_2 + a y_1$ we can assume that $a=0$. Then replace $x_1, \ldots, y_5$ by $(1/b)x_1, (1/b^2)x_2,$
$(1/b^3) x_3, b^3  x_4, b x_5$
$, b y_1, b^2 y_2, b^3 y_3,$
$ (1/b^3) y_4, (1/b) y_5$ 
we can assume that $b=1$. Thus
\begin{equation}\label{eq:32II-I.4} \begin{aligned}
  x_1y_3=x_4.
\end{aligned} \end{equation}
This leaves us with the following triples.
\begin{alignat*}{3}
	(x_{1}y_{4},y_{5})&=a,&\quad (y_{1}y_{4},y_{5})&=c,&\quad  (y_3 y_4, y_5)&=e,\\
\mbox{} (x_{2}y_{4},y_{5})&=b,&\quad  (y_2 y_4, y_5) &=d,&\quad  (x_{3}y_{4},y_{5})&=r,
\end{alignat*}
First replace $x_3, x_2, y_1, y_4$ by $x_3 + b x_4, x_2 + a x_4, y_1 + d x_4, y_4- a y_2 + d x_1 - b y_3$. These changes imply that we can assume that
\begin{equation}\label{eq:32II-I.5} \begin{aligned}
 (x_{1}y_{4},y_{5})  = (x_{2}y_{4},y_{5}) = (y_2 y_4, y_5) =0 .
\end{aligned} \end{equation}
Finally replace $x_2, y_3, y_5$ by $x_2 - c x_5, y_3 -(e/r)x_3, y_5  + c y_2$ and we furthermore assume that
\begin{equation}\label{eq:32II-I.6} \begin{aligned}
 (y_{1}y_{4},y_{5})  = (y_{3}y_{4},y_{5}) =0 .
\end{aligned} \end{equation}
Thus $L$ has a presentation of the form ${\mathcal P}_{10}^{(2,4)}(r)$ as described in the next proposition.
\begin{Proposition}\label{pro33Ia}
Let $L$ be a nilpotent SAA of dimension $10$ with an isotropic center of dimension $2$ that is of class $7$  and has the further properties that $L^4L^3 =  L^7$ and $\mbox{dim\,} UL^2=2$. This algebra can be given by a presentation of the form
\begin{align*}
{\mathcal P}_{10}^{(2,4)}(r): \ \  (x_3 y_4, y_5)&=r,\ (x_2 y_3, y_5)=1,\ (x_1y_2,y_5)=1,\ (x_1 y_3, y_4)=1,\\
					  (y_1y_2,y_4)&=1,
\end{align*} 
where $r \neq 0$. Furthermore two such presentations ${\mathcal P}_{10}^{(2,4)}(r)$ and ${\mathcal P}_{10}^{(2,4)}(s)$ describe the same algebra if and only if $s/r \in ({\mathbb F}^{*})^{11}$.
\end{Proposition}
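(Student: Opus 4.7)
The argument parallels that of Proposition \ref{pro33IAA}. Existence of a presentation of the form ${\mathcal P}_{10}^{(2,4)}(r)$ with $r \neq 0$ was established in the derivation preceding the statement, and direct calculation verifies the converse: any algebra so presented has isotropic centre of dimension $2$, class $7$, $L^4L^3 = L^7$ and $\mbox{dim\,} UL^2 = 2$.

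For sufficiency, suppose $L$ satisfies ${\mathcal P}_{10}^{(2,4)}(r)$ with respect to a standard basis $x_1, y_1, \ldots, x_5, y_5$ and $s/r = t^{11}$ for some $t \in \mathbb{F}^*$. I plan to exhibit a diagonal rescaling $\tilde x_i = t^{e_i} x_i,\ \tilde y_i = t^{-e_i} y_i$. Requiring the four triples of value $1$ to remain unchanged produces a linear system in the exponents $e_i$ whose unique solution (up to a global shift absorbed by the symplectic normalization) is $(e_1, e_2, e_3, e_4, e_5) = (1, 3, 5, -4, -2)$, and then $(\tilde x_3 \tilde y_4, \tilde y_5) = t^{e_3 - e_4 - e_5}\, r = t^{11} r = s$.

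For necessity, suppose $L$ also admits ${\mathcal P}_{10}^{(2,4)}(s)$ with respect to a second standard basis. Write $\tilde y_4 = a y_4 + b y_5 + u_4$ and $\tilde y_5 = c y_4 + d y_5 + u_5$ with $u_4, u_5 \in L^2$ and $\Delta = ad - bc \neq 0$. The plan combines three inputs. First, a direct check gives $L^2 \cdot L^2 \leq L^7$ and $L^7 \cdot L = 0$ in this algebra, so every $7$-fold left-normed product in $L$ containing any factor from $L^2$ vanishes; hence the $7$-product $\tilde y_4 \tilde y_5^2 \tilde y_4 \tilde y_5^2 \tilde y_4 \in L^7$ depends only on $a, b, c, d$. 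Direct expansion in the original basis gives
\[
\tilde y_4 \tilde y_5^2 \tilde y_4 \tilde y_5^2 \tilde y_4 \;=\; r^2 \Delta\, a\, d^3 (a x_5 - b x_4),
\]
and pairing with $\tilde y_5$ (well-defined modulo $L^2$ since $(L^2, L^7) = 0$) yields $s^2 = r^2 \Delta^2 a d^3$. Second, the analogous $7$-product with $\tilde y_4, \tilde y_5$ interchanged must vanish (as does its counterpart in the new basis), and combined with the intrinsic requirement $\tilde y_1 \in L^4$ this forces $b = 0$, whence $\Delta = ad$. Third, the relation $(\tilde x_1 \tilde y_2, \tilde y_5) = 1$, computed from $\tilde y_2 = \frac{1}{s}\tilde y_4 \tilde y_5^2$ and the constraint $\tilde x_1 \in U$ (the intrinsic characteristic ideal of dimension $5$), yields the additional identity $a = d^2$. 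Substituting gives $(s/r)^2 = a^3 d^5 = d^{11}$; since $11$ is odd the map $x \mapsto x^{11}$ is the identity on $\mathbb{F}^*/(\mathbb{F}^*)^2$, so $d^{11} \in (\mathbb{F}^*)^2$ forces $d \in (\mathbb{F}^*)^2$. Writing $d = t^2$ we obtain $s/r = \pm t^{11} \in (\mathbb{F}^*)^{11}$, since $-1 = (-1)^{11}$ is always an $11$-th power.

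The main obstacle is isolating the three independent sources of constraint: a single use of the $7$-product gives only the identity $s^2 = r^2 \Delta^2 a d^3$, which is not by itself strong enough. Supplementing it with the characteristic-subspace containment $\tilde y_1 \in L^4$ (to force $b = 0$) and with the auxiliary triple $(\tilde x_1 \tilde y_2, \tilde y_5) = 1$ (to force $a = d^2$) is what converts the raw identity into the clean relation $(s/r)^2 = d^{11}$. Establishing $L^2 \cdot L^2 \leq L^7$ directly in this algebra, and carefully tracking which intrinsic characteristic ideals ($L^7, L^5 L^2, U, L^4$) force which restrictions on the transition matrix, is the delicate bookkeeping at the heart of the proof.
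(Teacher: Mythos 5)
Your sufficiency argument is exactly the paper's: the diagonal rescaling with exponents $(1,3,5,-4,-2)$ is the same substitution $\tilde x_1=ax_1,\ \tilde x_2=a^3x_2,\ \tilde x_3=a^5x_3,\ \tilde x_4=a^{-4}x_4,\ \tilde x_5=a^{-2}x_5$ used there, and it is correct.

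Your necessity argument takes a genuinely different route from the paper, and as written it has a gap. The paper's method is to use the chain of characteristic ideals to force the transition matrix into triangular form, with leading coefficients $a,b,c,d,e$ on $\tilde y_1,\dots,\tilde y_5$ and their reciprocals on $\tilde x_1,\dots,\tilde x_5$; the four unit triples then give $e=a/b$, $d=1/(ab)$, $c=a^2b$, $b=a^3$, whence $s=(de/c)r=a^{-11}r$ in one pass. Your plan replaces this with one $7$-fold product identity plus two auxiliary constraints. Two of your three inputs can be made to work: the displayed value of the $7$-product is off (I compute $\tilde y_4\tilde y_5^2\tilde y_4\tilde y_5^2\tilde y_4=r^2\Delta^2d^2(ax_5-bx_4)$, not $r^2\Delta a d^3(ax_5-bx_4)$; the two agree only after $b=0$, so the slip is harmless where you use it), and $b=0$ does follow — though not from ``$\tilde y_1\in L^4$'' but simply because the transposed product $\tilde y_5\tilde y_4^2\tilde y_5\tilde y_4^2\tilde y_5=r^2\Delta^2b^2(cx_5-dx_4)$ must vanish, or even more directly because $\tilde y_4\perp\tilde x_5\in L^5L^2=\mathbb{F}x_5$.

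The genuine gap is the derivation of $a=d^2$. The relation $(\tilde x_1\tilde y_2,\tilde y_5)=1$ involves the leading coefficient $\lambda$ of $\tilde x_1$, and ``$\tilde x_1\in U$'' only gives $\tilde x_1=\lambda x_1 \bmod L^5$ with $\lambda$ undetermined; so this triple yields an equation in $\lambda$, $a$, $d$, not the clean identity $a=d^2$. To eliminate $\lambda$ you need $(\tilde x_1,\tilde y_1)=1$ and hence the $y_1$-coefficient of $\tilde y_1$; but when you express $\tilde y_1=\tfrac{1}{s}\tilde y_4\tilde y_5\tilde y_4$ and expand, the error terms coming from the $L^2$-parts of $\tilde y_4,\tilde y_5$ land in $L^4=\mathbb{F}x_5+\cdots+\mathbb{F}x_1+\mathbb{F}y_1$, which contains $y_1$ itself, so they contaminate precisely the coefficient you are trying to read off. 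Your plan never uses the remaining triple $(\tilde y_1\tilde y_2,\tilde y_4)=1$, which in the paper's argument is exactly what ties the leading coefficient of $\tilde y_1$ to those of $\tilde y_2$ and $\tilde y_4$ (giving $abd=1$) and closes the system. Until $\lambda$ (equivalently the leading coefficient of $\tilde y_1$) is pinned down by an additional intrinsic relation, the identity $(s/r)^2=d^{11}$ is not established, and the final square-root extraction (which is itself fine) has nothing to act on.
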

\begin{proof}
We have already seen that any such algebra has such a presentation. Direct calculations show that an algebra with a presentation ${\mathcal P}_{10}^{(2,4)}(r)$ has the properties stated. We turn to the isomorphism property. To see that the condition is sufficient, suppose we have an algebra $L$ with a presentation 
${\mathcal P}_{10}^{(2,4)}(r)$ with respect to some standard basis $x_1, y_1, \ldots, x_5, y_5$.
Suppose that $s/r=a^{11}$ for some $a \in {\mathbb F}^*$. Consider a new standard basis 
 $\tilde{x_{1}}=ax_1,\ \tilde{y_{1}}=(1/a)y_1,\ \tilde{x_{2}}=a^3 x_2,\ \tilde{y_{2}}=(1/a^3) y_2,\ \tilde{x_{3}}=a^5 x_3,\  \tilde{y_{3}}=(1/a^5) y_3,\ \tilde{x_{4}}=(1/a^4)x_4,\ \tilde{y_{4}}=a^4 y_4,\ \tilde{x_5}=(1/a^2) x_5,\ \tilde{y_5}=a^2 y_5$. 
Calculations show that $L$ has then presentation ${\mathcal P}_{10}^{(2,4)}(s)$ with respect to the new standard basis.\\\\
It is only remains now to see that the conditions is also necessary. Consider an algebra $L$ with presentation ${\mathcal P}_{10}^{(2,4)}(r)$ with respect to some standard basis $x_1, y_1, \ldots, x_5, y_5$. Take some arbitrary new standard basis $\tilde{x_1}, \tilde{y_1}, \ldots, \tilde{x_5}, \tilde{y_5}$ such that $L$ satisfies the presentation ${\mathcal P}_{10}^{(2,4)}(s)$ with respect to the new basis.
Using the fact that we have an ascending chain of characteristic ideals we know that
\begin{align*}
\tilde{y_1}&=ay_1+\beta_{11}x_1+\cdots+\beta_{15}x_5,\\
\tilde{y_2}&=by_2+\alpha_{21}y_1+\beta_{21}x_1+ \cdots +\beta_{25}x_5, \\
\tilde{y_3}&=cy_3+\alpha_{32}y_2+\alpha_{31}y_1+\beta_{31}x_1+ \cdots +\beta_{35}x_5,\\
\tilde{y_4}&=dy_4+\alpha_{43}y_3+\alpha_{42}y_2+\alpha_{41}y_1+\beta_{41}x_1+\cdots+ \beta_{45}x_5,\\
\tilde{y_5}&=ey_5+\alpha_{54}y_4+ \cdots +\alpha_{51}y_1+\beta_{51}x_1+\cdots +\beta_{55}x_5, \\
\tilde{x_1}&=(1/a)x_1+\gamma_{12}x_2+ \cdots +\gamma_{15}x_5,\\
\tilde{x_2}&=(1/b)x_2+\gamma_{23}x_3+\gamma_{24}x_4+\gamma_{25}x_5,\\
\tilde{x_3}&=(1/c)x_3+\gamma_{34}x_4+\gamma_{35}x_5,\\
\tilde{x_4}&=(1/d)x_4+\gamma_{45}x_5,\\
\tilde{x_5}&=(1/e)x_5,
\end{align*}
for some $\alpha_{ij}, \beta_{ij}, \gamma_{ij}, a, b, c, d, e $ where $a, b, c, d, e \neq 0$. Direct calculations show that
\begin{eqnarray*}
		   1= (\tilde{x_1}\tilde{y_2},\tilde{y_5})=be/a      &  \Rightarrow & e=a/b \\
\mbox{} 1= (\tilde{y_1}\tilde{y_2},\tilde{y_4})  =abd      & \Rightarrow &  d=1/(ab)\\
\mbox{} 1= (\tilde{x_1} \tilde{y_3},\tilde{y_4}) =cd/a     & \Rightarrow & c=a^2b\\
\mbox{} 1= (\tilde{x_2}\tilde{y_3},\tilde{y_5})   =ce/b     & \Rightarrow &  b= a^3.
\end{eqnarray*}
Thus $b= a^3$, $c = a^5$, $d = 1/a^4$, $e = 1/a^2$ and it follows that
\[ s = (\tilde{x_3}\tilde{y_4},\tilde{y_5})=(de/c) r = (1/a)^{11}r. \]
Hence $s/r \in ({\mathbb F}^{*})^{11}$.
\end{proof}
\begin{Remark} It follows that if $({\mathbb F}^{*})^{11} = {\mathbb F}^{*}$ then there is only one algebra of this type. This includes any algebraically closed field and ${\mathbb R}$. If ${\mathbb F}$ is a finite field of order $p^n$, then the number of algebras is either $11$ or $1$ according to whether $11$ divides $p^n -1$ or not. Notice also that there are infinitely many algebras over ${\mathbb Q}$.
\end{Remark}
\subsection{Algebras where $ L^4L^3 \neq L^7 $ and $L^5 L^2 \leq L^4L^3$}
Here we pick our standard basis such that
\[ L^4L^3 = {\mathbb F} x_5+ {\mathbb F} x_3.\]
Notice also that as before $U = \{ x \in L^4:\, xL^3 \leq L^5L^2 \}$ and thus again $x_1y_2 \in {\mathbb F} x_5$. Notice also that
\[(L^4L^3)^\perp = {\mathbb F} x_5 + \cdots +{\mathbb F} x_1 + {\mathbb F} y_1 + {\mathbb F} y_2+ {\mathbb F} y_4.\]
Then $L^5 (L^4L^3)^\perp = ({\mathbb F} x_3 + {\mathbb F} x_2)y_4 = L^5L^2$. Consider the characteristic subspace
\[ V = \{ x \in L^5:\, x(L^4L^3)^\perp=0 \}.\]
Here $x_3y_4 \neq 0$ and thus $V$ is the kernel of a surjective linear map $L^5 \rightarrow L^5L^2,\, x \mapsto xy_4$ and has codimension $1$ in $L^5$. We pick our standard basis such that
\[ V = {\mathbb F} x_5 + {\mathbb F} x_4 + {\mathbb F} x_2.\]
In particular 
\begin{equation}\label{eq:32II-I.ab.1} \begin{aligned} x_2y_4=0. \end{aligned}\end{equation}
Here we have again $UL^2 = {\mathbb F} x_5+ {\mathbb F} x_1y_3$ and thus either the dimension of $UL^2$ is $1$ or $2$. We consider these cases separately.
%
%
\subsubsection{I. Algebras where $ UL^2$ is $1$-dimensional}
Notice that
\[V^\perp = {\mathbb F} x_5 + \cdots + {\mathbb F} x_1 +{\mathbb F}y_{1}+ {\mathbb F} y_3.\]
and that $UV^\perp =  {\mathbb F} x_5 = L^5L^2$. Let 
\[ W = \{ x \in U:\, xV^\perp =0 \}.\]
Here $x_2y_3 \neq 0$ and $W$ is the kernel of the surjective linear map $U \rightarrow L^5L^2,\  x\mapsto xy_3$. We choose our standard basis further such that
\[ W = {\mathbb F} x_5 + {\mathbb F} x_4 + {\mathbb F} x_3 + {\mathbb F} x_1 .\]
In particular 
\begin{equation}\label{eq:32II-I.b.01} \begin{aligned} x_1y_3=0. \end{aligned} \end{equation}
Next look at $L^4V^\perp =  {\mathbb F} x_5 +  {\mathbb F} y_1y_3$. Notice that $y_1y_3 \in V$ and that $(y_1y_3, y_2) \neq 0$ (as $ {\mathbb F} y_1y_2  + {\mathbb F} x_5=  {\mathbb F} x_3 +  {\mathbb F} x_5$). We choose our basis further such that
\[ L^4V^\perp =  {\mathbb F} x_5 +  {\mathbb F} x_2.\]
In particular 
\begin{equation}\label{eq:32II-I.b.02} \begin{aligned} (y_1y_3, y_4)=0. \end{aligned} \end{equation}
Now consider the characteristic subspace
\[ T = L^4V^\perp + L^4L^3  = {\mathbb F} x_5 + {\mathbb F} x_3 + {\mathbb F} x_2.\]
Notice that $T^\perp = L^4 + {\mathbb F} y_4$ and $W T^\perp = {\mathbb F} x_3y_4 + {\mathbb F} x_1y_4$. Let
\[ R = \{x\in W:\, xT^\perp = 0\}.\]
We have $x_3y_4 \neq 0$ and $R$ is the kernel of the surjective linear map $W \rightarrow L^5L^2,\, x\mapsto xy_4$. We now refine our basis further such that
\[R = {\mathbb F} x_5  + {\mathbb F} x_4 +  {\mathbb F} x_1 .\]
In particular $x_1y_4=0$. We have thus got a basis where $x_1y_3 = x_1y_4 = x_2y_4=0$ and where $(y_1y_3, y_4)=0$. It is not difficult that we can furthermore assume that
\begin{alignat*}{4}
 x_3 y_4 &= x_5, &\quad x_2y_3 &= r x_5, &\quad x_2y_4&=0, &\quad x_1y_2&=x_5,\\
 x_1y_3 &= 0, &\quad x_1y_4 &= 0, &\quad y_1y_2&=x_3, & \quad (y_1y_3, y_4) &= 0. 
 \end{alignat*}
We still need to consider the following triples.
\begin{alignat*}{3}
(y_3y_4, y_5)&=a, &\quad (y_1y_4,y_5)&=c, &\quad  (y_2y_3, y_5)&=e,\\
(y_1y_3, y_5)&=b, &\quad (y_2y_3, y_4)&=d, &\quad  (y_2y_4, y_5)&=f,\\
(x_2y_3, y_5)&=r,&\quad  & \quad 
 \end{alignat*}
We start by replacing $x_2$, $x_1,$ $y_4$, $y_5$ by $x_2 - bx_5$, $x_1+dx_4$, $y_4-dy_1$, $y_5+ by_2$ and 
\begin{equation}\label{eq:32II-I.b.03} \begin{aligned} (y_1y_3, y_5) = (y_2y_3, y_4)=0. \end{aligned} \end{equation}
Then replace $y_1, y_2, y_3$ with $y_1-c x_3$, $y_2-[(e-c)/r]x_2-f x_3$, $y_3-cx_1-f x_2-ax_3$ and we can furthermore assume that
\begin{equation}\label{eq:32II-I.b.04} \begin{aligned}
(y_1y_4, y_5) = (y_2y_4, y_5)=(y_2y_3, y_5)=(y_3y_4, y_5)=0.
 \end{aligned} \end{equation}
It follows that $L$ has a presentation of the form ${\mathcal P}_{10}^{(2,5)}(r)$ as in the following proposition.
\begin{Proposition}\label{pro33Ia}
Let $L$ be a nilpotent SAA of dimension $10$ with an isotropic center of dimension $2$ that is of class $7$ and the further properties that $L^4L^3 \neq L^7$, $L^5L^2 \leq L^4L^3$ and $UL^2$ is $1$-dimensional. This algebra can be given by a presentation of the form
\begin{align*}
{\mathcal P}_{10}^{(2,5)}(r):\ \ (x_2 y_3, y_5)=r,\  (x_3 y_4, y_5)=1\ , (x_1y_2,y_5)=1,\  (y_1y_2,y_3)=1,\  
\end{align*} 
where $r \neq 0$. Furthermore two such presentations ${\mathcal P}_{10}^{(2,5)}(r)$ and ${\mathcal P}_{10}^{(2,5)}(s)$ describe the same algebra if and only if $s/r \in ({\mathbb F}^{*})^3$.
\end{Proposition}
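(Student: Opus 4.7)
The plan is to mirror the structure of Propositions~\ref{pro33IAA} and \ref{pro33Ia}. Existence of a presentation of the form ${\mathcal P}_{10}^{(2,5)}(r)$ for any algebra in this class has already been obtained in the discussion preceding the statement, so only the converse direction and the isomorphism criterion require proof. For the converse, I would inspect the algebra defined by ${\mathcal P}_{10}^{(2,5)}(r)$ and check by direct calculation that $Z(L)=\mathbb{F}x_5+\mathbb{F}x_4$ is isotropic of dimension $2$, that $L$ has nilpotence class $7$ with lower central series matching the description of this subsection, and that $L^4L^3\neq L^7$, $L^5L^2\leq L^4L^3$ and $\dim UL^2=1$.

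Sufficiency of the isomorphism criterion is immediate: given $s/r=c^3$ with $c\in\mathbb{F}^*$, I would take the rescaled basis $\tilde{x_1}=x_1$, $\tilde{y_1}=y_1$, $\tilde{x_2}=cx_2$, $\tilde{y_2}=c^{-1}y_2$, $\tilde{x_3}=c^{-1}x_3$, $\tilde{y_3}=cy_3$, $\tilde{x_4}=x_4$, $\tilde{y_4}=y_4$, $\tilde{x_5}=c^{-1}x_5$, $\tilde{y_5}=cy_5$; a short computation verifies that this is again a standard basis and that it satisfies ${\mathcal P}_{10}^{(2,5)}(s)$, the only non-trivial check being $(\tilde{x_2}\tilde{y_3},\tilde{y_5})=c^3(x_2y_3,y_5)=c^3r=s$.

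The substantive part is necessity. I would observe that $\{0\}\subset L^5L^2\subset L^7\subset L^6\subset L^5\subset U\subset L^4\subset L^3\subset L^2\subset(L^5L^2)^\perp\subset L$ is an ascending chain of characteristic ideals of dimensions $0,1,\ldots,10$, each characteristic by the analysis preceding the proposition together with Lemma~\ref{lma115}. This forces any two standard bases giving presentations of the form ${\mathcal P}_{10}^{(2,5)}$ to be related in the upper-triangular form $\tilde{x_i}=\lambda_ix_i+v_i$, $\tilde{y_i}=\mu_iy_i+u_i$, where $v_i$ lies in the preceding term of the chain; the symplectic pairing then yields $\lambda_i\mu_i=1$. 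I would next evaluate each of the four defining triples in the new basis. Using Lemma~\ref{lma2.4gth} (which gives $L^4L^4=0$), the identities $L^5L^2=UL^2=\mathbb{F}x_5$, and the observation that every product $x_iy_j$ not appearing in ${\mathcal P}_{10}^{(2,5)}(r)$ must vanish (such a product lies in $\mathbb{F}x_5$ and its pairing with $y_5$ detects the relevant triple), all correction terms drop out. This leaves the clean system $\mu_1\mu_2\mu_3=1$, $\lambda_1\mu_2\mu_5=1$, $\lambda_3\mu_4\mu_5=1$ and $\lambda_2\mu_3\mu_5=s/r$. Substituting $\lambda_i=1/\mu_i$ and solving successively for $\mu_3$, $\mu_5$ and $\mu_4$ in terms of $\mu_1,\mu_2$ yields $s/r=1/\mu_2^3\in(\mathbb{F}^*)^3$, as required. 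The main technical obstacle is verifying that the correction terms indeed contribute nothing, but this reduces to a short inspection of products of basis vectors against the list of non-zero triples in the presentation.
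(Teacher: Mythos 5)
Your proposal is correct and follows essentially the same route as the paper's proof: the identical rescaling $\tilde{x_2}=cx_2,\ \tilde{y_3}=cy_3,\ \tilde{y_5}=cy_5,\dots$ for sufficiency, and for necessity the same reduction, via a flag of characteristic subspaces forcing a triangular basis change with $\lambda_i\mu_i=1$, to the four scalar equations $\mu_1\mu_2\mu_3=1$, $\lambda_1\mu_2\mu_5=1$, $\lambda_3\mu_4\mu_5=1$, $\lambda_2\mu_3\mu_5=s/r$, yielding $s/r=\lambda_2^3$. The only cosmetic difference is that you justify the triangular form using the complete flag $L^5L^2\subset L^7\subset\cdots\subset(L^5L^2)^\perp$ rather than the paper's finer list of characteristic subspaces ($R$, $L^4V^\perp$, $T^\perp$, etc.), which leaves slightly more correction terms to eliminate, but these do all vanish by the inspection you describe.
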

\begin{proof}
We have already seen that any such algebra has a presentation of this form. Conversely, direct calculations show that any algebra with a 
presentation ${\mathcal P}_{10}^{(2,5)}(r)$ satisfies the properties stated. We turn to the isomorphism property. To see that the condition is sufficient, suppose we have an algebra $L$ with presentation 
${\mathcal P}_{10}^{(2,5)}(r)$ with respect to some standard basis $x_1, y_1, \ldots, x_5, y_5$. Suppose that $s/r=a^{3}$ for some $a \in {\mathbb F}^*$.
Consider a new standard basis 
$\tilde{x_{1}}=x_1,\ \tilde{y_{1}}=y_1,\ \tilde{x_{2}}=ax_2,\ \tilde{y_{2}}=(1/a)y_2,\ \tilde{x_{3}}=(1/a)x_3,\ \tilde{y_{3}}=ay_3,\ \tilde{x_{4}}=x_4,\ \tilde{y_{4}}=y_4,\ \tilde{x_5}=(1/a)x_5,\ \tilde{y_5}=ay_5$. 
Calculations show that $L$ has then the presentation ${\mathcal P}_{10}^{(2,5)}(s)$ with respect to the new basis.\\\\
It only remains to see that the condition is also necessary. Consider an algebra $L$ with presentation ${\mathcal P}_{10}^{(2,5)}(r)$
with respect to some standard basis $x_1, y_1, \ldots, x_5, y_5$. Take some arbitrary new standard basis 
$\tilde{x_1}, \tilde{y_1}, \ldots, \tilde{x_5}, \tilde{y_5}$
such that $L$ also satisfies the presentation ${\mathcal P}_{10}^{(2,5)}(s)$ with respect to the new basis. 
Using the fact that we have an ascending chain of characteristic ideals as well as the fact that 
$L^4L^3 = {\mathbb F} x_5  + {\mathbb F} x_3$, 
$L^4V^\perp = {\mathbb F} x_5  + {\mathbb F} x_2$, 
$R ={\mathbb F} x_5  + {\mathbb F} x_4 + {\mathbb F} x_1$, 
$W^\perp ={\mathbb F} y_2  + U$, 
$(R + L^4V^\perp)^\perp ={\mathbb F} y_3  + U$
and
$T^\perp ={\mathbb F} y_4  +{\mathbb F} y_1  + U$
are characteristic subspaces, we know that
\begin{align*}
\tilde{y_1}&=(1/a)y_1+\beta_{11}x_1+\cdots+\beta_{15}x_5,\\
\tilde{y_2}&=(1/b)y_2+\beta_{21}x_1+ \cdots +\beta_{25}x_5, \\
\tilde{y_3}&=(1/c)y_3+\beta_{31}x_1+ \cdots +\beta_{35}x_5,\\
\tilde{y_4}&=(1/d)y_4 +\alpha_{41}y_1+\beta_{41}x_1+\cdots+ \beta_{45}x_5,\\
\tilde{y_5}&=(1/e)y_5+\alpha_{54}y_4+ \cdots +\alpha_{51}y_1+\beta_{51}x_1+\cdots +\beta_{55}x_5, \\
\tilde{x_1}&=ax_1+\gamma_{14}x_4 +\gamma_{15}x_5,\\
\tilde{x_2}&=b x_2+\gamma_{25}x_5,\\
\tilde{x_3}&=cx_3+\gamma_{35}x_5,\\
\tilde{x_4}&=d x_4+\gamma_{45}x_5,\\
\tilde{x_5}&=ex_5,
\end{align*}
for some $ a, b, c, d, e, \alpha_{ij}, \beta_{ij}, \gamma_{ij}$ where $a, b, c, d, e \neq 0$. 
It follows that
\begin{align*}
1&= ( \tilde{x_3} \tilde{y_4}, \tilde{y_5})=c/(de)                   \\
\mbox{} 1&=( \tilde{x_1} \tilde{y_2}, \tilde{y_5})  =a/(be)     \\
\mbox{} 1&= ( \tilde{y_1} \tilde{y_2}, \tilde{y_3})=1/(abc).
%
%
\end{align*}
This gives $c=1/(ab)$, $e=a/b$, $d=1/a^2$ and then
\[ s = (\tilde{x_2}\tilde{y_3},\tilde{y_5})=br/(ce) =b^3 r. \]
This finishes the proof.
\end{proof}
\begin{Remark}
 Again we just got one algebra if $({\mathbb F}^*)^{3} = {\mathbb F}^*$. 
This includes all fields that are algebraically closed as well as ${\mathbb R}$.
For a finite field of order $p^n$ there are $3$ algebras if $3 | p^n-1$ but otherwise one.
For ${\mathbb Q}$ there are infinitely many algebras.
\end{Remark}
%
%
\subsubsection{II. Algebras where $UL^2$ is $2$-dimensional }
Recall that $UL^2 =  {\mathbb F}  x_5 +  {\mathbb F}  x_1y_3$, $L^4 L^3  = {\mathbb F}  x_5 + {\mathbb F}  x_3$ and $x_2y_4=0$.
It is not difficult to see that one can further refine the basis such that
\begin{equation}\label{eq:32II-I.b.1} \begin{aligned}
x_3y_4= \alpha x_5,\ x_2y_3=rx_5,\ x_2y_4=0,\ x_1y_2=x_5,\ x_1y_3=x_4,\ y_1y_2=x_3.
\end{aligned} \end{equation}
Replacing $x_1, y_1, \ldots, x_5, y_5$ by $\alpha x_1$, $(1/\alpha)y_1$, $(1/\alpha^3)x_2$, $\alpha^3 y_2$, $\alpha^2 x_3$,
 $(1/\alpha^2)y_3$, $(1/\alpha) x_4$, $\alpha y_4$, $\alpha^4 x_5$, $(1/\alpha^4)y_5$, implies that we can furthermore assume that
 $\alpha=1$. We have also the following triples to sort out.
\begin{alignat*}{3}
(y_1y_3, y_4)&=a, &\quad  (y_2y_3,y_4)&=d, &\quad (x_1y_4, y_5)&=g,\\
(y_1y_3, y_5)&=b, &\quad  (y_2y_3, y_5)&=e, &\quad (y_3y_4, y_5)&=h.\\
(y_1y_4, y_5)&=c, &\quad (y_2y_4, y_5)&=f, &\quad
\end{alignat*}
First we replace $x_2, x_1, y_4, y_5$ by $x_2 - a x_4 - b x_5,$ $x_1+dx_4$, $y_4+ay_2-d y_1$, $y_5+by_2$ and we see that we can assume that
\begin{equation}\label{eq:32II-I.b.2} \begin{aligned}
(y_1y_3, y_4)=(y_1y_3, y_5)= (y_2y_3,y_4)=0.
\end{aligned} \end{equation}
Next replace $y_1, y_2, y_3$ by $y_1-c x_3$, $y_2-f x_3$, $y_3-c x_1-f x_2$ and we can also assume that
\begin{equation}\label{eq:32II-I.b.3} \begin{aligned}
(y_1y_4, y_5)=(y_2y_4, y_5)=0.
\end{aligned} \end{equation}
Then replace $x_1, y_5$ by $x_1+ e x_5$, $y_5-ey_1$ and we see now add
\begin{equation}\label{eq:32II-I.b.4} \begin{aligned}
(y_2y_3, y_5)=0.
\end{aligned} \end{equation}
Finally replace $x_1, y_3$ by $x_1 - g x_3$, $y_3+g y_1 - h x_3$ and we see that we can now add
\begin{equation}\label{eq:32II-I.b.5} \begin{aligned}
(x_1y_4, y_5) = (y_3y_4, y_5)=0.
\end{aligned} \end{equation}
We have thus see that $L$ has a presentation ${\mathcal P}_{10}^{(2,6)}(r)$ as in the following proposition.
\begin{Proposition}\label{pro33Ia}
Let $L$ be a nilpotent SAA of dimension $10$ with an isotropic center of dimension $2$ that is of class $7$ and has further properties that $L^4L^3 \neq L^7 $, $L^5L^2 \leq L^4L^3$ and $UL^2$ is $2$-dimensional. This algebra can be given by a presentation of the form
\begin{align*}
{\mathcal P}_{10}^{(2,6)}(r): \ \ (x_2 y_3, y_5)&=r,\ (x_3 y_4, y_5)=1,\  (x_1y_2,y_5)=1,\ (x_1 y_3, y_4)=1,\\
					 (y_1y_2,y_3)  &=1
\end{align*} 
where $r \neq 0$. Furthermore two such presentations ${\mathcal P}_{10}^{(2,6)}(r)$ and ${\mathcal P}_{10}^{(2,6)}(s)$ describe the same algebra if and only if $s/r \in ({\mathbb F}^{*})^{12}$.
\end{Proposition}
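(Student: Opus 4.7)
The existence of a presentation of the form ${\mathcal P}_{10}^{(2,6)}(r)$ has already been extracted in the discussion preceding the statement, so what remains is (a) to verify that conversely any algebra given by ${\mathcal P}_{10}^{(2,6)}(r)$ satisfies the three structural hypotheses, and (b) to determine exactly when two such presentations describe isomorphic algebras. Part (a) is a direct computation: one reads off $L^2,\ldots,L^7$ from the presentation, confirms that $\dim Z(L)=2$ with $Z(L)$ isotropic, that the class is $7$, that $L^4L^3=\mathbb{F}x_5+\mathbb{F}x_3\neq L^7$, that $L^5L^2=\mathbb{F}x_5$ is contained in $L^4L^3$, and that the auxiliary subspace $U=\{x\in L^4 : xL^3\leq L^5L^2\}$ satisfies $UL^2=\mathbb{F}x_5+\mathbb{F}x_4$, which is $2$-dimensional.

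For sufficiency of the arithmetic condition, I would imitate the scaling trick used for ${\mathcal P}_{10}^{(2,4)}(r)$ and ${\mathcal P}_{10}^{(2,5)}(r)$. Given $s/r=a^{12}$, define a new standard basis by $\tilde{x}_i=\lambda_i x_i$, $\tilde{y}_i=\lambda_i^{-1}y_i$ with exponents $(\lambda_1,\ldots,\lambda_5)=(a^{-1},a^4,a^{-3},a^2,a^{-5})$. The four unit relations $(x_3y_4,y_5)=1$, $(x_1y_2,y_5)=1$, $(x_1y_3,y_4)=1$, $(y_1y_2,y_3)=1$ translate, after scaling, into the linear conditions
\begin{align*}
k_3-k_4-k_5 &= 0, & k_1-k_2-k_5 &= 0, \\
k_1-k_3-k_4 &= 0, & -k_1-k_2-k_3 &= 0,
\end{align*}
for $\lambda_i=a^{k_i}$, and direct substitution shows that the chosen exponents solve this system. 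A short calculation then gives $\tilde{r}=\lambda_2\lambda_3^{-1}\lambda_5^{-1}r=a^{12}r=s$, so $L$ satisfies ${\mathcal P}_{10}^{(2,6)}(s)$ in the new basis.

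For necessity I would follow the same template as in Propositions~\ref{pro33Ia} and its analogues. Given a second standard basis $\tilde{x}_1,\tilde{y}_1,\ldots,\tilde{x}_5,\tilde{y}_5$ realising ${\mathcal P}_{10}^{(2,6)}(s)$, I would exploit the ascending chain of characteristic subspaces
\[
L^5L^2 < L^4L^3 < L^6 < V < L^5 < U < L^4 < L^3 < L^2 < L
\]
(built from $L^iL^j$, $U$, $V$ and their perpendiculars, all of which were identified during the derivation of the presentation) to force the change-of-basis matrix to be upper triangular with respect to the ordered basis $(x_5,x_4,x_3,x_2,x_1,y_1,y_2,y_3,y_4,y_5)$. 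Writing $\tilde{x}_i=\lambda_i x_i+(\text{higher characteristic terms})$ and $\tilde{y}_i=\lambda_i^{-1}y_i+(\text{lower terms lying in }L^2)$, the triple values in a standard basis are unaffected by the lower-order perturbations because all necessary perpendicularity relations hold modulo $L^2$. I would then evaluate the four unit triples in the new basis to obtain the multiplicative relations $\lambda_3=\lambda_4\lambda_5$, $\lambda_1=\lambda_2\lambda_5$, $\lambda_1=\lambda_3\lambda_4$, $\lambda_1\lambda_2\lambda_3=1$, which determine the $\lambda_i$ up to a single free parameter $a$ with the same exponent pattern $(-1,4,-3,2,-5)$ as above. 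Finally, computing $s=(\tilde{x}_2\tilde{y}_3,\tilde{y}_5)=\lambda_2\lambda_3^{-1}\lambda_5^{-1}r$ yields $s/r=a^{12}\in(\mathbb{F}^*)^{12}$.

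The main obstacle is the bookkeeping for the necessity direction: one must check that the perturbation terms (the off-diagonal entries of the upper-triangular change of basis) genuinely cannot affect the five triple values ${\mathcal P}_{10}^{(2,6)}$ records, which is why the full list of characteristic subspaces has to be in place before the computation begins. Once that is set up, the exponent calculation is purely linear and is forced by the four unit relations, giving the sharp invariant $({\mathbb F}^*)^{12}$.
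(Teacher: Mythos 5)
Your proposal is correct and follows essentially the same route as the paper: the sufficiency direction uses exactly the scaling basis the paper uses (your exponent vector $(-1,4,-3,2,-5)$ matches $\tilde{x_1}=(1/a)x_1$, $\tilde{x_2}=a^4x_2$, $\tilde{x_3}=(1/a^3)x_3$, $\tilde{x_4}=a^2x_4$, $\tilde{x_5}=(1/a^5)x_5$), and the necessity direction extracts the same four multiplicative relations from the unit triples, forcing $s/r=a^{12}$. One small correction: the subspaces you list do not form a single ascending chain, since $L^6=\mathbb{F}x_5+\mathbb{F}x_4+\mathbb{F}x_3$ and $V=\mathbb{F}x_5+\mathbb{F}x_4+\mathbb{F}x_2$ are distinct $3$-dimensional spaces, so neither contains the other; the paper instead invokes the standard chain together with $L^4L^3$ and $V$ as two additional (incomparable) characteristic subspaces, which is what actually pins $\tilde{x_3}$ into $\mathbb{F}x_3+\mathbb{F}x_5$ and $\tilde{x_2}$ into $V$. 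This does not affect the validity of your argument, only the phrasing.
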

\begin{proof}
We have already seen that any such algebra has a presentation of this form. Conversely, direct calculations show that any algebra with a presentation ${\mathcal P}_{10}^{(2,6)}(r)$ satisfies the properties stated. We turn to the isomorphism property. To see the condition is sufficient, suppose we have an algebra $L$ with presentation ${\mathcal P}_{10}^{(2,5)}(r)$ with respect to some standard basis $x_1, y_1, \ldots, x_5, y_5$. Suppose that $s/r=a^{12}$ for some $a \in {\mathbb F}^*$.
Consider a new standard basis  
$\tilde{x_{1}}=(1/a)x_1,\ \tilde{y_{1}}=ay_1,\ \tilde{x_{2}}=a^4x_2,\ \tilde{y_{2}}=(1/a^4)y_2,\ 
\tilde{x_{3}}=(1/a^3)x_3,\ \tilde{y_{3}}=a^3y_3,\ \tilde{x_{4}}=a^2x_4,\ \tilde{y_{4}}= (1/a^2)y_4,\ 
\tilde{x_5}=(1/a^5)x_5,\ \tilde{y_5}=a^5y_5$.
Calculations show that $L$ has then the presentation ${\mathcal P}_{10}^{(2,6)}(s)$ with respect to the new basis. \\\\
It only remains to see that the condition is also necessary. Consider an algebra $L$ with presentation ${\mathcal P}_{10}^{(2,6)}(r)$ with respect to some standard basis $x_1, y_1, \ldots, x_5, y_5$. Take some arbitrary new standard basis
$\tilde{x_1}, \tilde{y_1}, \ldots, \tilde{x_5}, \tilde{y_5}$
such that $L$ also satisfies the presentation ${\mathcal P}_{10}^{(2,6)}(s)$ with respect to the new basis.
Using the fact that
we have an ascending chain of characteristic ideals as well as the fact that $L^4L^3 = {\mathbb F} x_5 + {\mathbb F} x_3$, $V = {\mathbb F} x_5 + {\mathbb F} x_4 + {\mathbb F} x_2$, are characteristic subspaces, we know that
\begin{align*}
\tilde{y_1}&=(1/a)y_1+\beta_{11}x_1+\cdots+\beta_{15}x_5,\\
\tilde{y_2}&=(1/b)y_2+\alpha_{21}y_1+\beta_{21}x_1+ \cdots +\beta_{25}x_5, \\
\tilde{y_3}&=(1/c)y_3+\alpha_{31}y_1+\beta_{31}x_1+ \cdots +\beta_{35}x_5,\\
\tilde{y_4}&=(1/d)y_4 +\alpha_{42}y_2+\alpha_{41}y_1+\beta_{41}x_1+\cdots+ \beta_{45}x_5,\\
\tilde{y_5}&=(1/e)y_5+\alpha_{54}y_4+ \cdots +\alpha_{51}y_1+\beta_{51}x_1+\cdots +\beta_{55}x_5, \\
\tilde{x_1}&=ax_1+\gamma_{12}x_2 + \ldots +\gamma_{15}x_5,\\
\tilde{x_2}&=b x_2+\gamma_{24}x_4+\gamma_{25}x_5,\\
\tilde{x_3}&=cx_3+\gamma_{35}x_5,\\
\tilde{x_4}&=d x_4+\gamma_{45}x_5,\\
\tilde{x_5}&=ex_5,
\end{align*}
for some $ a, b, c, d, e, \alpha_{ij}, \beta_{ij}, \gamma_{ij} $ where $a, b, c, d, e \neq 0$. 
It follows that
\begin{align*}
1 &= ( \tilde{x_3} \tilde{y_4}, \tilde{y_5})=c/(de)                   \\
\mbox{} 1 &=( \tilde{x_1} \tilde{y_2}, \tilde{y_5})  =a/(be)     \\
\mbox{} 1&= ( \tilde{y_1} \tilde{y_2}, \tilde{y_3})=1/(abc) \\
\mbox{} 1&= (\tilde{x_1}\tilde{y_3},\tilde{y_4})   =a/(cd).
\end{align*}
This gives $b = (1/a^4)$, $c=a^3$, $d=(1/a^2)$, $e=a^5$ and then
\[ s = (\tilde{x_2}\tilde{y_3},\tilde{y_5})=br/(ce) =(1/a^{12}) r. \]
This finishes the proof.
\end{proof}
\begin{Remark}
The number of algebras depends thus again on the underling field. In particular there is one algebra over the field ${\mathbb C}$,
two algebras over ${\mathbb R}$ and infinitely many over ${\mathbb Q}$. When ${\mathbb F}$ is a finite field of order $p^n$ then the number can be $12$, $6$, $4$, $3$, $2$ or $1$ depending on what the value of $p^n$ is modulo $12$.
\end{Remark}
\subsection{Algebras where $L^{4}L^{3} \neq L^{7}$ and $L^{5}L^{2} \nleq L^{4}L^{3}$}
First we pick our standard basis such that 
\begin{equation*}\label{eq:32II-III.1} \begin{aligned}  
L^4L^3 \cap L^7 ={\mathbb F}x_4,\  L^4L^3 ={\mathbb F}x_4 + {\mathbb F}x_3. 
\end{aligned} \end{equation*}
Thus in particular ${\mathbb F} x_1y_2 ={\mathbb F} x_4 $. 
From this one sees that $U (L^5L^2)^\perp = L^7 + {\mathbb F} x_1y_4$ where $(x_1y_4, y_2) = - (x_1y_2, y_4) \neq 0$.
We further refine our basis such that 
\[ U \cdot (L^5L^2)^\perp = L^7 + {\mathbb F} x_2 = {\mathbb F} x_5 + {\mathbb F} x_4 + {\mathbb F} x_2.\]
Then notice that $ (U (L^5L^2)^\perp) L =L^5L^2 +  {\mathbb F} x_2y_5$, where $(x_2y_5, y_3) = - (x_2y_3, y_5) \neq 0$.
We refine our basis further such that
$$(U(L^5L^2)^\perp)L = L^5L^2+ {\mathbb F} x_3 = {\mathbb F} x_5 + {\mathbb F} x_3.$$
Notice that in particular $x_2y_5 \in {\mathbb F} x_5 + {\mathbb F} x_3$ and thus $(x_2y_4, y_5) = - (x_2y_5, y_4) =0$. We have as well $(x_2 y_4, y_3) = - (x_2y_3, y_4) =0$ and thus 
\begin{equation}\label{eq:32II-III.2} \begin{aligned} x_2y_4 = 0. \end{aligned} \end{equation}
We also have 
\[{\mathbb F} x_3 = ( {\mathbb F} x_4 + {\mathbb F} x_3) \cap ({\mathbb F} x_5 + {\mathbb F} x_3) = (L^4L^3) \cap ((U (L^5L^2)^\perp) L ).\]
Next consider the characteristic subspace 
\[ V = \{ x \in L^4:\, xL^3 \leq (L^4L^3) \cap ((U (L^5L^2)^\perp) L )\}.\]
Notice that $x_1y_2 \neq 0$ and thus $V$ is the kernel of a surjective linear map $L^4 \rightarrow L^4L^3/{\mathbb F} x_3,\, x\mapsto xy_2+ {\mathbb F} x_3$ and thus of codimension $1$ in $L^4$. Notice also that $L^5 \leq V$. We refine our basis further such that
\[ V = L^5 + {\mathbb F} y_1 = {\mathbb F} x_5 + \cdots + {\mathbb F} x_2 + {\mathbb F} y_1.\]
It follows in particular that 
\begin{equation}\label{eq:32II-III.3} \begin{aligned} {\mathbb F} y_1y_2= {\mathbb F} x_3. \end{aligned} \end{equation}
Notice that
\[ (U (L^5L^2)^\perp)^\perp = {\mathbb F} x_5 + \ldots + {\mathbb F} x_1 + {\mathbb F} y_1 + {\mathbb F} y_3\]
and then $U \cdot (U (L^5L^2)^\perp)^\perp = L^5L^2 + {\mathbb F} x_1y_3$. Now $(x_1y_3, y_2) = - (x_1y_2, y_3) = 0$ but also
$x_1y_4 \in U(L^5L^2)^\perp = {\mathbb F} x_5 + {\mathbb F} x_4 + {\mathbb F} x_2$ and thus $(x_1y_3, y_4) = - (x_1 y_4, y_3)=0$.
It follows that $x_1y_3 \in {\mathbb F} x_5 = L^5L^2$. It follows that $U \cdot (U (L^5L^2)^\perp)^\perp = L^5L^2$.
Consider the characteristic subspace
\[R = \{ x \in U:\, x (U (L^5L^2)^\perp)^\perp = 0 \}.\]
We have that $x_2y_3 \neq 0$ and thus $R$ is the kernel of the surjective linear map $U \rightarrow L^5L^2,\, x\mapsto xy_3$.
Thus $R$ is of codimension $1$ in $U$ and contains $L^6$. Now choose our basis further such that
\[ R = L^6 + {\mathbb F} x_1 = {\mathbb F} x_5 + {\mathbb F} x_4 + {\mathbb F} x_3 + {\mathbb F} x_1.\]
In particular 
\begin{equation}\label{eq:32II-III.4} \begin{aligned} x_1y_3=0. \end{aligned} \end{equation}
Next consider $L^4 \cdot (U (L^5L^2)^\perp)^\perp = L^5L^2 + {\mathbb F} y_1y_3$. Notice that $(y_1y_3, y_2) = - (y_1y_2, y_3) \neq 0$. We can refine our basis further such that
\[ L^4 \cdot (U (L^5L^2)^\perp)^\perp = {\mathbb F} x_5 + {\mathbb F} x_2.\]
In particular 
\begin{equation}\label{eq:32II-III.5} \begin{aligned} (y_1y_3, y_4)=0. \end{aligned} \end{equation}
It is not difficult to see that we can now choose our basis such that
\begin{alignat*}{4}
x_3y_4 &= x_5, &\quad x_2y_3 &= r x_ 5, &\quad x_2y_4 &= 0, &\quad &\\
x_1 y_2 &= x_4,&\quad x_1y_3 &=0, &\quad y_1y_2 &= x_3, &\quad (y_1 y_3, y_4) &=0.
\end{alignat*}
Replacing $x_1, y_1, \ldots, x_5, y_5$ by $(1/r) x_1$, $ry_1$, $rx_2$, $(1/r)y_2$, $x_3$, $y_3$, $(1/r^2)x_4$, $r^2y_4$, $r^2 x_5$, $(1/r^2) y_5$, we see that we can furthermore assume that $r=1$.
We are now left with the triples
\begin{alignat*}{3}
(x_1y_4, y_5)&=a, &\quad (y_2y_3,y_4)&=d, &\quad  (y_3y_4, y_5)&=h,\\
(y_1y_3, y_5)&=b, &\quad (y_2y_3, y_5)&=e, &\quad  \\
(y_1y_4, y_5)&=c, &\quad  (y_2y_4, y_5)&=f, &\quad 
 \end{alignat*}
We now show that we can further refine the basis such that all these values are zero. First replace
$x_2, x_1, y_5$ by $x_2 -  b x_5$, $x_1 + e x_5$, $y_5  + b y_2 -  e y_1$
and we can assume that
\begin{equation}\label{eq:32II-III.6} \begin{aligned} (y_1y_3, y_5)= (y_2y_3, y_5)=0. \end{aligned} \end{equation}
Then replace $x_1, y_1, y_2, y_3$ by $x_1 - a x_3$, $y_1 - c x_3$, $y_2 - f x_3$, $y_3 + a y_1 - c x_1 - f x_2$
and we can furthermore assume 
\begin{equation}\label{eq:32II-III.7} \begin{aligned} (x_1y_4, y_5)=(y_1y_4, y_5)= (y_2y_4, y_5)=0. \end{aligned} \end{equation}
Finally replace $x_1, y_3, y_4$ by $x_1+ d x_4, y_3 - g x_3, y_4 - d y_1$ and we also have
\begin{equation}\label{eq:32II-III.8} \begin{aligned} (y_2y_3, y_4)= (y_3y_4, y_5)=0. \end{aligned} \end{equation}
We have thus arrived at a unique presentation.
\begin{Proposition}\label{pro33Ia}
There is a unique nilpotent SAA of dimension $10$ with an isotropic center of dimension $2$ that is of class $7$ and has the further properties that $L^4L^3 \neq L^7 $, $L^5L^2 \not \leq L^4L^3$. This algebra can be given by the presentation 
\begin{align*}
{\mathcal P}_{10}^{(2,7)}: \ \ (x_2 y_3, y_5)=1,\ (x_3 y_4, y_5)=1,\  (x_1y_2,y_4)=1,\ (y_1y_2,y_3)=1.
\end{align*} 
\end{Proposition}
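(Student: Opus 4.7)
The plan is to follow the same strategy as in the preceding subsections of the chapter, namely to build up a chain of characteristic subspaces that fixes the basis, then to scale away the remaining parameter and kill off the remaining triples. As with all the class $7$ cases, I start from the standard basis provided by the general theory of algebras of maximal class, for which $L^5 = Z_3(L), L^6 = Z_2(L), L^7 = Z(L)$ etc., and where $x_3y_4, x_2y_3 \neq 0$ and $x_1y_2, y_1y_2$ are linearly independent. The assumption $L^5L^2 \not\leq L^4L^3$ combined with $\dim L^4L^3 = 2$ lets me choose the basis so that $L^4L^3 \cap L^7 = \mathbb{F}x_4$ and $L^4L^3 = \mathbb{F}x_4 + \mathbb{F}x_3$, so in particular $\mathbb{F}x_1y_2 = \mathbb{F}x_4$ (since $x_1y_2 \notin L^5L^2$).

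Next I extract characteristic subspaces in a cascade, mirroring what was done in the $L^4L^3 = L^7$ and $L^5L^2 \leq L^4L^3$ cases. The subspaces $U \cdot (L^5L^2)^\perp$ and $(U \cdot (L^5L^2)^\perp) \cdot L$ will pin down $\mathbb{F}x_2$ and give $x_2y_4 = 0$; then $V = \{x \in L^4 : xL^3 \leq L^4L^3 \cap ((U \cdot (L^5L^2)^\perp) \cdot L)\}$ has codimension $1$ in $L^4$ (since $x_1y_2 \neq 0$) and fixes $\mathbb{F}y_1$ so that $\mathbb{F}y_1y_2 = \mathbb{F}x_3$. A parallel analysis of $U \cdot (U \cdot (L^5L^2)^\perp)^\perp$ and $L^4 \cdot (U \cdot (L^5L^2)^\perp)^\perp$ forces $x_1y_3 = 0$ and $(y_1y_3, y_4) = 0$. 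After rescaling the basis vectors by appropriate powers of $r$ (where $\mathbb{F}x_2y_3 = r\mathbb{F}x_5$), I may assume the ``skeleton'' relations
\begin{align*}
x_3y_4 &= x_5, & x_2y_3 &= x_5, & x_2y_4 &= 0, & x_1y_2 &= x_4, \\
x_1y_3 &= 0, & y_1y_2 &= x_3, & (y_1y_3, y_4) &= 0.
\end{align*}

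It remains to deal with the triples $(x_1y_4, y_5)$, $(y_1y_3, y_5)$, $(y_1y_4, y_5)$, $(y_2y_3, y_4)$, $(y_2y_3, y_5)$, $(y_2y_4, y_5)$ and $(y_3y_4, y_5)$. Each can be eliminated by a suitable substitution of the form $x_i \mapsto x_i + \alpha x_j$, $y_i \mapsto y_i + \beta y_k$ (compensating to preserve the symplectic form and the relations already established), exactly as in the analogous steps of the $\mathcal{P}_{10}^{(2,5)}$ and $\mathcal{P}_{10}^{(2,6)}$ derivations. Since the parameter $r$ has already been absorbed by the rescaling, no isomorphism question remains and we obtain a single presentation $\mathcal{P}_{10}^{(2,7)}$. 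A final inspection confirms that the algebra with this presentation indeed has isotropic centre of dimension $2$, is of maximal class $7$, and satisfies $L^4L^3 \neq L^7$ and $L^5L^2 \not\leq L^4L^3$.

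The main obstacle is purely bookkeeping: at each substitution one must verify that the previously established relations are preserved. The order in which the triples are eliminated is therefore important (e.g.\ one must kill $(x_1y_4,y_5)$ and $(y_2y_3,y_5)$ before $(y_1y_4,y_5)$ and $(y_2y_4,y_5)$), and the order mirrors that of the earlier propositions. Crucially, the fact that $r$ can be completely absorbed here---unlike in the $L^5L^2 \leq L^4L^3$ case where a modular invariant of the form $r \in (\mathbb{F}^*)^k$ survived---is what yields a \emph{unique} algebra rather than a family, and this is ultimately due to the richer supply of characteristic subspaces available when $L^5L^2 \not\leq L^4L^3$ that provide more scaling freedom.
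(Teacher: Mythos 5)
Your proposal is correct and follows essentially the same route as the paper: the same cascade of characteristic subspaces ($U\cdot(L^5L^2)^\perp$, its product with $L$, the codimension-one kernels $V$ and the analogues forcing $x_1y_3=0$ and $(y_1y_3,y_4)=0$), the same skeleton relations, the same rescaling that absorbs $r$, and the same final elimination of the residual triples in a compatible order. The only quibble is that $\mathbb{F}x_1y_2=\mathbb{F}x_4$ comes from the definition of the characteristic ideal $U$ (which forces $x_1L^3\leq L^4L^3\cap L^7$) rather than merely from $x_1y_2\notin L^5L^2$, but this does not affect the argument.
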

\begin{proof} We have already seen that any such algebra must have a presentation of this form and conversely direct calculations show that the algebra with the given presentation satisfies all the properties stated.
\end{proof}


\part{Powerfully nilpotent groups and powerfully soluble groups}
\noindent

\chapter*{INTRODUCTION}
\vspace{-2.1cm}
\hspace{500cm}
\noindent
\noindent

This part will be mostly about the connection between Part II and Part I.
Thus throughout this part we will be working with an arbitrary field $\mbox{GF\,}(3)$. We also assume that the symplectic alternating algebras is furthermore nilpotent unless stated otherwise.\\\\
The study in Part II reveals some new classes of groups that we call\emph{ powerfully nilpotent group},\emph{ powerfully perfect nilpotent group} and\emph{ powerfully soluble group}. \\\\
As we have seen before, nilpotent symplectic alternating algebras over the field $\mbox{GF\,}(3)$ have a $1$-$1$ correspondence with a class $\mathcal{C}$ of powerful $2$-Engel $3$-groups. \\\\
Our classification of algebras of nilpotent symplectic alternating algebras of dimension up to $10$ over $\mbox{GF\,}(3)$  yields that there are $25$ powerfully nilpotent group over class $\mathcal{C}$ of rank at most $11$.\\\\
In general the situation here like a more further work to be done, which is currently under consideration.
We will only here first consider some general definitions and then in particular describe briefly what is it happening in class $\mathcal{C}$ based on the new language revealed from classification in Part II. \\\\

\thispagestyle{empty}
\chapter{Contributions in Group Theory and Further work}
Here we discuss some further directions that would be interesting and compatible with our work in this thesis. One natural consequence step would be is to investigate the groups that correspond to nilpotent SAA's in class $\mathcal{C}$. Also what is it happening there when the SAA is further nil-algebras over $\mbox{GF\,}(3)$. For instance we know such a correspondence exist.\\\\
A different direction is having the nil SAA's are classified up to $\mbox{dim\,}8$ which are exactly the same as the nilpotent SAA's of $\mbox{dim\,}$ up to $8$. One could investigate some general properties that holds for nil SAA's. Also what could we read more from the additional algebras that we have got as we know that there are nil SAA's, are they all nil-4 SAA's and bounded by nil-degree $4$. \\\\
We however in the rest of this part we continue describing some few things that is just a starting point for an ongoing new work.
\section{Contributions in Group Theory}
As we said before this is a more general setting now that is open and under investigation for a further work to be done. We however describe only what a powerfully nilpotent group is and in particular the connection between the previous parts of this thesis. Similarly we define a powerfully soluble group.\\\\
Recall that there is a one-to-one correspondence between symplectic alternating algebras over the field of three elements and a certain rich class \( \mathcal{C}\) of powerful 2-Engel 3-group of exponent 27. Namely, the groups form a class \( \mathcal{C}\) consist of all powerful 2-Engel 3-groups \(G\) with the following extra properties:
{
\begin{itemize}
    \item[1)] \( G = \langle x, H \rangle \), where \(H  = \{ g \in G \colon g^9=1 \} \) and \( Z(G) = \langle x \rangle \) with \( O(x)=27\),
    \item[2)] \(G\) is of rank \(2r+1\) and has order \(3^{3+4r}\). 
\end{itemize}}
\noindent
The associated symplectic alternating algebra $L(G)$ is constructed as follows. First we consider $L(G) = H/G^3$ as a vector space over GF($3$). To this we associate a bilinear alternating form (,) and an alternating binary multiplication as follows: for any $\bar{a} = a G^3, \bar{b} = b G^3$ and $\bar{c} = c G^3 $ in $L(G)$,
\[ [a,b]^3 = x^{9 (\bar{a}, \bar{b})} \]
\[ \bar{a} \cdot \bar{b} = \bar{c} \mbox{  where  } [a,b] Z(G) = c^3 Z(G).\]

\noindent
Next we identify the subclass of \( \mathcal{C}\) that consists of all groups in \( \mathcal{C}\) that has an extra group theoretical property that we call {\it powerfully nilpotent}. This subclass corresponds to nilpotent SAA's.\\
\begin{defn} A finite $p$-group $G$ is said to be a \emph{powerfully nilpotent group} if there is an ascending chain of powerfully embedded subgroups 
$ H_0, H_1, \cdots, H_n$ such that
\[\{1\}=H_0 \leq H_1=Z(G) \leq H_2  \leq H_3 \leq \cdots \leq H_{n} = G \]
and $[H_{i}, G] = H_{i-1}^p$ for $i = 1, \cdots, n$. 
We refer to such a chain as a powerfully central chain and $n$ is the length of the chain. Further, if $G$ is powerfully nilpotent then the smallest possible length of a powerfully central chain for $G$ is called its \emph{powerful nilpotence class}. 
\end{defn}
\noindent
\begin{Remark} Notice that if $G$ is powerfully nilpotent group, then the nilpotent class of the group $G$ is different from the Powerfully nilpotent class of $G$.\\
\end{Remark}
\noindent
We then define the upper powerfully nilpotent series, lower powerfully nilpotent series and the derived powerfully nilpotent series as usual like in Group.
\noindent
\section{Connection between nilpotent symplectic alternating algebras and powerfully nilpotent groups}
Let $L$ be a nilpotent SAA over $\mbox{GF\,}(3)$ of dimension $2n \geq 6$. We have seen from the general theory that $L$ has a 
 the central ascending chain 
\[\{0\} < I_2 < I_3 < \cdots < I_{n-1} < I_{n-1}^\perp < I_{n-2}^\perp  < \cdots< I_2^\perp < L \]
such that $\mbox{dim\,} I_r = r$ for $r = 2, \ldots, n-1$.  
In particular $L$ is nilpotent of class at most $2n-3$.\\\\
We now move to groups that consist $\mathcal{C}$. Let $G=<x,H>$ be such group.\\\\
%
Here the groups consist $\mathcal{C}$ are $2$-Engel, thus they are nilpotent of class at most $3$. We however here interest of those that are of class $3$.\\\\
From our previous work we know that if the SAA $L$ is abelian then the corresponding group $G(L)$ in class $\mathcal{C}$ would have a nilpotent class exactly $2$. This is because $G(L)$ being $2$-Engel has class at most $3$ and that the derived group $[G,G] \neq 1$ since we have a non-degenerate alternating form. It is however clear that the groups that are of class $3$ corresponds to non-abelian SAA over $\mbox{GF\,}(3)$. The reason for that is the latter must have some non-zero triples, say $(uv, w) =1$ but the corresponding commutator would be, say $[a,b,c]=1$ and hence $G(L)$ has class $3$.\\\\
We first start by the following remark that set up the connection between the SAA over $\mbox{GF\,}(3)$ and groups of class $\mathcal{C}$ that is of class $3$.\\ 
\begin{Lemma}[\cite{4}] Let $G(L)$ be a group of class $\mathcal{C}$ that has nilpotent class $3$. $G(L)$ is isomorphic to $G(K)$ if and only if $L$ is isomorphic to $K$.
\end{Lemma}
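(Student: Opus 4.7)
The plan is to prove both implications by working with the explicit construction of $G(L)$ in terms of the structure constants of $L$ given in the introduction, together with the characteristic nature of several naturally-defined subgroups of any group in $\mathcal{C}$.

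For the direction $L \cong K \Rightarrow G(L) \cong G(K)$, let $\psi : L \to K$ be a SAA isomorphism and fix a basis $u_1, \ldots, u_{2r}$ of $L$. The images $\psi(u_1), \ldots, \psi(u_{2r})$ form a basis of $K$ with identical structure constants $\alpha_{ij}(k)$ and $\beta_{ij}$, since $\psi$ preserves both the multiplication and the alternating form. Because the powerful commutator presentation
\[ [h_i, h_j] = h_1^{3\alpha_{ij}(1)} \cdots h_{2r}^{3\alpha_{ij}(2r)} x^{3\beta_{ij}} \]
for $G(L)$ exhibited in the introduction depends only on these constants, the assignment $x_L \mapsto x_K$, $h_i \mapsto h_i'$, where $h_i'$ is the generator of $G(K)$ associated to $\psi(u_i)$, extends to a group isomorphism $G(L) \to G(K)$.

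For the converse, suppose $\phi : G(L) \to G(K)$ is a group isomorphism. The subgroups $Z(G)$, $H = \{g : g^9 = 1\}$ and $G^3$ are all characteristic in any group of $\mathcal{C}$, so $\phi$ descends to a vector-space isomorphism $\bar\phi : L = H_L / G(L)^3 \to H_K / G(K)^3 = K$, and moreover $\phi(x_L) = x_K^k$ for some $k$ coprime to $27$. The identity $\bar a \cdot \bar b = \bar c \iff [a,b]\, Z(G(L)) = c^3 Z(G(L))$ is respected by every group homomorphism, so $\bar\phi$ preserves the SAA multiplication. Applying $\phi$ to the defining relation $[a,b]^3 = x_L^{9(\bar a, \bar b)_L}$ yields $(\bar\phi \bar a, \bar\phi \bar b)_K \equiv k(\bar a, \bar b)_L \pmod 3$. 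When $k \equiv 1 \pmod 3$, $\bar\phi$ is a genuine SAA isomorphism and we are done.

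The main obstacle is the remaining case $k \equiv 2 \equiv -1 \pmod 3$, in which $\bar\phi$ only scales the form by $-1$. To complete the proof I would show that $L$ is always SAA-isomorphic to its form-negated counterpart $L^{-}$ (the same vector space and multiplication, with form $-(\cdot,\cdot)_L$), by exhibiting an explicit similitude $\tau : L \to L$ satisfying $\tau(uv) = \tau(u)\tau(v)$ and $(\tau u, \tau v) = -(u,v)_L$; such a $\tau$ can often be taken to act as $\pm 1$ on a judiciously chosen basis, as illustrated by the four-dimensional non-abelian SAA of the introduction, where $\tau(x_1) = x_1$, $\tau(y_1) = -y_1$, $\tau(x_2) = -x_2$, $\tau(y_2) = y_2$ works. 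Equivalently one produces an automorphism $\sigma$ of $G(L)$ with $\sigma(x_L) = x_L^{-1}$; pre-composing $\phi$ with such a $\sigma$ replaces $k$ by $-k$ and reduces matters to the first case. Constructing $\sigma$ (equivalently, $\tau$) uniformly for every non-abelian nilpotent SAA over $\mathrm{GF}(3)$, perhaps by induction on dimension or by exploiting the rigid chains of characteristic ideals supplied by Theorems~\ref{thm2.9gth} and~\ref{thm3.1gth}, is the technical heart of the argument and is precisely where the detailed structure theory developed in Parts I and II is genuinely needed.
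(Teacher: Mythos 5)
Your overall strategy is sound and you have put your finger on exactly the right pressure point, but the proof is not complete. The forward implication (reading off $G(L)\cong G(K)$ from equality of structure constants) is fine, and in the converse direction your bookkeeping is correct: a group isomorphism $\phi$ sends $x_L$ to $x_K^k$ with $3\nmid k$, the induced map $\bar\phi\colon L\to K$ automatically preserves the product, and it multiplies the alternating form by $k\bmod 3\in\{\pm1\}$. The genuine gap is the case $k\equiv-1\pmod 3$: there you need, for every non-abelian SAA over $\mathrm{GF}(3)$, a linear bijection $\tau$ with $\tau(uv)=\tau(u)\tau(v)$ and $(\tau u,\tau v)=-(u,v)$ (equivalently, an automorphism of $G(K)$ inverting $x_K$), and this existence statement is asserted rather than proved. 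Your only evidence is a single $4$-dimensional example --- which is in fact the non-nilpotent algebra, so it does not even lie in the class the structure theory of Parts I and II describes --- and the suggestion that $\tau$ can be taken diagonal with entries $\pm1$ does not obviously work in general: a diagonal anti-similitude forces $\varepsilon_{x_i}\varepsilon_{y_i}=-1$ on every hyperbolic pair, while multiplicativity imposes one sign equation for every non-zero structure constant, and nothing guarantees these are simultaneously solvable. Without this step your argument only shows that $G(L)\cong G(K)$ implies $L\cong K$ or $L\cong K^-$.

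Note also that the thesis does not prove this lemma --- it is quoted from \cite{4} --- so there is no internal proof to compare with. More importantly, the claim you defer is essentially as strong as the hard half of the lemma itself: the map $G(L)\to G(L^-)$ fixing each $h_i$ and sending $x\mapsto x^{-1}$ is already a group isomorphism, so the lemma is \emph{equivalent to} your reduction together with the assertion that $L\cong L^-$ for every relevant $L$. In other words, you have reformulated the difficulty rather than resolved it. To complete the proof you would need either a uniform construction of the anti-similitude $\tau$ (valid for all non-abelian SAA's over $\mathrm{GF}(3)$, not only nilpotent ones of low dimension), or an argument that removes the sign ambiguity in $\phi(x_L)$ by other means.
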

\noindent
Recall that ideals in SAA correspond to powerfully embedded subgroups and subalgebra correspond to powerfully subgroup.
We also have that $[a Z(G), bZ(G)] = c^3 Z(G)$ for any $a,b,c \in H$. We next lists few simple consequences.\\\\

\noindent
Let $G$ be a group from $\mathcal{C}$ of rank $2n+1$ that has nilpotent class $3$. Then

\begin{itemize}
    \item[1)] $G$ is powerfully nilpotent group if and only if $L(G)$ is a nilpotent SAA.
\item[2)]  The  powerfully nilpotent class of $G$ is the same as the nilpotent class of $L(G)$.
\item[3)]  the center $Z(G)$ of $G$ and $Z_2(G)$ correspond to the isotropic subspace of $L$ and the center of $L$ consequently.
\item[4)]  $\mbox{rank\,} G = \mbox{dim\,} Z_2(G)/Z(G)$.
\item[5)]  The structure of $G$ is fully determined by the structure of $L$.\\
\end{itemize}

\noindent
Now let $G$ be in the class $\mathcal{C}$ of powerful $2$-Engel $3$-groups. For any $K$ such that $G^3 \leq K \leq G$ we let $\bar{K} = K/G^3$. Notice that
\[ \bar{A} \cdot L(G) \leq \bar{B} \mbox{ if and only if } [ \langle A, x \rangle, G] \leq \langle B, x \rangle^3 .\]
Thus if $G^3 \leq H_i$ for $i=1, \cdots, n$, then
\[ \{0\} = \bar{H_0} \leq \bar{H_1} \leq \cdots \leq \bar{H_n}=L(G) \]
is a central chain of ideals in $L(G)$ if and only if 
\[ \{1\} \leq \langle x \rangle \leq \langle H_0, x \rangle \leq \cdots \leq \langle H_n, x \rangle = G \]
is a powerfully central chain. The classification of the nilpotent symplectic alternating algebras of dimension $10$ over $\mbox{GF}(3)$ gives us thus the classification for the powerfully nilpotent groups in $\mathcal{C}$ that are of rank $11$. The classification reveals that there are $25$ such groups.\\

\noindent
It is also extremely interesting to see that there is a class of {\it maximal powerfully nilpotent groups} consists of powerfully nilpotent groups in $\mathcal{C}$ that corresponds to the maximal class of nilpotent SAA's of class $2n-3$. We seems to have also a dual class of {\it minimal powerfully nilpotent groups} too.\\

\bibliographystyle{plain}
\markboth{Bibliography}{Bibliography}
\addcontentsline{toc}{chapter}{Bibliography}
\bibliography{refrences.bib}
\end{document}